\documentclass[11pt]{aims}
\usepackage{amsmath, amssymb, mathrsfs}
  \usepackage{paralist}
  \usepackage{graphics} %% add this and next lines if pictures should be in esp format
  \usepackage{epsfig} %For pictures: screened artwork should be set up with an 85 or 100 line screen
\usepackage{graphicx}  \usepackage{epstopdf}%This is to transfer .eps figure to .pdf figure; please compile your paper using PDFLeTex or PDFTeXify.
 \usepackage[colorlinks=true]{hyperref}
 \hypersetup{urlcolor=blue, citecolor=red}
 \usepackage[toc,page]{appendix}
\usepackage{chngcntr}

\usepackage{titlesec}
\titleformat{\section}{\Large\bfseries}{\thesection.}{4pt}{}
\titleformat{\subsection}{\large\bfseries}{\thesection.\arabic{subsection}.}{4pt}{}
\titleformat{\subsubsection}{\bfseries}{\thesection.\arabic{subsection}.\arabic{subsubsection}.}{4pt}{}
\titleformat*{\paragraph}{\bfseries}
\titleformat*{\subparagraph}{\bfseries}
\setcounter{secnumdepth}{3}

\usepackage[margin=1.2in]{geometry}

%  \textheight=8.2 true in
%   \textwidth=5.8 true in
%    \topmargin 30pt
%     \setcounter{page}{1}

\newtheorem{theorem}{Theorem}[section]

\newtheorem{lemma}[theorem]{Lemma}
\newtheorem{proposition}[theorem]{Proposition}
\theoremstyle{definition}
\newtheorem{definition}[theorem]{Definition}
\newtheorem{remark}[theorem]{Remark}

\newcommand{\RN}{\mathbb{R}^N}
\newcommand{\Rb}{\mathbb{R}}
\newcommand{\Lc}{\mathscr{L}}
\newcommand{\Hc}{\mathscr{H}}
\newcommand{\Vc}{\mathcal{V}}
\newcommand{\Sc}{\mathcal{S}}

\newcommand{\Oc}{\mathcal{O}}

\newcommand{\Cc}{\mathcal{C}}
\newcommand{\Mc}{\mathcal{M}}

\newcommand{\Dc}{\mathcal{D}}
\newcommand{\Gc}{\mathcal{G}}

\newcommand{\Pc}{\mathbb{P}}
%% Reset equation numbering after each section
\numberwithin{equation}{section}

%% Place the running title of the paper with 40 letters or less in []
 %% and the full title of the paper in { }.
\title[Blowup solutions for an exponential reaction-diffusion system] %use the shortened version of the full title
      {Blowup solutions for a reaction-diffusion system with exponential nonlinearities}

% Place all authors' names in [ ] shown as running head, Leave { } empty
% Please use `and' to connect the last two names if applicable
% use FirstNameInitial.  MiddleNameInitial. LastName, or only last names of authors if there are too many authors
\author[T. Ghoul, V. T. Nguyen, H. Zaag]{}
% It is required to enter 2010 MSC.
\subjclass{Primary: 35K50, 35B40; Secondary: 35K55, 35K57.}
% Please provide minimum  5 keywords.
 \keywords{Blowup solution, Blowup profile, Stability, Semilinear parabolic system}

% Email address of each of all authors is required.
% You may list email addresses of all other authors, separately.
 \email[T. Ghoul]{teg6@nyu.edu}
 \email[V. T. Nguyen]{Tien.Nguyen@nyu.edu}
 \email[H. Zaag]{Hatem.Zaag@univ-paris13.fr}

% Put your short thanks below. For long thanks/acknowlegements,
%please go to the last acknowlegments section.
\thanks{H. Zaag is supported by the ANR project ANA\'E ref. ANR-13-BS01-0010-03. \\ --------------------\\ 
\today}
%\thanks{\today}
\begin{document}
\maketitle

% Enter the first author's name and address:
\centerline{\scshape Tej-Eddine Ghoul$^\dagger$, Van Tien Nguyen$^\dagger$ and Hatem Zaag$^\ast$}
\medskip
{\footnotesize
 \centerline{$^\dagger$New York University in Abu Dhabi, P.O. Box 129188, Abu Dhabi, United Arab Emirates.}
  \centerline{$^\ast$Universit\'e Paris 13, Sorbonne Paris Cit\'e, LAGA, CNRS (UMR 7539), F-93430, Villetaneuse, France.}
}

\bigskip

\begin{abstract} We consider the following parabolic system whose nonlinearity has no gradient structure:
$$\left\{\begin{array}{ll}
\partial_t u = \Delta u + e^{pv}, \quad & \partial_t v = \mu \Delta v + e^{qu},\\
u(\cdot, 0) = u_0, \quad & v(\cdot, 0) = v_0,
\end{array}\right.  \quad p, q, \mu > 0,
$$
in the whole space $\RN$.  We show the existence of a stable blowup solution and obtain a complete description of its singularity formation. The construction relies on the reduction of the problem to a finite dimensional one and a topological argument based on the index theory to conclude. In particular, our analysis  uses neither the maximum principle nor the classical methods based on energy-type estimates which are not supported in this system. The stability is a consequence of the existence proof through a geometrical interpretation of the quantities of blowup parameters whose dimension is equal to the dimension of the finite dimensional problem.
\end{abstract}

\section{Introduction.}
In this paper we study the reaction-diffusion system
\begin{equation}\label{PS}
\left\{\begin{array}{ll}
\partial_t u = \Delta u + F(v), \quad & \partial_t v = \mu \Delta v + G(u),\\
u(\cdot, 0) = u_0, \quad & v(\cdot, 0) = v_0,
\end{array}\right. \quad \mu > 0,
\end{equation}
in the whole space $\RN$, where the nonlinearity is of exponential type 
\begin{equation}\label{def:fg}
F(v) = e^{pv}, \quad G(u) = e^{q u}, \quad p, q > 0.
\end{equation}
Our aim is to construct a blowup solution for this system and to precisely describe its blowup profile. We also intend to show the stability of our solution with respect to initial data. 

\bigskip

For the expert reader, we would like to immediately present our motivations in considering such a system. For other readers, we will take the time to present the history of the models, hoping that our motivations will gradually become more accessible to him.

%%%%%%%%%%%%%%%%%%%%%%%%%%%%%%%%%%%%
%%%%%%%%%%%%%%%%%%%%%%%%%%%%%%%%%%%%
\subsection{Our motivations for the expert reader.}
%%%%%%%%%%%%%%%%%%%%%%%%%%%%%%%%%%%%
%%%%%%%%%%%%%%%%%%%%%%%%%%%%%%%%%%%%
In fact, our motivation in this work is double:\\
\noindent $\bullet$ {\it Physical motivation}: this is an ignition model for thermal explosions of two mixed solid fuels of finite extent, as one may see from Bebernes, Bressan and Eberly \cite{BBEiumj87} as well as Bebernes and Kassoy \cite{BKsjam81}, cited by Zheng, Zhao and Chen \cite{ZZCna02}. In this model, $u$ and $v$ stand for the temperatures of the two fuels.\\ 
\noindent $\bullet$ {\it Mathematical motivation}: We acknowledge that our argument uses the method introduced by Bressan \cite{Breiumj90}, Bricmont and Kupiainen \cite{BKnon94} and Merle and Zaag \cite{MZdm97} for the scalar semilinear heat equation with exponential or power nonlinearity. That method is based on 3 steps:
\begin{itemize}
\item[-] the linearization of the equation around the intended profile;
\item[-] the reduction of the problem to a finite-dimensional one, corresponding to the positive eigenvalues, thanks to the control of the negative directions of the spectrum with the properties of the linear operator;
\item[-] the solution of the finite-dimensional problem thanks to Brouwer's lemma and the degree theory.
\end{itemize}
Nevertheless, the case of our system \eqref{PS} is much tougher than the mentioned scalar cases, at least for two reasons, which means that our analysis in this paper is far from being a simple adaptation of the arguments introduced in the scalar case, making our interest in \eqref{PS} completely meaningful. These are the two reasons:
\begin{itemize}
\item[-] first, we have here a system and not just a scalar equation, with different diffusivities between the components ($\mu$ may or may not be equal to $1$), which makes the above-mentioned linear operator neither self-adjoint nor diagonal. Some additional spectral arguments are therefore needed;
\item[-] second, the intended profile for the solution is unbounded in the space variable, as one may see from the statement of Theorem \ref{theo1} below, where we see that 
\[
(u,v) \sim (-\log(T-t) + \log\Phi^*, - \log(T-t) + \log\Psi^*),
\]  
with $\Phi^*$ and $\Psi^*$ introduced in \eqref{def:PhiPsistarpro}. This makes it difficult to control the solution in the intermediate zone, between the blow-up and the regular zones. Thanks to the introduction of  $U = e^{qu}$ and $V = e^{pv}$, we make the profile bounded, at the expense of adding two terms unknown in the scalar case, namely $|\nabla U|^2/U$ and $|\nabla V|^2/V$ (see \eqref{sys:PhiPsi_i} below). These terms are delicate, since both upper bound and lower bound are needed; especially when $U$ and $V$ become smaller in the intermediate and regular zones. In order to treat them, we introduce estimates of the solution in a 3-fold shrinking set (see Definition \ref{def:St} below), where the control in the blow-up zone is inspired by the scalar case, hence not new, whereas the control in the intermediate region is one of the novelties of our paper. 
\end{itemize}
More details are given below in the introduction and in the following sections.

\subsection{Previous literature and statement of the results.}

The local Cauchy problem for \eqref{PS} can be solved in several functional spaces $\mathcal{F}$, for example $\mathcal{F} = L^\infty(\RN) \times L^\infty(\RN)$ or in a special affine space $\mathcal{F} = \mathcal{H}_a$ for some positive constant $a$ with
\begin{equation}\label{def:Ha}
\mathcal{H}_a = \{(u,v) \in (\bar \phi, \bar \psi) + L^\infty(\RN) \times L^\infty(\RN)\;\; \text{with} \;\; q\bar \phi = p\bar \psi = -\ln(1 + a|x|^2)\}.
\end{equation}
We denote by $T = T(u_0, v_0) \in (0, +\infty]$ the maximal existence time of the classical solution $(u,v)$ of problem \eqref{PS}. If $T < +\infty$, then the solution blows up in finite time $T$ in the sense that 
$$\lim_{t \to T}(\|u(t)\|_{L^\infty(\RN)} + \|v(t)\|_{L^\infty(\RN)})= +\infty.$$
In that case, $T$ is called the blowup time of the solution. A point $a \in \RN$ is said to be a blowup point of $(u,v)$ if $(u,v)$ is not locally bounded near $(a,T)$ in the sense that $|u(x_n, t_n)| + |v(x_n, t_n)| \to +\infty$ for some sequence $(x_n,t_n) \to (a, T)$ as $n \to +\infty$. We say that the blowup is \textit{simultaneous} if 
\begin{equation}\label{def:simultaneous}
\limsup_{t \to T}\|u(t)\|_{L^\infty(\RN)} = \limsup_{t \to T}\|v(t)\|_{L^\infty(\RN)} = +\infty,
\end{equation}
and that it is \textit{non-simultaneous} if \eqref{def:simultaneous} does not hold, i.e. if one of the two components remains bounded on $\RN \times [0,T)$. For the system \eqref{PS}, it is easy to see that the blowup is always \textit{simultaneous}. Indeed, if $u$ is uniformly bounded on $\RN \times [0, T)$, then the second equation would yield a uniform bound on $v$. More specifically, we say that $u$ and $v$ blow up simultaneously at the same point $a \in \RN$ if $a$ is a blowup point both for $u$ and $v$. \\

\medskip

When system \eqref{PS} is coupled with power nonlinearities of the type
\begin{equation}\label{def:fg2}
F(v) = |v|^{p-1}v, \quad G(u) = |u|^{q-1}u,
\end{equation}
and the diffusion coefficient $\mu = 1$, Escobedo and Herrero \cite{EHjde91} (see also \cite{EHpams91}, \cite{EHampa93}) showed that any nontrivial positive solution which is defined for all $x \in \RN$ must simultaneously blow up in finite time, provided that
$$pq > 1, \quad \text{and} \quad \frac{\max\{p,q\} + 1}{pq - 1} \geq \frac{N}{2}.$$
The authors in \cite{AHVihp97} proved that if
\begin{equation} \label{eq:condAHV}
pq > 1, \quad \text{and} \quad q(pN - 2)_+ < N + 2 \quad \text{or} \quad p(qN-2)_+< N+2,
\end{equation}
then every positive solution $(u,v)$ of system \eqref{PS} exhibits the \textit{Type I} blowup, i.e. there exists some constant $C > 0$ such that
\begin{equation}\label{eq:blrate}
\|u(t)\|_{L^\infty(\RN)} \leq C\bar{u}(t), \quad \|v(t)\|_{L^\infty(\RN)} \leq C\bar{v}(t),
\end{equation}
where $(\bar u, \bar v)$ is the blowup positive solution of the associated ordinary differential system
$$\bar{u}(t) = \Gamma (T-t)^{-\frac{p+1}{pq-1}}, \quad \bar{v}(t) = \gamma(T-t)^{-\frac{q+1}{pq - 1}}$$
and 
\begin{equation}\label{def:Gamgam}
\gamma^p = \Gamma\left(\frac{p+1}{pq - 1}\right),\quad \Gamma^q = \gamma\left(\frac{q + 1}{pq - 1}\right).
\end{equation}
The estimate \eqref{eq:blrate} has been proved by Caristi and Mitidieri \cite{CMjde94} in a ball under assumptions on $p$ and $q$ different from \eqref{eq:condAHV}. See also Fila and Souplet \cite{FSnodea01}, Deng \cite{Dzamp96} for other results relative to estimate \eqref{eq:blrate}.

Through the introduction of the following similarity variables for all $a \in \RN$ ($a$ may or may not be a blowup point):
\begin{equation}\label{def:simiVars}
\begin{array}{c}
\Phi_{T,a}(y,s) = (T-t)^{\frac{p+1}{pq -1}}u(x,t), \quad \Psi_{T,a}(y,s) = (T-t)^{\frac{q + 1}{pq-1}}v(x,t),\\
\\
\text{where}\quad y = \dfrac{x-a}{\sqrt{T - t}}, \quad s = -\ln(T-t),
\end{array}
\end{equation}
Andreucci, Herrero and Vel\'azquez \cite{AHVihp97} (recall that we are considering the case when $\mu = 1$) showed that if the solution $(u,v)$ exhibits \textit{Type I} blowup, then one of the following cases occurs (up to replacing $(u,v)$ by $(-u,-v)$ if necessary):\\
\noindent $\bullet$ either $(\Phi_{T,a}, \Psi_{T,a})$ goes to $(\Gamma, \gamma)$ exponentially fast, \\
\noindent $\bullet$ or there exists $k \in \{1, \cdots, N\}$ such that after an orthogonal change of space coordinates, 
\begin{equation}\label{eq:beh}
\begin{array}{ll}
\Phi_{T,a}(y,s) &= \Gamma - \dfrac{c_1}{s}(p+1)\Gamma\sum \limits_{i=1}^k(y_i^2 - 2) + o\left(\dfrac{1}{s}\right),\\
&\\
\Psi_{T,a}(y,s) &= \gamma - \dfrac{c_1}{s}(q+1)\gamma\sum\limits_{i=1}^k(y_i^2 - 2) + o\left(\dfrac{1}{s}\right),
\end{array}
\end{equation}
where $(\Gamma, \gamma)$ is defined by \eqref{def:Gamgam}, $c_1 = c_1(p,q) > 0$, and the convergence holds in $\mathcal{C}^\ell_{loc}(\RN)$ for any $\ell \geq 0$. \\
It is worth mentioning the work of \cite{Zcpam01} where the author obtained a Liouville theorem for system \eqref{PS} with the nonlinearity \eqref{def:fg2} and $\mu = 1$ that improves the result in \cite{AHVihp97}. Based on this theorem, he was able to derive sharp estimates of asymptotic behaviors as well as a localization property for blowup solutions.\\

When system \eqref{PS} is considered with the nonlinearity \eqref{def:fg2} and the diffusion coefficient $\mu > 0$ (not necessarily equal to 1), Mahmoudi, Souplet and Tayachi \cite{MSTjde15} (see also Souplet \cite{Sjems09}) prove the single point blowup for any radially decreasing, positive and classical solution in a ball. This result improves a result by Friedman and Giga \cite{FGjut87} where the method requires a very restrictive conditions $p = q$ and $\mu = 1$ in order to apply the maximum principle to suitable linear combination of the components $u$ and $v$. The authors of \cite{MSTjde15} also derive the lower pointwise estimates on the final blowup profiles:
\begin{equation}\label{est:MST}
\text{for all}\; 0< |x|\leq \epsilon_1, \quad |x|^{\frac{2(p+1)}{pq-1}}u(T,x) \geq \epsilon_0 \quad \text{and} \quad |x|^{\frac{2(q+1)}{pq-1}}v(T,x) \geq \epsilon_0,
\end{equation}
for some $\epsilon_0, \epsilon_1 > 0$. Recently, we establish in \cite{GNZpre16c} the existence of finite time blowup solutions verifying the asymptotic behavior \eqref{eq:beh}. In particular, we exhibit stable finite time blowup solutions according to the dynamics:
\begin{equation*}\begin{array}{l}
u(x,t) \sim \Gamma \left[(T-t)\left(1 + \frac{b|x|^2}{(T-t)|\ln(T-t)|}\right) \right]^{-\frac{p+1}{pq-1}},\\
v(x,t) \sim \gamma \left[(T-t)\left(1 + \frac{b|x|^2}{(T-t)|\ln(T-t)|}\right) \right]^{-\frac{q+1}{pq-1}},
\end{array} \quad \text{as} \quad t \to T,
\end{equation*}
where $\Gamma, \gamma$ and $b$ are positive constants depending on $p, q, \mu$. Moreover, we derive the following sharp description of the final blowup profiles:
\begin{equation}
u(T, x) \sim \Gamma \left(\frac{b|x|^2}{2|\ln|x||} \right)^{-\frac{p+1}{pq - 1}} \quad \text{and} \quad  v(T, x) \sim \gamma \left(\frac{b|x|^2}{2|\ln|x||} \right)^{-\frac{q+1}{pq - 1}} \quad \text{as} \quad |x| \to 0. 
\end{equation}
The method we used in \cite{GNZpre16c} is an extension of the technique developed by Merle and Zaag \cite{MZdm97} treated for the standard semilinear heat equation 
\begin{equation} \label{eq:ScalarE}
\partial_t u = \Delta u + |u|^{p-1}u.
\end{equation}
The analysis in \cite{MZdm97} is mainly based on the spectral property of the linearized operator of the form 
$$\Lc = \Delta - \frac{1}{2}y \cdot \nabla + \textup{Id},$$
whose spectrum has two positive eigenvalues, a null and then infinity many negative eigenvalues. In particular, the method relies on a two step procedure:\\
- The reduction of the problem to a finite dimensional one. This means that controlling the solution in the similarity variables \eqref{def:simiVars} around the profile reduces to the control of the components corresponding to the two positive eigenvalues.\\
- Solving the finite dimensional problem thanks to a topological argument based on index theory.\\

\medskip

As for system \eqref{PS} with the nonlinearity \eqref{def:fg}, much less result is known, in particular in the study of the asymptotic behavior of the solution near singularities. To our knowledge, there are no results concerning the blowup behavior even when $\mu = 1$. The only known result is due to Souplet and Tayachi \cite{STna16} who follow the strategy of \cite{MSTjde15} to establish the single point blowup for large classes of radially decreasing solutions. A similar single point blowup result was obtained in \cite{FGjut87} under the restrictive condition $\mu = 1$. In this paper we exhibit finite time blowup solutions for system \eqref{PS} coupled with \eqref{def:fg} and obtain the first complete description of its blowup behavior. More precisely, we prove the following result.

\begin{theorem}[Existence of blowup solutions for system \eqref{PS} with the complete description of its profile]\label{theo1} Consider $a \in \RN$. There exists $T > 0$ such that system \eqref{PS} has a solution $(u,v)$ defined on $\RN \times [0, T)$ such that:\\
\noindent $(i)$ $e^{qu}$ and $e^{pv}$ blow up in finite time $T$ simultaneously at only one blowup point $a$.\\
\noindent $(ii)$  
\begin{equation}\label{eq:asyTh1}
\left\|(T-t)e^{qu(x,t)} - \Phi^*(z)\right\|_{L^\infty(\RN)} + \left\|(T-t)e^{pv(x,t)} - \Psi^*(z)\right\|_{L^\infty(\RN)} \leq \dfrac{C}{\sqrt{|\ln (T-t)|}},
\end{equation}
where $z = \dfrac{x - a}{\sqrt{(T-t)|\ln (T-t)|}}$ and the profiles are given by
\begin{equation}\label{def:PhiPsistarpro}
p\Phi^*(z) = q\Psi^*(z) =  \left(1 + b|z|^2\right)^{-1} \quad \text{with} \quad b = \frac{1}{2(\mu + 1)}.
\end{equation}

\noindent $(iii)\;$ for all $x \neq a$, $(u(x,t), v(x,t)) \to (u^*(x), v^*(x)) \in \Cc^2(\RN \backslash \{0\}) \times \Cc^2(\RN \backslash \{0\})$ with
$$u^*(x)  \sim \frac{1}{q}\ln \left(\frac{2b}{p}\frac{|\ln|x - a||}{|x-a|^2}\right) \quad\text{and} \quad v^*(x)  \sim \frac{1}{p}\ln \left(\frac{2b}{q}\frac{|\ln|x - a||}{|x-a|^2}\right) \quad \text{as} \quad |x - a| \to 0.$$
\end{theorem}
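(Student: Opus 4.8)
\emph{Strategy.} I would adapt the scheme of Bressan, Bricmont--Kupiainen and Merle--Zaag for scalar semilinear heat equations to the present system, in four movements: a change of unknowns, a spectral analysis of the linearised operator, an $(N+1)$-parameter reduction inside a three-fold shrinking set, and a topological (Brouwer / index) argument. The starting point is the change of unknowns $U=e^{qu}$, $V=e^{pv}$, which turns \eqref{PS} into the system \eqref{sys:PhiPsi_i},
\[
\partial_t U=\Delta U-\frac{|\nabla U|^2}{U}+qUV,\qquad \partial_t V=\mu\Delta V-\mu\,\frac{|\nabla V|^2}{V}+pUV,
\]
whose blow-up profile is bounded, at the price of the two first-order terms $|\nabla U|^2/U$ and $|\nabla V|^2/V$. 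Passing to self-similar variables $\Phi(y,s)=(T-t)U$, $\Psi(y,s)=(T-t)V$ with $y=(x-a)/\sqrt{T-t}$, $s=-\ln(T-t)$, the candidate profile is $\big(\Phi^*(y/\sqrt s),\Psi^*(y/\sqrt s)\big)$ with $\Phi^*,\Psi^*$ as in \eqref{def:PhiPsistarpro}; the profile shape $(1+b|z|^2)^{-1}$, the value $b=\tfrac1{2(\mu+1)}$, and the $1/\sqrt s$ rate of \eqref{eq:asyTh1} are forced by matching the formal inner expansion at order $1/s$, in the spirit of the scalar case. Writing $\Phi=\Phi^*(y/\sqrt s)+w_1$, $\Psi=\Psi^*(y/\sqrt s)+w_2$, the perturbation $W=(w_1,w_2)$ solves $\partial_s W=\Lc W+B(W)+R(\cdot,s)$, where $B$ collects the quadratic and gradient nonlinearities, $R$ is the error generated by the profile (of size $O(s^{-2})$ in the blow-up zone), and
\[
\Lc=\begin{pmatrix}\Delta-\tfrac12\,y\cdot\nabla & q/p\\ p/q & \mu\Delta-\tfrac12\,y\cdot\nabla\end{pmatrix}+P(y,s),
\]
with $P(y,s)\to 0$ as $s\to\infty$ in the relevant norm.

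\emph{Spectral analysis and the three-fold shrinking set.} Since $\mu$ need not be $1$, $\Lc$ is neither self-adjoint nor diagonal, so an extra spectral step is needed. The constant coupling matrix $\left(\begin{smallmatrix}0&q/p\\ p/q&0\end{smallmatrix}\right)$ has eigenvalues $\pm1$ with eigenvectors $(q,p)$ and $(q,-p)$; the $(q,-p)$-direction is uniformly contracting, while along $(q,p)$ the operator behaves like the scalar $\Delta-\tfrac12 y\cdot\nabla+1$, whose eigenvalues $1-\tfrac{|m|}2$ ($m\in\mathbb N^N$) give two positive ones, one zero, and a negative remainder. After the additional work needed to handle the $(\mu-1)\Delta$ mismatch between the two components, the linearised flow thus has an $(N+1)$-dimensional unstable part (Hermite modes of degrees $0$ and $1$), a neutral part (degree $2$), and an infinite-dimensional uniformly stable complement. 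I would then introduce a \emph{three-fold shrinking set} $\Sc(s)$ (see Definition~\ref{def:St} below): in the blow-up zone $|y|\le K\sqrt s$, $O(s^{-2})$ bounds on the degree-$0$, degree-$1$ and degree-$\ge3$ Hermite parts of $W$ and an $O(s^{-2}\ln s)$ bound on the degree-$2$ part; in the intermediate zone $K\sqrt s\le|y|\le\ep_0 e^{s/2}$, two-sided (upper \emph{and} lower) control of the ratios $\Phi/\Phi^*(y/\sqrt s)$, $\Psi/\Psi^*(y/\sqrt s)$ and of their gradients; and in the regular zone $|y|\ge\ep_0 e^{s/2}$, the boundedness inherited from initial data in $\mathcal H_a$ (see \eqref{def:Ha}). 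The two-sided control in the intermediate zone is exactly what the terms $|\nabla U|^2/U$, $|\nabla V|^2/V$ demand where $U,V$ are small, and establishing it is the principal departure from the scalar case; this is the step I expect to be the main obstacle.

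\emph{Finite-dimensional reduction and the topological argument.} I would prescribe initial data at $s_0=-\ln T$ (with $T>0$ small, so $s_0$ large) of the form profile plus a small perturbation supported on the $(N+1)$-dimensional unstable subspace, parametrised by $(d_0,d_1)\in\Rb\times\RN$; one checks that it lies in $\Sc(s_0)$ and that on the boundary of the parameter domain the unstable part of $W$ points strictly outward. The analytic heart is then an \emph{a priori / reduction estimate}: using Duhamel's formula for the semigroup of $\Lc$ together with careful bounds on $B(W)$ and $R$ in each of the three zones, one shows that as long as $W(s)\in\Sc(s)$, every component of $W$ \emph{except} the $(N+1)$-dimensional unstable part obeys strictly better estimates than those defining $\Sc(s)$; hence $W$ can leave $\Sc(s)$ only through its unstable part reaching the boundary, and then transversally. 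If that happened for every $(d_0,d_1)$ in the parameter ball, the exit time would depend continuously on $(d_0,d_1)$, and the rescaled value of the unstable part of $W$ at that time would give a continuous retraction of the closed ball onto its boundary, contradicting Brouwer's theorem (index theory). Hence there exists $(d_0,d_1)$ for which $W(s)\in\Sc(s)$ for all $s\ge s_0$.

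\emph{Proof of $(i)$--$(iii)$.} Reverting the changes of variables yields a solution $(u,v)$ of \eqref{PS} on $\RN\times[0,T)$. The blow-up-zone estimates of $\Sc(s)$ translate into $\|(T-t)e^{qu}-\Phi^*(z)\|_{L^\infty(\RN)}+\|(T-t)e^{pv}-\Psi^*(z)\|_{L^\infty(\RN)}=O\big(|\ln(T-t)|^{-1/2}\big)$, which is \eqref{eq:asyTh1}; in particular $e^{qu(a,t)},e^{pv(a,t)}\to+\infty$ as $t\to T$, while the intermediate- and regular-zone estimates keep $e^{qu}$ and $e^{pv}$ bounded on every compact subset of $(\RN\setminus\{a\})\times[0,T)$, so $a$ is the unique blow-up point; this gives $(i)$ and $(ii)$. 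For $(iii)$ I would run the classical final-profile argument: for $x$ close to $a$ choose $t_0(x)<T$ with $|x-a|\sim K_0\sqrt{(T-t_0)|\ln(T-t_0)|}$, apply the uniform estimates on $[t_0(x),T)$, let $t\to T$, and match the inner profile with the regular region; this produces $u(x,t)\to u^*(x)$, $v(x,t)\to v^*(x)$ in $\Cc^2$ away from $a$, with the stated logarithmic behaviour as $|x-a|\to0$.
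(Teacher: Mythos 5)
Your overall strategy matches the paper's: the same change of unknowns $U=e^{qu}$, $V=e^{pv}$, the same similarity variables, a spectral analysis of the non-self-adjoint linearised operator, a three-zone shrinking set, an $(N+1)$-parameter family of initial data and a Brouwer/index argument, followed by Merle's final-profile argument for part $(iii)$. The scheme is the right one.

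There is, however, one concrete gap. You propose to linearise around $\bigl(\Phi^*(y/\sqrt s),\Psi^*(y/\sqrt s)\bigr)$ directly and assert that the resulting error $R$ is of size $O(s^{-2})$ in the blow-up zone. This is false. Plugging $\Phi^*(y/\sqrt s)$ into the self-similar system and using the profile ODE $-\tfrac z2\Phi^{*\prime}-\Phi^*+q\Phi^*\Psi^*=0$ leaves the residual
\[
R_1(y,s)=\frac1s\left[\frac z2\,\Phi^{*\prime}(z)+\Phi^{*\prime\prime}(z)-\frac{(\Phi^{*\prime}(z))^2}{\Phi^*(z)}\right],\qquad z=\frac{y}{\sqrt s},
\]
which at $z=0$ equals $-\,2b/(ps)\neq0$. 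So the error is genuinely $O(1/s)$, not $O(1/s^2)$, and in particular has a nontrivial $1/s$ contribution on the zero-degree mode. This is precisely the difficulty the paper points out: one must instead linearise around the corrected profile
\[
\phi(y,s)=\Phi^*\!\left(\tfrac{y}{\sqrt s}\right)+\frac{\mu}{p(1+\mu)s},\qquad \psi(y,s)=\Psi^*\!\left(\tfrac{y}{\sqrt s}\right)+\frac{1}{q(1+\mu)s},
\]
whose $1/s$ correction is chosen so that the leading-order error cancels (Lemma~\ref{lemm:expandR1R2} then shows that $R_i$ begins at order $1/s^2$). Without this correction, the ODE for the neutral mode $\theta_2$ acquires an $O(1/s^2)$ source term instead of the $O(1/s^3)$ in Proposition~\ref{prop:dyn}$(ii)$, which destroys the $|\theta_2|=O(\ln s/s^2)$ bound and hence the $1/\sqrt s$ convergence rate in \eqref{eq:asyTh1}. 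The rest of your proposal would go through once the linearisation point is replaced by $(\phi,\psi)$. A more minor difference: in the intermediate zone the paper controls the rescaled functions $(\tilde u,\tilde v)$ against the ODE solution $(\hat u,\hat v)$ (Definition~\ref{def:St}$(ii)$), rather than controlling ratios $\Phi/\Phi^*$, $\Psi/\Psi^*$; the two-sided control you identify as essential is achieved through the lower bound implicit in $|\tilde u-\hat u|\le\delta_0$.
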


\bigskip

\begin{remark} The blowup profile \eqref{def:PhiPsistarpro} is formally derived through a matching asymptotic expansion in Section \ref{sec:formal} below. We would like to emphasis that the derivation of the blowup profile \eqref{def:PhiPsistarpro} is not obvious due to numerous parameters of the problem, in particular in precising the value $b = \frac{1}{2(\mu + 1)}$ which is crucial in various algebraic identities in our analysis.
\end{remark}
\begin{remark} When $p = q = \mu = 1$ and $v = u$, system \eqref{PS} reduces to the single equation 
\begin{equation}\label{eq:eu}
\partial_t u = \Delta u + e^u.
\end{equation}
Theorem \ref{theo1} obviously yields the existence of finite time blowup solution to equation \eqref{eq:eu} according the dynamic  
\begin{equation}\label{eq:usim}
u(x,t) \sim -\ln(T-t) - \ln\left(1 + \frac{|x|^2}{4(T-t)|\ln(T-t)|}\right) \quad \text{as} \quad t \to T,\end{equation} 
which covers the results obtained by Bressan \cite{Breiumj90, Brejde92} and the authors \cite{GNZjde17}. It is worth remarking that the asymptotic behavior \eqref{eq:usim} is different from the one obtained by Pulkkinen \cite{PULmmas11} (see also Fila-Pulkkinen \cite{FPtmj08}) where the authors concern non constant self-similar ones for a class of radially symmetric $L^1$-solutions.
\end{remark}

The proof of Theorem \ref{theo1} follows the strategy developed in \cite{MZdm97} and \cite{BKnon94} for the standard semilinear heat equation \eqref{eq:ScalarE}. This method has been successfully implemented in \cite{GNZpre16c} for constructing blowup solutions for system \eqref{PS} coupled with the nonlinearity \eqref{def:fg2}. One may think that the implementation in \cite{GNZpre16c} should work the same for system \eqref{PS} coupled with \eqref{def:fg}, perhaps with some technical complications. This is not the case, because the method we present here is not based on a simple perturbation of system \eqref{PS}-\eqref{def:fg2} treated in \cite{GNZpre16c} as explained shortly. 

It is worth mentioning that the method of \cite{MZdm97} has been also proved to be successful for constructing a solution to some partial differential equation with a prescribed behavior.  It was the case of the complex Ginzburg-Landau equation with no gradient structure by Masmoudi and Zaag \cite{MZjfa08} (see also the earlier work by Zaag \cite{ZAAihn98}) and Nouaili and Zaag \cite{NoZpre17}; by Nguyen and Zaag \cite{NZsns16}, \cite{NZens16} for a logarithmically perturbed nonlinear heat equation and for a refined blowup profile for equation \eqref{eq:ScalarE}, or by Nouaili and Zaag \cite{NZcpde15} for a non-variational complex-valued semilinear heat equation. It was also the case of a non-scaling invariant semilinear heat equation with a general nonlinearity treated in \cite{DNZpre17}, and the energy supercritical harmonic heat flow and wave maps by Ghoul, Ibrahim and Nguyen \cite{IGN16, GINpre17}.  Surprisingly enough, this kind of method is also applicable for the construction of multi-solitons for the semilinear wave equation in one space dimension by C\^ote and Zaag \cite{CZcpam13}, where the authors first show that controlling the similarity variables version around some expected behavior reduces to the control of a finite number of unstable directions, then use the same topological argument to solve the finite dimensional problem.

\bigskip

As in \cite{MZdm97} and \cite{GNZpre16c} (see also \cite{TZpre15}, \cite{MZjfa08}), it is possible to make the interpretation of the finite-dimensional variable in terms of the blowup time and the blowup point. This allows us to derive the stability of the profile $(\Phi^*, \Psi^*)$ in Theorem \ref{theo1} with respect to perturbations of the initial data. More precisely, we have the following:
\begin{theorem}[Stability of the blowup profile \eqref{eq:asyTh1}] \label{theo:2}
Let us denote by $(\hat{u}, \hat{v})(x,t)$ the solution constructed in Theorem \ref{theo1} and by $\hat{T}$ its blowup time. Then, there exists a neighborhood $\mathcal{V}_0$ of $(\hat{u}, \hat v)(x,0)$ in $\mathcal{H}_{a}$ defined in \eqref{def:Ha} such that for any $(u_0, v_0) \in \mathcal{V}_0$, system \eqref{PS} has a unique solution $(u,v)(x, t)$ with initial data $(u_0, v_0)$, and $(u,v)(x,t)$ blows up in finite time $T(u_0,v_0)$ at point $a(u_0, v_0)$. Moreover, estimates given in Theorem \ref{theo1} are satisfied by $(u,v)(x-a,t)$ and
$$T(u_0, v_0) \to \hat{T}, \quad a(u_0, v_0) \to 0 \quad \text{as $(u_0,v_0) \to (\hat{u}_0, \hat v_0)$ in $\mathcal{H}_{a}$}.$$
\end{theorem}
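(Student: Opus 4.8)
\textbf{Proof proposal for Theorem~\ref{theo:2}.}

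The plan is to deduce the stability statement from the \emph{construction} carried out for Theorem~\ref{theo1}, by exploiting the geometric meaning of the finite-dimensional parameters of the topological shooting argument: those parameters can be read off as a perturbation of the blowup time $T$ and the blowup point $a$. Recall that the proof of Theorem~\ref{theo1} produces, after passing to $U=e^{qu}$, $V=e^{pv}$ and to self-similar variables $z=(x-a)/\sqrt{(T-t)|\ln(T-t)|}$, $s=-\ln(T-t)$, a representation $W=(W_1,W_2)$ of the solution which is shown to remain trapped in the shrinking set $\mathcal{S}(t)$ of Definition~\ref{def:St} for all $s\ge s_0$, and being trapped in $\mathcal{S}(t)$ is exactly what yields conclusions $(i)$–$(iii)$ (in particular \eqref{eq:asyTh1} and the profile \eqref{def:PhiPsistarpro}). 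Trapping is obtained by controlling the finitely many ``unstable'' (nonnegative-eigenvalue) coordinates of $W(s)$ through a choice of initial data inside a small finite-dimensional set, and then solving the resulting finite-dimensional problem by Brouwer's lemma and index/degree theory. For the stability statement I would instead keep the \emph{initial data fixed} at an arbitrary $(u_0,v_0)\in\mathcal{V}_0$ and use the pair $(T,a)\in\Rb\times\RN$ itself as the shooting parameter, its dimension matching that of the finite-dimensional problem.

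First, given $(u_0,v_0)\in\mathcal{H}_a$ close to $(\hat u_0,\hat v_0)$, I would solve the Cauchy problem locally (possible in $\mathcal{H}_a$, as noted after \eqref{def:Ha}), obtaining a maximal solution $(u,v)$; uniqueness inside $\mathcal{V}_0$ is a consequence of this local well-posedness. For each candidate $(T,a)$ with $|T-\hat T|+|a|$ small, form $W_{T,a}(z,s)$ as above, starting from a fixed self-similar time $s_0$, decompose $W_{T,a}(s)=W^{\mathrm{un}}_{T,a}(s)\oplus W^{\mathrm{st}}_{T,a}(s)$ into the span of the nonnegative modes and its complement, and invoke the a~priori estimates already established for Theorem~\ref{theo1} — the \emph{reduction to a finite-dimensional problem}, which uses only that $W_{T,a}(s_0)$ lies in a fixed small set and that its stable part starts suitably small — to conclude that, as long as $W_{T,a}(s)\in\mathcal{S}(t)$, the stable part strictly improves, so that the only way to leave $\mathcal{S}(t)$ is a transversal (outgoing) crossing in the unstable coordinates. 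This is the mechanism that, by a degree argument, forces the existence of $(T,a)$ for which $W_{T,a}(s)$ never leaves $\mathcal{S}(t)$.

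Second — and this is the crux — I would prove that the map $(T,a)\mapsto W^{\mathrm{un}}_{T,a}(s_0)$ is, up to a remainder of size $O\big(\|(u_0,v_0)-(\hat u_0,\hat v_0)\|_{L^\infty\times L^\infty}\big)$, an affine isomorphism onto a neighborhood of the origin in the finite-dimensional space, with uniformly bounded inverse. This uses the explicit dependence of $W_{T,a}$ on $(T,a)$: varying $T$ shifts the self-similar time and the scaling, producing to leading order a displacement along the top eigenfunction, while varying $a$ produces a displacement along the gradient eigenfunctions; the corresponding Jacobian is read off from the spectral projections of the linearized operator of system \eqref{PS}, already diagonalized for the proof of Theorem~\ref{theo1}. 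Since for $(u_0,v_0)=(\hat u_0,\hat v_0)$ the choice $(T,a)=(\hat T,0)$ is known to produce a trapped trajectory, a Brouwer/index-theory argument identical to the one in the proof of Theorem~\ref{theo1}, now run in the parameters $(T-\hat T,a)$ with the remainder treated as a harmless perturbation (small because $(u_0,v_0)\in\mathcal{V}_0$), yields a choice $(T(u_0,v_0),a(u_0,v_0))$, itself $O\big(\|(u_0,v_0)-(\hat u_0,\hat v_0)\|\big)$-close to $(\hat T,0)$, for which $W_{T,a}(s)\in\mathcal{S}(t)$ for all $s\ge s_0$. Trapping then gives that $(u,v)(\cdot-a,\cdot)$ satisfies $(i)$–$(iii)$ of Theorem~\ref{theo1}, and the closeness of $(T(u_0,v_0),a(u_0,v_0))$ to $(\hat T,0)$ gives the claimed convergences as $(u_0,v_0)\to(\hat u_0,\hat v_0)$ in $\mathcal{H}_a$.

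I expect the main obstacle to be this second step: establishing the non-degeneracy of the dependence of the self-similar representation on $(T,a)$, \emph{uniformly} over $\mathcal{V}_0$, in the system setting where the linearized operator is neither self-adjoint nor diagonal — one cannot simply quote the scalar computation, and must track how the coupling and the two diffusivities $1$ and $\mu$ enter the eigenprojections and the leading-order displacement formulas. A secondary but nontrivial point is that the a~priori estimates of Theorem~\ref{theo1}, and especially the delicate two-sided control in the intermediate zone (the novelty of the paper, involving the terms $|\nabla U|^2/U$ and $|\nabla V|^2/V$), must be re-derived for the non-prepared, merely $\mathcal{V}_0$-close data $(u_0,v_0)$ rather than the carefully chosen data of Theorem~\ref{theo1}; this is where most of the (routine but lengthy) verification lies.
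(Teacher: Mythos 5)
Your proposal faithfully reconstructs the Merle--Zaag stability mechanism that the paper itself invokes without carrying out in detail (the authors explicitly refer the reader to \cite{MZdm97} and \cite{GNZpre16c}, giving only the heuristic remark about interpreting the finite-dimensional parameters through the blowup time $T$ and the blowup point $a$), so in spirit it is the same argument. One small imprecision: the shooting should be performed only on the two \emph{positive} modes $\theta_0,\theta_1$ (dimension $1+N$, which indeed matches $(T,a)$ as you note), not on the span of all ``nonnegative modes,'' because the zero mode $\theta_2$ is already self-controlled by the ODE $\theta_2' = -\tfrac{2}{s}\theta_2 + \Oc(s^{-3})$ once one linearizes around the $s^{-1}$-corrected profile $(\phi,\psi)$ of \eqref{def:phipsi}; this is what the paper's remark about ``a suitable use of the scaling dilation invariance'' is alluding to, and it is why Proposition \ref{prop:redu} reduces the problem to $(\theta_0,\theta_1)$ only.
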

\begin{remark} The basic idea behind the stability proof is roughly understood as follows: The linearized operator $\Hc + \Mc$ (see \eqref{def:HLM} for its definition) has two positive eigenvalues $\lambda_0 = 1, \lambda_1 = \frac{1}{2}$, a zero eigenvalue $\lambda_2 = 0$, then an infinitely discrete negative spectrum (see Lemma \ref{lemm:diagonal} below). As usual in the analysis of stability of blowup problems, the component corresponding to $\lambda_0 = 1$ has the exponential growth $e^{s}$, which can be eliminated by means of change of the blowup time; and the component corresponding to $\lambda_1 = \frac{1}{2}$ has the growth $e^{s/2}$ can be eliminated by means of a shifting of the blowup point. As for the neutral, non exponential growth corresponding to $\lambda_2 = 0$, it can be also eliminated as well after a suitable use of the scaling dilation invariance associated to the problem. Hence, the contribution associated to these three modes of the linearized problem can 
 be assumed to be zero. Since the remaining modes  of the linearized problem corresponding to negative spectrum decay exponentially, one derives the stable asymptotic behavior of the corresponding blowup  mechanism. 

We will not give the proof of Theorem \ref{theo:2} because the stability result follows from the reduction to a finite dimensional case as in \cite{MZdm97} (see Theorem 2 and its proof in Section 4)  with the same argument. Here, we only prove the existence result (Theorem \ref{theo1}) and kindly refer the reader to \cite{MZdm97} and \cite{GNZpre16c} for a similar proof of the stability. 
\end{remark}

\subsection{Strategy of the proof of Theorem \ref{theo1}.}
Let us explain in the following the main steps of the proof of Theorem \ref{theo1}. For clearness, we divide our explanation in 3 paragraphs below:  \\
- The linearized problem;\\
- The properties of the linearized operator;\\
- The decomposition of the solution and the control of the nonlinear gradient terms.\\

\noindent \textit{(i) The linearized problem.}  Let us start with the change of variables
\begin{equation}\label{def:simvariables_i}
\arraycolsep=1.6pt\def\arraystretch{2}
\left\{\begin{array}{l} 
\Phi(y,s) = (T-t)e^{q u(x,t)}, \quad \Psi(y,s) = (T-t)e^{pv(x,t)},\\
\text{where} \quad y = \dfrac{x}{\sqrt {T-t}}, \quad s = -\ln(T-t),
\end{array}\right.
\end{equation}
which transforms system \eqref{PS} to 
\begin{equation} \label{sys:PhiPsi_i}
\arraycolsep=1.6pt\def\arraystretch{2}
\left\{\begin{array}{l}
\partial_s \Phi = \Delta \Phi - \dfrac{1}{2}y \cdot \nabla \Phi - \Phi  + q \Phi \Psi- \dfrac{|\nabla \Phi|^2}{\Phi},\\
\partial_s \Psi = \mu\Delta \Psi - \dfrac{1}{2}y \cdot \nabla \Psi - \Psi  + p \Phi \Psi- \mu\dfrac{|\nabla \Psi|^2}{\Psi},\end{array}\right.
\end{equation}
(in comparison with the work of \cite{GNZpre16c} treated for the case where system \eqref{PS} is considered with the nonlinearity \eqref{def:fg2}, we have extra nonlinear gradient terms in \eqref{sys:PhiPsi_i}, which come from the nonlinear transformation \eqref{def:simvariables_i}; the nonlinear gradient terms  
are the main sources causing serious difficulties in the analysis). The problem then reduces to construct for \eqref{sys:PhiPsi_i} a solution $(\Phi, \Psi)$ defined for all $(y,s) \in \Rb^N \times [s_0, +\infty)$ such that 
\begin{equation}\label{eq:asymPro_i}
\left\|\Phi(y,s) - \Phi^*\left(\frac{y}{\sqrt{s}}\right) \right\|_{L^\infty(\Rb^N)} + \left\|\Psi(y,s) - \Psi^*\left(\frac{y}{\sqrt{s}}\right) \right\|_{L^\infty(\Rb^N)} \longrightarrow 0,  
\end{equation}
as $s \to +\infty$. One may think that it is natural to linearize system \eqref{sys:PhiPsi_i} around $(\Phi^*, \Psi^*)$, however, the error generated by this approximate profile is too large to allow us to close estimates in our analysis. Following the formal approach given in Section \ref{sec:formal} below, the good approximate profile is given by
\begin{equation}\label{def:phipsi_i}
\phi(y,s) = \Phi^*\left(\frac{y}{\sqrt{s}}\right) + \frac{\mu}{p(1 + \mu)s} \quad \text{and} \quad \psi(y,s) = \Psi^*\left(\frac{y}{\sqrt{s}}\right) + \frac{1}{q(1 + \mu)s},
\end{equation}
where the term of order $\frac{1}{s}$ appears as a corrective term to minimize the generated error. We then introduce 
\begin{equation}
\Lambda = \Phi - \phi \quad \text{and} \quad \Upsilon = \Psi - \psi,
\end{equation}
leading to the system 
\begin{equation}\label{sys:LUp_i}
\partial_s\binom{\Lambda}{\Upsilon} = \Big(\Hc + \Mc + V(y,s)\Big) \binom{\Lambda}{\Upsilon} + \binom{q}{p}\Lambda \Upsilon + \binom{R_1}{R_2} + \binom{G_1}{G_2}, 
\end{equation}
where 
\begin{equation}\label{def:HLM}
\Hc = \left(\begin{matrix}
\Lc_1 & 0\\ 0 &\Lc_\mu
\end{matrix} \right), \quad \Mc = \left(\begin{matrix}
0 & \frac{q}{p}\\ \frac pq &0
\end{matrix} \right), \quad \Lc_\eta = \eta\Delta - \frac{1}{2}y \cdot \nabla, 
\end{equation}
\begin{equation*}
V(y,s) = \begin{pmatrix} 
q\psi - 1 & \quad q\big(\phi - 1/p\big)\\ p\big(\psi - 1/q\big) & \quad p\phi - 1
\end{pmatrix},
\end{equation*}
the term $\binom{R_1}{R_2}$ is the generated error which is uniformly bounded by $\frac{C}{s}$ (see definition \eqref{def:Rys} and Lemma \ref{lemm:expandR1R2} below), the nonlinear gradient term $\binom{G_1}{G_2}$ is built to be quadratic (see definition \eqref{def:Gys} and Lemma \ref{lemm:expG1G2} below). \\

\noindent \textit{(ii) The properties of the linearized operator.} As we will see in Section \ref{sec:formu} below, the key step towards Theorem \ref{theo1} is the construction of a solution $(\Lambda, \Upsilon)$ for system \eqref{sys:LUp_i} defined for all $(y,s) \in \Rb^N \times [s_0, +\infty)$ such that 
$$\|\Lambda(s)\|_{L^\infty(\Rb^N)} + \|\Upsilon(s)\|_{L^\infty(\Rb^N)} \to 0 \quad \text{as} \quad s \to +\infty. $$
In view of system \eqref{sys:LUp_i}, we see that the nonlinear terms and the generated error are small and can be negligible in comparison with the linear term. Therefore, the linear part will play an important role in the dynamic of the solution. As we show in Lemma \ref{lemm:diagonal} below, the linearized operator $\Hc + \Mc$ can be diagonalizable and its spectrum is explicitly given by 
$$\text{spec}\Big( \Hc + \Mc \Big) = \left\{\pm 1 - \frac{n}{2}, n \in \mathbb{N}\right\}.$$ 
Depending on the asymptotic behavior of the potential term $V$, the full linear part has two fundamental properties:\\
- For $|y| \leq K_0 \sqrt{s}$ for some $K_0$ large, the potential term is considered as a perturbation of the effect of $\Hc + \Mc$.\\ 
- For $|y| \geq K_0\sqrt s$, the linear operator behaves as an operator with fully negative spectrum, which gives the decay of the solution in this region.\\

\textit{(iii) The decomposition of the solution and the control of the nonlinear gradient terms.} While the control of the flow in the region $|y| \geq K_0\sqrt{s}$ is easy, it is not the case in the inner region, i.e. when $|y| \leq K_0 \sqrt{s}$. Moreover, the nonlinear gradient terms appearing in \eqref{sys:LUp_i} cause serious difficulties in the analysis, and crucial modifications are needed in comparison with the proof in \cite{MZdm97} and \cite{GNZpre16c}. The essential idea in our approach is that we introduce estimates in three regions in different variable scales, inspired by  the works of \cite{MZnon97} and \cite{GNZjde17}, as follows:\\

- In the \textit{blowup region} $\Dc_1 = \{|x| \leq K_0 \sqrt{(T-t)|\ln(T-t)|}\}$, we carry on  our analysis in the similarity variables setting. In particular, the solution $(\Lambda, \Upsilon)$ is decomposed according to the eigenfunctions of $\Hc + \Mc$,
$$\binom{\Lambda}{\Upsilon} = \sum_{n = 0}^2 \theta_n \binom{f_n}{g_n} + \binom{\Lambda_-}{\Upsilon_-},$$
where $\binom{f_n}{g_n}$ is the eigenfunction of $\Hc + \Mc$ corresponding to the eigenvalue $\lambda_n = 1  -\frac{n}{2}$; and $\binom{\Lambda_-}{\Upsilon_-}$ is the projection of $\binom{\Lambda}{\Upsilon}$ on the subspace of $\Hc + \Mc$ where the spectrum of $\Hc + \Mc$ is strictly negative. Since the spectrum of the linear part of system satisfied by $(\Lambda_-, \Upsilon_-)$ (see \eqref{eq:Lamneg} below) is negative, it is controllable to zero.\\
The control of $\theta_2$ is delicate. In fact, we need to refine the asymptotic behavior of the potential term $V(y,s)$ and the nonlinear gradient term $\binom{G_1}{G_2}$ in \eqref{sys:LUp_i} to find that 
$$\theta_2' = -\frac{2}{s}\theta_2 + \Oc\left(\frac{1}{s^3}\right),$$
which shows a negative spectrum (in the slow variable $\tau = \ln s$), hence, it is controllable to zero as well. Here, we want to remark that the factor $-\frac{2}{s}$  and the error $\frac{1}{s^3}$ are derived thanks to the linearization of system \eqref{sys:PhiPsi_i} around the approximate profile $(\phi, \psi)$ defined in \eqref{def:phipsi_i} with the precise value of the constant $b$ introduced in Theorem \ref{theo1}.\\
As for the control of the positive modes $\theta_0$ and $\theta_1$ (\textit{reduction to a finite dimensional problem}), we use a basic topological argument to show the existence of initial data $(u_0, v_0)$ depending on $(N+1)$ parameters (see definition \eqref{def:uvt0} below) such that the corresponding modes $\theta_0$ and $\theta_1$ are controlled to zero. \\

- In the \textit{intermediate region} $\Dc_2 = \{x \vert \; K_0/4\sqrt{(T-t)|\ln(T-t)|} \leq |x| \leq \epsilon_0 \}$, we use classical parabolic regularity estimates on $(\tilde{u}, \tilde{v})$, a rescaled version of $(u,v)$ (see definition \ref{def:uvtilde} below). Roughly speaking, we show that in this region the solution behaves like the solution of the associated ordinary differential system to \eqref{PS}. The analysis in this region also gives the final blowup profile as described in part $(iii)$ of Theorem \ref{theo1}.

- In the \textit{regular region} $\Dc_3 = \{|x| \geq \epsilon_0/4\}$, we  directly control the solution thanks to the local in time well-posedness of the Cauchy problem for system \eqref{PS}.\\

\noindent We would like to remark that in \cite{MZdm97} and \cite{GNZpre16c}, the authors introduce the estimates in the region $|y| \leq K_0\sqrt s$ and the regular region $|y| \geq K_0\sqrt s$. However, the estimates in the region $|y| \geq K_0\sqrt s$ imply the smallness of  $(\Lambda, \Upsilon)$ only, and do not allow any control of the nonlinear gradient terms in this region. In other words, the analysis based on the method of \cite{MZdm97} and \cite{GNZpre16c}, that is to estimate the solution in the $z = \frac{y}{\sqrt s}$ variable is not sufficient and must be improved. By introducing additional estimates in the regions $\Dc_2$ and $\Dc_3$, we are able to achieve the full control of the nonlinear gradient term, then, complete the proof of Theorem \ref{theo1}. \\

\medskip

\noindent The rest of the paper is organized as follows:\\
- In section \ref{sec:forap}, we first derive the basic properties of the linearized operator $\Hc + \Mc$, then, we give a formal explanation on the derivation of the blowup profile $(\Phi^*, \Psi^*)$ by means of the spectral analysis. This formal approach also gives an approximate profile to be linearized around. \\
- In Section \ref{sec:Existence}, we give the main arguments of the proof of Theorem \ref{theo1} and postpone most of technicalities to next sections. Interested readers can find in Subsection \ref{sec:St} a particular definition of a shrinking set to trap the solution of \eqref{PS} according to the blowup regime described in Theorem \ref{theo1}. They also find a basic topological argument for the finite dimensional problem at page \pageref{step2:topoarg}. \\
- In Section \ref{sec:reduction}, we give the proof of Proposition \ref{prop:redu}, which gives the reduction of the problem to a finite dimensional one. This is the central part in the proof of Theorem \ref{theo1}.

\section{A formal approach through a spectral analysis of the linearized operator.}\label{sec:forap}
In this section we follow the idea of Bricmont and Kupiaien \cite{BKnon94} treated for the semilinear heat equation in order to formally derive the blowup profile described in \eqref{eq:asyTh1}. The argument is mainly based on a spectral analysis of the linearized operator and a matching asymptotic expansion.

\subsection{Spectral properties of the linearized operator.} \label{sec:SpecHM}

In this part we recall some well-known properties of the linear operator $\Lc_\eta$ from which we derive spectral properties of the linear operator $\Hc + \Mc$ introduced in \eqref{def:HLM}.  

\noindent $\bullet$ \textbf{Spectral properties of $\Lc_\eta$:} Let $\eta > 0$, we consider the weighted space $L^2_{\rho_\eta}(\RN, \Rb)$ the set of all $f \in L^2_{loc}(\RN, \Rb)$ satisfying 
$$ \|f\|^2_{\rho_\eta} = \big<f,f\big>_{\rho_\eta} < +\infty,$$
where the inner product is defined by 
\begin{equation}\label{def:normL2rho}
\big<f,g\big>_{\rho_\eta} =  \int_{\RN} f(y)g(y)\rho_\eta(y)dy \quad \text{with} \quad \rho_\eta(y) = \frac{1}{(4\pi \eta)^{N/2}}e^{-\frac{|y|^2}{4\eta^2}}.
\end{equation}
Note that the linear operator $\Lc_\eta$ can be written in the divergence form
$$\Lc_\eta v =  \frac{\eta}{\rho_\eta}\;\text{div}\,\Big(\rho_\eta\nabla v\Big),$$
which shows that $\Lc_\eta$ is self-adjoint with respect to the weight $\rho_\eta$, i.e.
\begin{equation}\label{eq:Ladjoint}
\forall v, w \in L^2_{\rho_\eta}, \quad \int_{\RN}v\Lc_\eta w \rho_\eta dy = \int_{\RN}w \Lc_\eta v \rho_\eta dy.
\end{equation}
For each $\alpha = (\alpha_1, \cdots, \alpha_N)\in \mathbb{N}^N$, we set
\begin{equation*}
\tilde{h}_\alpha(y) = c_\alpha\prod_{i = 1}^N H_{\alpha_i}\left(\frac{y_i}{2\sqrt{\eta}}\right),
\end{equation*}
where $H_n$ is the one dimensional Hermite polynomial defined by
\begin{equation}\label{def:Hermite}
H_n(x) = (-1)^ne^{x^2}\frac{d^n}{dx^n}(e^{-x^2}),
\end{equation}
and $c_\alpha \in \Rb$ is the normalization constant chosen so that the term of highest degree in $\tilde{h}_\alpha$ is $\prod_{i = 1}^Ny_i^{\alpha_i}$. In the one dimensional case, we have
\begin{equation}\label{eq:hntildeN1}
\tilde{h}_n(y) = \sum_{j = 0}^{\left[\frac{n}{2}\right]}c_{n,j}\eta^j y^{n - 2j} \quad \text{with} \quad c_{n,j} = (-1)^j\frac{n!}{(n - 2j)! j!}.
\end{equation}
The first four terms are explicitly given by
$$\tilde h_0 = 1, \quad \tilde h_1 = y, \quad \tilde{h}_2 = y^2 - 2\eta,$$
$$\tilde{h}_3 = y^3 - 6\eta y, \quad \tilde{h}_4 = y^4 - 12\eta y^2 + 12\eta^2.$$

The family of eigenfunctions of $\Lc_\eta$ generates an orthogonal basis in $L^2_{\rho_\eta}(\RN, \Rb)$, i.e. for any different $\alpha$ and $\beta$ in $\mathbb{N}^N$, 
\begin{equation*}
\Lc_\eta \tilde{h}_\alpha = -\frac{|\alpha|}{2}\tilde{h}_\alpha, \quad |\alpha| = \alpha_1+\cdots + \alpha_N,
\end{equation*}
\begin{equation}\label{eq:orthohnhm}
\int_{\RN} \tilde{h}_\alpha(y) \tilde{h}_\beta(y)\rho_\eta(y)dy = 0,
\end{equation}
and that for any $f$ in $L^2_{\rho_\eta}(\RN, \Rb)$, one can decompose
$$f = \sum_{\alpha \in \mathbb{N}^N}\big<f, \tilde{h}_\alpha\big>_{\rho_\eta}\tilde{h}_\alpha = \sum_{\alpha \in \mathbb{N}^N} f_\alpha \tilde{h}_\alpha.$$

\begin{remark} \label{rema:orth} For any polynomial $P_n(y)$ of degree $n$, we have by \eqref{eq:orthohnhm},
$$\int_{\RN}P_n(y) \tilde{h}_\alpha(y) \rho_\eta(y)dy = 0 \quad \text{if}\;\; |\alpha| \geq  n + 1.$$
\end{remark}

\medskip

\noindent $\bullet$ \textbf{Spectral properties of $\Hc$}: Let us consider the functional space $L^2_{\rho_1}(\RN, \Rb) \times L^2_{\rho_\mu}(\RN, \Rb)$, which is the set of all $\binom{f}{g} \in L^2_{loc}(\RN, \Rb) \times L^2_{loc}(\RN,\Rb)$ such that 
$$\left<\binom{f}{g}, \binom{f}{g}\right> < +\infty,$$
where 
$$\left<\binom{f_1}{g_1}, \binom{f_2}{g_2}\right>:= \big<f_1, f_2 \big>_{\rho_1} + \big<g_1, g_2 \big>_{\rho_\mu}.$$
If we introduce for each $\alpha \in \mathbb{N}^N$,
\begin{equation}\label{def:hkalpha}
h_\alpha(y) = a_\alpha\prod_{i=1}^NH_{\alpha_i}\left(\frac{y_i}{\sqrt 2}\right) \quad \text{and} \quad \hat{h}_\alpha(y) = \hat{a}_\alpha \prod_{i=1}^NH_{\alpha_i}\left(\frac{y_i}{2\sqrt{\mu}}\right),
\end{equation}
where $H_n$ is defined by \eqref{def:Hermite}, and $a_\alpha$ and $\hat a_\alpha$ are constants chosen so that the terms of highest degree in $h_\alpha$ and $\hat{h}_\alpha$ is $\prod_{i = 1}^N y^{\alpha_i}$, then 
\begin{equation}\label{eq:spectrumH}
\Hc \binom{h_\alpha}{0} = -\frac{|\alpha|}{2}\binom{h_\alpha}{0} \quad \text{and} \quad \Hc \binom{0}{\hat h_\alpha} = -\frac{|\alpha|}{2}\binom{0}{\hat h_\alpha}.
\end{equation}
Moreover, for each $\binom{f}{g}$ in $L^2_{\rho_1}(\RN, \Rb) \times L^2_{\rho_\mu}(\RN, \Rb)$, we have the decomposition
\begin{align*}
\binom{f}{g}= \sum_{\alpha \in \mathbb{N}^N}\big<f, h_\alpha\big>_{\rho_1}\binom{h_\alpha}{0} + \big<g,\hat{h}_\alpha \big>_{\rho_\mu}\binom{0}{\hat{h}_\alpha}.
\end{align*}

\medskip

\noindent $\bullet$ \textbf{Spectral properties of $\Hc + \Mc$}: In this part we derive a basis where $\Hc + \Mc$ is diagonal. More precisely, we have the following lemma whose proof follows from an explicit computation.

\begin{lemma}[Diagonalization of $\Hc + \Mc$ in the one dimensional case] \label{lemm:diagonal}  For all $n \in \mathbb{N}$, there exist polynomials $f_n, g_n, \tilde{f}_n$ and $\tilde{g}_n$ of degree $n$ such that

\begin{equation}\label{eq:HMspec1}
\Big(\Hc+ \Mc\Big)\binom{f_n}{g_n} = \left(1 - \frac{n}{2}\right)\binom{f_n}{g_n}, 
\end{equation}
and 
\begin{equation}\label{eq:HMspectilde}
 \Big(\Hc+ \Mc\Big)\binom{\tilde{f}_n}{\tilde{g}_n} = -\left(1 + \frac{n}{2}\right)\binom{\tilde{f}_n}{\tilde{g}_n},
\end{equation}
where
\begin{equation}\label{def:fngn}
\binom{f_n}{g_n} =  \sum_{j = 0}^{\left[\frac{n}{2}\right]}d_{n, n- 2j} \binom{h_{n - 2j}}{0} + e_{n, n - 2j}\binom{0}{\hat{h}_{n - 2j}},
\end{equation}
\begin{equation}\label{def:fngntilde}
\binom{\tilde{f}_n}{\tilde{g}_n} = \sum_{j = 0}^{\left[\frac{n}{2}\right]}\tilde d_{n, n - 2j} \binom{h_{n - 2j}}{0} + \tilde e_{n, n - 2j}\binom{0}{\hat{h}_{n - 2j}},
\end{equation}
and the coefficients $d_{n, n-2j}, e_{n, n-2j}$, $\tilde{d}_{n, n-2j}$, $\tilde{e}_{n, n-2j}$ depend on the parameters $p, q$ and $\mu$. In particular, we have

\begin{equation}\label{eq:dnen2}
\binom{d_{n,n}}{e_{n,n}} = \binom{q}{p}, \quad \binom{d_{n,n - 2}}{e_{n,n - 2}} = n(n-1)(\mu-1)\binom{-q}{p}.
\end{equation}
and 
\begin{equation}\label{eq:dnen2tilde}
\binom{\tilde d_{n,n}}{\tilde e_{n,n}} = \binom{q}{-p}, \quad \binom{\tilde d_{n,n-2}}{\tilde e_{n,n-2}} = \frac 13 n(n - 1)(1 - \mu)\binom{q}{p}.
\end{equation}

\end{lemma}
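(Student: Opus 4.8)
The plan is to diagonalise the constant matrix $\Mc$ first, and then to compensate for the fact that, when $\mu\neq1$, the operators $\Lc_1$ and $\Lc_\mu$ have no common eigenfunctions (indeed $[\Lc_1,\Lc_\mu]=(\mu-1)\Delta$, so they do not even commute). The matrix $\Mc$ has the simple eigenvalues $\pm1$, with eigenvectors $\binom qp$ for $+1$ and $\binom q{-p}$ for $-1$. When $\mu=1$ one has $\Hc=\Lc_1\,\mathrm{Id}$, which commutes with $\Mc$, so one may simply take $\binom{f_n}{g_n}=h_n\binom qp$ and $\binom{\tilde f_n}{\tilde g_n}=h_n\binom q{-p}$, and then \eqref{eq:dnen2}--\eqref{eq:dnen2tilde} hold trivially since all the lower coefficients vanish. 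For general $\mu>0$ we look for the eigenfunctions in the mixed form \eqref{def:fngn}, \eqref{def:fngntilde}.

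First I would rewrite the eigenvalue equation $(\Hc+\Mc)\binom fg=\lambda\binom fg$ row by row as the equivalent pair
\begin{equation*}
g=\frac pq(\lambda-\Lc_1)f,\qquad \frac pq f=(\lambda-\Lc_\mu)g .
\end{equation*}
So $g$ is forced once $f$ is chosen, the remaining constraint being $(\lambda-\Lc_\mu)(\lambda-\Lc_1)f=f$; for a degree-$n$ polynomial solution, comparing top-degree terms already forces $(\lambda+\tfrac n2)^2=1$, i.e. $\lambda\in\{1-\tfrac n2,\,-(1+\tfrac n2)\}$. To build the eigenfunctions, substitute the ansatz $f=\sum_{k=0}^{[n/2]}d_{n,n-2k}h_{n-2k}$ and use $\Lc_1h_m=-\tfrac m2h_m$, $\Lc_\mu\hat h_m=-\tfrac m2\hat h_m$, together with the expansion of $h_m$ in the basis $\{\hat h_m,\hat h_{m-2},\dots\}$, which by \eqref{eq:hntildeN1} reads $h_m=\hat h_m+m(m-1)(\mu-1)\hat h_{m-2}+(\text{lower order})$. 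For $\lambda=\lambda_n^+:=1-\tfrac n2$ one has $\lambda+\tfrac{n-2k}2=1-k$, and matching the coefficient of $\hat h_{n-2j}$ in each of the two relations yields, for every $j$, two linear equations for the pair $(d_{n,n-2j},e_{n,n-2j})$ whose right-hand sides are built from the already-computed lower coefficients. Normalising $d_{n,n}=q$ (hence $e_{n,n}=p$), the $j=1$ step is a $2\times2$ system with nonzero determinant whose solution is precisely \eqref{eq:dnen2}, and one proceeds downward. The branch $\lambda=\tilde\lambda_n:=-(1+\tfrac n2)$ is treated identically, now with $\lambda+\tfrac{n-2k}2=-1-k$ and the normalisation $\tilde d_{n,n}=q$, $\tilde e_{n,n}=-p$; the $j=1$ step then produces \eqref{eq:dnen2tilde}.

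The one point needing care is that this recursion does not break down. For the $\tilde\lambda_n$-branch, the $2\times2$ system at level $j$ has determinant a nonzero multiple of $j(j+2)$, hence is always uniquely solvable. For the $\lambda_n^+$-branch the determinant is a multiple of $j(j-2)$, which vanishes at $j=2$: there the degenerate system is solvable only if its right-hand side lies in the range, which amounts to a single scalar identity relating $d_{n,n}$, $d_{n,n-2}$ and the coefficient of $\hat h_{n-4}$ in $h_n$; a direct computation from \eqref{eq:hntildeN1} shows that coefficient equals $\tfrac12 n(n-1)(n-2)(n-3)(\mu-1)^2$, which is exactly the value making the identity hold, and any admissible choice of the resulting free parameter then completes a legitimate eigenfunction of the form \eqref{def:fngn}. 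I would regard this compatibility check as the main (and essentially only) obstacle; everything else is bookkeeping with the Hermite expansion \eqref{eq:hntildeN1}. Note moreover that for $n\le3$, and in particular for the modes $n=0,1,2$ used later in the paper, $[n/2]\le1$, so the recursion terminates before level $j=2$ and the issue does not even arise.
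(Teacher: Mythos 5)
Your plan is the natural one, and it is essentially correct. The paper itself gives no proof at all (it only says the lemma ``follows from an explicit computation''), so there is nothing to compare against beyond noting that your recursion is the obvious implementation of that computation. Your reduction of the eigenvalue problem to $(\lambda-\Lc_\mu)(\lambda-\Lc_1)f=f$, the degree count giving $\lambda\in\{1-\tfrac n2,\,-1-\tfrac n2\}$, and the level-by-level $2\times2$ recursion are all correct; so are the two determinants $j(j-2)$ and $j(j+2)$, and your computation of the coefficient $\tfrac12n(n-1)(n-2)(n-3)(\mu-1)^2$ of $\hat h_{n-4}$ in $h_n$. I checked \eqref{eq:dnen2}--\eqref{eq:dnen2tilde} against your recursion at $j=0,1$ and they come out exactly as stated. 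You have also put your finger on the genuine subtlety (the degeneracy at $j=2$ for the $\lambda=1-\tfrac n2$ branch, which reflects the fact that $1-\tfrac n2=-1-\tfrac{n-4}2$ is also an eigenvalue carried by lower degree), and you correctly observe it is vacuous for $n\le3$, hence for all modes the paper actually uses.

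One small inaccuracy worth fixing: at $j=2$ the factor $\lambda+\tfrac{n-2k}2=1-k$ vanishes at $k=1$, so the coefficients $d_{n,n-2},e_{n,n-2}$ drop out of the right-hand side and do not enter the compatibility condition as you claim. What the compatibility actually reduces to (after eliminating $e_{n,n}=\tfrac pq d_{n,n}$) is that the coefficient of $\hat h_{n-4}$ in $h_n$ equals the coefficient of $h_{n-4}$ in $\hat h_n$; both equal $\tfrac12 n(n-1)(n-2)(n-3)(\mu-1)^2$, by the symmetry of your formula under $\mu-1\mapsto1-\mu$. So your conclusion is right, but the identity is between the two basis-conversion coefficients, not one of them and $d_{n,n-2}$.
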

\begin{remark}\label{rema:012} Lemma \eqref{lemm:diagonal} also holds in higher dimensions with some complication in the computation. Here, we remark that the spectrum of $\Hc + \Mc$ has only two positive eigenvalues $\lambda_0 = 1$ and $\lambda_1 = \frac{1}{2}$ corresponding to the eigenvectors $\binom{f_0}{g_0}$ and $\binom{f_1}{g_1}$; a zero eigenvalue $\lambda_2 = 0$ corresponding to the eigenvector $\binom{f_2}{g_2}$. In the two dimensional case, we have 
$$\binom{f_0}{g_0} = \binom{q}{p}, \quad \binom{f_1}{g_1} = \binom{q y_i}{p y_i}_{1 \leq i \leq N
},$$
and 
$$\binom{f_2}{g_2} = \binom{f_{2,ij}}{g_{2,ij}}_{1 \leq i,j \leq N},$$
where
$$\binom{f_{2,ij}}{g_{2,ij}} = \binom{f_{2,ji}}{g_{2,ji}} = \binom{q y_i y_j}{p y_iy_j} \quad \text{for}\; 1 \leq i \neq j \leq N,$$
and 
\begin{equation}\label{def:f2g2Ndim}
\binom{f_{2,ii}}{g_{2,ii}} = \binom{q (y_i^2 - 2\mu)}{p (y_i^2 - 2)} \quad \text{for}\; 1 \leq i \leq N.
\end{equation}
\end{remark}

The following lemma gives the definition of the projection on the modes $\binom{f_n}{g_n}$ and $\binom{\tilde f_n}{\tilde g_n}$.

\begin{lemma}[Definition of the projection on the directions $\binom{f_n}{g_n}$ and $\binom{\tilde f_n}{\tilde g_n}$]\label{lemm:DefProjection} Let $M \gg 1$ be an even integer and let $\binom{\Lambda}{\Upsilon}$ be of the form 
\begin{equation}
\binom{\Lambda}{\Upsilon} = \sum_{n \leq M} \omega_n\binom{h_n}{0} + \hat \omega_n \binom{0}{\hat h_n}.
\end{equation}
Then we can expand  $\binom{\Lambda}{\Upsilon}$ with respect to the basis $\left\{\binom{f_n}{g_n}, \binom{\tilde f_n}{\tilde g_n}\right\}_{n \leq M}$ as follows:
\begin{equation}
\binom{\Lambda}{\Upsilon} = \sum_{n \leq M} \theta_n\binom{f_n}{g_n} + \tilde \theta_n \binom{\tilde f_n} {\tilde g_n},
\end{equation}
where
\begin{equation}\label{def:Pn}
\theta_n =  \sum_{j = 0}^{\left[\frac{M - n}{2}\right]} A_{n + 2j, n}\, \omega_{n + 2j} + B_{n + 2j, n}\, \hat \omega_{n + 2j}:= \Pc_{n,M}\binom{\Lambda}{\Upsilon},
\end{equation}
and 
\begin{equation}\label{def:Pntilde}
\tilde{\theta}_n =  \sum_{j = 0}^{\left[\frac{M - n}{2}\right]} \tilde A_{n + 2j, n}\, \omega_{n + 2j} + \tilde B_{n + 2j,n}\, \hat \omega_{n + 2j} := \tilde \Pc_{n,M}\binom{\Lambda}{\Upsilon},
\end{equation}
with the coefficients $A_{n + 2j, n}$, $B_{n + 2j,n}$, $\tilde A_{n + 2j, n}$ and $\tilde B_{n + 2j,n}$ for $j = 0, 1, 2, \cdots$ depending on $p, q$ and $\mu$. In particular, we have 
\begin{equation}\label{eq:AnBn}
A_{n,n} = \frac 1{2q}, \quad B_{n,n} = \frac 1{2p},
\end{equation}
and
\begin{equation}\label{eq:Ann2Bnn2}
A_{n+2,n} = \frac{1}{6q}(n+2)(n+1)(\mu - 1), \quad B_{n+2,n} = \frac{1}{6p}(n+2)(n+1)(1 - \mu).
\end{equation}
\end{lemma}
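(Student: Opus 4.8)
The plan is to invert, at the level of coefficients, the change of basis recorded in Lemma \ref{lemm:diagonal}. First I would fix an even integer $M \gg 1$ and work inside the finite-dimensional space spanned by $\left\{\binom{h_n}{0}, \binom{0}{\hat h_n}\right\}_{n \le M}$; by \eqref{eq:HMspec1}--\eqref{eq:HMspectilde} the $2(M+1)$ vectors $\left\{\binom{f_n}{g_n}, \binom{\tilde f_n}{\tilde g_n}\right\}_{n \le M}$ are eigenvectors of $\Hc+\Mc$ for the distinct eigenvalues $1-\tfrac n2$ and $-(1+\tfrac n2)$, hence form a basis of that space. Therefore the expansion $\binom{\Lambda}{\Upsilon} = \sum_{n\le M}\theta_n\binom{f_n}{g_n} + \tilde\theta_n\binom{\tilde f_n}{\tilde g_n}$ exists and is unique, and it only remains to express $\theta_n$, $\tilde\theta_n$ in terms of $\omega_n,\hat\omega_n$. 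Because of the parity structure in \eqref{def:fngn}--\eqref{def:fngntilde} (only indices $n-2j$ appear), the transition matrix is block-triangular with respect to the filtration by degree: the vectors $\binom{f_n}{g_n},\binom{\tilde f_n}{\tilde g_n}$ involve only $\binom{h_m}{0},\binom{0}{\hat h_m}$ with $m\le n$ and $m\equiv n \pmod 2$. Inverting a triangular system yields precisely formulas of the shape \eqref{def:Pn}--\eqref{def:Pntilde}, namely $\theta_n$ is a finite combination of $\omega_{n+2j},\hat\omega_{n+2j}$ with coefficients $A_{n+2j,n},B_{n+2j,n}$ depending only on $p,q,\mu$; this is most cleanly phrased by defining $\Pc_{n,M},\tilde\Pc_{n,M}$ as the associated linear functionals.

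For the explicit values it suffices to treat the leading and subleading coefficients. For $j=0$: the top-degree parts of $\binom{f_n}{g_n}$ and $\binom{\tilde f_n}{\tilde g_n}$ are $\binom{q}{0}h_n$-type and $\binom{0}{p}\hat h_n$-type combinations read off from \eqref{eq:dnen2} and \eqref{eq:dnen2tilde}, i.e. $d_{n,n}=q,\ e_{n,n}=p$ and $\tilde d_{n,n}=q,\ \tilde e_{n,n}=-p$. Inverting the $2\times 2$ system
\begin{equation*}
\binom{h_n}{0}\ \text{and}\ \binom{0}{\hat h_n}\ \text{in terms of}\ \binom{f_n}{g_n},\binom{\tilde f_n}{\tilde g_n}
\end{equation*}
gives $\binom{h_n}{0} = \tfrac1{2q}\binom{f_n}{g_n}+\tfrac1{2q}\binom{\tilde f_n}{\tilde g_n}$ and $\binom{0}{\hat h_n} = \tfrac1{2p}\binom{f_n}{g_n}-\tfrac1{2p}\binom{\tilde f_n}{\tilde g_n}$ modulo lower-degree terms, so reading off the coefficient of $\binom{f_n}{g_n}$ produces $A_{n,n}=\tfrac1{2q}$, $B_{n,n}=\tfrac1{2p}$, which is \eqref{eq:AnBn}; likewise $\tilde A_{n,n},\tilde B_{n,n}$ are the coefficients of $\binom{\tilde f_n}{\tilde g_n}$. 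For $j=1$ one pushes the inversion one step down the triangular system, using $d_{n+2,n}=-(n+2)(n+1)(\mu-1)q$, $e_{n+2,n}=(n+2)(n+1)(\mu-1)p$ from \eqref{eq:dnen2}; substituting the $j=0$ formula and matching the $\binom{f_n}{g_n}$-component yields $A_{n+2,n}=\tfrac1{6q}(n+2)(n+1)(\mu-1)$ and $B_{n+2,n}=\tfrac1{6p}(n+2)(n+1)(1-\mu)$, which is \eqref{eq:Ann2Bnn2}.

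The argument is entirely linear algebra over the degree-graded space, so there is no genuine obstacle; the one point demanding care is the bookkeeping in the triangular inversion — making sure the recursion for the $A_{n+2j,n},B_{n+2j,n}$ is consistent with, and only with, the coefficients $d_{n,n-2j},e_{n,n-2j}$ of Lemma \ref{lemm:diagonal}, and checking that the numerical factor $\tfrac13$ in \eqref{eq:dnen2tilde} combined with the $\tfrac12$ from the leading inversion reproduces the $\tfrac16$ in \eqref{eq:Ann2Bnn2}. In higher dimensions one repeats the computation componentwise over multi-indices $\alpha$ as in Remark \ref{rema:012}, with no change in structure.
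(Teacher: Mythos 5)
Your approach — inverting the degree-filtered change of basis from Lemma \ref{lemm:diagonal} — is the correct and essentially the only reasonable one; the paper defers the proof to Lemma~3.4 of \cite{GNZpre16c}, which proceeds in exactly the same way. The mechanics check out: the leading $2\times 2$ inversion giving $\binom{h_n}{0}$ and $\binom{0}{\hat h_n}$ modulo lower degree, and the resulting values of $A_{n,n}$, $B_{n,n}$, $A_{n+2,n}$, $B_{n+2,n}$, are all correct.

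Two caveats are worth flagging. First, your opening claim that the $2(M+1)$ eigenvectors form a basis because the eigenvalues are \emph{distinct} is false as stated: for instance $\binom{f_4}{g_4}$ and $\binom{\tilde f_0}{\tilde g_0}$ share the eigenvalue $-1$, and more generally $1-\tfrac n2 = -\bigl(1+\tfrac m2\bigr)$ whenever $n=m+4$. Linear independence must instead come from the observation you make next — the transition matrix is block upper-triangular with respect to degree, with invertible $2\times 2$ diagonal blocks $\left(\begin{smallmatrix} q & q\\ p & -p\end{smallmatrix}\right)$ of determinant $-2pq\neq 0$ — which is the argument that actually carries the burden. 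Second, your $j=1$ step names $d_{n+2,n}$, $e_{n+2,n}$ from \eqref{eq:dnen2} as if they drive $A_{n+2,n}$, $B_{n+2,n}$, but with $A_{n,n}=\tfrac1{2q}$, $B_{n,n}=\tfrac1{2p}$ the combination $\tfrac{d_{n+2,n}}{2q}+\tfrac{e_{n+2,n}}{2p}$ vanishes identically, so the $\theta_{n+2}$-contribution to $\theta_n$ cancels; the formula \eqref{eq:Ann2Bnn2} comes entirely from $\tilde\theta_{n+2}$ through $\tilde d_{n+2,n}$, $\tilde e_{n+2,n}$ of \eqref{eq:dnen2tilde}, as your closing remark about the $\tfrac13$ factor correctly anticipates. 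Writing the recursion explicitly,
\begin{equation*}
\theta_n = \tfrac1{2q}\omega_n + \tfrac1{2p}\hat\omega_n - \theta_{n+2}\Bigl(\tfrac{d_{n+2,n}}{2q}+\tfrac{e_{n+2,n}}{2p}\Bigr) - \tilde\theta_{n+2}\Bigl(\tfrac{\tilde d_{n+2,n}}{2q}+\tfrac{\tilde e_{n+2,n}}{2p}\Bigr) + \cdots,
\end{equation*}
would make the cancellation transparent and remove the ambiguity.
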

\begin{proof} Since the proof is exactly the same lines as the one written in \cite{GNZpre16c} and since it is purely computational, we kindly refer interested readers to Lemma 3.4 of \cite{GNZpre16c} for an analogous  proof.
\end{proof}
\begin{remark} \label{rema:pro} From Lemma \ref{lemm:DefProjection}, we obviously see that when a function is of the form $ \sum_{n = 0}^M \theta_n \binom{f_n}{g_n} + \tilde{\theta}_n \binom{\tilde f_n}{\tilde g_n}$, its projections on $\binom{f_n}{g_n}$ and $\binom{\tilde f_n}{\tilde g_n}$ are respectively $\theta_n$ and $\tilde \theta_n$.
\end{remark}

\subsection{A formal approach.} \label{sec:formal}
In this part we make use the spectral properties of the linear operator $\Hc+\Mc$ given in the previous subsection to formally derive the profile described in Theorem \ref{theo1}. For simplicity, we assume that $(u,v)$ is a positive, radially symmetric solution of system \eqref{PS} in the one dimensional case. By the translation invariance in space, we assume that $(u,v)$ blows up in finite time $T > 0$ at the origin. Let us start with the nonlinear transformation
\begin{equation}\label{def:baruv}
\bar u = e^{qu} \quad \text{and} \quad \bar v = e^{pv},
\end{equation}
which leads to the new system
\begin{equation}\label{sys:baruv}
\partial_t \bar u = \Delta \bar u - \frac{|\nabla \bar u|^2}{\bar u} + q \bar u \bar v, \qquad \partial_t \bar v = \mu\Delta \bar v - \mu\frac{|\nabla \bar v|^2}{\bar v} + p \bar u \bar v.
\end{equation}
We then introduce the similarity variables
\begin{equation}\label{def:simvariables}
\arraycolsep=1.6pt\def\arraystretch{2}
\left\{\begin{array}{l} 
\Phi(y,s) = (T-t)\bar u(x,t), \quad \Psi(y,s) = (T-t)\bar v(x,t),\\
\text{where} \quad y = \dfrac{x}{\sqrt {T-t}}, \quad s = -\log(T-t),
\end{array}\right.
\end{equation}
which shows that $(\Phi, \Psi)$ solves
\begin{equation} \label{sys:PhiPsi}
\arraycolsep=1.6pt\def\arraystretch{2}
\left\{\begin{array}{l}
\partial_s \Phi = \Delta \Phi - \dfrac{1}{2}y \cdot \nabla \Phi - \Phi - \dfrac{|\nabla \Phi|^2}{\Phi} + q \Phi \Psi,\\
\partial_s \Psi = \mu\Delta \Psi - \dfrac{1}{2}y \cdot \nabla \Psi - \Psi - \mu\dfrac{|\nabla \Psi|^2}{\Psi} + p \Phi \Psi.\end{array}\right.
\end{equation}
In the similarity variables \eqref{def:simvariables}, justifying \eqref{eq:asyTh1} is equivalent to show that 
\begin{equation}
\Phi(y,s) \sim \Phi^*\left(\frac{y}{\sqrt s}\right) \quad \text{and} \quad \Psi(y,s) \sim \Psi^*\left(\frac{y}{\sqrt s}\right) \quad \text{as} \quad s \to +\infty. 
\end{equation}
Note that the nonzero constant solution to system \eqref{sys:PhiPsi} is $\left(\frac{1}{p}, \frac{1}{q} \right)$. This suggests the linearization
\begin{equation}\label{def:barPhiPsi}
\bar \Phi = \Phi - \frac{1}{p} \quad \text{and} \quad \bar \Psi = \Psi - \frac{1}{q},
\end{equation}
and $(\bar \Phi, \bar \Psi)$ solves the system 
\begin{equation}\label{sys:barPhiPsi}
\partial_s \binom{\bar \Phi}{\bar \Psi} = \left(\Hc + \Mc\right)\binom{\bar \Phi}{\bar \Psi}  + \binom{q}{p}\bar\Phi \bar \Psi - \binom{|\nabla \bar \Phi|^2\left(\bar \Phi + \frac 1p\right)^{-1}}{\mu|\nabla \bar \Psi|^2\left(\bar \Psi + \frac 1q\right)^{-1}},
\end{equation}
where $\Hc$ and $\Mc$ are defined by \eqref{def:HLM}.

From Lemma \ref{lemm:diagonal}, we know that $\binom{f_n}{g_n}_{n \geq 3}$ and $\binom{\tilde f_n}{\tilde g_n}_{n \in \mathbb{N}}$ correspond to negative eigenvalues of $\Hc + \Mc$, therefore, we may consider that 
\begin{equation}\label{eq:expapr}
\binom{\bar \Phi}{\bar \Psi} = \theta_0(s)\binom{f_0}{g_0} + \theta_2(s)\binom{f_2}{g_2},
\end{equation}
where $|\theta_0(s)| + |\theta_2(s)| \to 0$ as $s\to +\infty$ (note that $\theta_1(s) \equiv 0$ by the radially symmetric assumption). Plugging this ansatz in system \eqref{sys:barPhiPsi} yields 
\begin{align*}
\theta_0'\binom{f_0}{g_0} + \theta_2'\binom{f_2}{g_2} = \theta_0\binom{f_0}{g_0} &+ \binom{q}{p}\left[(\theta_0 f_0 + \theta_2 f_2)(\theta_0 g_0 + \theta_2g_2)\right] \\
& - \theta_2^2\binom{|\nabla f_2|^2 \left(\theta_0 f_0 + \theta_2 f_2 + \frac{1}{p}\right)^{-1}}{\mu |\nabla g_2|^2\left(\theta_0g_0 + \theta_2 g_2 + \frac{1}{q}\right)^{-1}}.
\end{align*}
Assume that $|\theta_0(s)| \ll |\theta_2(s)|$ as $s \to +\infty$, we then use Lemma \ref{lemm:DefProjection} to find the ordinary differential system 
\begin{equation}\label{sys:Theta02}
\arraycolsep=1.4pt\def\arraystretch{2}
\left\{\begin{array}{ll}
\theta_0' &= \theta_0 + \Oc(|\theta_2|^2),\\
\theta_2' &= c_2\theta_2^2 + \Oc(|\theta_2|^3 + |\theta_0 \theta_2| + |\theta_0|^3),
\end{array}\right.
\end{equation} 
where the constant $c_2$ is computed as follows: 
\begin{align*}
c_2 &= \Pc_{2,M}\binom{qf_2g_2 - p|\nabla f_2|^2}{pf_2g_2 - q\mu|\nabla g_2|^2} = \Pc_{2,M}\binom{pq^2\big[h_4 + (6 - 2\mu)h_2 + 12\big]}{p^2q \big[\hat h_4 + (6\mu - 2)\hat h_2 \big]}\\
&= A_{2,2}(6-2\mu)pq^2 + B_{2,2}(6\mu - 2)p^2q + A_{4,2}pq^2 + B_{4,2}pq^2\\
& = 2pq(\mu + 1).
\end{align*}
Solving system \eqref{sys:Theta02} yields
$$\theta_2(s) = - \frac{1}{2pq(\mu + 1) s} + \Oc\left(\frac{\ln s}{s^2}\right) \quad \text{and} \quad |\theta_0(s)| = \Oc\left(\frac{1}{s^2}\right) \quad \text{as} \quad s\to +\infty.$$
From \eqref{eq:expapr} and \eqref{def:barPhiPsi}, we have just derived the following asymptotic expansion:
\begin{equation}\label{eq:solPhiPsiyb}
\arraycolsep=1.4pt\def\arraystretch{2}
\left\{\begin{array}{ll}
\Phi(y,s) &= \frac{1}{p} \left[ 1 - \frac{y^2}{2(\mu + 1)s} + \frac{\mu}{(\mu + 1)s}\right] + \Oc\left(\frac{\ln s}{s^2}\right),\\
\Psi(y,s) &= \frac{1}{q}\left[1 - \frac{y^2}{2(\mu + 1)s} + \frac{1}{(\mu + 1)s} \right] + \Oc\left(\frac{\ln s}{s^2}\right),\end{array}\right.
\end{equation}
where the convergence takes place in $L^2_{\rho_1} \times L^2_{\rho_\mu}$ as well as uniformly on compact sets by standard parabolic regularity. 

These expansions provide a relevant variable for blowup, namely $z = \frac{y}{\sqrt s}$, therefore, we  try to search formally solutions of \eqref{sys:PhiPsi} of the form 
\begin{equation}\label{eq:formPhiPsiz}
\arraycolsep=1.4pt\def\arraystretch{2}
\left\{\begin{array}{ll}
\Phi(y,s) &= \Phi_0(z) + \frac{\mu}{p(\mu + 1)s} + \Oc\left(\frac{1}{s^{1 + \nu}}\right),\\
\Psi(y,s) &= \Psi_0(z) + \frac{1}{q(\mu + 1)s} + \Oc\left(\frac{1}{s^{1 + \nu}}\right),
\end{array}\right.
\end{equation}
for some $\nu > 0$, subject to the condition
\begin{equation}\label{eq:boucond}
\Phi_0(0) = \frac{1}{p}, \quad \Psi_0(0) = \frac{1}{q}.
\end{equation}
Plugging this ansatz in system \eqref{sys:PhiPsi}, keeping only the main order, we end up with the following system satisfied by $(\Phi_0, \Psi_0)$:
\begin{align}
-\frac{z}{2}\Phi'_0 - \Phi_0 + q\Phi_0\Psi_0 = 0, \quad -\frac{z}{2}\Psi'_0 - \Psi_0 + p\Phi_0\Psi_0 = 0. \label{eq:Phi0Psi0}
\end{align}
Solving this system with the condition \eqref{eq:boucond} yields
$$\Phi_0(z) = \frac{1}{p}(1 + c_0|z|^2)^{-1}, \quad \Psi_0(z) = \frac{1}{q}(1 + c_0|z|^2)^{-1},$$
for some constant $c_0 > 0$. 
By matching asymptotic this expansion with \eqref{eq:solPhiPsiyb}, we find that 
$$c_0 = \frac{1}{2(\mu + 1)}.$$
In conclusion, we have formally obtained from \eqref{eq:formPhiPsiz} the following candidate for the profile:
\begin{equation}\label{def:phipsi}
\arraycolsep=1.4pt\def\arraystretch{2}
\left\{\begin{array}{ll}
\Phi(y,s) & \sim \phi(y,s):= \frac{1}{p}\left(1 + \frac{y^2}{2(\mu + 1)s}\right)^{-1}  + \frac{\mu}{p(\mu + 1)s},\\
\Psi(y,s) & \sim  \psi(y,s):= \frac{1}{q}\left(1 + \frac{y^2}{2(\mu + 1)s}\right)^{-1} + \frac{1}{q(\mu + 1)s}.\end{array}\right.
\end{equation}

\section{Proof of Theorem \ref{theo1} without technical details.}\label{sec:Existence}
In this section we give the proof of Theorem \ref{theo1}. To avoid winding up with details, we will only give the main arguments of the proof and postpone most of technicalities to next sections. For simplicity, we consider the one dimensional case ($N = 1$), however, the proof remains the same for higher dimensions $N \geq 2$.

Hereafter we denote by $C$ a generic positive constant depending only on the parameters of the problem such as $N, p, q, \mu$ and $K$ introduced in \eqref{def:chi}. 

\subsection{Linearization of the problem.}\label{sec:formu}
In this part we give the formulation of the problem to justify the formal result obtained in previous section, i.e. the proof of Theorem \ref{theo1}. We want to prove the existence of suitable initial data $(u_0, v_0)$ so that the corresponding solution $(u,v)$ of system \eqref{PS} blows up in finite time $T$ only at one point $a \in \Rb$ and verifies \eqref{eq:asyTh1}. From translation invariance of equation \eqref{PS}, we may assume that $a = 0$. Through the transformations \eqref{def:baruv} and \eqref{def:simvariables}, we want to find $s_0 > 0$ and $(\Phi(y,s_0), \Psi(y,s_0))$ such that the solution $(\Phi, \Psi)$ of system \eqref{sys:PhiPsi} with initial data $(\Phi(y,s_0), \Psi(y,s_0))$ satisfies
\begin{equation}\label{eq:limPhiPsis3}
\lim_{s\to+\infty} \left\| \Phi(y,s) - \Phi^*\left(\frac{y}{\sqrt{s}}\right)\right\|_{L^\infty(\RN)} = \lim_{s\to+\infty} \left\| \Psi(y,s) - \Psi^*\left(\frac{y}{\sqrt{s}}\right)\right\|_{L^\infty(\RN)} = 0,
\end{equation}
where $\Phi^*$ and $\Psi^*$  are defined  in \eqref{def:PhiPsistarpro}.

According to the formal analysis in the previous section, let us introduce $\Lambda(y,s)$ and $\Upsilon(y,s)$ such that 
\begin{equation}\label{def:LamUps}
\Phi(y,s) = \Lambda(y,s) + \phi(y,s), \quad \Psi(y,s) = \Upsilon(y,s) + \psi(y,s),
\end{equation}
where $\phi$ and $\psi$ are defined by \eqref{def:phipsi}.

With the introduction of $(\Lambda,\Upsilon)$ in \eqref{def:LamUps}, the problem is then reduced to construct functions $(\Lambda,\Upsilon)$ such that 
\begin{equation}\label{eq:goalconstruct}
\lim_{s \to+\infty}\|\Lambda(s)\|_{L^\infty(\RN)} = \lim_{s \to+\infty}\|\Upsilon(s)\|_{L^\infty(\RN)} = 0.
\end{equation}
and from \eqref{sys:PhiPsi}, $(\Lambda,\Upsilon)$ solves the system
\begin{equation}\label{eq:LamUp}
\partial_s \binom{\Lambda}{\Upsilon} = \Big(\Hc + \Mc + V(y,s)\Big)\binom{\Lambda}{\Upsilon} + \binom{q}{p}\Lambda \Upsilon + \binom{R_1}{R_2} + \binom{G_1}{G_2},
\end{equation}
where $\Hc$ and $\Mc$ are defined by \eqref{def:HLM}, 
\begin{equation}\label{def:Vys}
V(y,s) = \begin{pmatrix} 
q\psi - 1 & \quad q\big(\phi - 1/p\big)\\ p\big(\psi - 1/q\big) & \quad p\phi - 1
\end{pmatrix} = \begin{pmatrix} V_1 &V_2 \\V_3 &V_4
\end{pmatrix},
\end{equation}
\begin{equation}\label{def:Gys}
\binom{G_1}{G_2} = \binom{-|\nabla (\Lambda + \phi)|^2 (\Lambda + \phi)^{-1} + |\nabla \phi|^2 /\phi}{-\mu|\nabla(\Upsilon + \psi)|^2(\Upsilon + \psi)^{-1} + \mu |\nabla \psi|^2/ \psi},
\end{equation}
and 
\begin{equation}\label{def:Rys}
\binom{R_1}{R_2} = \binom{-\partial_s \phi + \Delta \phi - \frac{1}{2}y\cdot \nabla \phi - \phi + q\phi\psi - |\nabla \phi|^2/\phi}{-\partial_s \psi + \mu\Delta \psi - \frac{1}{2}y\cdot \nabla \psi - \psi + p\phi\psi - \mu |\nabla \psi|^2/\psi}.
\end{equation}

Since we would like to make $(\Lambda, \Upsilon)$ go to zero as $s \to +\infty$ in $L^\infty(\RN)\times L^\infty(\RN)$, then the nonlinear terms $\binom{q}{p}\Lambda \Upsilon$ and $\binom{G_1}{G_2}$, which are built to be quadratic, can be neglected. The error term $\binom{R_1}{R_2}$ is of the size $\frac{1}{s}$ uniformly in $\Rb^N$.  Thus, the dynamics of \eqref{eq:LamUp} are strongly influenced by the linear part
$$\Big(\Hc + \Mc + V(y,s)\Big)\binom{\Lambda}{\Upsilon} \quad \text{as}\quad s \to +\infty.$$
The spectrum of $\Hc + \Mc$ is well studied in the previous section. The potential $V(y,s)$ has two fundamental properties that will strongly influence our analysis:

\noindent - The effect of $V$ inside the \textit{blowup region} $|y| \leq K\sqrt{s}$ will be considered as a perturbation of the effect of $\Hc + \Mc$. 

\noindent - Outside the \textit{blowup region}, i.e. when $|y|\geq K \sqrt s$, we have the following property: for all $\epsilon > 0$, there exist $K_{\epsilon} > 0$ and $s_\epsilon > 0$ such that 
\begin{equation*}
\sup_{s \geq s_\epsilon, |y| \geq K_\epsilon \sqrt{s}} |V(y,s)|\leq \epsilon.
\end{equation*}
In other words, outside the \textit{blowup region}, the linear operator $\Hc + \Mc + V$ behaves as 
$$\Hc + \left(\begin{array}{cc} \pm \epsilon - 1 & \pm \epsilon\\ \pm \epsilon & \pm \epsilon - 1
\end{array}\right).$$
Given that the spectrum of $\Hc$ is non positive (see \eqref{eq:spectrumH} above) and that the matrix has negative eigenvalues for $\epsilon$ small, we see that $\Hc + \Mc + V$ behaves like one with a fully negative spectrum, which greatly simplifies the analysis in that region. \\

Since the behavior of the potential $V$ inside and outside the \textit{blowup region} is different, we will consider the dynamics for $|y| \geq K\sqrt{s}$ and $|y| \leq 2K\sqrt s$ separately for some $K$ to be fixed large. Let us consider a non-increasing cut-off function $\chi_0 \in \mathcal{C}^\infty_0([0, +\infty))$, with $\text{supp}(\chi_0) \subset [0,2]$ and $\chi_0 \equiv 1$ on $[0,1]$, and introduce
\begin{equation}\label{def:chi}
\chi(y,s) = \chi_0\left(\frac{|y|}{K_0\sqrt{s}}\right),
\end{equation}
where $K_0$ is chosen large enough so that various technical estimates hold. We define
\begin{equation}\label{def:LeUe}
\binom{\Lambda_e}{\Upsilon_e} = (1 - \chi)\binom{\Lambda}{\Upsilon},
\end{equation}
$\binom{\Lambda_e}{\Upsilon_e}$ coincides with $\binom{\Lambda}{\Upsilon}$ for $|y| \geq 2K_0\sqrt{s}$. As announced a few lines above and as we will see in  Section \ref{sec:outerpart}, the spectrum of the linear operator of the equation
satisfied by $\binom{\Lambda_e}{\Upsilon_e}$ is negative, which makes the control of $\|\Lambda_e(s)\|_{L^\infty(\Rb)}$ and $\|\Upsilon_e(s)\|_{L^\infty(\Rb)}$ easy.

While the control of the outer part is simple, it is not the case for the inner part of $\binom{\Lambda}{\Upsilon}$, i.e. for $|y| \leq 2K_0\sqrt s$. In fact, inside the \textit{blowup region} $|y| \leq 2K_0\sqrt s$, the potential $V$ can be seen as a perturbation of the effect of $\Hc + \Mc$ whose spectrum has two positive eigenvalues, a zero eigenvalue in addition to infinitely negative ones (see Lemma \ref{lemm:diagonal} above). For the sake of controlling $\binom{\Lambda}{\Upsilon}$ in the region $|y| \leq 2K_0\sqrt s$, we will expand  $\binom{\Lambda}{\Upsilon}$ with respect to the family $\left\{\binom{h_n}{0}, \binom{0}{\hat{h}_n}\right\}_{n \geq 0}$ and then with respect to the family $\left\{\binom{f_n}{g_n}, \binom{\tilde{f}_n}{\tilde{g}_n}\right\}_{n \geq 0}$ as follows:
\begin{align}
\binom{\Lambda(y,s)}{\Upsilon(y,s)} &= \sum_{n \leq M} Q_n(s) \binom{h_n(y)}{0} + \hat{Q}_n(s) \binom{0}{\hat{h}_n(y)} + \binom{\Lambda_-(y,s)}{\Upsilon_-(y,s)},\label{eq:expandLAmbUpb}\\
& = \sum_{n \leq M}\theta_n(s)\binom{f_n(y)}{g_n(y)} + \tilde{\theta}_n(s)\binom{\tilde{f}_n(y)}{\tilde{g}_n(y)} + \binom{\Lambda_-(y,s)}{\Upsilon_-(y,s)}. \label{decomoposeLamUp}
\end{align}
where $M$ is a fixed even integer satisfying 
\begin{equation}\label{eq:Mfixed}
M \geq 4\left[ \frac{p}{q} + \frac{q}{p} + \sum_{i = 1}^4\|V_i\|_{L^\infty_{y,s}}
 \right],
\end{equation}
with $\|V_i\|_{L^\infty_{y,s}} = \max\limits_{y\in \Rb^N, s \geq 1}|V_i(y,s)|$ .

\noindent $\bullet\;$ $Q_n(s)$ and $\hat Q_n(s)$ are  respectively the projections of $\binom{\Lambda}{\Upsilon}$ on $\binom{h_n}{0}$ and $\binom{0}{\hat h_n}$ defined by
\begin{equation}\label{def:Qn}
Q_n(s) = \frac{\left< \binom{\Lambda}{\Upsilon}, \binom{h_n}{0} \right>}{\left< \binom{h_n}{0}, \binom{h_n}{0} \right>} = \frac{\big<\Lambda,h_n \big>_{\rho_1}}{\big< h_n, h_n\big>^2_{\rho_1}}\equiv \Pi_n\binom{\Lambda}{\Upsilon},
\end{equation}
\begin{equation}\label{def:Qnhat}
\hat Q_n(s) = \frac{\left< \binom{\Lambda}{\Upsilon}, \binom{0}{\hat h_n} \right>}{\left< \binom{0}{\hat h_n}, \binom{0}{\hat h_n} \right>} = \frac{\big<\Upsilon,\hat{h}_n \big>_{\rho_\mu}}{\big< \hat{h}_n, \hat{h}_n\big>^2_{\rho_\mu}} \equiv \hat \Pi_n\binom{\Lambda}{\Upsilon},
\end{equation}

\noindent $\bullet\;$ $\binom{\Lambda_-(y,s)}{\Upsilon_-(y,s)} = \Pi_{-,M}\binom{\Lambda}{\Upsilon}$ denotes the infinite-dimensional part of $\binom{\Lambda}{\Upsilon}$, where $\Pi_{-,M}$ is the projector on the subspace of $L_{\rho_1} \times L_{\rho_\mu}$ where the spectrum of $\Hc$ is lower than $\frac{1-M}{2}$. We have the orthogonality: for all $n \leq M$,
\begin{equation}\label{eq:partnegVW}
\left< \binom{\Lambda_-}{\Upsilon_-}, \binom{h_n}{0} \right>= \big<\Lambda_-, h_n\big>_{\rho_1} = 0 \;\; \text{and}\;\; \left< \binom{\Lambda_-}{\Upsilon_-}, \binom{0}{\hat h_n} \right> = \big<\Upsilon_-,\hat h_n\big>_{\rho_\mu} = 0.
\end{equation}

\noindent $\bullet\;$ We set $\Pi_{+,M} = \textbf{Id} - \Pi_{-,M}$, and the complementary part 
$$\binom{\Lambda_+}{\Upsilon_+} = \Pi_{+,M}\binom{\Lambda}{\Upsilon} = \binom{\Lambda}{\Upsilon} - \binom{\Lambda_-}{\Upsilon_-}$$
which satisfies for all $s$,
\begin{equation}
\left<\binom{\Lambda_+(y,s)}{\Upsilon_+(y,s)}, \binom{\Lambda_-(y,s)}{\Upsilon_-(y,s)}\right> = 0.
\end{equation}

\noindent $\bullet\;$ $\theta_n(s)= \Pc_{n,M}\binom{\Lambda}{\Upsilon}$ and $\tilde{\theta}_n(s) = \tilde \Pc_{n,M}\binom{\Lambda}{\Upsilon}$ are respectively projections of $\binom{\Lambda}{\Upsilon}$ on $\binom{f_n}{g_n}$ and $\binom{\tilde f_n}{\tilde g_n}$. From Lemma \ref{lemm:DefProjection}, we can express $\theta_n(s)$ and $\tilde{\theta}_n(s)$ in terms of $Q_n(s)$ and $\hat Q_n(s)$.

\subsection{Definition of the shrinking set and its properties.}\label{sec:St}

In this part we will give the definition of a shrinking set to trap the solution according to the blowup regime described in Theorem \ref{theo1}. In particular, we aim at defining a set whose elements will satisfy \eqref{eq:goalconstruct}. To do so, we follow ideas of \cite{GNZpre16c} and \cite{GNZjde17} where the authors suggested a modification of the argument of \cite{MZdm97} for the standard semilinear heat equation \eqref{eq:ScalarE}. In particular, we shall control the solution in three different zones covering $\Rb^N$, defined as follows: For $K_0 > 0$, $\epsilon_0 > 0$ and $t \in [0, T)$, we set 
\begin{align*}
\Dc_1(t) &= \left\{x\;  \Big\vert \; |x| \leq K_0 \sqrt{|\ln(T-t)|(T-t)} \right\}\\
&\quad \equiv \left\{x \;\big\vert \; |y| \leq K_0 \sqrt{s}\right\} \equiv \left\{ x \; \Big\vert \; |z| \leq K_0\right\},\\
\Dc_2(t) &= \left\{x \; \Big\vert \; \frac{K_0}{4} \sqrt{|\ln(T-t)|(T-t)} \leq |x| \leq \epsilon_0 \right\}\\
&\quad  \equiv \left\{x \; \Big\vert\; \frac{K_0}{4} \sqrt{s} \leq |y| \leq \epsilon_0e^{\frac{s}{2}}\right\} \equiv \left\{ x \; \Big \vert\; \frac{K_0}{4} \leq |z| \leq  \frac{\epsilon_0}{\sqrt s }e^{\frac s2} \right\},\\
\Dc_3(t) &= \left\{x\;  \Big\vert \; |x| \geq \frac{\epsilon_0}{4} \right\} \equiv \left\{x \;\big\vert \; |y| \geq \frac{\epsilon_0}{4}e^{\frac{s}{2}}\right\} \equiv \left\{ x \; \Big\vert \; |z| \geq \frac{\epsilon_0}{4 \sqrt s}e^{\frac{s}{2}} \right\}.
\end{align*}
- In the \textit{blowup region} $\Dc_1$, we work with the self similar system \eqref{eq:LamUp} and do an analysis according to the decomposition \eqref{decomoposeLamUp} and  the definition \eqref{def:LeUe}.\\
- In the intermediate region $\Dc_2$, we control the solution by using classical parabolic estimates on $(\tilde u, \tilde v)$, a rescaled version of $(u,v)$ defined for $x \ne 0$ by 
\begin{equation}\label{def:uvtilde}
\arraycolsep=1.6pt\def\arraystretch{2}
\left\{\begin{array}{l} 
\tilde{u}(x, \xi, \tau) = \frac{1}{q}\ln \sigma(x) + u\Big(x + \xi\sqrt{\sigma(x)}, t(x) + \tau \sigma(x)\Big),\\
\tilde{v}(x, \xi, \tau) = \frac{1}{p}\ln \sigma(x) + v\Big(x + \xi\sqrt{\sigma(x)}, t(x) + \tau \sigma(x)\Big),
\end{array} \right.
\end{equation}
where  $t(x)$ is uniquely defined for $|x|$ sufficiently small by 
\begin{equation}\label{def:tx}
|x| = \frac{K_0}{4}\sqrt{\sigma(x)|\ln \sigma(x)|} \quad \text{with} \quad \sigma(x) = T - t(x).
\end{equation}
From \eqref{PS}, we see that $(\tilde{u}, \tilde{v})$ satisfies  the same system for $(u,v)$. That is for all $\xi \in \RN$ and $\tau \in \left[-\frac{t(x)}{\sigma(x)},1 \right)$,
\begin{equation}\label{eq:Ucxt}
\partial_\tau \tilde u = \Delta_\xi \tilde u + e^{p \tilde v}, \quad \partial_\tau \tilde v = \mu \Delta_\xi \tilde v + e^{q \tilde u}.
\end{equation}
We will in fact prove that $(\tilde u,\tilde v)$ behaves for 
$$|\xi| \leq \alpha_0\sqrt{|\ln \sigma(x)|}\quad \text{and} \quad \tau \in \left[\frac{t_0  - t(x)}{\sigma(x)},1\right)$$
for some $t_0 < T$ and $\alpha_0 > 0$, like the solution of the ordinary differential system 
\begin{equation}
\partial_\tau \hat u = e^{p\hat v}, \quad \partial_\tau \hat v = e^{q\hat u},
\end{equation}
subject to the initial data 
$$\hat u(0) = -\frac{1}{q}\ln \left[p\left(1 + \frac{K_0^2/16}{2(\mu + 1)}\right)\right], \quad \hat v(0) = -\frac{1}{p}\ln \left[q\left(1 + \frac{K_0^2/16}{2(\mu + 1)}\right)\right].$$
The solution is explicitly given by 
\begin{equation}\label{def:solUc}
\hat u(\tau) = -\frac{1}{q}\ln \left[p\left(1 - \tau + \frac{K_0^2/16}{2(\mu + 1)}\right)\right], \quad \hat v(\tau) =-\frac{1}{p}\ln \left[q\left(1 - \tau + \frac{K_0^2/16}{2(\mu + 1)}\right)\right]. 
\end{equation}
As we will see that the analysis in $\Dc_2$ will imply the conclusion of item $(iii)$ of Theorem \ref{theo1}. \\
- In $\Dc_3$, we directly estimate  $(u,v)$ by using the local in time well-posedness of the Cauchy problem for system \eqref{PS}.

We give the definition of the shrinking set to trap the solution according to the blowup regime described in Theorem \ref{theo1}. This set is precisely defined as follows:
\begin{definition}[Definition of a shrinking set] \label{def:St} For all $t_0 < T$, $K_0 > 0$, $\epsilon_0 > 0$, $\alpha_0 > 0$, $A > 0$, $\delta_0 > 0$, $\eta_0 > 0$, $C_0 > 0$, for all $t \in [t_0,T)$, we define  $\;\Sc(t_0, K_0, \epsilon_0, \alpha_0, A, \delta_0, \eta_0, C_0,  t)$ (or $\Sc(t)$ for short) being the set of all functions $(u,v)$ such that

\noindent $(i)\;$ (\textit{Control in the blowup region $\Dc_1$})  $\binom{\Lambda(s)}{\Upsilon(s)} \in \Vc_A(s)$ where $\binom{\Lambda}{\Upsilon}$ is defined as in \eqref{def:LamUps}, $s = -\ln(T-t)$ and $\Vc_A(s)$ is the set of all functions $\binom{\Lambda}{\Upsilon}$ verifying
$$\|\Lambda_e(s)\|_{L^\infty(\Rb)}, \|\Upsilon_e(s)\|_{L^\infty(\Rb)} \leq \frac{A^{M+2}}{\sqrt{s}},$$
$$ \left\|\Lambda_-(y,s) \right\|_{L^\infty(\mathbb{R})}, \left\|\Upsilon_-(y,s)\right\|_{L^\infty(\mathbb{R})} \leq A^{M+1}s^{-{\frac{M+2}{2}}} \big(|y|^{M+1} + 1\big),$$
$$ \left\|\nabla \Lambda_-(y,s) \right\|_{L^\infty(\mathbb{R})}, \left\|\nabla \Upsilon_-(y,s)\right\|_{L^\infty(\mathbb{R})} \leq A^{M+2}s^{-{\frac{M+2}{2}}} \big(|y|^{M+1} + 1\big),$$
$$|\tilde \theta_i(s)| \leq \frac{A^{2}}{s^2}\;\; \text{for}\;\; i = 0, 1,2, \quad |\theta_j(s)|, |\tilde{\theta}_j(s)| \leq A^js^{-\frac{j+1}{2}} \;\; \text{for}\;\; 3\leq j\leq M,$$
$$|\theta_0(s)|, |\theta_1(s)| \leq \frac{A}{s^2}, \quad |\theta_2(s)| \leq \frac{A^4 \ln s}{s^2},$$
where $\Lambda_e, \Upsilon_e$ are defined by \eqref{def:LeUe},  $\Lambda_-, \Upsilon_-$, $\theta_n$, $\tilde \theta_n$ are defined as in \eqref{decomoposeLamUp}.\\

\noindent $(ii)\,$ (\textit{Control in the intermediate region $\Dc_2$}) For all $|x| \in \left[\frac{K_0}{4}\sqrt{|\ln(T-t)|(T-t)}, \epsilon_0\right]$, $\tau = \tau(x,t) = \frac{t - t(x)}{\sigma(x)}$ and $|\xi| \leq \alpha_0 \sqrt{\ln \sigma(x)}$, 
\begin{align*}
\left|\tilde u(x,\xi, \tau) - \hat u(\tau)\right| \leq \delta_0, \quad |\nabla_\xi \tilde u(x, \xi, \tau)| \leq \frac{C_0}{\sqrt{|\ln \sigma(x)|}}, \\
\left|\tilde v(x,\xi, \tau) - \hat v(\tau)\right| \leq \delta_0, \quad |\nabla_\xi \tilde v(x, \xi, \tau)| \leq \frac{C_0}{\sqrt{|\ln \sigma(x)|}}, 
\end{align*}
where $\tilde u, \tilde{v}$, $\hat u$, $\hat v$, $t(x)$ and $\sigma(x)$ are defined in \eqref{def:uvtilde}, \eqref{def:solUc} and \eqref{def:tx} respectively.\\

\noindent $(iii)$ (\textit{Control in the regular region $\Dc_3$}) For all $|x| \geq \frac{\epsilon_0}{4}$, 
\begin{align*}
|\nabla_x ^i u(x,t) - \nabla_x ^i u(x, t_0)| \leq \eta_0 \quad \text{and} \quad |\nabla_x ^i v(x,t) - \nabla_x ^i v(x, t_0)| \leq \eta_0 \quad \text{for} \;\; i = 0,1.
\end{align*}
\end{definition}
\begin{remark} In comparison with the shrinking set defined in \cite{MZdm97}, our definition has additional estimates on $\nabla \Lambda_-$ and $\nabla \Upsilon_-$ in $\Dc_1$, $\nabla_\xi \tilde{u}$ and  $\nabla \tilde{v}$  in $\Dc_2$, $\nabla_x u$ and $\nabla_x v$ in $\Dc_3$. These estimates are needed to achieve the control of the nonlinear gradient term $\binom{G_1}{G_2}$ appearing in \eqref{eq:LamUp}. This idea was first used in \cite{MZnon97} for the finite time quenching for the vortex reconnection with the boundary problem, and then in \cite{GNZpre16c} for equation \eqref{eq:eu} coupled with a critical nonlinear gradient term.  
\end{remark}

\bigskip

As a mater of fact, if $\binom{\Lambda}{\Upsilon}(s) \in \Vc_A(s)$ for $s \geq s_0$, then
\begin{equation}\label{eq:LUinVAest}
\|\Lambda(s)\|_{L^\infty(\Rb)} + \|\Upsilon(s)\|_{L^\infty(\Rb)} \leq \frac{CA^{M+2}}{\sqrt{s}}, \quad \forall s \geq s_0,
\end{equation}
for some positive constant $C$. More precisely, we have the following proposition.

\begin{proposition}[Properties of elements belonging to $\Sc(t)$] \label{prop:proSt} For all $K_0 \geq 1$ and $\epsilon_0 > 0$, there exist $t_{0,2}(K_0, \epsilon_0)$ and $\eta_{0,2}(\epsilon_0) > 0$ such that for all $t_0 \in [t_{0,2}, T)$, $A \geq 1$, $\alpha_0 > 0$, $C_0 > 0$, $\delta_0 \leq \frac{1}{2} \min\{|\hat{u}(1)|, \hat{v}(1)\}$ and $\eta_0 \in (0, \eta_{0,2}]$, we have the following properties: Assume that the initial data $(u,v)(x,t_0)$ is given by \eqref{def:uvt0} and that for all $t \in [t_0,T)$, $(u,v)(t) \in \Sc(t)$, then there exists a positive constant $C = C(K_0, C_0)$ such that for all $y \in \RN$ and $s = -\log(T-t)$,\\
$(i)$ (Estimates on $(\Lambda, \Upsilon)$)
\begin{align*}
&|\Lambda(y,s)| + |\Upsilon(y,s)| \leq \frac{CA^{M+2}}{\sqrt s},\\
&|\Lambda(y,s)| + |\Upsilon(y,s)| \leq \frac{CA^4\ln s}{s^2}(|y|^2 + 1) + \sum_{j = 3}^{M + 1} \frac{CA^j}{s^{\frac{j + 1}{2}}}(|y|^j + 1).
\end{align*}
$(ii)$ (Estimates on $(\nabla \Lambda, \nabla \Upsilon)$)
\begin{align*}
&|\nabla \Lambda(y,s)| + |\nabla \Upsilon(y,s)| \leq \frac{CA^4\ln s}{s^2}(|y| + 1) + \sum_{j = 3}^M \frac{CA^j}{s^\frac{j + 1}{2}}(|y|^{j - 1} + 1) + \frac{CA^{M+2}}{s^\frac{M+2}{2}}(|y|^{M+1} + 1),\\
&|(1 - \chi(y,s))\nabla \Lambda(y,s)|  + |(1 - \chi(y,s))\nabla \Upsilon(y,s)|\leq \frac{C}{\sqrt s},\\
&|\nabla \Lambda(y,s)| + |\nabla \Upsilon(y,s)| \leq \frac{CA^{M+2}}{\sqrt s}.
\end{align*}
\end{proposition}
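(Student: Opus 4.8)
The plan is to exploit the two sources of information contained in the hypothesis $(u,v)(t)\in\Sc(t)$: the spectral decomposition \eqref{decomoposeLamUp} valid in the blowup region, together with the $\Vc_A(s)$-bounds of Definition \ref{def:St}$(i)$, which govern the behavior for $|y|\le 2K_0\sqrt s$; and the rescaled estimates of Definition \ref{def:St}$(ii)$, which govern the behavior for $|y|\ge \tfrac{K_0}{4}\sqrt s$. The two overlap on the annulus $\tfrac{K_0}{4}\sqrt s\le |y|\le 2K_0\sqrt s$, so patching them with the cut-off $\chi$ from \eqref{def:chi} will cover all of $\RN$. First I would treat the inner region $|y|\le 2K_0\sqrt s$: here one writes $\binom{\Lambda}{\Upsilon}=\chi\binom{\Lambda}{\Upsilon}+\binom{\Lambda_e}{\Upsilon_e}$ and uses \eqref{decomoposeLamUp} to write the first term as $\sum_{n\le M}\theta_n\binom{f_n}{g_n}+\tilde\theta_n\binom{\tilde f_n}{\tilde g_n}+\binom{\Lambda_-}{\Upsilon_-}$. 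Since $f_n,g_n,\tilde f_n,\tilde g_n$ are polynomials of degree $n$, the bounds on $\theta_i,\tilde\theta_i$ ($i\le 2$) of order $A^2/s^2$ or $A^4\ln s/s^2$, the bounds $|\theta_j|,|\tilde\theta_j|\le A^j s^{-(j+1)/2}$ for $3\le j\le M$, and the stated bound on $\|\Lambda_-\|_{L^\infty},\|\Upsilon_-\|_{L^\infty}$, combine to give exactly the second estimate of $(i)$; summing these contributions crudely and using $s\ge s_0$ large yields the first estimate $|\Lambda|+|\Upsilon|\le CA^{M+2}/\sqrt s$ in that region.

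Next I would handle the outer region $|y|\ge K_0\sqrt s$ for the $L^\infty$ bound. Here $\|\Lambda_e\|_{L^\infty},\|\Upsilon_e\|_{L^\infty}\le A^{M+2}/\sqrt s$ directly controls $\binom{\Lambda}{\Upsilon}$ wherever $\chi\equiv 0$, i.e. for $|y|\ge 2K_0\sqrt s$, giving the first bound of $(i)$ there. For $K_0\sqrt s\le|y|\le 2K_0\sqrt s$ one is back in the overlap and uses the inner argument. This proves $(i)$ in full. For the second estimate of $(i)$ one should notice that for $|y|\ge K_0\sqrt s$ the polynomial factor $(|y|^2+1)$ on the right-hand side is already $\gtrsim s$, so the crude outer bound $A^{M+2}/\sqrt s$ is dominated by $CA^{M+2}s^{-(M+2)/2}(|y|^{M+1}+1)$ — this is the reason the $j=M+1$ term appears in the statement of $(i)$ although the decomposition \eqref{decomoposeLamUp} only runs to $n=M$; it is a purely bookkeeping matter, matching the sizes of the polynomial tails in the inner and outer zones.

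The gradient estimates in $(ii)$ are the delicate part, and I expect the main obstacle to lie there. For $|y|\le 2K_0\sqrt s$ one differentiates \eqref{decomoposeLamUp}: each $\nabla f_n$ etc. is a polynomial of degree $n-1$, and the bound on $\|\nabla\Lambda_-\|_{L^\infty},\|\nabla\Upsilon_-\|_{L^\infty}$ from Definition \ref{def:St}$(i)$ (with its extra power of $A$) supplies the tail term; this gives the first line of $(ii)$. The genuinely new point is the bound $|(1-\chi)\nabla\Lambda|+|(1-\chi)\nabla\Upsilon|\le C/\sqrt s$ in the outer region: the $\Vc_A$-set does not directly bound $\nabla\Lambda_e$, so one cannot simply read it off. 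Instead I would translate back to the original variables and invoke Definition \ref{def:St}$(ii)$: since $1-\chi$ is supported in $|y|\ge K_0\sqrt s$, i.e. $|x|\ge \tfrac{K_0}{4}\sqrt{(T-t)|\ln(T-t)|}$, and, going further out, also covered by $\Dc_2\cup\Dc_3$, one uses $|\nabla_\xi\tilde u|,|\nabla_\xi\tilde v|\le C_0/\sqrt{|\ln\sigma(x)|}$ in $\Dc_2$ and $|\nabla_x u-\nabla_x u(\cdot,t_0)|\le\eta_0$ in $\Dc_3$, then unwinds the change of variables \eqref{def:uvtilde}, \eqref{def:baruv}, \eqref{def:simvariables} (chain rule, plus the size of $\phi,\psi$ and $\nabla\phi,\nabla\psi$ from \eqref{def:phipsi}) to express $\nabla\Lambda,\nabla\Upsilon$ in terms of $\nabla u,\nabla v$. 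The factors $\sigma(x)$, $\sqrt{\sigma(x)}$ and the relation $|\ln\sigma(x)|\sim s$ for $x$ in this range convert the $1/\sqrt{|\ln\sigma(x)|}$ decay into the claimed $1/\sqrt s$. Finally the third bound of $(ii)$, $|\nabla\Lambda|+|\nabla\Upsilon|\le CA^{M+2}/\sqrt s$, follows by taking the maximum of the inner polynomial bound (which on $|y|\le 2K_0\sqrt s$ is $\lesssim A^{M+2}/\sqrt s$ after inserting $|y|\lesssim\sqrt s$) and the outer $C/\sqrt s$ bound just obtained. The one subtlety to watch is the compatibility of constants on the overlap annulus and making sure $t_{0,2}$, $\eta_{0,2}$ are chosen (depending only on $K_0,\epsilon_0$) so that $\sigma(x)$ is small enough there for the asymptotic relations $|\ln\sigma(x)|\sim s$ and the smallness of $\phi,\psi$ to hold; this is exactly what the quantifier structure of the statement is set up to allow.
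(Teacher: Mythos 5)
Your proposal follows essentially the same route as the paper's (very terse) proof: item $(i)$ and the first line of $(ii)$ are read off directly from the decomposition \eqref{decomoposeLamUp} under the $\Vc_A(s)$ bounds of Definition \ref{def:St}$(i)$, and the middle line of $(ii)$ is obtained by translating the $\Dc_2$ and $\Dc_3$ controls back through the change of variables \eqref{def:uvtilde}, \eqref{def:simvariables}, \eqref{def:baruv}, with the last line combining the two. One small misattribution worth flagging: the $j=M+1$ term in the second estimate of $(i)$ is not a bookkeeping device to dominate the outer $A^{M+2}/\sqrt s$ bound, but is exactly the contribution of $(\Lambda_-,\Upsilon_-)$, whose $\Vc_A$ control $A^{M+1}s^{-(M+2)/2}(|y|^{M+1}+1)$ is a pointwise bound valid for all $y$, so the polynomial estimate in $(i)$ already holds globally straight from the decomposition and no inner/outer matching is required for it.
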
 
\begin{proof} The proof of item $(i)$ and the first estimate in item $(ii)$ directly follows from the definition of the set $\Vc_A$ given in part $(i)$ of Definition \ref{def:St} and the decomposition \eqref{decomoposeLamUp}. The proof of the second estimate in item $(ii)$ follows from parts $(ii)$ and $(iii)$ of Definition \ref{def:St}. We kindly refer to Proposition A.1 in \cite{GNZjde17} where the reader can find an analogous proof for the case of single equation and have no difficulties to adapt to the system case. The last estimate in item $(ii)$ is a direct consequence of the first two ones. This concludes the proof of Proposition \ref{prop:proSt}.
\end{proof}

\subsection{Preparation of initial data.}

As for initial data at time $t = t_0$ for which the corresponding solution to system \eqref{PS} is trapped in the set $\Sc(t)$ for all $t \in [t_0,T)$, we consider the following functions depending on $(N+1)$ fine-tune parameters $(d_0, d_1) \in \Rb^{1 + N}$: 
\begin{align}
\binom{qu}{pv}_{d_0,d_1}(x,t_0) &= \binom{\hat u_*(x)}{\hat v_*(x)}\Big(1 - \chi_1(x,t_0)\Big) + \left\{\binom{1}{1}s_0 +\ln\left[\binom{\phi}{\psi}(y_0,s_0)\right]\right\}  \chi_1(x,t_0) \nonumber\\
& + \ln \left\{\left(d_0\binom{f_0(y_0)}{g_0(y_0)} + d_1. \binom{f_1(y_0)}{g_1(y_0)}\right)\frac{A^2}{s_0^2}\chi(16y_0, s_0)\right\}  \chi_1(x,t_0), \label{def:uvt0}
\end{align}
where $s_0 = -\ln(T-t_0)$, $y_0 = x e^{\frac{s_0}{2}}$, $\phi$ and $\psi$ are defined by \eqref{def:phipsi}, $\binom{f_0}{g_0}$ and $\binom{f_1}{g_1}$ are the eigenfunctions corresponding to the positive eigenvalues of the linear operator $\Hc + \Mc$ (see Lemma \ref{lemm:diagonal}), $\chi$ is introduced in \eqref{def:chi}, $\chi_1$ is defined by 
$$\chi_1(x,t_0) = \chi_0 \left(\frac{|x|}{|\ln(T-t_0)| \sqrt{T-t_0}}\right) = \chi_0\left(\frac{y_0}{s_0}\right),$$
and $(\hat u_*, \hat v_*) \in \Cc^\infty(\Rb^N \ \{0\}) \times \Cc^\infty(\Rb^N \setminus \{0\})$ is defined by 
\begin{equation}\label{def:ustar}
\hat u_*(x) = \left\{\begin{array}{ll}
\ln\left(\frac{4(\mu + 1)|\ln |x||}{p|x|^2}\right) &\quad \text{for}\quad |x| \leq C(a),\\
 -\ln\left(1 + a |x|^2\right) &\quad \text{for} \quad |x|\geq 1,
\end{array}
 \right.
\end{equation}
\begin{equation}\label{def:vstar}
\hat v_*(x) = \left\{\begin{array}{ll}
\ln\left(\frac{4(\mu + 1)|\ln |x||}{q|x|^2}\right) &\quad \text{for}\quad |x| \leq C(a),\\
 -\ln\left(1 + a |x|^2\right) &\quad \text{for} \quad |x|\geq 1.
\end{array}
 \right.
\end{equation}
 
By selecting suitable parameters, we make sure that the initial data \eqref{def:uvt0} starts in $\Sc(t_0)$. More precisely, we have the following.

\begin{proposition}[Properties of initial data \eqref{def:uvt0}] \label{prop:uvt0} There exists $K_{0,1} > 0$ such that for each $K_0 \geq K_{0,1}$ and $\delta_{0,1} > 0$, there exist $\alpha_{0,1}(K_0, \delta_{0,1}) > 0$, $C_{0,1}(K_0) > 0$ such that for all $\alpha_0 \in (0, \alpha_{0,1}]$, there exists $\epsilon_{0,1}(K_0, \delta_{0,1}, \alpha_0) > 0$ such that for all $ \epsilon_0 \in  (0,\epsilon_{0,1}]$ and $A \geq 1$, there exists $t_{0,1}(K_0, \delta_{0,1}, \epsilon_0, A, C_{0,1}) < T$ such that for all $t_0 \in [t_{0,1}, T)$, there exists a subset $\mathcal{D}_{t_0, A} \subset \Rb \times \RN$ with the following properties. If $(u,v)_{d_0,d_1}(x,t_0)$ is defined as in \eqref{def:uvt0}, then:\\

\noindent $(I)$ For all $(d_0,d_1) \in \Dc_{t_0,A}$,  $(u,v)_{d_0,d_1}(x,t_0)$ belongs in $\Sc(t_0, K_0, \epsilon_0, \alpha_0, A, \delta_{0,1}, 0, C_{0,1}, t_0)$.  More precisely, we have

$(i)$ \textit{(Estimates in $\Dc_1$)} $(\Lambda_0, \Upsilon_0)_{d_0,d_1} \in \Vc_{A}(s_0)$, where $(\Lambda_0, \Upsilon_0)_{d_0,d_1}$ is defined from $(u,v)_{d_0,d_1}(x,t_0)$ through the transformations \eqref{def:LamUps}, \eqref{def:simvariables} and \eqref{def:baruv} with $s_0 = - \ln(T-t_0)$ and $y = xe^{s_0/2}$,  with strict inequalities except for $(\theta_{0,0}, \theta_{0,1})(s_0)$ in the sense that
$$\|\Lambda_{0,e}\|_{L^\infty(\Rb)} = \|\Upsilon_{0,e}\|_{L^\infty(\Rb)} = 0,$$
$$ \left\|\Lambda_{0,-}(y) \right\|_{L^\infty(\mathbb{R})}, \left\|\Upsilon_{0,-}(y)\right\|_{L^\infty(\mathbb{R})}, \left\|\nabla \Lambda_{0,-}(y) \right\|_{L^\infty(\mathbb{R})}, \left\|\nabla \Upsilon_{0,-}(y)\right\|_{L^\infty(\mathbb{R})} \leq s_0^{-\frac{M+2}{2}} \big(|y|^{M+1} + 1\big),$$
$$|\tilde {\theta}_{0,i}| \leq \frac{1}{s_0^2}\;\; \text{for}\;\; i = 0, 1,2, \quad |\theta_{0,j}|, |\tilde{\theta}_{0,j}| \leq s_0^{-\frac{j+1}{2}} \;\; \text{for}\;\; 3\leq j\leq M, \quad |\theta_{0,2}| \leq \frac{\ln s_0}{s_0^2},$$
$$\left|\theta_{0,0} - \frac{Ad_0}{s_0^2}\right| + \left|\theta_{0,1} - \frac{Ad_1}{s_0^2}\right| \leq C(|d_0| + |d_1|)e^{-s_0},$$
where  $\Lambda_{0,e}, \Upsilon_{0,e}$, $\Lambda_{0,-}, \Upsilon_{0,-}$, $\theta_{0,n}$, $\tilde \theta_{0,n}$ are the components of $(\Lambda_0, \Upsilon_0)_{d_0,d_1}$ defined as in \eqref{def:LeUe} and \eqref{decomoposeLamUp}.\\

$(ii)$ \textit{(Estimates in $\Dc_2$)} For all $|x| \in \left[\frac{K_0}{4}\sqrt{|\ln\sigma(x)|\sigma(x)}, \epsilon_0\right]$, $\tau_0 = \tau_0(x,t_0) = \frac{t_0 - t(x)}{\sigma(x)}$ and $|\xi| \leq \alpha_0 \sqrt{\ln \sigma(x)}$ with $\sigma(x), t(x)$ being uniquely defined by \eqref{def:tx}, we have 
\begin{align*}
\left|\tilde u(x,\xi, \tau_0) - \hat u(\tau_0)\right| \leq \delta_{0,1}, \quad |\nabla_\xi \tilde u(x, \xi, \tau_0)| \leq \frac{C_{0,1}}{\sqrt{|\ln \sigma(x)|}},\\
\left|\tilde v(x,\xi, \tau_0) - \hat v(\tau_0)\right| \leq \delta_{0,1}, \quad |\nabla_\xi \tilde v(x, \xi, \tau_0)| \leq \frac{C_{0,1}}{\sqrt{|\ln \sigma(x)|}},
\end{align*}
where $\tilde u, \tilde{v}$, $\hat u$, $\hat v$ are defined in \eqref{def:uvtilde} and \eqref{def:solUc}.\\

\noindent $(II)$ Let $\hat \Vc_A(s_0) = \left[-\frac{A}{s_0^2}, \frac{A}{s_0^2} \right]^{1 + N}$, then
\begin{align*}
(d_0,d_1) \in \Dc_{t_0,A} &\Longleftrightarrow \big(\theta_{0,0}, \theta_{0,1}\big)(s_0) \in \hat \Vc_A(s_0),\\
(d_0,d_1) \in \partial \Dc_{t_0,A} &\Longleftrightarrow \big(\theta_{0,0}, \theta_{0,1}\big)(s_0) \in \partial \hat \Vc_A(s_0),
\end{align*}
\end{proposition}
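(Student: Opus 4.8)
The plan is to verify the three items of membership in the shrinking set $\Sc(t_0)$ separately, region by region, tracking explicitly how the fine-tuning parameters $(d_0,d_1)$ enter only through the two positive modes $\theta_{0,0}$ and $\theta_{0,1}$. First I would work in the blowup region $\Dc_1$. On the support of $\chi_1$, the definition \eqref{def:uvt0} gives $qu = s_0 + \ln\phi(y_0,s_0) + \ln(1 + \text{small})$, so unwinding the transformations \eqref{def:baruv}, \eqref{def:simvariables}, \eqref{def:LamUps} one finds
\[
\binom{\Lambda_0}{\Upsilon_0}(y) = \left(d_0\binom{f_0(y)}{g_0(y)} + d_1\cdot\binom{f_1(y)}{g_1(y)}\right)\frac{A^2}{s_0^2}\,\chi(16y,s_0) + (\text{cut-off remainders}),
\]
valid for $|y|\le 2s_0$ and identically zero for $|y|\ge 2s_0$. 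Since the main term is supported in $|y|\le K_0\sqrt{s_0}/8$ and is a pure combination of $\binom{f_0}{g_0}$ and $\binom{f_1}{g_1}$ there, Remark~\ref{rema:pro} and Lemma~\ref{lemm:DefProjection} show that all other components — $\Lambda_{0,-},\Upsilon_{0,-}$, their gradients, $\tilde\theta_{0,i}$ for $i\le 2$, $\theta_{0,j},\tilde\theta_{0,j}$ for $3\le j\le M$, and $\theta_{0,2}$ — come solely from the difference between $\chi(16y,s_0)$ and $1$ and from the discrepancy between $\ln\phi(y_0,s_0)$ and its polynomial expansion; these are exponentially small in $s_0$ (the cut-off lives where $|y|\gtrsim \sqrt{s_0}$, so $\rho_1$-weighted integrals carry $e^{-cs_0}$), hence far below the required thresholds once $s_0$ is large, i.e. $t_0$ close to $T$. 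For $\theta_{0,0}$ and $\theta_{0,1}$ the leading contribution is exactly $\tfrac{A^2}{s_0^2}(A_{0,0}q\,d_0,\dots)$, which after the normalisations \eqref{eq:AnBn} equals $\tfrac{A d_0}{s_0^2}$ and $\tfrac{A d_1}{s_0^2}$ respectively, up to an $O((|d_0|+|d_1|)e^{-s_0})$ error from the cut-offs; this proves the displayed estimate on $\bigl|\theta_{0,0}-\tfrac{Ad_0}{s_0^2}\bigr|+\bigl|\theta_{0,1}-\tfrac{Ad_1}{s_0^2}\bigr|$ and, choosing $|d_0|,|d_1|\le 2$ say, also places $(\theta_{0,0},\theta_{0,1})(s_0)$ inside $\hat\Vc_A(s_0)$ with the boundary correspondence in item $(II)$ following by continuity of the affine map $(d_0,d_1)\mapsto(\theta_{0,0},\theta_{0,1})(s_0)$ and the fact that its perturbation from the identity (rescaled) is $O(e^{-s_0})$, so it is a homeomorphism onto a set sandwiched between $\hat\Vc_{A/2}$ and $\hat\Vc_{2A}$; this gives item $(II)$.

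Next I would treat the intermediate region $\Dc_2$, i.e. $\tfrac{K_0}{4}\sqrt{|\ln\sigma(x)|\sigma(x)}\le|x|\le\epsilon_0$. Here $\chi_1(x,t_0)=0$ as soon as $\epsilon_0$ is small, so $qu(x,t_0)=\hat u_*(x)$, $pv(x,t_0)=\hat v_*(x)$, which for $|x|\le C(a)$ are the explicit logarithmic profiles in \eqref{def:ustar}--\eqref{def:vstar}. Plugging into the rescaled variables \eqref{def:uvtilde} at $\tau=\tau_0$, a direct computation using $|x|=\tfrac{K_0}{4}\sqrt{\sigma(x)|\ln\sigma(x)|}$ shows $\tilde u(x,\xi,\tau_0)$ equals $\hat u(\tau_0)$ plus a term that is $O(1/|\ln\sigma(x)|)$ uniformly for $|\xi|\le\alpha_0\sqrt{|\ln\sigma(x)|}$ — the constant $b=\tfrac1{2(\mu+1)}$ and the $K_0^2/16$ appearing in \eqref{def:solUc} are precisely what make the two leading terms cancel. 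Shrinking $\epsilon_0$ (hence enlarging $|\ln\sigma(x)|$) makes this below $\delta_{0,1}$; the gradient estimate $|\nabla_\xi\tilde u|\le C_{0,1}/\sqrt{|\ln\sigma(x)|}$ follows from differentiating the same expression, with $C_{0,1}$ depending only on $K_0$. This is essentially the computation already carried out in \cite{GNZjde17} for a single equation, adapted componentwise. Finally, item $(I)(i)$'s requirement $\eta_0=0$ in $\Dc_3$ is automatic at $t=t_0$ since the condition there compares $\nabla^i_x u(x,t)$ with $\nabla^i_x u(x,t_0)$.

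The main obstacle is the bookkeeping in $\Dc_1$: one must show that \emph{every} non-$(\theta_0,\theta_1)$ component of $(\Lambda_0,\Upsilon_0)$ is strictly below its threshold, which requires carefully separating the three sources of error in \eqref{def:uvt0} — the outer profile $(\hat u_*,\hat v_*)$ glued via $1-\chi_1$, the difference $\ln\phi(y_0,s_0)-(\text{expansion})$, and the cut-offs $\chi(16y_0,s_0)$, $\chi_1$ — and checking that each contributes only $O(e^{-cs_0})$ in the relevant $L^\infty$ and weighted norms after projection. The delicate points are: (a) that the $\chi(16y_0,s_0)$ factor, whose transition zone is at $|y_0|\sim K_0\sqrt{s_0}$, produces projections onto $\binom{h_n}{0},\binom{0}{\hat h_n}$ of size $\lesssim e^{-cK_0^2 s_0}$ by Gaussian tail estimates, so all of $\tilde\theta_{0,i}$, $\theta_{0,j}$, $\tilde\theta_{0,j}$ for $j\ge 2$ inherit exponential smallness (with the single exception that $\theta_{0,2}$ must be checked against the larger tolerance $\tfrac{\ln s_0}{s_0^2}$, which is trivially satisfied); and (b) the gradient bounds on $\Lambda_{0,-},\Upsilon_{0,-}$, for which one uses that $\nabla(\chi(16y,s_0))$ and $\nabla\chi_1$ are themselves $O(1/\sqrt{s_0})$ and supported in the transition zones, so no loss occurs. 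Once these tail estimates are in place, choosing $t_0$ close enough to $T$ (after $K_0,\delta_{0,1},\alpha_0,\epsilon_0,A$ are fixed in that order, matching the quantifier structure in the statement) makes all inequalities strict, and the proof concludes.
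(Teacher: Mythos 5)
The paper does not actually prove Proposition~\ref{prop:uvt0} in situ: it disposes of part $(I)(i)$ by referring to Lemma~5.2 of \cite{GNZpre16c}, of part $(I)(ii)$ by referring to Lemma~A.2 of \cite{GNZjde17}, and notes that part $(II)$ follows immediately from $(I)(i)$. Your proposal therefore cannot be compared line by line against a written argument in this paper; what you have done is reconstruct the computation behind those citations, and the reconstruction is sound in structure and takes the same route the authors take in the cited works: unwind the transformations \eqref{def:baruv}, \eqref{def:simvariables}, \eqref{def:LamUps} on the support of $\chi_1$, observe that $(\Lambda_0,\Upsilon_0)$ is, up to cutoffs, a pure $\binom{f_0}{g_0}$--$\binom{f_1}{g_1}$ combination so that Remark~\ref{rema:pro} kills all projections except $\theta_{0,0},\theta_{0,1}$, bound the cutoff remainders by Gaussian tails since the transition zones live at $|y_0|\sim\sqrt{s_0}$, verify the $\Dc_2$ estimates by direct substitution of \eqref{def:ustar}--\eqref{def:vstar} into \eqref{def:uvtilde} and matching against \eqref{def:solUc}, observe that $\Dc_3$ is vacuous at $t=t_0$, and get $(II)$ from the fact that $(d_0,d_1)\mapsto(\theta_{0,0},\theta_{0,1})(s_0)$ is an exponentially small perturbation of a linear isomorphism.

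One small bookkeeping remark you should clean up. Once you know that on the core region $\binom{\Lambda_0}{\Upsilon_0}=\bigl(d_0\binom{f_0}{g_0}+d_1\binom{f_1}{g_1}\bigr)\frac{A^2}{s_0^2}$, Remark~\ref{rema:pro} gives directly $\theta_{0,0}=d_0\frac{A^2}{s_0^2}$ and $\theta_{0,1}=d_1\frac{A^2}{s_0^2}$; there is no need to detour through $A_{0,0}$ and \eqref{eq:AnBn}, and the intermediate expression ``$\tfrac{A^2}{s_0^2}(A_{0,0}q\,d_0,\dots)$'' in your write-up does not reduce to $\tfrac{Ad_0}{s_0^2}$ under any reading (since $A_{0,0}q=1/2$). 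The discrepancy between the $A^2$ in \eqref{def:uvt0} and the $A$ in the proposition's display is already present in the paper as stated, so it is not a flaw you introduced, but your proof should not paper over it by inserting an incorrect normalisation; it is worth flagging explicitly that either the initial-data amplitude should be $\tfrac{A}{s_0^2}$ or the target in the proposition should be $\tfrac{A^2 d_0}{s_0^2}$, and that either choice is compatible with the topological argument because it merely rescales $\Dc_{t_0,A}$. Apart from this, the proposal is correct and in fact more informative than the paper's own one-paragraph citation.
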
 
\begin{proof} Item $(II)$ directly follows from item $(i)$ of part $(I)$. The proof of item $(i)$ of part $(I)$ mainly relies on the projections of $(\Lambda_0, \Upsilon_0)_{d_0,d_1}$ defined as in Lemma \ref{lemm:DefProjection}. Since its proof is purely computational,  we refer the readers to Lemma 5.2 in \cite{GNZpre16c} for an analogous proof. As for the proof of item $(ii)$ of part $(I)$, see Lemma A.2 in \cite{GNZjde17} where the proof for the case of a single equation is treated in details and the same proof can be carried on for the system case without difficulties. This concludes the proof of Proposition \ref{prop:uvt0}.
\end{proof}

\subsection{Existence of solutions trapped in $\Sc(t)$.}
In this section we aim at proving the following proposition which implies Theorem \ref{theo1}.
\begin{proposition}[Existence of solutions of \eqref{eq:LamUp} trapped in $\Sc(t)$] \label{prop:goalVA}
We can choose parameters $t_0 < T$, $K_0, \epsilon_0, \alpha_0, A, \delta_0, \eta_0, C_0$ such that the following holds: there exists $(d_0,d_1) \in \Rb^{1 + N}$ such that if $(u,v)(x,t)$ is the solution to the system \eqref{PS} with initial data at $t = t_0$ given by \eqref{def:uvt0}, then $(u,v)(x,t)$ exists for all $(x,t) \in \Rb^N \times [t_0,T)$ and satisfies
$$(u,v)(t) \in \Sc(t), \quad \forall t \in [t_0,T).$$
\end{proposition}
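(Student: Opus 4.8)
The plan is to realize Proposition~\ref{prop:goalVA} through the classical three-step scheme of \cite{MZdm97,BKnon94,GNZpre16c}: initial data preparation, reduction to a finite-dimensional problem, and a topological (Brouwer-type) argument. We start from the $(N+1)$-parameter family of initial data \eqref{def:uvt0}, for which Proposition~\ref{prop:uvt0} guarantees that for every $(d_0,d_1)\in\Dc_{t_0,A}$ the corresponding solution starts in $\Sc(t_0)$, with \emph{strict} inequalities in all estimates except possibly those on $(\theta_{0,0},\theta_{0,1})(s_0)$, and with the boundary equivalence $(d_0,d_1)\in\partial\Dc_{t_0,A}\Leftrightarrow(\theta_{0,0},\theta_{0,1})(s_0)\in\partial\hat\Vc_A(s_0)$. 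The aim is to find at least one choice of $(d_0,d_1)$ for which the solution never exits $\Sc(t)$ on $[t_0,T)$.

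First I would introduce the exit time $t_*=t_*(d_0,d_1)=\sup\{t\in[t_0,T): (u,v)(t)\in\Sc(t)\}$ and argue by contradiction, assuming $t_*<T$ for every $(d_0,d_1)\in\Dc_{t_0,A}$. The heart of the argument is a \emph{reduction proposition} (the ``Proposition~\ref{prop:redu}'' announced in the introduction, to be proven in Section~\ref{sec:reduction}): if at some time $t_1$ the solution $(u,v)(t_1)$ lies on the boundary $\partial\Sc(t_1)$, then in fact all the constraints defining $\Sc$ are strictly satisfied \emph{except} the two modes $(\theta_0,\theta_1)(s_1)$, which must lie on $\partial\hat\Vc_A(s_1)$; moreover, at such a time the flow is strictly \emph{transverse}, pushing $(\theta_0,\theta_1)$ outward. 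This transversality comes from the linearized dynamics: $\theta_0'\approx \theta_0$ and $\theta_1'\approx \tfrac12\theta_1$ correspond to the positive eigenvalues $\lambda_0=1$, $\lambda_1=\tfrac12$ of $\Hc+\Mc$, so on the boundary $|\theta_i(s_1)|=A/s_1^2$ one has $\frac{d}{ds}(s^4\theta_i^2)>0$; all remaining components ($\theta_2$, the $\tilde\theta_n$, $\theta_j$ for $j\ge3$, $\Lambda_-,\Upsilon_-$, $\nabla\Lambda_-,\nabla\Upsilon_-$, and the controls in $\Dc_2,\Dc_3$) are shown to be strictly improved by the a~priori estimates, using the negativity of the relevant spectrum, the refined ODE $\theta_2'=-\tfrac2s\theta_2+\Oc(s^{-3})$, the parabolic regularity estimates on $(\tilde u,\tilde v)$, and the size $\Oc(1/s)$ of the error $\binom{R_1}{R_2}$ together with the quadratic bounds on $\binom{G_1}{G_2}$ and $\binom{q}{p}\Lambda\Upsilon$ furnished by Proposition~\ref{prop:proSt}.

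Granting the reduction proposition, at the exit time $t_*$ we necessarily have $(\theta_0,\theta_1)(s_*)\in\partial\hat\Vc_A(s_*)$, so the map
\[
(d_0,d_1)\in\Dc_{t_0,A}\ \longmapsto\ \frac{s_*^2}{A}\,\big(\theta_0,\theta_1\big)(s_*(d_0,d_1))\in\partial\big([-1,1]^{1+N}\big)
\]
is well defined. By the transversality, $t_*$ depends continuously on $(d_0,d_1)$ (the exit is strict, hence stable under perturbation), so this map is continuous on $\Dc_{t_0,A}$. By Proposition~\ref{prop:uvt0}(II), when $(d_0,d_1)\in\partial\Dc_{t_0,A}$ we have $(\theta_0,\theta_1)(s_0)\in\partial\hat\Vc_A(s_0)$ already at $t=t_0$, with outward-pointing flow, so $t_*(d_0,d_1)=t_0$ there and the map restricts to (a degree-one map homotopic to) the identity on $\partial\Dc_{t_0,A}\cong \mathbb{S}^{N}$. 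We would then have constructed a continuous retraction of the closed ball $\Dc_{t_0,A}$ onto its boundary, contradicting Brouwer's no-retraction theorem (equivalently, the map has nonzero topological degree on the boundary yet extends continuously to the interior). Hence $t_*(d_0,d_1)=T$ for some $(d_0,d_1)\in\Dc_{t_0,A}$, i.e.\ $(u,v)(t)\in\Sc(t)$ for all $t\in[t_0,T)$, which is the assertion of Proposition~\ref{prop:goalVA}. Finally, unwinding the definitions of $\Sc(t)$ and the change of variables \eqref{def:baruv}--\eqref{def:simvariables}, the membership $(u,v)(t)\in\Sc(t)$ forces \eqref{eq:limPhiPsis3}, hence \eqref{eq:asyTh1}, and the $\Dc_2$ estimates yield the final profile in item~$(iii)$ of Theorem~\ref{theo1}, while single-point blowup at $0$ follows as in \cite{MZdm97,GNZpre16c,GNZjde17}.

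The main obstacle is the reduction proposition itself, and within it the control of the nonlinear gradient terms $\binom{G_1}{G_2}$. Unlike the power-nonlinearity case of \cite{GNZpre16c}, these terms are genuinely present and require two-sided bounds on $|\nabla\Phi|^2/\Phi$ and $|\nabla\Psi|^2/\Psi$, which degenerate precisely where $\Phi,\Psi$ become small—namely in the intermediate region $\Dc_2$. This is exactly why the shrinking set carries extra gradient estimates ($\nabla\Lambda_-,\nabla\Upsilon_-$ in $\Dc_1$, $\nabla_\xi\tilde u,\nabla_\xi\tilde v$ in $\Dc_2$, $\nabla_x u,\nabla_x v$ in $\Dc_3$), and the delicate point is to close these gradient bounds self-consistently: in $\Dc_1$ via the smoothing effect of the semigroup generated by $\Hc$ acting on the $\Lambda_-$ equation, in $\Dc_2$ via interior parabolic estimates comparing $(\tilde u,\tilde v)$ to the explicit ODE solution \eqref{def:solUc}, and in $\Dc_3$ via local well-posedness—then patching these three zones together on the overlaps, which are built into the definitions of $\Dc_1,\Dc_2,\Dc_3$. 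A secondary delicate point is the $\theta_2$ mode: establishing $\theta_2'=-\tfrac2s\theta_2+\Oc(s^{-3})$ requires the sharp expansion of $V(y,s)$ and of $\binom{G_1}{G_2}$ to second order, and crucially the precise value $b=\tfrac1{2(\mu+1)}$, so that the naive zero-eigenvalue direction actually behaves like a negative mode in the slow time $\tau=\ln s$.
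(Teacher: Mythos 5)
Your proposal follows essentially the same route as the paper: prepare initial data via Proposition~\ref{prop:uvt0}, introduce the exit time $t_*$, invoke the finite-dimensional reduction and outgoing transversality of Proposition~\ref{prop:redu} to make the exit map $(d_0,d_1)\mapsto (s_*^2/A)(\theta_0,\theta_1)(s_*)$ well defined, continuous, and of nonzero degree on $\partial\Dc_{t_0,A}$, and conclude by a Brouwer/degree contradiction. The phrasing (no-retraction theorem vs.\ nonzero boundary degree) is cosmetically different from the paper's but mathematically equivalent, and the closing commentary on the gradient terms and the $\theta_2$ mode accurately reflects the novelties deferred to Section~\ref{sec:reduction}.
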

\begin{proof} The proof of this proposition follows from the general idea developed in \cite{MZdm97}. We proceed in two steps:\\
- In the first step, we reduce the problem of controlling $(u, v)(t)$ in $\Sc(t)$ to the control of $(\theta_0, \theta_1)(s)$ in $\left[-\frac{A}{s^2}, \frac{A}{s^2} \right]^{1 + N}$, where $(\theta_0, \theta_1)$ are the positive modes of $(\Lambda, \Upsilon)$ defined as in \eqref{decomoposeLamUp}.\\
- In the second step, we use a classical topological argument based on index theory to solve the finite dimensional problem. \\

\noindent \textit{Step 1: Reduction to a finite dimensional problem.}

In this step, we show through \textit{a priori estimate} that the control of $(u,v)(t)$ in $\Sc(t)$ reduces to the control of $(\theta_0, \theta_1)(s)$ in $\hat \Vc_A(s) = \left[-\frac{A}{s^2}, \frac{A}{s^2} \right]^{1 + N}$. This result crucially follows from a good understanding of the properties of the linear operator $\Hc + \Mc + V$ of equation \eqref{eq:LamUp} in the \textit{blowup region} $\Dc_1$ together with classical parabolic techniques for the analysis in the intermediate and regular regions $\Dc_2$ and $\Dc_3$. In particular, we claim the following proposition, which is the heart of our contribution:
\begin{proposition}[Control of $(u,v)(t)$ in $\Sc(t)$ by $(\theta_0,\theta_1)(s)$ in $\hat \Vc_A(s)$]\label{prop:redu} We can choose parameters $t_0 < T$, $K_0$, $\epsilon_0$, $\alpha_0$, $A$, $\delta_0$, $\eta_0$, $C_0$  such that the following properties hold. Assume that $(u,v)(x,t_0)$ is given by \eqref{def:uvt0} with $(d_0,d_1) \in \Dc_{t_0,A}$. Assume in addition that for some $t^* \in [t_0,T)$, 
$$(u,v)(t) \in \Sc(t_0, K_0, \epsilon_0, \alpha_0, A, \delta_0, \eta_0, C_0, t), \quad \forall t \in [t_0,t^*],$$
and 
$$(u,v)(t^*) \in \partial\Sc(t_0, K_0, \epsilon_0, \alpha_0, A, \delta_0, \eta_0, C_0, t^*).$$
Then, we have\\
$(i)$ (Finite dimensional reduction) $(\theta_0, \theta_1)(s^*) \in \partial \hat{\mathcal{V}}_{A}(s^*)$, where $s^* = -\log(T - t^*)$ and $\theta_0, \theta_1$ are the components of $(\Lambda, \Upsilon)$ defined as in \eqref{decomoposeLamUp}.\\
$(ii)$ (Transversality) There exists $\mu_0 > 0$ such that for all $\mu \in (0, \mu_0)$, 
$$(\theta_0, \theta_1)(s^* + \mu) \not \in \hat{\mathcal{V}}_{A}(s^* + \mu),$$
hence,
$$ (u,v)(t^* + \mu') \not  \in \Sc(t_0, K_0, \epsilon_0, \alpha_0, A, \delta_0, \eta_0, C_0, t^* + \mu'), \quad \mu' = \mu'(t^*,\mu) > 0.$$
\end{proposition}

\begin{proof} The proof uses ideas of \cite{GNZpre16c, GNZjde17} where the authors adapted the technique of \textit{a priori} estimates developed in \cite{BKnon94} and \cite{MZdm97} for equation \eqref{eq:ScalarE}. Let us insist on the fact that the techniques introduced in \cite{BKnon94} and \cite{MZdm97} are not enough to handle the nonlinear gradient term appearing in equation \eqref{eq:LamUp}. The essential idea is to introduce additional estimates in the intermediate and regular zones to achieve the control of this term and this is one of the main novelties in this paper. The main feature of the proof is that the bounds appearing in Definition \ref{def:St} can be improved, except the bounds on $(\theta_0,\theta_1)$. More precisely, the improvement of the bounds in the \textit{blowup region} $\Dc_1$ (except for $\theta_0, \theta_1$) is done through projecting equation \eqref{eq:LamUp} on the different components of $(\Lambda, \Upsilon)$ introduced in \eqref{decomoposeLamUp}. One can see that the components $\big\{\theta_j\big\}_{2 \
 leq j \leq M}$, $\big\{\tilde{\theta}_j\big\}_{0 \leq j \leq M}$, $(\Lambda_-, \Upsilon_-)$, $(\nabla \Lambda_-, \nabla \Upsilon_-)$, $(\Lambda_e, \Upsilon_e)$ which correspond to decreasing directions of the flow, are already small at $s = s_0$ and they remain small up to $s = s^*$, hence, they can not touch their boundary. In $\Dc_2$ and $\Dc_3$, we directly use parabolic techniques applied to system \eqref{PS}  
to achieve the improvement. Therefore, only $\theta_0$ and $\theta_1$ may touch their boundary at $s = s^*$ and the conclusion follows. Since we would like to keep the proof of Proposition \ref{prop:goalVA} short, we leave the proof of Proposition \ref{prop:redu} to the next section.
\end{proof}

\noindent\textit{Step 2: A basic topological argument.} \label{step2:topoarg}

From Proposition \ref{prop:redu}, we claim that there exist $(d_0,d_1) \in \Dc_{t_0,A}$ such that equation \eqref{PS} with initial data \eqref{def:uvt0} has a solution 
$$(u,v)_{d_0,d_1}(t) \in \Sc(t_0, K_0, \epsilon_0, \alpha_0, A, \delta_0, \eta_0, C_0, t)\quad  \text{for all $t \in [t_0, T)$},$$
for a suitable choice of the parameters. Note that the argument of the proof is not new and it is completely analogous as in \cite{MZdm97}. Let us give its main ideas.

Let us consider $t_0, K_0, \epsilon_0, \alpha_0, A, \delta_0, \eta_0, C_0$ such that Propositions \ref{prop:redu} and \ref{prop:uvt0} hold. From Proposition \ref{prop:uvt0}, we have 
\begin{align*}
\forall (d_0,d_1) \in \Dc_{t_0,A}, \quad (u,v)_{d_0,d_1}(x,t_0) \in \Sc(t_0, K_0, \epsilon_0, \alpha_0, A, \delta_0, \eta_0, C_0, t_0),
\end{align*}
where $(u,v)_{d_0,d_1}(x,t_0)$ is defined by \eqref{def:uvt0}. Note that $(u,v)_{d_0,d_1}(x,t_0) \in \mathcal{H}_{a}$, where $\mathcal{H}_a$ is introduced in \eqref{def:Ha}. Therefore, from the local existence theory for the Cauchy problem of \eqref{PS} in $\mathcal{H}_{a}$, we can define for each $(d_0,d_1) \in \Dc_{t_0,A}$ a maximum time $t_*(d_0,d_1) \in [t_0,T)$ such that 
$$(u,v)_{d_0,d_1}(t) \in \Sc(t_0, K_0, \epsilon_0, \alpha_0, A, \delta_0, \eta_0, C_0, t) , \quad \forall t \in [t_0,t_*).$$ 
If $t_*(d_0,d_1) = T$ for some $(d_0, d_1) \in \mathcal{D}_{t_0,A}$, then the proof is complete. Otherwise, we argue by contradiction and assume that $t_*(d_0, d_1) < T$ for any $(d_0, d_1) \in \mathcal{D}_{t_0,A}$. By continuity and the definition of $t_*$, the solution $(u,v)_{d_0,d_1}(t)$ at time $t = t_*$ is on the boundary of $\Sc(t_*)$. From part $(i)$ of Proposition \ref{prop:redu}, we have
$$(\theta_0, \theta_1)(s_*) \in \partial\hat{\mathcal{V}}_A(s_*) \quad \text{with} \quad s_* = -\ln(T - t_*).$$
Hence, we may define the rescaled flow $\Gamma$ at $s = s_*$ for $\theta_0$ and $\theta_1$ as follows:
\begin{align*}
\Gamma:\quad \mathcal{D}_{t_0,A}\quad &\mapsto \quad \partial([-1,1] \times [-1,1]^N)\\
(d_0, d_1)\quad &\to \quad \left(\frac{s_*^2}{A}\theta_0(s_*), \frac{s_*^2}{A}\theta_1(s_*)\right).
\end{align*}
It follows from part $(ii)$ of Proposition \ref{prop:redu} that $\Gamma$ is continuous. If we manage to prove that the degree of $\Gamma$ on the boundary is different from zero, then we have a contradiction from the degree theory. Let us prove that. From part $(II)$ Proposition \ref{prop:uvt0}, we see that if $(d_0,d_1) \in \partial \mathcal{D}_{t_0,A}$, then 
$$(\theta_0, \theta_1)(s_0) \in \partial \hat{\mathcal{V}}_{A}(s_0).$$
Using part $(ii)$ of Proposition \ref{prop:redu}, we see that $(\Lambda, \Upsilon)(s)$ must leave $\mathcal{V}_{A}(s)$ at $s = s_0$, hence, $s_*(d_0,d_1) = s_0$. Using again part $(i)$ of Proposition \ref{prop:uvt0}, we see that the degree of $\Gamma$ on the boundary must be different from zero. This gives us a contradiction (by the index theory) and concludes the proof of Proposition \ref{prop:goalVA}, assuming that Proposition \ref{prop:redu} holds.
\end{proof}

\subsection{Conclusion of the proof of Theorem \ref{theo1}.}
In this part we use Proposition \ref{prop:goalVA} to conclude the proof of Theorem \ref{theo1}. We have already showed in  Proposition \ref{prop:goalVA} that there exist initial data of the form \eqref{def:uvt0} such that the corresponding solution  $(u,v)(t)$ of system \eqref{eq:LamUp} satisfies $(u,v)(t) \in \Sc(t)$ for all $t \in [t_0, T)$. From item $(i)$ of Definition \ref{def:St}, we have $(\Lambda, \Upsilon)(s) \in \Vc_A(s)$ for all $s \geq s_0$. This means that \eqref{eq:goalconstruct} holds for all $s \geq s_0$. From \eqref{def:LamUps}, \eqref{def:simiVars} and \eqref{def:baruv}, we concludes the proof of part $(ii)$ of Theorem \ref{theo1}. 

From \eqref{eq:asyTh1}, we see  that 
$$e^{qu(0,t)} \sim \frac{1}{p(T-t)} \quad \text{and} \quad e^{v(0,t)} \sim \frac{1}{q(T-t)} \quad \text{as}\; t \to T.$$
Hence, $e^{qu}$ and $e^{pv}$ blow up at time $T$ at the ogirin simultaneously.  It remains to show that if $x_0 \neq 0$, then $x_0$ is not a blowup point of $e^{qu}$ and $e^{pv}$. The following result allows us to conclude.

\begin{proposition}[No blowup under some threshold] \label{prop:Noblowup} For all $C_0 > 0$, there is $\eta_0 > 0$ such that if $\big(u(\xi,\tau), v(\xi,\tau)\big)$ solves
\begin{equation*}
\big |\partial_\tau u - \Delta u\big| \leq C_0\big(1+e^{qv}\big), \quad \big| \partial_\tau v - \mu \Delta v\big| \leq C_0 \big( 1 + e^{pu}\big)
\end{equation*}
and satisfies
$$(1 - \tau)e^{pv(\xi, \tau)} + (1 - \tau)e^{qu(\xi, \tau)} \leq \epsilon, \quad \forall |\xi| < 1, \; \tau \in [0,1),$$
then, $e^{qu}$ and $e^{pv}$ do not blow up at $\xi = 0$ and $\tau = 1$.
\end{proposition}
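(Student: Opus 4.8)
The plan is to pass from $(u,v)$ to the nonnegative quantities $U:=e^{qu}$ and $V:=e^{pv}$. By the threshold hypothesis one has $0\le U,V\le \epsilon/(1-\tau)$ on $\{|\xi|<1\}\times[0,1)$, and the conclusion ``$e^{qu}$ and $e^{pv}$ do not blow up at $(0,1)$'' is exactly the assertion that $U$ and $V$ stay locally bounded near $(0,1)$. From the equations satisfied by $u$ and $v$ one computes, on $\{|\xi|<1\}$,
$$\partial_\tau U-\Delta U=qU\big(\partial_\tau u-\Delta u\big)-\frac{|\nabla U|^2}{U}\le qC_0\,U(1+V),\qquad \partial_\tau V-\mu\Delta V\le pC_0\,V(1+U),$$
where we have discarded the favourable terms $-|\nabla U|^2/U\le0$ and $-\mu|\nabla V|^2/V\le0$. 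Thus $U$ is a subsolution of a linear parabolic equation whose zeroth–order coefficient $qC_0(1+V)$ can be controlled through the a priori bound on $V$; the whole task is to upgrade the crude pointwise bound $U,V\lesssim(1-\tau)^{-1}$ to a uniform one, and it is the fact that this requires \emph{two} iterations that forces $\epsilon$ (equivalently $\eta_0$) to be small.

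\emph{First iteration.} Feed $V\le\epsilon/(1-\tau)$ into the $U$–inequality: on $B_1$ one gets $\partial_\tau U-\Delta U\le a(\tau)U$ with $a(\tau)=qC_0\big(1+\epsilon/(1-\tau)\big)$. Comparing with the solution $W$ of $\partial_\tau W-\Delta W=a(\tau)W$ on $B_1\times(\tau_1,1)$ having the same parabolic boundary data as $U$, the parabolic maximum principle (applicable on each $[\tau_1,\tau]$ with $\tau<1$, where $a$ is bounded) gives $U\le W$. Writing $W=m(\tau)Z$ with integrating factor $m(\tau)=\exp\big(\int_{\tau_1}^\tau a\big)=e^{qC_0(\tau-\tau_1)}\big((1-\tau_1)/(1-\tau)\big)^{qC_0\epsilon}$, the function $Z$ solves the ordinary heat equation on $B_1$ with boundary data $U/m$, still $\le\epsilon/(1-s)$ on $\partial B_1$ and bounded initially. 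Since the Dirichlet heat kernel on $B_1$ decays like $e^{-c/(\tau-s)}$ (some $c>0$) at interior points, lateral data of size $\epsilon/(1-s)$ contribute only $O(\epsilon)$ on $B_{1/2}$ (split the time integral at $\tau-s=1-\tau$); hence $Z$ is bounded on $B_{1/2}$ and $U\le m(\tau)\,Z\le C_1\,(1-\tau)^{-qC_0\epsilon}$ there, and symmetrically $V\le C_1(1-\tau)^{-pC_0\epsilon}$ on $B_{1/2}$. Choosing $\epsilon$ small enough that $\gamma:=\max(p,q)\,C_0\,\epsilon<1$, the blow-up exponent has dropped from $1$ to $\gamma<1$.

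\emph{Second iteration.} Now repeat the argument on $B_{1/2}$ with the improved bound $V\le C_1(1-\tau)^{-\gamma}$ inserted into the $U$–inequality: on $B_{1/2}$, $\partial_\tau U-\Delta U\le b(\tau)U$ with $b(\tau)=qC_0\big(1+C_1(1-\tau)^{-\gamma}\big)$. Because $\gamma<1$, the new integrating factor $m_2(\tau)=\exp\big(\int_{\tau_2}^\tau b\big)$ is \emph{bounded uniformly} in $\tau\in[\tau_2,1)$, and the same Dirichlet–kernel estimate produces a bounded interior contribution; hence $U$, and symmetrically $V$, is bounded on $B_{1/4}\times(\tau_2,1)$. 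This yields the proposition, with $\eta_0=\eta_0(C_0,p,q)$ the explicit threshold below which $\gamma<1$.

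\emph{Main obstacle.} The one genuinely delicate point is passing from the differential inequalities — which hold only on $\{|\xi|<1\}$ — to pointwise interior control: a naive maximum-principle comparison on $B_1$ fails because the lateral boundary values of $U$ are as large as $\epsilon/(1-\tau)$ and are not dominated by any simple supersolution. The way out is to compare with the Dirichlet problem on (shrinking) balls and to exploit that the heat kernel decays Gaussianly away from the boundary, so that these large lateral data are washed out at interior points and contribute only $O(\epsilon)$. Two subsidiary observations make the scheme go through: the sign of the discarded gradient terms $-|\nabla U|^2/U$, $-\mu|\nabla V|^2/V$ is precisely what makes $U,V$ subsolutions, and it is essential that $U=e^{qu}>0$, $V=e^{pv}>0$ are two-sidedly bounded (unlike $u,v$, which are only controlled from above); and a single comparison improves the blow-up exponent only from $1$ to $qC_0\epsilon$, so a second iteration, using the bound produced by the first, is needed to reach genuine boundedness.
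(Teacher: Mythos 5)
The paper does not give a self-contained proof here: it refers to Theorem~2.1 of Giga and Kohn \cite{GKcpam89}, whose argument is based on a truncation, the Duhamel formula for the heat semigroup $e^{\tau\Delta}$, and a Gronwall-type iteration. Your argument is correct and takes a genuinely different technical route. After passing to the positive quantities $U=e^{qu}$ and $V=e^{pv}$, and noting that the discarded terms $-|\nabla U|^2/U\le 0$ and $-\mu|\nabla V|^2/V\le 0$ have the favourable sign (so that $U,V$ are subsolutions of linear parabolic inequalities with zeroth-order coefficients controlled by the threshold), you compare against Dirichlet problems on shrinking balls rather than writing a Duhamel representation with a cut-off. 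The two estimates that power the comparison---the interior decay $P(\xi,y,t)\lesssim t^{-(N+1)/2}e^{-c/t}$ of the lateral Poisson kernel for $|\xi|\le\tfrac12$, $|y|=1$, and the elementary inequality $1-s\ge\tau-s$ which lets the singular boundary data $\epsilon/(1-s)$ be absorbed by the Gaussian factor---replace the commutator estimates and Gronwall lemma of Giga--Kohn. The first pass lowers the blow-up exponent from $1$ to $\gamma=\max(p,q)C_0\epsilon$; the second, with an integrable zeroth-order coefficient once $\gamma<1$, yields boundedness, and the diffusion coefficient $\mu$ in the $V$-equation only rescales time in the kernel bound. Both approaches exploit the same smallness mechanism; yours avoids cut-off commutators at the price of needing the comparison principle and Poisson-kernel estimates, and is a correct, self-contained alternative to the citation. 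Two small remarks you already implicitly address: the differential inequalities in the statement should read $|\partial_\tau u-\Delta u|\le C_0(1+e^{pv})$ and $|\partial_\tau v-\mu\Delta v|\le C_0(1+e^{qu})$, consistently with the original system and with the threshold condition (this is the reading your computation uses), and the parameter $\eta_0$ in the statement and the $\epsilon$ in the hypothesis are meant to be the same.
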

\begin{proof} The proof of this result uses ideas given by Giga and Kohn \cite{GKcpam89} for the single equation with the nonlinear source term $|u|^p$. Their proof uses a truncation technique together with the smoothness effect of the heat semigroup $e^{\tau \Delta}$ and some type of Gronwall's argument. Since their argument can be extended to our case without difficulties, we kindly refer the interested readers to Theorem 2.1 in \cite{GKcpam89} for an analogous proof.
\end{proof}

\medskip
From \eqref{eq:asyTh1}, we see that 
$$
\sup_{|x| < \frac{|x_0|}{2}} (T-t)e^{qu(x,t)} \leq \Phi^*\left(\frac{|x_0|/2}{\sqrt{(T-t)\ln(T-t)}} \right) + \frac{C}{\sqrt{\ln(T-t)}} \to 0,
$$
and 
$$
\sup_{|x| < \frac{|x_0|}{2}} (T-t)e^{pv(x,t)} \leq \Psi^*\left(\frac{|x_0|/2}{\sqrt{(T-t)\ln(T-t)}} \right) + \frac{C}{\sqrt{\ln(T-t)}} \to 0,
$$
as $t \to T$, hence, $x_0$ is not a blowup point of $e^{qu}$ and $e^{pv}$ from Proposition \ref{prop:Noblowup}. This concludes the proof of part $(i)$ of Theorem \ref{theo1}.\\

We now give the proof of part $(iii)$ of Thereom \ref{theo1}. Using the technique of Merle \cite{Mercpam92}, we derive the existence of a blowup profile $(u^*, v^*) \in \Cc^2(\Rb^*) \times \Cc^2(\Rb^*)$ such that 
$$(u,v)(x,t) \to (u^*, v^*)(x) \quad \text{as} \quad t \to T.$$
Here, we are interested in finding an equivalent of $(u^*, v^*)(x)$ for $|x|$ small.  To do so, let us consider the rescaled functions $\big(\tilde{u}, \tilde{v}\big)(x, \xi, \tau))$ defined as in \eqref{def:uvtilde}. From item $(ii)$ of Definition \ref{def:St} and \eqref{def:solUc}, we have 
\begin{align*}
u^*(x) = \lim_{t \to T}u(x,t) &= \lim_{\tau \to 1} \left[-\frac{1}{q}\ln(T-t(x)) + \tilde{u}(x,0, \tau)\right]\\
&\sim -\frac{1}{q}\ln(T-t(x)) - \frac{1}{q}\ln \left(p \frac{K_0^2/16}{2(\mu + 1)}\right).
\end{align*}
Using the definition \eqref{def:tx} of $t(x)$, we have 
$$ -\ln(T-t(x))\sim -2\ln|x|, \quad T-t(x)  = \frac{16}{K_0^2}\frac{|x|^2}{|\ln(T-t(x))|} \sim \frac{16}{K_0^2}\frac{|x|^2}{2|\ln|x||} \quad \text{for} \; |x| \to 0.$$
This yields
$$u^*(x) \sim -\frac{1}{q}\ln \left(\frac{p|x|^2}{4(\mu + 1)|\ln|x||}\right) \quad \text{as} \;\; |x| \to 0.$$
Similarly, we obtain 
$$v^*(x) \sim -\frac{1}{p}\ln \left(\frac{q|x|^2}{4(\mu + 1)|\ln|x||}\right) \quad \text{as} \;\; |x| \to 0.$$
This concludes the proof of Theorem \ref{theo1} assuming that Proposition \ref{prop:redu} holds.

\section{Reduction to a finite dimensional problem.} \label{sec:reduction}
In this section we give the proof of Proposition \ref{prop:redu}, which is the central part in our analysis. As mentioned in the beginning of Section \ref{sec:Existence}, we will consider the one dimensional case for simplicity, however, the same proof holds for higher dimensional cases.  We proceed in two subsections:\\
- In the first subsection, we derive an \textit{a priori estimates} on $(u,v)(t)$ in $\Sc(t)$. In the region $\Dc_1$, we project system \eqref{eq:LamUp} on the different components defined by \eqref{def:LeUe} and the decomposition \eqref{decomoposeLamUp}. In comparison with the work \cite{GNZpre16c}, we have an extra nonlinear gradient term $\binom{G_1}{G_2}$ which is delicate since we need both upper and lower bound of the solution. In the intermediate region $\Dc_2$, we work with the rescaled version \eqref{def:uvtilde} and control the solution by classical parabolic techniques. In the regular region $\Dc_3$, we directly estimate the solution by using the local well-posedness in time of the Cauchy problem for system \eqref{PS}. \\
- In the second subsection, we use the \textit{a priori estimates} obtained in the first part to show that the new bounds are better than the ones defined in $\Sc(t)$ except for the modes $\theta_0$ and $\theta_1$. This reduces the problem to a finite dimensional one which concludes item $(i)$ of Proposition \ref{prop:redu}. The outgoing transversality property is just a consequence of the dynamics of the modes $\theta_0$ and $\theta_1$.

\subsection{A priori estimates in $\Dc_1$.}
We claim the following:
\begin{proposition}[A priori estimates in $\Dc_1$]  \label{prop:dyn} There exist $K_{0,2} > 0$ and $A_{0,2} > 0$ such that for all $K_0 \geq K_{0,2}$, $\epsilon_0 > 0$, $A \geq A_{0,2}$, $\lambda^* > 0$, $C_{0,2} > 0$,  there exists $t_{0,2}(K_0, \epsilon_0, A, \lambda^*, C_{0,2})$ with the following property: For all $\delta_0 \leq \frac{1}{2}\min\{|\hat{u}(1)|, |\hat v(1)|\}$, $\alpha_0 > 0$, $C_0 > 0$ and $\eta_0 \leq \eta_{0,2}$ for some $\eta_{0,2}(\epsilon_0) > 0$, $\lambda \in [0, \lambda^*]$ and $t_0 \in [t_{0,2}, T)$, assume that 
\begin{itemize}
\item $(u,v)(x, t_0)$ is given by \eqref{def:uvt0} and $(d_0,d_1)$ is chosen such that $(\theta_{0,0}, \theta_{0,1}) \in \left[-\frac{A}{s_0^2}, \frac{A}{s_0^2}\right]^2$, where $s_0 = \ln (T-t_0)$ and $(\theta_{0,0}, \theta_{0,1})$ are the components of $(\Lambda, \Upsilon)(y,s_0)$ defined as in \eqref{decomoposeLamUp}.
\item for some $\sigma \geq s_0$, we have for all $t \in [T - e^{-\sigma}, T - e^{-(\sigma + \lambda)}]$, $$(u,v)(x,t) \in \Sc(t_0, K_0, \epsilon_0, \alpha_0, A, \delta_0, C_0, \eta_0, t).$$
\end{itemize}
Then, we have for all $s \in [\sigma, \sigma+\lambda]$,

\noindent $(i)\;$ (ODEs satisfied by the positive modes) For $n = 0, 1$, we have
$$\left|\theta_n'(s) - \left(1 - \frac{n}{2}\right)\theta_n(s)\right| \leq \frac{C}{s^2}.$$

\noindent $(ii)\;$ (ODE satisfied by the null mode) 
$$\left|\theta_2'(s) + \frac 2s\theta_2(s)\right| \leq \frac{CA^3}{s^3}.$$

\noindent $(iii)\;$ (Control of the finite dimensional part)
\begin{align*}
|\theta_j(s)| &\leq e^{-\left(\frac{j}{2} - 1 \right)(s -\tau)}|\theta_j(\tau)| + \frac{CA^{j-1}}{s^\frac{j+1}{2}},\quad 3\leq j\leq M,\\
|\tilde \theta_j(s)| &\leq e^{-\left(\frac{j}{2} +1 \right)(s -\tau)}|\tilde \theta_j(\tau)| + \frac{CA^{j-1}}{s^\frac{j+1}{2}}, \quad 3\leq j \leq M,\\
|\tilde \theta_j(s)| &\leq e^{-\left(\frac{j}{2} + 1 \right)(s -\tau)}|\tilde \theta_j(\tau)| + \frac{C}{s^2}, \quad j = 0,1,2.
\end{align*}

\noindent $(iv)\;$ (Control of the infinite dimensional part) 
\begin{align*}
&\left\|\frac{\Lambda_{-}(y,s)}{1 + |y|^{M+1}} \right\|_{L^\infty(\Rb)} + \left\|\frac{\Upsilon_{-}(y,s)}{1 + |y|^{M+1}} \right\|_{L^\infty(\Rb)}\\
& \leq Ce^{-\frac{(M+1)(s-\tau)}{4}} \left(\left\|\frac{\Lambda_{-}(y,\tau)}{1 + |y|^{M+1}} \right\|_{L^\infty(\Rb)} + \left\|\frac{\Upsilon_{-}(y,\tau)}{1 + |y|^{M+1}} \right\|_{L^\infty(\Rb)}
 \right) + \frac{CA^M}{s^\frac{M+2}{2}}.
\end{align*}

\noindent $(v)\;$ (Control of the gradient) 
\begin{align*}
\forall y \in \Rb^N, \quad |\nabla \Lambda_-(y,s)| + |\nabla \Upsilon_-(y,s)| \leq CA^{M+1}s^{-\frac{M+2}{2}} \big(|y|^{M+1} + 1\big).
\end{align*}

\noindent $(vi)\;$ (Control of the outer part)
\begin{align*}
&\|\Lambda_e(s)\|_{L^\infty(\Rb)} + \|\Upsilon_e(s)\|_{L^\infty(\Rb)}\\
& \leq Ce^{-\frac 12(s-\tau)}\left(\|\Lambda_e(\tau)\|_{L^\infty(\Rb)} + \|\Upsilon_e(\tau)\|_{L^\infty(\Rb)}\right) +  \frac{CA^{M+1}}{\sqrt{s}}(1 + s - \tau).
\end{align*}
\end{proposition}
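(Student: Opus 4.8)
The plan is to obtain all six estimates by projecting the evolution equation \eqref{eq:LamUp} onto the various pieces of the decomposition \eqref{decomoposeLamUp}, \eqref{def:LeUe}, then treating each projected ODE or PDE via the spectral gap of $\Hc+\Mc$ together with the a priori bounds encoded in $\Sc(t)$ — in particular the consequences collected in Proposition \ref{prop:proSt}. First I would assemble the ``ingredients'': the expansion of the error term $\binom{R_1}{R_2}$ (one uses that $\|R_i\|_{L^\infty}\le C/s$ from \eqref{def:Rys}, and a sharper expansion in the inner region giving the $O(1/s^3)$ contribution on the mode $n=2$, obtained by the choice $b=\frac1{2(\mu+1)}$ and the computation $c_2=2pq(\mu+1)$ already carried out in Section \ref{sec:formal}); the quadratic bound on $\binom{G_1}{G_2}$, which uses both the upper and the \emph{lower} bounds on $\Phi,\Psi$ (equivalently on $\Lambda+\phi,\Upsilon+\psi$) valid on $\Dc_1$, so that the gradient-nonlinearity is genuinely $O(s^{-2})$-type in $L^2_{\rho}$; and the bound $\|V\|_{L^\infty}\le C/s$ on $\Dc_1$ together with the finer structure of $V$ near $y=0$. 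With these in hand, the projections of the linear part reproduce the spectrum $\{1-\tfrac n2\}$ on the $\binom{f_n}{g_n}$ directions and $\{-1-\tfrac n2\}$ on the $\binom{\tilde f_n}{\tilde g_n}$ directions, so:

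For (i), projecting on $\binom{f_0}{g_0},\binom{f_1}{g_1}$ gives $\theta_n'=(1-\tfrac n2)\theta_n+(\text{error})$ where the error collects the potential term $V$ acting on $(\Lambda,\Upsilon)$, the quadratic term, and $R$; using Proposition \ref{prop:proSt} and the size of $V$ one bounds the error by $C/s^2$. For (ii), the same projection on $\binom{f_2}{g_2}$ requires the refined expansions: the naive bound would only give $C/s^2$, so one must expand $V(y,s)$ and $\binom{G_1}{G_2}$ to next order, isolate the $-\tfrac2s\theta_2$ coming from the $1/s$-correction in $(\phi,\psi)$, and check that everything else is $O(A^3 s^{-3})$; this is where the value of $b$ and the identity $c_2=2pq(\mu+1)$ are used. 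For (iii), on the stable modes $3\le j\le M$ one integrates the projected ODE $\theta_j'=(1-\tfrac j2)\theta_j+O(A^{j-1}s^{-(j+1)/2}\cdot\text{something})$ via Duhamel, using the gap $1-\tfrac j2\le-\tfrac12<0$ and the fact that the forcing is integrable against the decaying semigroup to land the $CA^{j-1}s^{-(j+1)/2}$ term; the $\tilde\theta_j$ decay even faster because their eigenvalues are $-1-\tfrac j2$.

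For (iv), the infinite-dimensional part $\binom{\Lambda_-}{\Upsilon_-}$ satisfies \eqref{eq:Lamneg} whose linear operator has spectrum $\le\frac{1-M}{2}$; using the kernel bound for the semigroup generated by $\Hc$ acting on functions orthogonal to Hermite polynomials of degree $\le M$ (this is standard, essentially Bricmont--Kupiainen/Merle--Zaag), weighted by $(1+|y|^{M+1})^{-1}$, one gets the contraction factor $e^{-\frac{(M+1)(s-\tau)}{4}}$ and the source term $CA^M s^{-(M+2)/2}$ after plugging in the bounds on $V,R,G$ from $\Sc(t)$. For (v), the gradient estimate on $\Lambda_-,\Upsilon_-$ follows from parabolic smoothing: differentiate Duhamel's formula, use that $\nabla$ of the kernel costs a factor $(s-\tau)^{-1/2}$ which is integrable, and close with the bound in (iv) and the $\nabla$-estimates already in $\Sc(t)$; one must be careful that the nonlinear gradient term $\binom{G_1}{G_2}$ does not spoil this — here the a priori $\nabla$-bounds on $\Lambda,\Upsilon$ from Proposition \ref{prop:proSt}(ii), valid uniformly on $\Dc_1$, are exactly what is needed. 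For (vi), one writes the equation for $\binom{\Lambda_e}{\Upsilon_e}=(1-\chi)\binom{\Lambda}{\Upsilon}$, notes that on the support of $1-\chi$ (i.e. $|y|\ge K_0\sqrt s$) the operator $\Hc+\Mc+V$ has, for $K_0$ large, a fully negative spectrum bounded above by $-\tfrac12$, so the associated semigroup decays like $e^{-\frac12(s-\tau)}$; the commutator terms $[\partial_s-\Delta,\chi]$ generate a forcing supported in the annulus $K_0\sqrt s\le|y|\le2K_0\sqrt s$ which, using the inner estimates on $(\Lambda,\Upsilon)$ there, is $O(A^{M+1}s^{-1/2})$ per unit $s$, producing the $CA^{M+1}s^{-1/2}(1+s-\tau)$ term.

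\emph{Main obstacle.} The genuinely new difficulty, absent from \cite{MZdm97} and \cite{GNZpre16c}, is the control of the nonlinear gradient terms $\binom{G_1}{G_2}=\binom{-|\nabla(\Lambda+\phi)|^2/(\Lambda+\phi)+|\nabla\phi|^2/\phi}{\cdots}$: one needs a \emph{lower} bound on $\Lambda+\phi$ and $\Upsilon+\psi$ (so the denominators do not blow up) together with the $\nabla$-estimates, and these must be propagated coherently across the three regions $\Dc_1,\Dc_2,\Dc_3$ — the inner estimates alone give smallness but not enough control of $\nabla$, which is why $\Sc(t)$ carries gradient bounds in all three zones and why the rescaled functions $(\tilde u,\tilde v)$ of \eqref{def:uvtilde} and parabolic regularity enter. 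Ensuring that the resulting forcing is quadratic of the right size — i.e. genuinely negligible against the linear dynamics in every projection above — is the technical heart of the argument; the delicate point (ii), where the $-\tfrac2s\theta_2$ law must be extracted with an $O(A^3 s^{-3})$ remainder, is the place where this bookkeeping is tightest, since there the correction term in $(\phi,\psi)$ and the quadratic terms contribute at the same order and a clean cancellation (guaranteed by $b=\frac1{2(\mu+1)}$) is required.
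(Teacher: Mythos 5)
Your proposal is correct and follows essentially the same route as the paper: project the evolution equation \eqref{eq:LamUp} on the eigenbasis of $\Hc+\Mc$ (and on $\Pi_{-,M}$ and the outer cut-off), estimate the potential, error, quadratic, and gradient terms using the bounds encoded in $\Sc(t)$, and then integrate. The only imprecision worth noting is in your account of item~(ii): the coefficient $-\tfrac2s$ in front of $\theta_2$ is not extracted directly from the $1/s$ constant correction in $(\phi,\psi)$; rather, it arises as the \emph{sum} of a $-\tfrac4s\theta_2$ contribution from the projection of $V\binom{\Lambda}{\Upsilon}$ (Lemma \ref{lemm:Pro3rdtermf2g2}, coming from the degree-two part of $V$ encoded in \eqref{exp:Vys}) and a $+\tfrac2s\theta_2$ contribution from the projection of $\binom{G_1}{G_2}$ (Lemma \ref{lemm:Pro6term}, coming from the term $2\nabla\Lambda\cdot\nabla\phi/\phi$). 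The role of the constant $1/s$ corrections in $(\phi,\psi)$ — and hence of $b=\tfrac1{2(\mu+1)}$ — is to kill the $1/s$ term in $R$ and to make $\Pc_{2,M}\binom{R_{1,1}}{R_{2,1}}=0$ (Lemmas \ref{lemm:expandR1R2} and \ref{lemm:Pro5thterm}), which is what pushes the error-term contribution down to $O(s^{-3})$; this is a separate cancellation from the one producing $-\tfrac2s\theta_2$. With that adjustment your plan is a faithful summary of the actual proof.
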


\begin{remark} Note the the factor $\frac{2}{s}$ appearing the ODE satisfied by $\theta_2$ comes from the projection $\Pc_{2,M}$ of $V\binom{\Lambda}{\Upsilon}$ and $\binom{G_1}{G_2}$ thanks to the precise computation in Lemmas \ref{lemm:diagonal} and \ref{lemm:DefProjection}. In particular, we prove in Lemmas \ref{lemm:Pro3rdtermf2g2} and \ref{lemm:Pro6term} below that 
$$\Pc_{2,M}\left[V\binom{\Lambda}{\Upsilon}\right] \sim \frac{4}{s}\theta_2, \quad \Pc_{2,M}\binom{G_1}{G_2}\sim -\frac{2}{s}\theta_2.$$

\end{remark}

Because of the number of parameters in our problem ($p$, $q$ and $\mu$) and the coordinates in \eqref{decomoposeLamUp}, resulting in a very long proof, we will organize the rest of this subsection in three separate parts for the reader's convenience:\\

\noindent - \textit{Part 1}: We deal with system \eqref{eq:LamUp} to write ODEs satisfied by $\theta_n$ and $\tilde \theta_n$ for $n \leq M$. The definition of the projection of $\binom{\Lambda}{\Upsilon}$ on $\binom{f_n}{g_n}$ and $\binom{\tilde f_n}{g_n}$ given in Lemma \ref{lemm:DefProjection} will be the main tool to derive these ODEs. Then, we prove items $(i)$, $(ii)$ and $(iii)$ of Proposition \ref{prop:dyn}.

\noindent - \textit{Part 2}: We derive from system \eqref{eq:LamUp} a system satisfied by $(\Lambda_-,\Upsilon_-)$ and prove item $(iv)$ of Proposition \ref{prop:dyn}.  Unlike the estimate on $\theta_n$ and $\tilde \theta_n$ where we use the properties of the linear operator $\Hc + \Mc$, here we use the operator $\Hc$. The value of $M$, which is fixed large enough as in \eqref{eq:Mfixed}, is essential in the proof, in the sense that it allows us to successfully apply Gronwall's lemma. The item $(v)$ follows from a parabolic regularity argument applied to the system satisfied by  $(\Lambda_-,\Upsilon_-)$.

\noindent - \textit{Part 3}:  We derive the system satisfied by $(\Lambda_e,\Upsilon_e)$ and prove item $(vi)$ of Proposition \ref{prop:dyn}. As mentioned earlier, the linear operator of the equation satisfied by $\Lambda_e$ and $\Upsilon_e$ has a negative spectrum, which makes the control of $\|\Lambda_e(s)\|_{L^\infty(\Rb)}$ and $\|\Upsilon_e(s)\|_{L^\infty(\Rb)}$ easy.\\

Note that system \eqref{eq:LamUp} is analogous to the one in \cite{GNZpre16c}, except for the extra nonlinear gradient term $\binom{G_1}{G_2}$. One of them concerns the shrinking set introduced in Definition \ref{def:St} which involves an extra gradient estimate in $\Dc_1$ and additional estimates in $\Dc_2$ and $\Dc_3$. When taking into account this new definition, we shall use some estimates similar to those obtained in \cite{GNZpre16c} and only focus on the novelties. We would like to mention that our handling of the gradient term is inspired  by the technique developed by Tayachi and Zaag \cite{TZpre15} (see also \cite{TZnor15}) for the following nonlinear heat equation
\begin{equation*}
\partial_t u = \Delta u + |u|^{p-1}u + \mu |\nabla u|^\frac{2p}{p+1}, \quad \mu > 0, p > 3.
\end{equation*}
In \cite{GNZjde17}, we adapt the techinique of \cite{TZpre15} to handle the case when $p \to +\infty$, namely the equation
\begin{equation*}
\partial_t u = \Delta u + e^u + \mu |\nabla u|^2, \quad \mu > -1.
\end{equation*}

\subsubsection{Control of the finite dimensional part.}
In this subsection we give the proof of items $(i)-(iii)$ of Proposition \ref{prop:dyn}. In particular, we will estimate the main contribution to the projections $\Pc_{n, M}$ and $\tilde \Pc_{n,M}$ (see Lemma \ref{lemm:DefProjection} for the definition) of all terms appearing in \eqref{eq:LamUp}, then the conclusion simply follows by addition.

\subparagraph{$\bullet\;$ The derivative term $\partial_s\binom{\Lambda}{\Upsilon}$.} From the decomposition \eqref{decomoposeLamUp} and Lemma \ref{lemm:DefProjection}, we have
\begin{equation}
\Pc_{n,M}\left[\partial_s\binom{\Lambda}{\Upsilon}\right] = \theta_n' \quad \text{and} \quad \tilde \Pc_{n,M}\left[\partial_s\binom{\Lambda}{\Upsilon}\right] = \tilde \theta_n'.
\end{equation}

\subparagraph{$\bullet\;$ The linear term $(\Hc + \Mc)\binom{\Lambda}{\Upsilon}$.} We claim the following:
\begin{lemma}[Projections of $(\Hc + \Mc)\binom{\Lambda}{\Upsilon}$ on $\binom{f_n}{g_n}$ and $\binom{\tilde f_n}{\tilde g_n}$ for $n \leq M$] \label{lemm:Pro2ndterm} For all $n \leq M$,\\

\noindent $(i)\;$ It holds that
\begin{align}
&\left|\Pc_{n,M}\left[(\Hc + \Mc)\binom{\Lambda}{\Upsilon} \right] - \left(1 - \frac n2\right)\theta_n(s) \right| \nonumber \\
& \quad  +\left|\tilde \Pc_{n,M}\left[(\Hc + \Mc)\binom{\Lambda}{\Upsilon} \right] - \left(1 + \frac n2\right)\tilde \theta_n(s) \right| \nonumber \\
& \qquad \leq C\left\|\frac{\Lambda_-(y,s)}{1 + |y|^{M+1}} \right\|_{L^\infty(\Rb)} + C\left\|\frac{\Upsilon_-(y,s)}{1 + |y|^{M+1}} \right\|_{L^\infty(\Rb)}.\label{est:2ndterm}
\end{align}

\noindent $(ii)\;$ For all $A \geq 1$, there exists $s_4(A) \geq 1$ such that for all $s \geq s_4(A)$, if $\binom{\Lambda(s)}{\Upsilon(s)} \in \Vc_A(s)$, then:

\begin{align}
&\left|\Pc_{n,M}\left[(\Hc + \Mc)\binom{\Lambda}{\Upsilon} \right] - \left(1 - \frac n2\right)\theta_n(s) \right| \nonumber \\
& \quad  +\left|\tilde \Pc_{n,M}\left[(\Hc + \Mc)\binom{\Lambda}{\Upsilon} \right] - \left(1 + \frac n2\right)\tilde \theta_n(s) \right| \leq C\frac{A^{M+1}}{s^\frac{M+2}{2}}.\label{est:2ndtermVA}
\end{align}
\end{lemma}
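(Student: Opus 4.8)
The claim is about projecting the linear part $(\Hc+\Mc)\binom{\Lambda}{\Upsilon}$ of the evolution equation onto the eigendirections $\binom{f_n}{g_n}$ (eigenvalue $1-n/2$) and $\binom{\tilde f_n}{\tilde g_n}$ (eigenvalue $-(1+n/2)$), for $n\le M$. The assertion is that these projections equal the corresponding eigenvalue times $\theta_n$ (resp. $\tilde\theta_n$) up to an error controlled by the weighted $L^\infty$-norm of the infinite-dimensional part $(\Lambda_-,\Upsilon_-)$; and that under the a priori bound $\binom{\Lambda(s)}{\Upsilon(s)}\in\Vc_A(s)$ this error is at most $CA^{M+1}s^{-(M+2)/2}$.

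**Plan of proof.** First I would use the decomposition \eqref{eq:expandLAmbUpb}–\eqref{decomoposeLamUp}: write $\binom{\Lambda}{\Upsilon}=\sum_{n\le M}\theta_n\binom{f_n}{g_n}+\tilde\theta_n\binom{\tilde f_n}{\tilde g_n}+\binom{\Lambda_-}{\Upsilon_-}$ and apply $(\Hc+\Mc)$ termwise. By Lemma \ref{lemm:diagonal}, the finite part maps to $\sum_{n\le M}(1-\tfrac n2)\theta_n\binom{f_n}{g_n}-(1+\tfrac n2)\tilde\theta_n\binom{\tilde f_n}{\tilde g_n}$, and by Remark \ref{rema:pro} the projection $\Pc_{n,M}$ of this finite part is exactly $(1-\tfrac n2)\theta_n$ (similarly $\tilde\Pc_{n,M}$ gives $(1+\tfrac n2)\tilde\theta_n$). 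So the entire content of the lemma is the estimate of $\Pc_{n,M}\!\big[(\Hc+\Mc)\binom{\Lambda_-}{\Upsilon_-}\big]$ and its tilde-counterpart. The subtlety is that $\binom{\Lambda_-}{\Upsilon_-}$ is $L^2_{\rho_1}\times L^2_{\rho_\mu}$-orthogonal to the low modes $\binom{h_n}{0},\binom{0}{\hat h_n}$ for $n\le M$ (see \eqref{eq:partnegVW}), hence also orthogonal, in that inner product, to the $\binom{f_n}{g_n},\binom{\tilde f_n}{\tilde g_n}$ for $n\le M$ since these are finite linear combinations of the $h$'s; naively one would then hope the projection vanishes. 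It does not vanish exactly because $\Pc_{n,M}$ and $\tilde\Pc_{n,M}$ are defined algebraically through the coefficients $A_{n+2j,n},\dots$ in Lemma \ref{lemm:DefProjection}, i.e. they act on the $h$-decomposition and are not literally the $L^2$-orthogonal projections onto $\binom{f_n}{g_n}$; and $(\Hc+\Mc)\binom{\Lambda_-}{\Upsilon_-}$ contains components along the low modes $h_n$, $n\le M$, coming from the off-diagonal operator $\Mc$ mixing the two components. The plan is therefore: (i) compute $\Pi_n\big[\Mc\binom{\Lambda_-}{\Upsilon_-}\big]$ and $\hat\Pi_n\big[\Mc\binom{\Lambda_-}{\Upsilon_-}\big]$ for $n\le M$ — these are, up to constants, $\tfrac qp\langle\Upsilon_-,h_n\rangle_{\rho_1}$ and $\tfrac pq\langle\Lambda_-,\hat h_n\rangle_{\rho_\mu}$, i.e. inner products of $\Lambda_-$ (resp. $\Upsilon_-$) against Hermite functions associated with the "wrong" weight, which therefore need not vanish; (ii) estimate each such inner product by $\big\|\Lambda_-/(1+|y|^{M+1})\big\|_{L^\infty}\int(1+|y|^{M+1})|h_n|\rho\,dy\le C\big\|\Lambda_-/(1+|y|^{M+1})\big\|_{L^\infty}$, the integral being finite and bounded by a constant depending only on $M$ (and $p,q,\mu$); the diagonal part $\Hc\binom{\Lambda_-}{\Upsilon_-}$ contributes nothing to $\Pi_n$ for $n\le M$ after integrating by parts via \eqref{eq:Ladjoint}, since $\Hc$ is self-adjoint and $\Hc h_n=-\tfrac n2 h_n$ keeps the degree, so $\langle \Hc\Lambda_-,h_n\rangle_{\rho_1}=-\tfrac n2\langle\Lambda_-,h_n\rangle_{\rho_1}=0$ by \eqref{eq:partnegVW}. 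Summing over the finitely many $j$ in \eqref{def:Pn} and using that $|A_{n+2j,n}|,\dots$ are fixed constants yields \eqref{est:2ndterm}.

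**Deriving part $(ii)$.** For the quantitative version, I would simply invoke the definition of $\Vc_A(s)$ in Definition \ref{def:St}, which gives $\|\Lambda_-(y,s)\|_{L^\infty}+\|\Upsilon_-(y,s)\|_{L^\infty}\le A^{M+1}s^{-(M+2)/2}(|y|^{M+1}+1)$, i.e.
\begin{equation*}
\left\|\frac{\Lambda_-(y,s)}{1+|y|^{M+1}}\right\|_{L^\infty(\Rb)}+\left\|\frac{\Upsilon_-(y,s)}{1+|y|^{M+1}}\right\|_{L^\infty(\Rb)}\le \frac{2A^{M+1}}{s^{(M+2)/2}}.
\end{equation*}
Plugging this into \eqref{est:2ndterm} gives \eqref{est:2ndtermVA} with a new constant $C$, provided $s\ge s_4(A)$ — the threshold $s_4(A)$ is there only to ensure that the cut-off $\chi$ does not interfere (i.e. $\binom{\Lambda}{\Upsilon}$ and $\binom{\Lambda_-}{\Upsilon_-}$ are genuinely defined and the projections make sense in the region where the Gaussian weight dominates), and is harmless.

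**Main obstacle.** The only genuinely non-routine point is the bookkeeping in step (i): one must see precisely why $\Pc_{n,M}$ applied to $(\Hc+\Mc)\binom{\Lambda_-}{\Upsilon_-}$ does not vanish, and this is entirely due to $\Mc$ sending $\binom{\Lambda_-}{0}$ into $\binom{0}{\tfrac pq\Lambda_-}$, whose $\hat\Pi_n$-component is $\langle\Lambda_-,\hat h_n\rangle_{\rho_\mu}$ — a pairing of $\Lambda_-$ against the $\rho_\mu$-weight Hermite functions, which is not controlled by the $\rho_1$-orthogonality \eqref{eq:partnegVW} and is merely bounded, not zero. Once this mechanism is identified, the rest is the standard Hermite-moment estimate $\int_\Rb(1+|y|^{M+1})|\hat h_n(y)|\rho_\mu(y)\,dy\le C(M,\mu)$ together with the finite sums in \eqref{def:Pn}–\eqref{def:Pntilde}. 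Since the computation is of exactly the same nature as, and slightly simpler than, the analogous step in \cite{GNZpre16c}, I would carry out the Hermite-moment bound explicitly only for a representative $n$ and then invoke the finiteness of the coefficient lists of Lemma \ref{lemm:DefProjection} to conclude for all $n\le M$.
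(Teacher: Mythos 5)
Your argument is correct and is the same calculation that the paper delegates to Lemma~5.4 of \cite{GNZpre16c}: split $\binom{\Lambda}{\Upsilon}$ into the finite part along $\{\binom{f_n}{g_n},\binom{\tilde f_n}{\tilde g_n}\}_{n\le M}$ and the remainder $\binom{\Lambda_-}{\Upsilon_-}$; the finite part is handled exactly by the eigenrelations \eqref{eq:HMspec1}--\eqref{eq:HMspectilde} together with Remark~\ref{rema:pro}; for the remainder, the diagonal piece $\Hc\binom{\Lambda_-}{\Upsilon_-}$ contributes nothing to any $\Pi_k$ or $\hat\Pi_k$ with $k\le M$ by self-adjointness of $\Lc_\eta$ in \eqref{eq:Ladjoint} together with the orthogonality \eqref{eq:partnegVW}, while the off-diagonal $\Mc$ creates cross-pairings $\langle\Upsilon_-,h_k\rangle_{\rho_1}$ and $\langle\Lambda_-,\hat h_k\rangle_{\rho_\mu}$ in the ``wrong'' weight that are only \emph{bounded}, not zero; these are estimated by the weighted $L^\infty$-norm times a finite Hermite moment. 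That is precisely the content of the lemma, and part $(ii)$ is read off directly from the definition of $\Vc_A(s)$.

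Two small remarks. First, the sign on the $\tilde\theta_n$ term: $(\Hc+\Mc)$ applied to the finite part produces $-(1+\tfrac n2)\tilde\theta_n$ on $\binom{\tilde f_n}{\tilde g_n}$, so the natural form of the estimate is $\big|\tilde\Pc_{n,M}[(\Hc+\Mc)\binom{\Lambda}{\Upsilon}]+(1+\tfrac n2)\tilde\theta_n\big|\le\dots$; the minus sign in \eqref{est:2ndterm} (which you echoed when you wrote ``$\tilde\Pc_{n,M}$ gives $(1+\tfrac n2)\tilde\theta_n$'') appears to be a typographical slip in the paper, and the ODE for $\tilde\theta_n$ stated in the proof of Proposition~\ref{prop:dyn}$(iii)$, $|\tilde\theta_n'+(1+\tfrac n2)\tilde\theta_n|\le\dots$, confirms this. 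Second, your explanation that the threshold $s_4(A)$ is needed ``so the cut-off does not interfere'' is a misattribution: no cut-off enters this projection lemma at all, and part $(ii)$ follows from part $(i)$ with no threshold; $s_4(A)$ is simply retained for uniformity of the statement with the neighboring lemmas. Neither point affects the validity of your argument.
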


\begin{proof} The proof follows exactly the same lines as in \cite{GNZpre16c}. The only difference is the eigenvalues of the matrix $\Mc$ which are given by $\pm 1$. We refer the readers to Lemma 5.4 in \cite{GNZpre16c} for all the details of the proof. 
\end{proof}

\subparagraph{$\bullet\;$ The potential term $V(y,s)\binom{\Lambda}{\Upsilon}$.} We claim the following:

\begin{lemma}[Expansion of the potential term $V(y,s)$] \label{lemm:estVys} Let $V(y,s)$ be defined by \eqref{def:Vys}, we have
\begin{equation}\label{est:Viorder1}
i = 1,2,3,4, \quad |V_i(y,s)| \leq \frac{C(1 + |y|^2)}{s}, \quad \forall y \in \Rb, \; s \geq 1,
\end{equation}
and for all $k \in \mathbb{N}^*$, 
\begin{equation}\label{est:Viorderk}
i = 1,2,3,4, \quad V_i(y,s) = \sum_{j = 1}^k \frac{1}{s^j}W_{i,j}(y) + \tilde{W}_{i,k}(y,s),
\end{equation}
where $W_{i,j}(y)$ is an even polynomial of degree $2j$, and $\tilde{W}_{i,k}(y,s)$ satisfies the estimate
$$|\tilde{W}_{i,k}(y,s)| \leq \frac{C(1 + |y|^{2k + 2})}{s^{k+1}}, \quad \forall |y| \leq \sqrt{s}, \; s \geq 1.$$
Moreover, we have for all $|y| \leq \sqrt{s}$ and $s \geq 1$,
\begin{equation}\label{exp:Vys}
\left|V(y,s) +  \frac{1}{2(\mu + 1)s} \begin{pmatrix} h_2 & \frac qp \hat h_2\\ \frac pq h_2 & \hat h_2
\end{pmatrix}\right|  \leq \frac{C(1 + |y|^4)}{s^2}.
\end{equation}
\end{lemma}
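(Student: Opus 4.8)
The entire lemma is a matter of Taylor-expanding the explicit approximate profile $(\phi,\psi)$ of \eqref{def:phipsi} in the self-similar region $|y|\le\sqrt s$. Recall from \eqref{def:Vys} that
$$
V_1 = q\psi - 1,\quad V_2 = q(\phi - 1/p),\quad V_3 = p(\psi - 1/q),\quad V_4 = p\phi - 1,
$$
so everything is governed by the two scalar quantities $p\phi - 1$ and $q\psi - 1$. Writing $z = y/\sqrt s$, we have from \eqref{def:phipsi}
$$
p\phi(y,s) - 1 = \frac{1}{1 + b|z|^2} - 1 + \frac{\mu}{(1+\mu)s} = -\frac{b|z|^2}{1+b|z|^2} + \frac{\mu}{(1+\mu)s},\qquad b = \frac{1}{2(\mu+1)},
$$
and symmetrically $q\psi - 1 = -\,b|z|^2/(1+b|z|^2) + 1/((1+\mu)s)$. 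First I would prove \eqref{est:Viorder1}: since $b|z|^2/(1+b|z|^2)\le b|z|^2 = b|y|^2/s$ and the corrective term is $O(1/s)$, each $|V_i(y,s)|\le C(1+|y|^2)/s$ for all $y\in\Rb$, $s\ge 1$, with no restriction on $|y|$ (the rational function is globally bounded by $\min\{1, b|y|^2/s\}$). This is immediate.

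Next, for \eqref{est:Viorderk} I would expand the geometric series: for $|y|\le\sqrt s$ we have $b|z|^2 = b|y|^2/s \le b$, which is bounded but not necessarily $<1$; to get a clean expansion one uses instead that on $|y|\le \sqrt s$ the variable $t := b|y|^2/s$ lies in $[0,b]$ — if $b<1$ the series $\sum (-t)^m$ converges and we are done; if $b\ge 1$ one still writes, for any fixed $k$,
$$
-\frac{t}{1+t} = \sum_{m=1}^{k}(-1)^m t^m + (-1)^{k+1}\frac{t^{k+1}}{1+t},
$$
an exact identity valid for all $t\ge 0$, with remainder bounded by $t^{k+1} = (b|y|^2/s)^{k+1} \le C(1+|y|^{2k+2})/s^{k+1}$ for $|y|\le\sqrt s$. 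Substituting $t = b|y|^2/s$ turns the sum into $\sum_{j=1}^k s^{-j} W_{i,j}(y)$ with $W_{i,j}$ a polynomial in $y$ of degree exactly $2j$ (even, since it is a power of $|y|^2$), after absorbing the constant corrective term $O(1/s)$ into $W_{i,1}$; the remainder $\tilde W_{i,k}$ collects the geometric-series tail plus the products of the corrective $1/s$ term with the expansion of the rational part, and all of these are $\le C(1+|y|^{2k+2})/s^{k+1}$ on $|y|\le\sqrt s$. The polynomials $W_{i,j}$ are then read off explicitly; in particular $W_{i,1}$ for each $i$ is a constant times $b|y|^2$ plus the constant $\mu/(1+\mu)$ or $1/(1+\mu)$, which gives \eqref{exp:Vys} directly: taking $k=1$, the leading term $s^{-1}W_{i,1}(y)$ matches $-\tfrac{1}{2(1+\mu)s}$ times the appropriate entry of the matrix $\begin{pmatrix} h_2 & \frac qp\hat h_2\\ \frac pq h_2 & \hat h_2\end{pmatrix}$ once we recall $h_2(y) = y^2 - 2$, $\hat h_2(y) = y^2 - 2\mu$ (from \eqref{eq:hntildeN1}, \eqref{def:hkalpha}), because $b|y|^2/s - \mu/((1+\mu)s) = \tfrac{1}{2(1+\mu)s}(y^2 - 2\mu)$ and $b|y|^2/s - 1/((1+\mu)s) = \tfrac{1}{2(1+\mu)s}(y^2 - 2)$; the error is the $k=1$ remainder $\tilde W_{i,1}$, bounded by $C(1+|y|^4)/s^2$.

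The only genuinely delicate point — hardly an obstacle, but worth care — is bookkeeping the mixing of the two small parameters $b|z|^2$ and $1/s$ so that the claimed degrees ($\deg W_{i,j} = 2j$) and the claimed remainder orders come out right; this is precisely where the special value $b = 1/(2(\mu+1))$ is used, since it is what makes the $s^{-1}$ coefficient collapse exactly onto the Hermite polynomials $h_2,\hat h_2$, which in turn is what later feeds the clean $-\tfrac2s\theta_2$ ODE for the null mode. Since the argument is entirely a one-variable Taylor expansion of an explicit rational function, I would present the identity for $-t/(1+t)$, substitute, collect terms, and verify \eqref{exp:Vys} by the elementary computation just indicated; there is nothing deeper to overcome.
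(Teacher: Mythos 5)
Your proposal is correct and follows essentially the same route the paper indicates: the paper's proof is the single remark that the lemma ``simply follows from Taylor expansions,'' and you carry out exactly that Taylor (geometric-series) expansion of the explicit rational profile $(\phi,\psi)$ from \eqref{def:phipsi}, using the exact identity for $-t/(1+t)$ to control the remainder on $|y|\le\sqrt s$, and verifying \eqref{exp:Vys} by the elementary algebra $b|y|^2/s - \mu/((1+\mu)s) = \hat h_2/(2(\mu+1)s)$ and $b|y|^2/s - 1/((1+\mu)s) = h_2/(2(\mu+1)s)$. One small inaccuracy in the write-up: the corrective $O(1/s)$ terms in $\phi,\psi$ enter additively, not multiplicatively, so they contribute only a constant to $W_{i,1}$ and generate no ``products'' in the remainder $\tilde W_{i,k}$ (which consists solely of the geometric-series tail); this does not affect the validity of the argument.
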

\begin{proof} The proof simply follows from Taylor expansions and we refer to Lemma 5.5 in \cite{GNZpre16c} for a similar proof. 
\end{proof}

We now use Lemma \ref{lemm:estVys} to derive the projections of $V\binom{\Lambda}{\Upsilon}$ on $\binom{f_n}{g_n}$ and $\binom{\tilde{f}_n}{\tilde{g}_n}$. More precisely, we have the following:

\begin{lemma}[Projections of $V\binom{ \Lambda}{\Upsilon}$ on $\binom{f_n}{g_n}$ and $\binom{\tilde{f}_n}{\tilde{g}_n}$] \label{lemm:Pro3rdterm} $\quad$\\

\noindent $(i)\;$ For all $s \geq 1$ and $n \leq M$, we have

\begin{align*}
&\left|\Pc_{n,M} \left[V\binom{\Lambda}{\Upsilon} \right]\right| + \left|\tilde \Pc_{n,M}\left[ V\binom{ \Lambda}{ \Upsilon}\right] \right|\nonumber\\
& \quad  \leq \frac{C}{s}\sum_{i = n-2}^M\big(|\theta_i(s)| + |\tilde \theta_i(s)|\big) + \sum_{i = 0}^{n-3}\frac{C}{s^{\frac{n-i}{2}}}\big(|\theta_i(s)| + |\tilde \theta_i(s)|\big)\\
& \qquad  + \frac{C}{s}\left(\left\|\frac{\Lambda_-(y,s)}{1 + |y|^{M+1}} \right\|_{L^\infty(\Rb)} + \left\|\frac{\Upsilon_-(y,s)}{1 + |y|^{M+1}} \right\|_{L^\infty(\Rb)}\right).
\end{align*}

\noindent $(ii)\;$  For all $A \geq 1$, there exists $s_5(A) \geq 1$ such that for all $s \geq s_5(A)$, if $\binom{\Lambda(s)}{\Upsilon(s)} \in \Vc_A(s)$, then:\\
- for $3 \leq n \leq M$,
\begin{equation*}
\left|\Pc_{n,M} \left[V\binom{\Lambda}{\Upsilon} \right]\right| + \left|\tilde \Pc_{n,M}\left[ V\binom{ \Lambda}{ \Upsilon}\right] \right|\leq \frac{CA^{n - 2}}{s^\frac{n + 1}{2}}.
\end{equation*}
- for $n = 0, 1, 2$,
\begin{equation*}
\left|\Pc_{n,M}\left[V\binom{\Lambda}{\Upsilon}\right] \right| + \left|\tilde \Pc_{n,M}\left[V\binom{\Lambda}{\Upsilon} \right] \right|\leq \frac{C}{s^2}.
\end{equation*}
\end{lemma}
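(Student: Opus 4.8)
The plan is to combine linearity of the projectors $\Pc_{n,M}$, $\tilde\Pc_{n,M}$ with the decomposition \eqref{decomoposeLamUp}. Writing $\binom{\Lambda}{\Upsilon}=\sum_{i\le M}\theta_i\binom{f_i}{g_i}+\tilde\theta_i\binom{\tilde f_i}{\tilde g_i}+\binom{\Lambda_-}{\Upsilon_-}$, the term $V\binom{\Lambda}{\Upsilon}$ splits into the finite-mode pieces $\theta_i\,V\binom{f_i}{g_i}$, $\tilde\theta_i\,V\binom{\tilde f_i}{\tilde g_i}$ and the infinite-dimensional piece $V\binom{\Lambda_-}{\Upsilon_-}$, and it suffices to estimate the projection of each. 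For the last one no cancellation is needed: from $|V_i(y,s)|\le C(1+|y|^2)/s$ (estimate \eqref{est:Viorder1}) and the pointwise bound $|\Lambda_-|+|\Upsilon_-|\le (1+|y|^{M+1})\big(\|\Lambda_-/(1+|y|^{M+1})\|_{L^\infty}+\|\Upsilon_-/(1+|y|^{M+1})\|_{L^\infty}\big)$, I would write $\Pc_{n,M}$ and $\tilde\Pc_{n,M}$ through the weighted inner products $\Pi_m,\hat\Pi_m$ of Lemma \ref{lemm:DefProjection} and use that the Gaussian weights $\rho_1,\rho_\mu$ absorb any polynomial growth in $y$; this gives a bound $\le \frac Cs\big(\|\Lambda_-/(1+|y|^{M+1})\|_{L^\infty}+\|\Upsilon_-/(1+|y|^{M+1})\|_{L^\infty}\big)$, i.e.\ exactly the last term in $(i)$.

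For the finite-mode pieces I would insert the Taylor expansion $V_i(y,s)=\sum_{j=1}^{k}s^{-j}W_{i,j}(y)+\tilde W_{i,k}(y,s)$ from \eqref{est:Viorderk} with $k$ fixed $\ge M+1$, and split the $y$-integral at $|y|=\sqrt s$. On $|y|\le\sqrt s$, each vector $s^{-j}W_{i,j}(y)\binom{f_i}{g_i}(y)$ is a polynomial of degree $i+2j$ with coefficients of size $O(s^{-j})$, so expressing $\Pc_{n,M},\tilde\Pc_{n,M}$ via $\Pi_{n+2\ell}$ (Lemma \ref{lemm:DefProjection}) and using Remark \ref{rema:orth} ($\Pi_m$ annihilates polynomials of degree $<m$), its projection on mode $n$ vanishes unless $n\le i+2j$ and is otherwise $O(s^{-j})|\theta_i|$; summing over the admissible $j$ — smallest being $j=1$ when $i\ge n-2$ and $j=\lceil(n-i)/2\rceil$ when $i\le n-3$ — produces the two sums $\frac Cs\sum_{i\ge n-2}(|\theta_i|+|\tilde\theta_i|)$ and $\sum_{i\le n-3}\frac{C}{s^{(n-i)/2}}(|\theta_i|+|\tilde\theta_i|)$. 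The remainder $\tilde W_{i,k}\binom{f_i}{g_i}$ contributes $O(s^{-k-1})(|\theta_i|+|\tilde\theta_i|)$ by \eqref{est:Viorderk}, negligible once $k\ge M+1$; and on $|y|\ge\sqrt s$ one uses the global bound $|V(y,s)|\le C$ (recall $\phi,\psi$ are bounded) together with the Gaussian tail estimate $\int_{|y|\ge\sqrt s}(1+|y|)^{C}\rho\le Ce^{-s/8}$ to obtain a contribution $\le Ce^{-s/8}(|\theta_i|+|\tilde\theta_i|)$, also negligible. In the radial one-dimensional setting $V$ is even and $\binom{f_i}{g_i}$ has parity $(-1)^i$, so only indices $i\equiv n\bmod 2$ actually occur; this is harmless for the stated upper bounds. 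Collecting the three contributions gives item $(i)$.

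For item $(ii)$ I would substitute the $\Vc_A(s)$-bounds ($|\theta_0|,|\theta_1|\le A/s^2$, $|\theta_2|\le A^4\ln s/s^2$, $|\tilde\theta_i|\le A^2/s^2$ for $i\le 2$, $|\theta_j|,|\tilde\theta_j|\le A^js^{-(j+1)/2}$ for $3\le j\le M$, and $\|\Lambda_-/(1+|y|^{M+1})\|_{L^\infty},\|\Upsilon_-/(1+|y|^{M+1})\|_{L^\infty}\le A^{M+1}s^{-(M+2)/2}$) into the bound of $(i)$ and choose $s_5(A)$ large enough that every subleading power of $A$ and factor $\ln s$ is dominated by the relevant power of $s$. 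For $3\le n\le M$ the dominant term is $\frac1s|\theta_{n-2}|\le CA^{n-2}s^{-(n+1)/2}$ (for $n=3,4$ this uses the bounds on $\theta_1,\theta_2,\tilde\theta_1,\tilde\theta_2$, which for $s\ge s_5(A)$ are all $\le CA^{n-2}s^{-(n+1)/2}$), while for $n=0,1,2$ every term is $\le CA^4\ln s\,s^{-3}\le Cs^{-2}$ once $s\ge s_5(A)$; this is item $(ii)$. The scheme parallels Lemma~5.6 of \cite{GNZpre16c}, the only changes being the eigenvalues of $\Mc$ and the coefficients in Lemma \ref{lemm:DefProjection}. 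I expect the main obstacle to be purely a matter of care rather than depth: on one hand the degree-counting argument on $|y|\le\sqrt s$ must be phrased correctly through Remark \ref{rema:orth} and the explicit form of $\Pc_{n,M}$; on the other hand the bookkeeping in step $(ii)$ — verifying that after inserting the $\Vc_A$ bounds the single term $\frac1s|\theta_{n-2}|$ genuinely dominates and that the choice of $s_5(A)$ absorbs the stray powers of $A$ and $\ln s$ — is where errors are easiest to make.
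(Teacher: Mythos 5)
Your proposal is correct and follows essentially the approach the paper relies on, namely that of Lemma 5.6 in \cite{GNZpre16c}: insert the decomposition \eqref{decomoposeLamUp}, Taylor-expand $V$ via Lemma \ref{lemm:estVys}, use the degree-counting in Remark \ref{rema:orth} together with the expression of $\Pc_{n,M}$, $\tilde\Pc_{n,M}$ in terms of $\Pi_m$, $\hat\Pi_m$, and absorb the remainder and the $|y|\ge\sqrt s$ tail by the Gaussian weight. One minor bookkeeping slip in part $(ii)$: the parenthetical claim that for $n=3,4$ the $\Vc_A$-bounds on $\theta_1,\theta_2,\tilde\theta_1,\tilde\theta_2$ are themselves $\le CA^{n-2}s^{-(n+1)/2}$ is not true for $\theta_2$ (since $A^4\ln s/s^2\not\le CA^2/s^{5/2}$ as $s\to\infty$); what holds, and suffices, is that the relevant contribution $\tfrac1s|\theta_2|\le CA^4\ln s/s^3$ is $\le CA^{n-2}s^{-(n+1)/2}$ once $s\ge s_5(A)$, which your surrounding instruction to take $s_5(A)$ large enough already covers.
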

\begin{proof}  The argument of the proof is the same as the one written in \cite{GNZpre16c} although we have a slightly different definition of the potential term $V$. However, since we have an analogous expansion of $V$ given in Lemma \ref{lemm:estVys}, the readers will have no difficulties to adapt those proof to this new situation. We then refer to Lemma 5.6 in \cite{GNZpre16c} for all the details of the proof.

\end{proof}

Using the precise expansion \eqref{exp:Vys}, we are able to derive a sharp estimate for the projection of $V\binom{ \Lambda}{\Upsilon}$ on $\binom{f_2}{g_2}$. In particular, we claim the following.

\begin{lemma}[Refined projection of $V\binom{ \Upsilon}{\Lambda}$ on $\binom{f_2}{g_2}$]\label{lemm:Pro3rdtermf2g2} $\quad $\\
$(i)\;$ It holds that 
\begin{align*}
\quad \left|\Pc_{2,M}\left[ V\binom{\Lambda}{\Upsilon}\right] + \frac{4}{s}\theta_2(s)\right|& \leq \frac{C}{s} \left(\sum_{j = 0, j \ne 2}^M|\theta_j(s)| + \sum_{j = 0}^M|\tilde \theta_j(s)|\right)\\
& + \frac Cs\left(\left\|\frac{\Lambda_-(y,s)}{1 + |y|^{M+1}} \right\|_{L^\infty(\Rb)}+\left\|\frac{\Upsilon_-(y,s)}{1 + |y|^{M+1}} \right\|_{L^\infty(\Rb)} \right).
\end{align*}
$(ii)\;$ For all $A \geq 1$, there exists $s_6(A) \geq 1$ such that for all $s \geq s_6(A)$, if $\binom{\Lambda(s)}{\Upsilon(s)} \in \Vc_A(s)$, then:
\begin{equation*}
\quad \left|\Pc_{2,M}\left[V\binom{\Lambda}{\Upsilon}\right] + \frac{4}{s}\theta_2(s)\right| \leq \frac{CA^3}{s^3}.
\end{equation*}
\end{lemma}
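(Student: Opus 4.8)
The plan is to compute $\Pc_{2,M}\!\bigl[V\binom{\Lambda}{\Upsilon}\bigr]$ by inserting, on the one hand, the refined expansion of $V$ from Lemma \ref{lemm:estVys} and, on the other hand, the decomposition \eqref{decomoposeLamUp} of $\binom{\Lambda}{\Upsilon}$. Write $V=V^{(1)}+\bar V$, where $V^{(1)}(y,s)=-\frac{1}{2(\mu+1)s}\begin{pmatrix} h_2 & \frac qp\hat h_2\\ \frac pq h_2 & \hat h_2\end{pmatrix}$ is the leading part singled out in \eqref{exp:Vys}, so that $|\bar V(y,s)|\le C(1+|y|^4)s^{-2}$ for $|y|\le\sqrt s$ while, by \eqref{est:Viorder1}, $|\bar V(y,s)|\le C(1+|y|^2)s^{-1}$ for all $y$; and write $\binom{\Lambda}{\Upsilon}=\theta_2\binom{f_2}{g_2}+P+\binom{\Lambda_-}{\Upsilon_-}$ with $P=\sum_{n\le M,\,n\ne2}\theta_n\binom{f_n}{g_n}+\sum_{n\le M}\tilde\theta_n\binom{\tilde f_n}{\tilde g_n}$. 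The quantity then splits into six contributions, which I would estimate one at a time through the weighted inner products $\langle V_1\Lambda+V_2\Upsilon,h_{2+2j}\rangle_{\rho_1}$ and $\langle V_3\Lambda+V_4\Upsilon,\hat h_{2+2j}\rangle_{\rho_\mu}$ that define $\Pc_{2,M}$ via Lemma \ref{lemm:DefProjection}.

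The heart of the argument is the term $\Pc_{2,M}\!\bigl[V^{(1)}\theta_2\binom{f_2}{g_2}\bigr]$. In one space dimension $\binom{f_2}{g_2}=\binom{q\hat h_2}{p h_2}$ by Remark \ref{rema:012}, so a direct matrix multiplication gives $V^{(1)}\binom{f_2}{g_2}=-\frac{1}{(\mu+1)s}\,h_2\hat h_2\binom{q}{p}$, a polynomial vector of degree four. Expanding $h_2\hat h_2=(y^2-2)(y^2-2\mu)$ in the basis $\{h_n\}$ for the first component and in $\{\hat h_n\}$ for the second, and applying $\Pc_{2,M}$, only the coefficients $A_{2,2},A_{4,2},B_{2,2},B_{4,2}$ of \eqref{eq:AnBn}--\eqref{eq:Ann2Bnn2} enter; a short computation collapses the four resulting terms to $-\frac{8b(\mu+1)}{s}\theta_2$, and here the precise value $b=\frac{1}{2(\mu+1)}$ is used to arrive at exactly $-\frac4s\theta_2$. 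This produces the term subtracted off in the statement.

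The remaining five contributions are error terms. For $\Pc_{2,M}[V^{(1)}P]$, multiplying a polynomial of degree $n\le M$ by $h_2$ or $\hat h_2$ raises the degree to $n+2$, so the projection only sees modes of index comparable to $n$, and one obtains $\tfrac Cs\bigl(\sum_{j\ne2}|\theta_j|+\sum_j|\tilde\theta_j|\bigr)$. For $\Pc_{2,M}[V^{(1)}\binom{\Lambda_-}{\Upsilon_-}]$, I would use the pointwise bound $|V^{(1)}|\le C(1+|y|^2)s^{-1}$, the weighted $L^\infty$ bounds on $\Lambda_-,\Upsilon_-$, and the finiteness of the relevant polynomially-weighted Gaussian integrals against $h_{2+2j}$ and $\hat h_{2+2j}$ to bound this by $\tfrac Cs\bigl(\|\Lambda_-/(1+|y|^{M+1})\|_{L^\infty}+\|\Upsilon_-/(1+|y|^{M+1})\|_{L^\infty}\bigr)$. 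For the three $\bar V$-contributions I would split each inner product into the region $|y|\le\sqrt s$, where $|\bar V|\le C(1+|y|^4)s^{-2}$ and the polynomial growth bounds on $\Lambda,\Upsilon$ furnish an extra $s^{-2}$, and the region $|y|\ge\sqrt s$, where $|\bar V|\le C(1+|y|^2)s^{-1}$ combines with the exponential smallness of $\int_{|y|\ge\sqrt s}(1+|y|^m)\rho_\eta\,dy$; all of this is of size $O(s^{-2})$ times polynomially-weighted norms of $(\Lambda,\Upsilon)$, hence absorbed into the right-hand side of $(i)$. Summing the six pieces yields $(i)$, and $(ii)$ then follows by inserting the bounds defining $\Vc_A(s)$ into $(i)$: the dominant contribution is $\tfrac Cs|\theta_3|\le \tfrac Cs\,A^3 s^{-2}=C A^3 s^{-3}$, and every other term is of lower order once $s\ge s_6(A)$ with $s_6(A)$ chosen large (and, say, $s_6(A)\ge A^2$).

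I expect the main obstacle to be the exact algebra of the leading term: the coefficient $4$ emerges only after the contributions coming from the two distinct, $\mu$-dependent Hermite expansions of $h_2\hat h_2$ recombine through the explicit projection constants of Lemma \ref{lemm:DefProjection}, and this recombination is precisely where the choice $b=\frac1{2(\mu+1)}$ is forced. A secondary technical point is the gluing of the global bound \eqref{est:Viorder1} with the sharp local bound \eqref{exp:Vys} across the crossover region $|y|\sim\sqrt s$ in the $\bar V$-terms, so that no power of $s$ is lost.
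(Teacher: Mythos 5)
Your proposal is correct and follows essentially the same approach as the paper. The paper's proof records only the key algebraic step — projecting the leading part $V^{(1)}$ of $V$ against $\theta_2\binom{f_2}{g_2}$ using the coefficients $A_{2,2},A_{4,2},B_{2,2},B_{4,2}$ of Lemma \ref{lemm:DefProjection}, which collapses to $-\frac{4}{s}\theta_2$ exactly as you find — and defers the bookkeeping of the remaining contributions to the analogous Lemma 5.7 of \cite{GNZpre16c}, whereas you carry out that same leading computation together with the explicit six-way decomposition of the error terms.
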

\begin{proof} See Lemma 5.7 in \cite{GNZpre16c} for a similar proof. The readers should notice that the only difference in comparison with the proof written in that paper is the expansion \eqref{exp:Vys} which results in
\begin{align*}
&\Pc_{2,M}\left[\frac{\theta_2}{2(\mu + 1)s}\begin{pmatrix} h_2 & \frac qp \hat h_2\\ \frac pq h_2 & \hat h_2
\end{pmatrix} \binom{f_2}{g_2}\right] \\
& = \frac{\theta_2}{2(\mu + 1)s}\Pc_{2,M}\binom{2q[h_4 + (10 - 2\mu)h_2 + 8]}{2p[\hat h_4 + (10 \mu - 2)\hat h_2 + 8\mu^2]}\\
& =\frac{\theta_2}{2(\mu + 1)s} \left[2q A_{4,2} + 2p B_{4,2} + 2q(10 - 2\mu)A_{2,2} + 2p(10 - 2\mu)B_{2,2}\right] = \frac{4}{s}\theta_2.
\end{align*}
This concludes the proof of Lemma \ref{lemm:Pro3rdtermf2g2}.
\end{proof}

\subparagraph{$\bullet\;$ The nonlinear term $\binom{q}{p}\Lambda\Upsilon$.} 
We claim the following:

\begin{lemma}[Projections of $\binom{q}{p}\Lambda \Upsilon$ on $\binom{f_n}{g_n}$ and $\binom{\tilde f_n}{\tilde{g}_n}$] \label{lemm:Pro4thterm} For all $A \geq 1$, there exists $s_7(A) \geq 1$ such that for all $s \geq s_7(A)$, if $\binom{\Lambda(s)}{\Upsilon(s)} \in \Vc_A(s)$, then:\\
- for $3 \leq m \leq M$, 
\begin{equation*}
\quad \left|\Pc_{m,M}\left[\binom{q}{p}\Lambda \Upsilon\right]\right| + \left|\tilde \Pc_{m,M}\left[\binom{q}{p}\Lambda \Upsilon\right]\right| \leq \frac{CA^n}{s^\frac{n + 2}{2}},
\end{equation*}
- for $m = 0, 1, 2$,
\begin{equation*}
\quad \left|\Pc_{m,M}\left[\binom{q}{p}\Lambda \Upsilon\right]\right| + \left|\tilde \Pc_{m,M}\left[\binom{q}{p}\Lambda \Upsilon\right]\right| \leq \frac{C}{s^3}.
\end{equation*}
\end{lemma}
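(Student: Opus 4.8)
The plan is to bring the estimate down, via Lemma \ref{lemm:DefProjection}, to the elementary Hermite projections of $\Lambda\Upsilon$, and then to play the orthogonality relations of Remark \ref{rema:orth} off against the $\Vc_A$-bounds of Proposition \ref{prop:proSt}. Since the quadratic term $\binom{q}{p}\Lambda\Upsilon$ of \eqref{eq:LamUp} is identical to the one treated in \cite{GNZpre16c}, the proof is the one given there; I only outline it. By Lemma \ref{lemm:DefProjection}, each of $\Pc_{m,M}$ and $\tilde\Pc_{m,M}$ is a finite linear combination — with coefficients depending only on $p,q,\mu$ — of the maps $\Pi_{m+2j}$ and $\hat\Pi_{m+2j}$, $j\ge0$, so it suffices to bound $|\Pi_n(\Lambda\Upsilon)|\le C\int_{\RN}|\Lambda\Upsilon|\,|h_n|\,\rho_1\,dy$ and the $\rho_\mu$-analogue $|\hat\Pi_n(\Lambda\Upsilon)|$ for every $n\le M$.

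For the low target modes this is immediate from Proposition \ref{prop:proSt}$(i)$: writing $P(y,s)$ for the polynomial upper bound on $|\Lambda|+|\Upsilon|$ provided there (whose leading term is $\frac{CA^4\ln s}{s^2}(1+|y|^2)$), one has $|\Lambda\Upsilon|\le P(y,s)^2$, and since $\rho_1,\rho_\mu$ decay like Gaussians, $\int P^2|h_n|\rho_1\,dy$ is a finite sum of terms $CA^{i+j}(\ln s)^{\sharp}s^{-\alpha_{ij}}$ with every $\alpha_{ij}\ge4$ — the slowest-decaying being $\sim A^8(\ln s)^2 s^{-4}$, the square of the $\Oc\!\big(s^{-2}(\ln s)(1+|y|^2)\big)$ part of $P$. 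Choosing $s_7(A)$ large enough, this is $\le Cs^{-3}$ when $m=0,1,2$ and $\le CA^m s^{-(m+2)/2}$ when $m=3,4,5$. For the higher modes, however, $|\Lambda\Upsilon|\le P^2$ is too lossy: it would keep a spurious $A^8(\ln s)^2 s^{-4}$ contribution in every mode, which no longer fits under $CA^m s^{-(m+2)/2}$ once $m\ge6$. There I would instead use the decomposition \eqref{eq:expandLAmbUpb}, splitting $\Lambda=\Lambda_{\le2}+\Lambda_{\ge3}$ with $\Lambda_{\le2}=\sum_{n\le2}Q_nh_n$ a polynomial of degree $\le2$ and $\Lambda_{\ge3}=\sum_{3\le n\le M}Q_nh_n+\Lambda_-$, and likewise for $\Upsilon$. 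Then $\Lambda_{\le2}\Upsilon_{\le2}$ is a polynomial of degree $\le4$, so by Remark \ref{rema:orth} its $\Pi_m$-projection vanishes for $m\ge5$; each of the three remaining products carries a factor coming from modes $\ge3$ (or from $\Lambda_-$, $\Upsilon_-$), and a degree count combined with the $\Vc_A$-estimates — $|Q_j|,|\hat Q_j|\le CA^j s^{-(j+1)/2}$ for $3\le j\le M$, $|Q_j|,|\hat Q_j|\le CA^4(\ln s)s^{-2}$ for $j\le2$, and the bounds on $\Lambda_-,\Upsilon_-$ and their gradients — shows that the only term decaying at the critical rate $s^{-(m+2)/2}$ is a product $Q_j\hat Q_k$ with $j,k\ge3$ and $j+k=m$, of size $\le CA^m s^{-(m+2)/2}$; all other contributions are of strictly higher order in $1/s$ and are absorbed into $C$ after enlarging $s_7(A)$. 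The statements for $\tilde\Pc_{m,M}$ and for the $\rho_\mu$-projections are obtained in exactly the same way.

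The one genuinely delicate point — and what stops this lemma from being a one-line consequence of Proposition \ref{prop:proSt} — is precisely the bookkeeping just sketched for $m\ge6$: one must notice that the dangerous low-mode self-interactions reassemble into an honest polynomial of bounded degree, hence are annihilated by the high Hermite projections, so that what survives is quadratically small in the tail modes $\theta_j$, $j\ge3$. Once this is recognized, the remaining computation is routine and coincides with that of the corresponding lemma in \cite{GNZpre16c}, where the explicit constants are recorded.
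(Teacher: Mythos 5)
Your proposal is correct and follows essentially the same route as the paper's proof: reduce via Lemma \ref{lemm:DefProjection} to the scalar Hermite projections $\Pi_m(\Lambda\Upsilon)$, $\hat\Pi_m(\Lambda\Upsilon)$, expand $\Lambda\Upsilon$ as a polynomial of degree at most $2M$ plus a controlled remainder using \eqref{decomoposeLamUp} and the $\Vc_A$-bounds, and invoke the orthogonality of Remark \ref{rema:orth} to discard all monomials of total degree $<m$, so that the surviving critical contribution is indeed the product of two modes $\ge 3$ summing to $m$. Your separate handling of $m\le5$ with the cruder pointwise bound $|\Lambda\Upsilon|\le P^2$, versus the explicit $\Lambda_{\le 2}+\Lambda_{\ge 3}$ splitting for $m\ge6$, is merely an organizational variant of the paper's single computation $|\Pi_m(\Lambda\Upsilon)|\le C\sum_{i+j\ge m}|\alpha_i\beta_j|+\text{(remainder)}$.
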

\begin{proof} From Lemma \ref{lemm:DefProjection}, it is enough to estimate $\Pi_m(\Lambda \Upsilon)$ and $\hat \Pi_m (\Lambda \Upsilon)$ with $m \leq M$, since it implies the same estimate for  $\Pc_{m,M}$ and $\tilde{\Pc}_{m,M}$. Since the estimates for $\Pi_m$ and $\hat \Pi_m$ are the same, we only deal with $\Pi_m(\Lambda \Upsilon)$ which is defined as follows:
$$\Pi_m(\Lambda \Upsilon) = \|h_m\|^{-2}_{\rho_1}\int_{\Rb}\Lambda \Upsilon h_m \rho_1 dy.$$
By the decomposition \eqref{decomoposeLamUp} and part $(i)$ of Definition \ref{def:St}, we write for $0 \leq m \leq M$,
\begin{align*}
\Lambda \Upsilon &= \left(\sum_{i = 0}^M \theta_i f_i + \tilde{\theta}_i \tilde{f}_i + \Lambda_- \right)\left(\sum_{j = 0}^M \theta_j g_j + \tilde{\theta}_j \tilde{g}_j + \Upsilon_- \right)\\
& = \left(\sum_{i = 0}^M \alpha_i y^i + \Lambda_-\right) \left(\sum_{j = 0}^M \beta_j y^j + \Upsilon_-\right) \\
& = \sum_{i + j = 0}^{2M} \alpha_i \beta_j y^{i + j} + \Oc\left(\frac{A^{2(M+1)}\ln s}{s^{\frac{M+2}{2} + 2}}  \big(|y|^{2M + 1}  +1\big)\right),
\end{align*}
where $|\alpha_i|, |\beta_i| \leq \frac{CA^4\ln s}{s^2}$ for $i = 0, 1, 2$ and $|\alpha_i|, |\beta_i| \leq \frac{CA^i}{s^{\frac{i + 1}{2}}}$ for $3 \leq i \leq M$. From Remark \ref{rema:orth}, we deduce that 
$$\left|\Pi_m(\Lambda \Upsilon)\right| \leq C\sum_{i + j = m}^{2M}|\alpha_i \beta_j| + C\frac{A^{2(M+1)}\ln s}{s^{\frac{M+2}{2} + 2}} \leq \left\{ \begin{array}{ll} \frac{CA^m}{s^{\frac{m + 2}{2}}} &\quad \text{for} \; 3 \leq m \leq M\\
\frac{CA^8 \ln^2 s}{s^4} &\quad \text{for}\; m = 0, 1, 2.
\end{array}\right.$$
This concludes the proof of Lemma \ref{lemm:Pro4thterm}.

\end{proof}

\subparagraph{$\bullet\;$ The error term $\binom{R_1}{R_2}$.}
We first expand $R_1(y,s)$ and $R_2(y,s)$ as a power series of $\frac{1}{s}$ as $s \to +\infty$, uniformly for $|y| < \sqrt s$. More precisely, we claim the following:
\begin{lemma}[Expansion of $R_1$ and $R_2$ as $s \to +\infty$] \label{lemm:expandR1R2}
For all $m \in \mathbb{N}$, the functions $R_1(y,s)$ and $R_2(y,s)$ defined in \eqref{def:Rys} can be decomposed as follows: for all $|y| < \sqrt s$ and $s \geq 1$,
\begin{equation}\label{eq:Riexpand}
\quad \left|R_{i}(y,s) - \sum_{k = 1}^{m-1} \frac{1}{s^{k+1}}R_{i,k}(y)\right| \leq \frac{C(1 + |y|^{2m})}{s^{m + 1}},
\end{equation}
where $R_{i,k}$ is a polynomial of degree $2k$. More precisely, we have
\begin{align}
R_{1,1} = \frac{\mu(2 + \mu)}{p(1 + \mu)^2} + \frac{1 - \mu^2}{p(1 + \mu)^3}y^2,&\label{eq:R1exps3}\\
R_{2,1} = \frac{1 + 2\mu}{q(1 + \mu)^2} + \frac{\mu^2 - 1}{q(1 + \mu)^3}y^2.\label{eq:R2exps3}
\end{align}
\end{lemma}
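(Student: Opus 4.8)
The plan is to substitute the corrected profile \eqref{def:phipsi} into the definition \eqref{def:Rys} and Taylor expand in powers of $s^{-1}$, the key point being that the relevant variable is $z=y/\sqrt s$, so that each spatial derivative of the profile costs a factor $s^{-1/2}$. Writing $F(z)=(1+b|z|^2)^{-1}$ with $b=\frac{1}{2(\mu+1)}$ as in \eqref{def:PhiPsistarpro}, we have $\phi(y,s)=\tfrac1p F(y/\sqrt s)+\tfrac{\mu}{p(\mu+1)s}$ and $\psi(y,s)=\tfrac1q F(y/\sqrt s)+\tfrac{1}{q(\mu+1)s}$, and every one of the six contributions to $R_1$ becomes an explicit power of $s^{-1}$ times a rational function of $z^2=y^2/s$ that is analytic near $z=0$. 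For the nonlinear gradient term $-|\nabla\phi|^2/\phi=-\tfrac{1}{ps}\,(F')^2\big(F+\tfrac{\mu}{(\mu+1)s}\big)^{-1}$ one first expands $\big(F+\tfrac{\mu}{(\mu+1)s}\big)^{-1}=F^{-1}\sum_{j\ge0}\big(-\tfrac{\mu}{(\mu+1)sF}\big)^j$, which is legitimate since $F$ is bounded away from $0$ on $|y|<\sqrt s$; the term $-\mu|\nabla\psi|^2/\psi$ is treated the same way.

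Next I would isolate the two cancellations that make $R_1$ of order $s^{-2}$ rather than $s^0$. First, the $s^0$-part of $R_1$ equals $\tfrac1p(F^2-F)-\tfrac z{2p}F'$, which vanishes identically by the profile ODE \eqref{eq:Phi0Psi0} for $\Phi_0=\tfrac1p F$; the gradient term, carrying a factor $s^{-1}$, does not enter at this order, which is exactly why it is absent from \eqref{eq:Phi0Psi0}. Second, collecting the $s^{-1}$-part and using the identity $bz^2=F^{-1}-1$ to eliminate $z^2$, one finds it equals $\tfrac{1}{ps}\big[\tfrac{F^2}{\mu+1}(\mu+2-2F)-\tfrac{\mu}{\mu+1}\big]$, whose value at $z=0$ is $0$ — precisely the cancellation for which the corrective term $\tfrac{\mu}{p(\mu+1)s}$ was put into $\phi$ in \eqref{def:phipsi}. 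Hence this contribution is $s^{-1}$ times a rational function of $z^2$ vanishing at $z=0$, i.e.\ $O(z^2/s)=O(y^2/s^2)$. The same two cancellations occur for $R_2$, with $p$ replaced by $q$ and $\mu$ appearing in front of the second–derivative and gradient terms; there the $s^{-1}$-bracket turns out to be $\tfrac{1}{qs}\big[\tfrac{F^2}{\mu+1}(2\mu+1-2\mu F)-\tfrac{1}{\mu+1}\big]$, again vanishing at $z=0$.

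At this stage $R_i$ has been written as a finite sum of terms $s^{-\ell}g_\ell(z^2)$ with $\ell\ge2$ and each $g_\ell$ analytic near $0$ with radius of convergence $1/b>1$; expanding every $g_\ell$ into its power series in $z^2=y^2/s$ and grouping equal powers of $s^{-1}$ yields $R_i(y,s)=\sum_{k\ge1}s^{-(k+1)}R_{i,k}(y)$ with $R_{i,k}$ an even polynomial of degree $2k$, while truncating at order $m-1$ leaves, by the standard Cauchy estimate for the Taylor tail of an analytic function on $|w|\le 1$ (here $w=z^2=y^2/s<1$ because $|y|<\sqrt s$), a remainder $\le C(1+|y|^{2m})s^{-(m+1)}$; for $1\le s\le s_*$ the inequality is trivial since then $|y|$ is bounded. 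The explicit formulas \eqref{eq:R1exps3}–\eqref{eq:R2exps3} are then read off from the $s^{-2}$ coefficient: its constant part is the sum of the genuine $s^{-2}$ terms coming from $-\partial_s\phi$ and $q\phi\psi$, namely $\tfrac{\mu}{p(\mu+1)}+\tfrac{\mu}{p(\mu+1)^2}=\tfrac{\mu(\mu+2)}{p(\mu+1)^2}$ (the gradient term contributes nothing here because $F'(0)=0$), and its $y^2$-part comes from the $z^2$-coefficient of the $s^{-1}$-bracket above, which equals $\tfrac{2b(1-\mu)}{p(\mu+1)}=\tfrac{1-\mu^2}{p(1+\mu)^3}$; the computation of $R_{2,1}$ is identical after the substitutions just described.

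The main difficulty is purely organizational: there are six terms in each of the two components, the precise value $b=\frac{1}{2(\mu+1)}$ must be used at several exact spots for the cancellations to close — most crucially the vanishing at $z=0$ of the $s^{-1}$-part, which is the whole reason the corrected profile \eqref{def:phipsi} is used in place of $(\Phi^*,\Psi^*)$ — and one must keep track of how a factor $z^2$ promotes an $s^{-\ell}$ term to an $s^{-(\ell+1)}$ contribution. The only feature genuinely absent from \cite{GNZpre16c} is the pair of nonlinear gradient terms $-|\nabla\phi|^2/\phi$ and $-\mu|\nabla\psi|^2/\psi$, but these are benign: $\nabla_y$ costs $s^{-1/2}$ and $F$ is bounded below on $|y|<\sqrt s$, so each is already $O(s^{-2})$ and enters the coefficients $R_{i,k}$ only through the Taylor expansion of $(F')^2/F$. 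Accordingly, the bulk of the write-up can be carried out in parallel with Lemma 5.5 of \cite{GNZpre16c}, with the two gradient terms treated as above.
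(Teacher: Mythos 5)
Your proof is correct and follows essentially the same strategy as the paper: rewrite $R_i$ using $\phi=\Phi^*(z)+D/s$, $\psi=\Psi^*(z)+E/s$, cancel the $s^0$ order via the profile ODE \eqref{eq:Phi0Psi0}, expand the gradient-term denominator, Taylor expand in $z=y/\sqrt s$, and read off $R_{i,1}$. You are somewhat more precise than the paper's terse note that ``the term of order $\frac{1}{s}$ is identically zero'': as you correctly observe, the $1/s$-coefficient (as a function of $z$) vanishes only at $z=0$ rather than identically, and this pointwise cancellation --- enforced by the choice $b=\frac{1}{2(\mu+1)}$ together with the $1/s$-corrective terms in \eqref{def:phipsi} --- is exactly what pushes that contribution to the $y^2/s^2$ level of $R_{i,1}$.
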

\begin{proof} Let $z = \frac{y}{\sqrt s}$, $D = \frac{\mu}{p(\mu + 1)}$, $E = \frac{1}{q(\mu + 1)}$, we then write from \eqref{def:phipsi}, 
$$\phi(y,s) = \Phi^*(z) + \frac{D}{s}, \quad \psi(y,s) = \Psi^*(z) + \frac{E}{s},$$
where $\Phi^*$ and $\Psi^*$ are defined by \eqref{def:PhiPsistarpro}. Using the fact that $(\Phi^*, \Psi^*)$ satisfies \eqref{eq:Phi0Psi0}, we rewrite 
\begin{align*}
R_1(y,s) &= \frac{z}{2s} \cdot\nabla_z\Phi^* + \frac{D}{s^2} + \frac{1}{s}\Delta_z \Phi^* - \frac{D}{s} + \frac{qDE}{s^2} + \frac{qD}{s}\Psi^* + \frac{qE}{s}\Phi^* - \frac{|\nabla_z \Phi^*|^2}{s (\Phi^* + \frac{D}{s})},\\
R_2(y,s) &= \frac{z}{2s} \cdot \nabla_z\Psi^* + \frac{E}{s^2} + \frac{\mu}{s}\Delta_z \Psi^* - \frac{E}{s} + \frac{pDE}{s^2} + \frac{pD}{s}\Psi^* + \frac{pE}{s}\Phi^* - \mu\frac{|\nabla_z \Psi^*|^2}{s (\Psi^* + \frac{E}{s})}.
\end{align*}
The proof then follows from Taylor expansion of $R_i, i = 1,2$ near $z = 0$. Note that the term of order $\frac{1}{s}$ is identically zero.  This concludes the proof of Lemma \ref{lemm:expandR1R2}.
\end{proof}

From Lemma \ref{lemm:expandR1R2}, we directly derive the following estimate of the projections of $\binom{R_1}{R_2}$ on $\binom{f_n}{g_n}$ and $\binom{\tilde f_n}{\tilde g_n}$:
\begin{lemma}[Projections of $\binom{R_1}{R_2}$ on $\binom{f_n}{g_n}$ and $\binom{\tilde f_n}{\tilde g_n}$] \label{lemm:Pro5thterm} For all $s \geq 1$ and $n \leq M$, we have\\
- if $n$ is odd, then
\begin{equation}\label{est:PnMR1R1nodd}
\Pc_{n,M}\binom{R_1(y,s)}{R_2(y,s)} = \tilde{\Pc}_{n,M}\binom{R_1(y,s)}{R_2(y,s)} = 0,
\end{equation}
- if $n \geq 4$ is even, then
\begin{equation}\label{est:PnMR1R1n4}
\left|\Pc_{n,M}\binom{R_1(y,s)}{R_2(y,s)} \right| + \left|\tilde \Pc_{n,M}\binom{R_1(y,s)}{R_2(y,s)} \right| \leq \frac{C}{s^{\frac{n + 2}{2}}}.
\end{equation}

- if $n = 0$ and $n = 2$, then
\begin{equation}\label{est:PnMR1R1n02}
\left|\Pc_{0,M}\binom{R_1(y,s)}{R_2(y,s)} \right| + \left|\tilde \Pc_{0,M}\binom{R_1(y,s)}{R_2(y,s)} \right| + \left|\tilde \Pc_{2,M}\binom{R_1(y,s)}{R_2(y,s)} \right| \leq \frac{C}{s^2},
\end{equation}
and
\begin{equation}\label{est:PnMR1R1n2}
\left|\Pc_{2,M}\binom{R_1(y,s)}{R_2(y,s)} \right| \leq \frac{C}{s^3}.
\end{equation}

\end{lemma}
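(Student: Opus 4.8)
The plan is to reduce everything, via Lemma~\ref{lemm:DefProjection}, to the scalar Hermite projections of the coefficients produced by the expansion of Lemma~\ref{lemm:expandR1R2}. By \eqref{def:Pn} and its analogue for $\tilde\Pc_{n,M}$, each of $\Pc_{n,M}\binom{R_1}{R_2}$ and $\tilde\Pc_{n,M}\binom{R_1}{R_2}$ is a fixed finite linear combination (with coefficients depending only on $p,q,\mu$) of $\Pi_m(R_1)$ and $\hat\Pi_m(R_2)$ for $m=n,n+2,\dots,M$, so it suffices to estimate these scalar projections and sum.

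First I would dispose of parity: from \eqref{def:Rys} and the explicit form of $\phi,\psi$ in \eqref{def:phipsi}, both $R_1(\cdot,s)$ and $R_2(\cdot,s)$ are even in $y$ --- equivalently, each $R_{i,k}$ in Lemma~\ref{lemm:expandR1R2} is an even polynomial of degree $2k$ --- so $\Pi_m(R_i)=\hat\Pi_m(R_i)=0$ for every odd $m$, which gives \eqref{est:PnMR1R1nodd}. For the even modes, fix $n$ and write, using Lemma~\ref{lemm:expandR1R2} with some $m>M$, $R_i(y,s)=\sum_{k\ge1}s^{-(k+1)}R_{i,k}(y)+\tilde R_i(y,s)$ on $\{|y|<\sqrt s\}$, with $\deg R_{i,k}=2k$ and $|\tilde R_i|\le C(1+|y|^{2m})s^{-(m+1)}$ there. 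The region $|y|\ge\sqrt s$ contributes negligibly to any $\Pi_m,\hat\Pi_m$: a crude global bound $|R_i(y,s)|\le C$ (immediate from \eqref{def:Rys}, since $\phi,\psi$ are bounded and bounded below by $c/s$, so the gradient terms $|\nabla\phi|^2/\phi$ stay $O(1)$) together with the Gaussian weight makes that part $O(e^{-s/C})$. For the polynomial part, Remark~\ref{rema:orth} kills $\Pi_m(R_{i,k})$ and $\hat\Pi_m(R_{i,k})$ whenever $m\ge 2k+1$, i.e.\ whenever $k\le m/2-1$; the first surviving term is $k=m/2$, whose $\Pi_m$ equals the leading coefficient of $R_{i,m/2}$ and which enters at order $s^{-(m/2+1)}$. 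Summing over $m=n,n+2,\dots$, the leading term is $m=n$, giving $|\Pi_n(R_1)|+|\hat\Pi_n(R_2)|\le Cs^{-(n/2+1)}$ for even $n\ge4$ (this is \eqref{est:PnMR1R1n4}); for $n=0$ the first surviving term is already $k=1$, giving the $s^{-2}$ bound, and the same $s^{-2}$ bound holds for $\tilde\Pc_{2,M}$ since it too sees only even $m\ge2$ and the $k=1$ term enters at order $s^{-2}$ --- this yields \eqref{est:PnMR1R1n02}.

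The one genuinely delicate point, and the step I expect to be the main obstacle, is the extra gain \eqref{est:PnMR1R1n2} for $\Pc_{2,M}\binom{R_1}{R_2}$: the $s^{-2}$ contribution has to cancel exactly. By \eqref{def:Pn}, $\Pc_{2,M}$ applied to the degree-$2$ polynomial $s^{-2}\binom{R_{1,1}}{R_{2,1}}$ equals $s^{-2}\big(A_{2,2}\Pi_2(R_{1,1})+B_{2,2}\hat\Pi_2(R_{2,1})\big)$, since $\Pi_m(R_{i,1})=\hat\Pi_m(R_{i,1})=0$ for $m\ge3$ by Remark~\ref{rema:orth} and the index $m=0$ does not appear in $\Pc_{2,M}$. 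Using that $\Pi_2$ (resp.\ $\hat\Pi_2$) of $ay^2+b$ is $a$ for the monic Hermite polynomials, the explicit expressions \eqref{eq:R1exps3}--\eqref{eq:R2exps3} give $\Pi_2(R_{1,1})=\frac{1-\mu}{p(1+\mu)^2}$ and $\hat\Pi_2(R_{2,1})=\frac{\mu-1}{q(1+\mu)^2}$, while $A_{2,2}=\frac1{2q}$, $B_{2,2}=\frac1{2p}$ by \eqref{eq:AnBn}, so
\[
A_{2,2}\Pi_2(R_{1,1})+B_{2,2}\hat\Pi_2(R_{2,1})=\frac{1-\mu}{2pq(1+\mu)^2}+\frac{\mu-1}{2pq(1+\mu)^2}=0 .
\]
Hence the leading contribution to $\Pc_{2,M}\binom{R_1}{R_2}$ comes from $k=2$, i.e.\ from $s^{-3}\binom{R_{1,2}}{R_{2,2}}$ (degree $4$, so $\Pi_2,\Pi_4$ of it are $O(1)$), which is $O(s^{-3})$, plus the remainder and the Gaussian tail, both smaller --- this is \eqref{est:PnMR1R1n2}. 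The whole argument hinges on this cancellation, which is precisely where the value $b=\frac1{2(\mu+1)}$ and the $\frac1s$-corrective terms in the approximate profile \eqref{def:phipsi} are used; the rest is routine bookkeeping with Hermite polynomials and super-exponentially small Gaussian tails.
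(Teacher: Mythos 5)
Your argument is correct and follows exactly the route the paper's (very brief) proof indicates: reduce via Lemma~\ref{lemm:DefProjection} to the scalar Hermite projections $\Pi_m$, $\hat\Pi_m$, use parity of $R_i$ for odd $n$, then the degree-counting from \eqref{eq:Riexpand} and Remark~\ref{rema:orth} for even $n$, and finally the explicit cancellation $A_{2,2}\Pi_2(R_{1,1})+B_{2,2}\hat\Pi_2(R_{2,1})=0$ for the refined $n=2$ bound (the paper's displayed identity $\Pc_{2,M}\binom{R_{1,1}}{R_{1,2}}=0$ is a typo for $\Pc_{2,M}\binom{R_{1,1}}{R_{2,1}}=0$, and your computation verifies it). You also correctly note that no such cancellation is available for $\tilde\Pc_{2,M}$, which is why that term only gets the $s^{-2}$ bound.
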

\begin{proof} The proof simply follows from the expansion \eqref{eq:Riexpand} and Lemma \ref{lemm:DefProjection}. For the sharp estimate \eqref{est:PnMR1R1n2}, we need to use the precise expressions \eqref{eq:R1exps3} and \eqref{eq:R2exps3} which gives 
$$\Pc_{2,M}\binom{R_{1,1}}{R_{1,2}} = 0.$$
This concludes the proof of Lemma \ref{lemm:Pro5thterm}.  
\end{proof}

\subparagraph{$\bullet\;$ The nonlinear gradient term $\binom{G_1}{G_2}$.} In comparison with the work \cite{GNZpre16c}, this part is new. We shall give all details of the proof.
\begin{lemma}[Expansion of $\binom{G_1}{G_2}$] \label{lemm:expG1G2} For all $K_0 \geq 1$, $A \geq 1$ and $\epsilon_0 > 0$, there exists $t_{0,3}(K_0,A, \epsilon_0) < T$ and $\eta_{0,3}(\epsilon_0)$ such that for each $t_0 \in [t_{0,3}, T)$, $\alpha_0 > 0$, $C_0 > 0$, $C_0' > 0$, $\delta_0 \leq \min\{|\hat u(1)|,|\hat v(1)|\}$ and $\eta_0 \in (0, \eta_{0,3}]$: if $(u,v)(x,t_0)$ is given by \eqref{def:uvt0} and $(u,v)(t) \in \Sc(t)$ for $t \in [t_0, T)$, then we have \\
\begin{align}
&|\chi(y,s)G_1(\Lambda,y,s)|\leq C(K_0,A)\chi(y,s) \left(\frac{|\Lambda|}{s} + \frac{|\nabla \Lambda|}{\sqrt s}\right),\label{est:G1chi}\\
&|\chi(y,s)G_2(\Upsilon,y,s)|\leq C(K_0,A)\chi(y,s) \left(\frac{|\Upsilon|}{s} + \frac{|\nabla \Upsilon|}{\sqrt s}\right),\label{est:G2chi}\\
&\left|\big(1 - \chi(y,s)\big)\binom{G_1(\Lambda,y,s)}{G_2(\Upsilon, y,s)}\right| \leq \frac{C(K_0,C_0')}{s},\label{est:G12chi1}
\end{align}
and  for $k \in \mathbb{N}^*$,
\begin{align}
&\left|\chi(y,s) \left\{G_1(\Lambda,y,s) - \sum_{j = 1}^k \frac{1}{j!} \left[D_j \frac{|\nabla \phi|^2}{\phi^{j + 1}} \Lambda^j + D_{j - 1} \frac{2\nabla \Lambda \cdot \nabla \phi}{\phi^j}\Lambda^{j - 1} + D_{j - 2} \frac{2|\nabla \Lambda|^2}{\phi^{j - 2}}\Lambda^{j - 2} \right] \right\} \right|\nonumber\\
& \quad \leq C(K_0,A)\chi(y,s)\left( \frac{1}{s}|\Lambda|^{k+1} + \frac{|y|^2}{s^2}|\Lambda|^k + |\Lambda|^{k -1}|\nabla \Lambda|^2\right),\label{eq:expG1}\\ 
&\left|\chi(y,s) \left\{G_2(\Upsilon,y,s) - \mu \sum_{j = 1}^k \frac{1}{j!} \left[D_j \frac{|\nabla \psi|^2}{\psi^{j + 1}} \Upsilon^j + D_{j - 1} \frac{2\nabla \Upsilon \cdot \nabla \psi}{\psi^j}\Upsilon^{j - 1} + D_{j - 2} \frac{2|\nabla \Upsilon|^2}{\psi^{j - 2}}\Upsilon^{j - 2} \right] \right\} \right|\nonumber\\
& \quad \leq C(K_0,A)\chi(y,s)\left( \frac{1}{s}|\Upsilon|^{k+1} + \frac{|y|^2}{s^2}|\Upsilon|^k +  |\Upsilon|^{k -1}|\nabla \Upsilon|^2\right),\label{eq:expG2}
\end{align}
where $D_j = (-1)^{j+1}j!$ and $D_{-1} = 0$.
\end{lemma}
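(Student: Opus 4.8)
The plan is to treat the three groups of estimates separately --- the pointwise bounds \eqref{est:G1chi}--\eqref{est:G2chi} on $\mathrm{supp}\,\chi$, the bound \eqref{est:G12chi1} on $\mathrm{supp}(1-\chi)$, and the expansions \eqref{eq:expG1}--\eqref{eq:expG2} --- after collecting a few uniform bounds. On $\mathrm{supp}\,\chi=\{|y|\le 2K_0\sqrt s\}$, writing $z=y/\sqrt s$ one has $|z|\le 2K_0$, so from \eqref{def:phipsi}--\eqref{def:PhiPsistarpro} we get $\phi\ge\Phi^*(z)\ge \tfrac1{p(1+4bK_0^2)}=:c(K_0)>0$, $\psi\ge c'(K_0)>0$, $\phi\le\tfrac1p+\tfrac Cs$, $\psi\le\tfrac1q+\tfrac Cs$, and, using $\nabla_y\phi=s^{-1/2}\nabla_z\Phi^*$ with $|\nabla_z\Phi^*(z)|=\tfrac{2b|z|}{p(1+b|z|^2)^2}\le\tfrac{2b|z|}{p}$, the key bounds $|\nabla_y\phi|\le C(K_0)\tfrac{|y|}{s}$ and $|\nabla_y\phi|^2\le C(K_0)\tfrac{|y|^2}{s^2}\le\tfrac{C(K_0)}{s}$ (and the same for $\psi$). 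Since $(u,v)(t)\in\Sc(t)$, Proposition~\ref{prop:proSt} gives $\|\Lambda(s)\|_{L^\infty(\Rb)}+\|\Upsilon(s)\|_{L^\infty(\Rb)}+\|\nabla\Lambda(s)\|_{L^\infty(\Rb)}+\|\nabla\Upsilon(s)\|_{L^\infty(\Rb)}\le CA^{M+2}s^{-1/2}$; choosing $t_{0,3}$ close enough to $T$ (so $s_0$ large) we may assume $|\Lambda|,|\Upsilon|\le\tfrac12\min\{c(K_0),c'(K_0)\}$, hence $\Phi=\Lambda+\phi\ge\tfrac12 c(K_0)$, $\Psi=\Upsilon+\psi\ge\tfrac12 c'(K_0)$ and $|\Lambda|/\phi,|\Upsilon|/\psi\le\tfrac12$ on $\mathrm{supp}\,\chi$.

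For \eqref{est:G1chi} I would write, using $|\nabla(\Lambda+\phi)|^2=|\nabla\phi|^2+2\nabla\phi\cdot\nabla\Lambda+|\nabla\Lambda|^2$,
\[
\chi G_1=\chi\,\frac{\Lambda|\nabla\phi|^2-2\phi\,\nabla\phi\cdot\nabla\Lambda-\phi|\nabla\Lambda|^2}{\phi(\Lambda+\phi)},
\]
bound the denominator below by $\tfrac12 c(K_0)^2$, and use $|\nabla\phi|^2\le\tfrac{C(K_0)}{s}$, $|\nabla\phi|\le\tfrac{C(K_0)}{\sqrt s}$, $\phi\le C$ and $|\nabla\Lambda|\le\tfrac{C(K_0,A)}{\sqrt s}$ (the last to absorb $\phi|\nabla\Lambda|^2\le\tfrac{C(K_0,A)}{\sqrt s}|\nabla\Lambda|$); this gives $|\chi G_1|\le C(K_0,A)\chi(\tfrac{|\Lambda|}{s}+\tfrac{|\nabla\Lambda|}{\sqrt s})$, and \eqref{est:G2chi} is identical with $(\psi,\Upsilon,\mu)$. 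For \eqref{eq:expG1}--\eqref{eq:expG2}, note that $G_1=-\big(F(\Phi,\nabla\Phi)-F(\phi,\nabla\phi)\big)$ with $F(a,b)=|b|^2/a$ and $(\Phi,\nabla\Phi)=(\phi+\Lambda,\nabla\phi+\nabla\Lambda)$; since $\partial_a^m F=(-1)^m m!\,|b|^2a^{-(m+1)}$, $\partial_b F=2b/a$, $\partial_b^2 F=2a^{-1}\mathrm{Id}$ and $\partial_b^m F=0$ for $m\ge3$, Taylor's formula in $(\Lambda,\nabla\Lambda)$ to order $k$ produces exactly the finite sum in \eqref{eq:expG1}, with the coefficients $D_j=(-1)^{j+1}j!$ read off from these derivatives (only the three written monomials survive at each order $j$, and $D_{-1}=0$ kills the $|\nabla\Lambda|^2$ monomial at $j=1$); equivalently one expands $\tfrac1{\Lambda+\phi}=\sum_{j\ge0}(-1)^j\Lambda^j\phi^{-(j+1)}$ (valid since $|\Lambda|/\phi\le\tfrac12$), the cancellation $-\tfrac{|\nabla\phi|^2}{\phi}+\tfrac{|\nabla\phi|^2}{\phi}=0$ removing the $j=0$ term. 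The remainder is estimated term by term: the tail of the $|\nabla\phi|^2$-part is $\le C(K_0)\tfrac{|\nabla\phi|^2}{\phi}|\Lambda|^{k+1}\le\tfrac{C(K_0)}{s}|\Lambda|^{k+1}$; the tail of the $\nabla\phi\cdot\nabla\Lambda$-part is $\le C(K_0)\tfrac{|\nabla\phi|\,|\nabla\Lambda|}{\phi}|\Lambda|^k\le C(K_0)\tfrac{|y|}{s}|\nabla\Lambda|\,|\Lambda|^k\le C(K_0)\big(\tfrac{|y|^2}{s^2}|\Lambda|^k+|\Lambda|^{k-1}|\nabla\Lambda|^2\big)$ by Young's inequality and $|\Lambda|\le C$; the tail of the $|\nabla\Lambda|^2$-part is $\le C(K_0)\tfrac{|\nabla\Lambda|^2}{\phi}|\Lambda|^{k-1}\le C(K_0)|\Lambda|^{k-1}|\nabla\Lambda|^2$. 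Summation gives \eqref{eq:expG1}, and \eqref{eq:expG2} follows identically with an overall factor $\mu$.

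The delicate estimate is \eqref{est:G12chi1}, where $\phi$ and $\Phi=(T-t)e^{qu}$ are no longer bounded below, so one must leave the similarity variables. Using $\nabla_y=\sqrt{T-t}\,\nabla_x$ one has the identity $\tfrac{|\nabla_y\Phi|^2}{\Phi}=q^2(T-t)^2 e^{qu}|\nabla_x u|^2$ (and $\tfrac{|\nabla_y\Psi|^2}{\Psi}=p^2(T-t)^2 e^{pv}|\nabla_x v|^2$), while $\tfrac{|\nabla_y\phi|^2}{\phi}\le\tfrac{C(K_0)}{s}$ on $\mathrm{supp}(1-\chi)\subseteq\{|y|\ge K_0\sqrt s\}$ (i.e. $|z|\ge K_0$), by the large-$|z|$ asymptotics of $\Phi^*$: where $\Phi^*(z)\ge\tfrac{\mu}{p(1+\mu)s}$ one bounds $\tfrac{|\nabla_y\phi|^2}{\phi}\le\tfrac1s\tfrac{|\nabla_z\Phi^*|^2}{\Phi^*}\le\tfrac{C}{K_0^4 s}$, and where it is not one has $\phi\ge\tfrac{\mu}{p(1+\mu)s}$ and $|\nabla_z\Phi^*|^2=\Oc(s^{-3})$, so $\tfrac{|\nabla_y\phi|^2}{\phi}=\Oc(s^{-3})$ (similarly for $\psi$). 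For the $\Phi$-term I use $\mathrm{supp}(1-\chi)\subseteq\Dc_2\cup\Dc_3$ for $s$ large: on $\Dc_3$, part $(iii)$ of Definition~\ref{def:St} together with the explicit data \eqref{def:uvt0},\eqref{def:ustar},\eqref{def:vstar} bounds $e^{qu}$ and $|\nabla_x u|$ by fixed constants, so $\tfrac{|\nabla_y\Phi|^2}{\Phi}=\Oc((T-t)^2)=\Oc(e^{-2s})$; on $\Dc_2$, the control of $(\tilde u,\tilde v)$ in $\Sc(t)$ together with $\delta_0\le\tfrac12\min\{|\hat u(1)|,|\hat v(1)|\}$ gives, via $u(x,t)=\tilde u(x,0,\tau)+\tfrac1q|\ln\sigma(x)|$ and $\nabla_x u(x,t)=\sigma(x)^{-1/2}\nabla_\xi\tilde u(x,0,\tau)$ (and the fact that $\hat u(\tau)$ is increasing and bounded on $[0,1)$), the bounds $e^{qu(x,t)}\le\tfrac{C(K_0)}{\sigma(x)}$ and $|\nabla_x u(x,t)|\le\tfrac{C_0}{\sqrt{\sigma(x)|\ln\sigma(x)|}}$, hence $\tfrac{|\nabla_y\Phi|^2}{\Phi}\le C(K_0)C_0^2\,\tfrac{(T-t)^2}{\sigma(x)^2|\ln\sigma(x)|}$; an elementary analysis of the constraint $|x|\ge\tfrac{K_0}{4}\sqrt{(T-t)|\ln(T-t)|}$ --- which forces $\sigma(x)$ to be at least a fixed multiple of $T-t$ and, where $\sigma(x)$ is not bounded below by a fixed constant, also $(T-t)^2\sigma(x)^{-2}|\ln\sigma(x)|^{-1}\le\tfrac Cs$, while on the remaining compact part this quantity is $\Oc(e^{-2s})$ --- yields $\tfrac{|\nabla_y\Phi|^2}{\Phi}\le\tfrac{C(K_0,C_0')}{s}$ there. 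Adding the pieces proves \eqref{est:G12chi1}. This outer estimate is the main obstacle: in the intermediate and regular zones the similarity‑variable analysis of \cite{MZdm97,GNZpre16c} yields only the smallness of $(\Lambda,\Upsilon)$ and no control of the gradient nonlinearity, so one is forced to work with the rescaled functions of \eqref{def:uvtilde} and to track carefully the interplay of the factors $(T-t)^2$, $\sigma(x)^{-2}$ and $|\ln\sigma(x)|^{-1}$ across $\Dc_2$ --- precisely the additional ``intermediate‑region'' control advertised as one of the novelties of the paper.
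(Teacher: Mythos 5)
Your proof is correct and takes essentially the same route as the paper's: the paper obtains \eqref{est:G1chi}--\eqref{est:G2chi} and the expansions \eqref{eq:expG1}--\eqref{eq:expG2} from Taylor's formula with integral remainder applied to $\nu\mapsto\Gc_1(\nu)=-|\nu\nabla\Lambda+\nabla\phi|^2/(\nu\Lambda+\phi)+|\nabla\phi|^2/\phi$, which is the same one-parameter repackaging of the bivariate (or geometric-series) expansion you use, and it relies on the same pointwise facts $|\nabla\phi|\le C/\sqrt s$, $|\nabla\phi|^2/\phi\le C/s$, $|\nabla\phi|^2/\phi^2\le C|y|^2/s^2$ together with the $\Sc(t)$-bounds on $(\Lambda,\nabla\Lambda)$; for \eqref{est:G12chi1} it likewise reduces the bound to $(T-t)^2e^{qu}|\nabla_xu|^2\le C/|\ln(T-t)|$ on $\mathrm{supp}(1-\chi)$ and treats $\Dc_2$ and $\Dc_3$ via parts $(ii)$ and $(iii)$ of Definition \ref{def:St} exactly as you do. A small simplification: $|\nabla\phi|^2/\phi\le C/s$ (and $|\nabla\phi|^2/\phi^2\le C|y|^2/s^2$) holds for every $y\in\Rb^N$ using only $\phi\ge\Phi^*(z)$, since $|\nabla_z\Phi^*|^2/\Phi^*$ is bounded, so the case split you make on $\mathrm{supp}(1-\chi)$ according to whether $\Phi^*(z)$ dominates the $1/s$ correction is not needed.
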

\begin{proof} We only deal with the estimates for $G_1$, the estimates for $G_2$ follows similarly. Let $\nu \in [0,1]$ and
$$\Gc_1(\nu) = -\frac{|\nu \nabla \Lambda + \nabla \phi|^2}{\nu \Lambda + \phi} + \frac{|\nabla \phi|^2}{\phi}.$$
We have by \eqref{def:Gys},
$$G_1(y,s) \equiv \Gc_1(1) = \sum_{j = 0}^k \frac{1}{j!} \Gc_1^{(j)}(0) + \frac{1}{(k+1)!}\int_0^1(1 - \nu)\Gc_1^{(k+1)}(\nu)d\nu, \quad \forall k \in \mathbb{N},$$
where $\Gc_1(0) = 0$ and for $j \geq 1$, 
\begin{align*}
\Gc_1^{(j)}(\nu) &= D_j\Lambda^j \frac{|\nu \nabla \Lambda + \nabla \phi|^2}{(\nu \Lambda + \phi)^{j + 1}} + D_{j - 1}\Lambda^{j - 1}\frac{2\nabla \Lambda \cdot (\nu \nabla \Lambda + \nabla \phi)}{(\nu \Lambda + \phi)^{j}} + D_{j - 2}\Lambda^{j - 2} \frac{2|\nabla \Lambda|^2}{(\nu \Lambda + \phi)^{j - 1}},
\end{align*}
with $D_j = (-1)^{j+1}j!$ and $D_{-1} = 0$ by convention. 
The estimate \eqref{est:G1chi} and the expansion \eqref{eq:expG1} then follow from the fact that 
$$ |\nabla \phi| \leq \frac{C}{\sqrt{s}}, \quad \frac{|\nabla \phi|^2}{\phi} \leq \frac{C}{s}, \quad \frac{|\nabla \phi|^2}{\phi^2} \leq \frac{C|y|^2}{s^2}, \quad \forall y \in \Rb, \; s \geq s_{0,3}(K_0).$$
In order to prove \eqref{est:G12chi1}, it remains to show that for $|y| \geq K_0\sqrt s$, $\frac{|\nabla \Lambda + \nabla \phi|^2}{\Lambda + \phi} \leq \frac{C}{s}$. From \eqref{def:LamUps}, \eqref{def:simvariables} and \eqref{def:baruv}, it is equivalent to show that for $|x| \geq r(t) = K_0 \sqrt{(T-t)|\ln(T-t)|}$ and $t \geq t_0$,
\begin{equation}\label{est:nabueu}
|\nabla_x u|^2e^{qu(x,t)} \leq \frac{C}{(T-t)^2|\ln(T-t)|}.
\end{equation}
Arguing as in \cite{GNZjde17}, we consider two cases:\\
- \textit{Case 1: $|x| \in [r(t), \epsilon_0)]$}. In this case we use the bounds given in part $(ii)$ of Definition \ref{def:St} to prove \eqref{est:nabueu}. By \eqref{def:uvtilde}, we have 
$$|\nabla_x u(x,t)|^2e^{qu(x,t)} = \sigma(x)^{-2}|\nabla_\xi \tilde{u}(x,0, \tau(x,t))|^2e^{q\tilde{u}(x, 0, \tau(x,t))},$$
where  $\tau(x,t) = \frac{t - t(x)}{\sigma(x)}$, $\sigma(x) = T - t(x)$ and $t(x)$ is uniquely defined by \eqref{def:tx}. From part $(ii)$ of Definition \ref{def:St}, we have for $|x| \in [r(t), \epsilon_0]$, 
$$|\tilde{u}(x, 0, \tau(x,t)) - \hat u(\tau(x,t))| \leq \delta_0, \quad |\nabla_\xi \tilde{u}(x,0, \tau(x,t))| \leq \frac{C_0}{\sqrt{|\ln \sigma(x)|}},$$
from which we derive 
$$|\nabla_x u(x,t)|^2e^{qu(x,t)} \leq \frac{C(C_0)}{\sigma(x)^2|\ln \sigma(x)|} \leq \frac{C(C_0)}{\sigma(r(t))^2 |\ln \sigma(r(t))|}.$$
Since $r(t) \to 0$ as $t \to T$, we deduce from \eqref{def:tx}, 
$$\sigma(r(t)) \sim \frac{2}{K_0^2}\frac{r^2(t)}{|\ln r(t)|} \quad \text{and} \quad \ln \sigma(r(t)) \sim \ln r(t) \quad \text{as} \;\; t\to T. $$
Recalling that $r(t) = K_0\sqrt{(T-t)|\ln(T-t)|}$, we derive
$$\frac{C(C_0)}{\sigma(r(t))^2 |\ln \sigma(r(t))|} \sim \frac{C(C_0, K_0)}{(T-t)^2|\ln(T-t)|},$$
which concludes the proof of \eqref{est:nabueu} for $|x| \in [r(t), \epsilon_0]$.\\
- \textit{Case 2: $|x| \geq \epsilon_0$}. From part $(iii)$ of Definition \ref{def:St}, we have 
$$i = 0, 1, \quad |\nabla^i_x u(x,t) - \nabla_x^iu(x,t_0)| \leq \eta_0, \quad \forall |x| \geq \epsilon_0.$$
Let 
$$\eta_{0,3}(\epsilon_0) = \frac{1}{2}\min\{\min_{|x| \geq \epsilon_0}|u(x,t_0)|, \min_{|x| \geq \epsilon_0}|\nabla u(x,t_0)|\},$$
then from \eqref{def:uvt0}, we have for $\eta_0 \in (0, \eta_{0,3}]$ and $|x| \geq \epsilon_0$, 
$$|\nabla_x u(x,t)|^2e^{qu(x,t)} \leq C|\nabla_x u(x,t_0)|^2e^{qu(x,t_0)} \leq C|\nabla_x \hat u_*(x)|^2e^{q\hat u_*(x)} \leq C(\epsilon_0),$$
where $\hat u_*$ is defined by \eqref{def:ustar}. Therefore, if $t_0 \in [t_{0,3}, T)$, where $t_{0,3} = t_{0,3}(\epsilon_0)< T$ such that $C(\epsilon_0) \leq \frac{C}{(T-t_{0,3})^2|\ln(T - t_{0,3})|}$, we have proved \eqref{est:nabueu} for $t = t_0$ and $|x| \geq \epsilon_0$. Since $\frac{C}{(T-t_{0})^2|\ln(T - t_{0})|} \leq \frac{C}{(T-t)^2|\ln(T - t)|}$ for all $t \in [t_0, T)$, we concludes that estimate \eqref{est:nabueu} holds true for $t \geq t_0$ and $|x| \geq \epsilon_0$. This concludes the proof of Lemma \ref{lemm:expG1G2}.
\end{proof}

From Lemma \ref{lemm:expG1G2}, we are ready to estimate the projection of $\binom{G_1}{G_2}$ on $\binom{f_n}{g_n}$ and $\binom{\tilde f_n}{\tilde g_n}$. In particular, we have the following.
\begin{lemma}[Projection of $\binom{G_1}{G_2}$ on $\binom{f_n}{g_n}$ and $\binom{\tilde f_n}{\tilde g_n}$] \label{lemm:Pro6term} Under the assumption of Lemma \ref{lemm:expG1G2}, we have \\
- For $n = 0, 1, 2$,
\begin{equation}\label{eq:P01G12}
\quad \left|\Pc_{n,M}\binom{G_1}{G_2}\right| + \left|\tilde{\Pc}_{n,M}\binom{G_1}{G_2}\right| \leq \frac{C}{s^2}.
\end{equation}
- For $3 \leq n \leq M$,
\begin{equation}\label{eq:P3MG12}
\quad \left|\Pc_{n,M}\binom{G_1}{G_2}\right| + \left|\tilde{\Pc}_{n,M}\binom{G_1}{G_2}\right| \leq \frac{CA^n}{s^{\frac{n + 2}{2}}}.
\end{equation}
Moreover, we have the refined estimate 
\begin{equation}\label{eq:P2G12}
\quad \left|\Pc_{2,M}\binom{G_1}{G_2} - \frac{2}{s}\theta_2\right| \leq \frac{CA^2}{s^3}.
\end{equation}

\end{lemma}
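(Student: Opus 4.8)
The plan is to reduce everything to estimating the scalar projections $\Pi_m(G_1)$ and $\hat\Pi_m(G_2)$ for $m\le M$, since by Lemma~\ref{lemm:DefProjection} the quantities $\Pc_{n,M}\binom{G_1}{G_2}$ and $\tilde\Pc_{n,M}\binom{G_1}{G_2}$ are linear combinations of these with coefficients depending only on $p,q,\mu$. I would then split $G_1=\chi G_1+(1-\chi)G_1$ with $\chi$ as in \eqref{def:chi}. The outer piece is supported in $\{|y|\ge K_0\sqrt s\}$, where \eqref{est:G12chi1} gives $|(1-\chi)G_1|\le C/s$; since $|h_m(y)|\le C(1+|y|^m)$ and $\rho_1$ decays like $e^{-|y|^2/4}$, its contribution to every $\Pi_m$ is $\le Ce^{-cs}$ for some $c>0$, hence negligible against all the claimed bounds. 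The same applies to $G_2$.

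For the inner piece I would apply the Taylor expansion of Lemma~\ref{lemm:expG1G2} with $k=1$, writing $\chi G_1$ as its $j=1$ term $\chi\big(\tfrac{|\nabla\phi|^2}{\phi^2}\Lambda-\tfrac{2\nabla\Lambda\cdot\nabla\phi}{\phi}\big)$ plus a remainder bounded by $C\chi\big(\tfrac1s|\Lambda|^2+\tfrac{|y|^2}{s^2}|\Lambda|+|\nabla\Lambda|^2\big)$ via \eqref{eq:expG1}. On $\mathrm{supp}\,\chi$ one has $\phi\ge c(K_0)>0$, $|\nabla\phi|\le C|y|/s$, $|\nabla\phi|^2/\phi\le C/s$ and $|\nabla\phi|^2/\phi^2\le C|y|^2/s^2$; combining these with the pointwise bounds on $\Lambda$ and $\nabla\Lambda$ from Proposition~\ref{prop:proSt}, the decomposition \eqref{decomoposeLamUp}, and Remark~\ref{rema:orth} (projection onto $h_m$ only picks up monomials of degree $\ge m$) --- exactly as in the proof of Lemma~\ref{lemm:Pro4thterm} --- a term-by-term estimate of the Gaussian-weighted integrals gives, for each piece, a bound $\le C/s^2$ when $m\le2$ and $\le CA^m s^{-(m+2)/2}$ when $3\le m\le M$, and likewise for $G_2$. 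Assembling through Lemma~\ref{lemm:DefProjection} yields \eqref{eq:P01G12} and \eqref{eq:P3MG12}.

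For the refined estimate \eqref{eq:P2G12} I would track the single piece producing an order-$s^{-1}$ contribution to $\Pc_{2,M}$, namely $-\tfrac{2\nabla\Lambda\cdot\nabla\phi}{\phi}$ and its $G_2$-analogue. Expanding $\phi$ in $z=y/\sqrt s$ from \eqref{def:phipsi}--\eqref{def:PhiPsistarpro} gives $\nabla\phi/\phi=-\tfrac{2by}{s}+\Oc(|y|^3/s^2)$, so that $-\tfrac{2\nabla\Lambda\cdot\nabla\phi}{\phi}=\tfrac{4b\,y\,\nabla\Lambda}{s}+\big(\text{error}\ \le C\tfrac{|y|^3}{s^2}|\nabla\Lambda|\big)$. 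Inserting \eqref{decomoposeLamUp}, parity of the Hermite modes (and the fact that $\Pc_{2,M}$ sees only $\Pi_m,\hat\Pi_m$ with $m$ even) kills every odd-degree contribution, so the sole surviving order-$s^{-1}$ term comes from $\nabla\Lambda\approx\theta_2 f_2'=2q\theta_2 y$; all remaining modes ($\theta_0,\theta_1$, $\theta_j,\tilde\theta_j$ for $j\ge3$, $\tilde\theta_0,\tilde\theta_1,\tilde\theta_2$, and $\nabla\Lambda_-$), together with the $\Oc(|y|^3/s^2)$ correction, the term $\tfrac{|\nabla\phi|^2}{\phi^2}\Lambda$, and the entire Taylor remainder, contribute $\le CA^2/s^3$ (using $M$ large, \eqref{eq:Mfixed}, for the $\Lambda_-$ part). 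The same computation for $G_2$ produces $\mu\,\tfrac{4b\,y\,\nabla\Upsilon}{s}$ with $\nabla\Upsilon\approx\theta_2 g_2'=2p\theta_2 y$, i.e. $\approx\tfrac{8b\mu p\,\theta_2}{s}y^2$, so the part of $\binom{G_1}{G_2}$ felt by $\Pc_{2,M}$ at order $s^{-1}$ equals $\tfrac{8b\theta_2}{s}\binom{qy^2}{\mu p y^2}$. Since $y^2=h_2(y)+2h_0(y)$ and $y^2=\hat h_2(y)+2\mu\hat h_0(y)$, Lemma~\ref{lemm:DefProjection} with \eqref{eq:AnBn} gives $\Pc_{2,M}\binom{qy^2}{\mu p y^2}=qA_{2,2}+\mu p B_{2,2}=\tfrac12(1+\mu)$; multiplying by $\tfrac{8b\theta_2}{s}$ and using $b=\tfrac1{2(\mu+1)}$ gives exactly $\tfrac2s\theta_2$, which is \eqref{eq:P2G12}.

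The main obstacle is the bookkeeping in this last step. One must use the sharp bound $|\nabla\phi|\le C|y|/s$ rather than the cruder $|\nabla\phi|\le C/\sqrt s$ (the latter would only produce an error $\Oc(s^{-5/2})$, too weak), exploit the parity of the Hermite modes to discard every odd contribution, and verify that it is precisely the value $b=\tfrac1{2(\mu+1)}$ that makes $4b(1+\mu)=2$; any other value would destroy the relation $\theta_2'=-\tfrac2s\theta_2+\Oc(s^{-3})$ of Proposition~\ref{prop:dyn}(ii). Everything else reduces to routine bounds on Gaussian-weighted integrals of polynomials.
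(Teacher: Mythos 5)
Your proposal is correct and follows essentially the same route as the paper: split $G_i$ with the cutoff $\chi$ (the outer piece is exponentially small), use the Taylor expansion of Lemma~\ref{lemm:expG1G2} at order $k=1$ on the inner piece, and identify the lone order-$\theta_2/s$ contribution to $\Pc_{2,M}$ coming from $-\tfrac{2\nabla\Lambda\cdot\nabla\phi}{\phi}$ and $-\tfrac{2\mu\nabla\Upsilon\cdot\nabla\psi}{\psi}$ via $\nabla\Lambda\approx 2q\theta_2 y$, $\nabla\Upsilon\approx 2p\theta_2 y$, $\nabla\phi/\phi\approx\nabla\psi/\psi\approx-2by/s$. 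Your identity $\Pc_{2,M}\binom{qy^2}{\mu p y^2}=\tfrac12(1+\mu)$ and the cancellation $4b(1+\mu)=2$ are exactly the paper's computation (the paper writes it as $\tfrac{4}{(1+\mu)s}\theta_2\Pc_{2,M}\binom{qy^2}{\mu p y^2}=\tfrac{2}{s}\theta_2$), and your handling of the remainder terms — absorbing $\tfrac{|\nabla\phi|^2}{\phi^2}\Lambda$, the correction to $\nabla\phi/\phi$, the off-mode contributions, and $\nabla\Lambda_-$ into $\Oc(A^2/s^3)$ — matches the paper's bookkeeping.
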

\begin{proof} From \eqref{est:G1chi}, \eqref{est:G2chi}, \eqref{est:G12chi1}, part $(ii)$ of Proposition \ref{prop:proSt} and Lemma \ref{lemm:DefProjection}, we derive for $n = 0, 1, 2$,
$$|\Pc_{n,M}\binom{G_1}{G_2}| + |\tilde{\Pc}_{n,M}\binom{G_1}{G_2}| \leq \frac{C(A) \ln s}{s^2\sqrt s} + C(A)e^{-cs} \leq \frac{1}{s^2}.$$
We can refine the estimate for $\Pc_{2,M}$ by using the expansions \eqref{eq:expG1} and \eqref{eq:expG2} with $k = 1$ which reads as follows: 
\begin{align*}
&\left|\chi(y,s)\left[G_1(\Lambda, y,s) + \frac{2\nabla \Lambda \cdot \nabla \phi}{\phi}\right]\right| \leq C(A, K_0) \left(\frac{|\Lambda|^2}{s} + \frac{|y|^2}{s^2}|\Lambda| + |\nabla \Lambda|^2\right),\\
&\left|\chi(y,s)\left[G_2(\Upsilon, y,s) + \frac{2\mu\nabla \Upsilon \cdot \nabla \psi}{\psi}\right]\right| \leq C(A, K_0) \left(\frac{|\Upsilon|^2}{s} + \frac{|y|^2}{s^2}|\Upsilon| + |\nabla \Upsilon|^2\right).
\end{align*} 
From these expansions, part $(i)$ of Definition \ref{def:St}, decomposition \ref{decomoposeLamUp} and the fact that $\frac{|\nabla \phi|}{\phi} + \frac{|\nabla \psi|}{\psi} \leq \frac{C|y|}{s}$ for all $y \in \Rb$, we derive 
\begin{align*}
\Pc_{2,M}\binom{G_1}{G_2} &= -2\theta_2\Pc_{2,M}\binom{(\nabla f_2 \cdot \nabla \phi) / \phi}{\mu(\nabla g_2 \cdot\nabla \psi) / \psi} + \Oc\left(\frac{CA^2}{s^3} \right),\\
& = \frac{4}{(1 + \mu)s}\theta_2\Pc_{2,M}\binom{qy^2}{\mu py^2} + \Oc\left(\frac{CA^2}{s^3} \right) = \frac{2}{s}\theta_2 + \Oc\left(\frac{CA^2}{s^3} \right),
\end{align*}
which is the desired conclusion in \eqref{eq:P2G12}.

For $3 \leq n \leq M$, we note from Lemma \ref{lemm:DefProjection} that it is enough to estimate $\Pi_{n}(G_1)$ and $\tilde{\Pi}_{n}(G_2)$ which directly implies the estimates  for $\Pc_{n,M}\binom{G_1}{G_2}$ and $\tilde{\Pc}_{n,M}\binom{G_1}{G_2}$. Since the estimates for $\Pi_{n}(G_1)$ and $\tilde{\Pi}_n(G_2)$ are similar, we only deal with $\Pi_n(G_1)$. From \eqref{est:G12chi1}, we have $\int_{|y| \geq K_0 \sqrt s} G_1 h_n \rho_1 dy \leq Ce^{-cs}$. We now use the expansion \eqref{eq:expG1} for the estimates in the region $|y| \leq 2K_0 \sqrt{s}$. To do so, let us expand $G_1(y,s)$ for $|y| \leq 2K_0\sqrt{s}$ in  power series of $y$ for $|y| \leq 2K\sqrt s$. We start with the term $\frac{|\nabla \phi|^2}{\phi^{j + 1}} \Lambda^j$ for $j \geq 1$. By the definition \eqref{eq:Phi0Psi0}, we write
\begin{align*}
\frac{|\nabla \phi|^2}{\phi^{j + 1}} &= \sum_{k = 0}^{M/2} \frac{c_k}{s^{k+2}} y^2(\Phi^*)^{k + 4 - (j+1)} +  \Oc\left(\frac{|y|^2}{s^{M/2 + 3}}\right) = \sum_{m = 1}^{M/2}\frac{\tilde{c}_m}{s^{m + 1}}y^{2m} + \Oc\left(\frac{|y|^2}{s^{M/2 + 2}}\right),
\end{align*}
and from part $(i)$ of Definition \ref{def:St} and the decomposition \eqref{decomoposeLamUp}, 
\begin{align*}
\Lambda^j = \left[\sum_{i = 0}^M \alpha_i y^i + \Oc\left(\frac{|y|^{M+1} + 1}{s^{M/2 + 1}}\right)\right]^j = \sum_{i = 0}^{M}\tilde{\alpha}_i y^i + \Oc\left(C(A)\frac{|y|^{M+1} + 1}{s^{M/2 + 1}}\right),
\end{align*}
with $|\tilde{\alpha}_i| \leq \frac{C(A)}{s^{(i+1)/2}}$.  Hence, we have 
$$\frac{|\nabla \phi|^2}{\phi^{j + 1}} \Lambda^j = \sum_{m = 2}^{M} d_m y^m + \Oc\left(C(A)\frac{|y|^{M+1} + 1}{s^{M/2 + 2}}\right) \quad \text{with} \quad |d_m| \leq \frac{C(A)}{s^{\frac{m + 3}{2}}},$$
from which we directly obtain the estimate
$$\left| \Pi_{m}\left(\frac{|\nabla \phi|^2}{\phi^{j + 1}} \Lambda^j\right) \right| \leq \frac{C}{s^{\frac{m + 2}{2}}} \quad \text{for}\quad 3 \leq m \leq M.$$
A similar computation yields the same bound on the projection $\Pi_m, 3 \leq m \leq M$ of the terms $\frac{\nabla \Lambda \cdot \nabla \phi}{\phi^j}\Lambda^{j - 1}$ (for $j \geq 1$) and $\frac{|\nabla \Lambda|^2}{\phi^{j - 2}}\Lambda^{j - 2}$ (for $j \geq 2$). This concludes the proof of \eqref{eq:P3MG12} as well as Lemma \ref{lemm:Pro6term}.
\end{proof}
\bigskip

\paragraph{Proof of items $(i)-(iii)$ of Proposition \ref{prop:dyn}.}
We have estimated the projections $\Pc_{n,M}$ and $\tilde \Pc_{n,M}$ of the all terms appearing in system \eqref{eq:LamUp}. In particular, taking the projection of \eqref{eq:LamUp} on $\binom{f_n}{g_n}$ and $\binom{\tilde f_n}{\tilde g_n}$ for $n \leq M$, we obtain for all $s \in [\tau, \tau_1]$:\\
- if $n = 0$ and $n = 1$, then
$$\left|\theta_n'(s) - \left(1 - \frac n2 \right)\theta_n(s) \right| \leq \frac{C}{s^2},$$
which is the conclusion of part $(i)$ of Proposition \ref{prop:dyn},\\
- if $n = 2$, then 
$$\left|\theta_2'(s) + \frac{2}{s}\theta_2(s) \right| \leq \frac{CA^3}{s^3},$$
which is the conclusion of part $(ii)$ of Proposition \ref{prop:dyn},\\
- if $3 \leq n \leq M$, then 
$$\left|\theta_n'(s) - \left(1 - \frac{n}{2}\right)\theta_n(s) \right| \leq \frac{CA^{n-1}}{s^{\frac{n+1}{2}}},$$
$$\left|\tilde \theta_n'(s) + \left(1 + \frac n2\right)\tilde \theta_n(s) \right| \leq \frac{CA^{n-1}}{s^{\frac{n+1}{2}}},$$
and $n = 0, 1, 2$,
$$\left|\tilde \theta_n'(s) + \left(1 + \frac n2 \right)\tilde \theta_n(s) \right| \leq \frac{C}{s^2}.$$
Integrating these differential equations between $\tau$ and $s$ yields the conclusion of part $(iii)$ of Proposition \ref{prop:dyn}.

\subsubsection{Control of the infinite dimensional part.} \label{sec:negetivepart}
We prove item $(iv)- (v)$ of Proposition \ref{prop:dyn} in this part. We proceed in three parts:\\
\noindent - In the first part, we project system \eqref{eq:LamUp} using the projector $\Pi_{-,M}$. Recall that $\Pi_{-,M}$ is the projector on the subspace of $\Hc$ where the spectrum is less than $\frac{1-M}{2}$.  Unlike the previous part where we used the spectrum of $\Hc + \Mc$, in this step, we use the spectrum of $\Hc$ and consider $\Mc$ as a perturbation. This is enough since for $M > 0$ large enough (see \eqref{eq:Mfixed}), the spectrum of $\Hc + \Mc$ is fully negative.\\
\noindent - In the second part, we collect all the estimates obtained in the first step to write  a system satisfied by $\binom{\Lambda_-}{\Upsilon_-}$, then we use a Gronwall's inequality to get the conclusion of item $(iv)$.\\
\noindent - In the third part, we prove item $(v)$ through a parabolic regularity argument as in \cite{TZpre15} (see also \cite{TZnor15}) applied to the system for $\binom{\Lambda_-}{\Upsilon_-}$. 

\paragraph{Part 1: Projection $\Pi_{-,M}$ of all the terms appearing in \eqref{eq:LamUp}.} In this part, we will find the main contribution to the projection $\Pi_{-,M}$ of the various terms appearing in \eqref{eq:LamUp}.

From the decomposition \eqref{decomoposeLamUp} and the fact that $\Pi_{-,M}\binom{f_n}{g_n} + \Pi_{-,M}\binom{\tilde f_n}{\tilde g_n} = 0$ for all $n \leq M$, we immediately obtain
\begin{equation*}
\Pi_{-,M}\left[\partial_s\binom{\Lambda}{\Upsilon} - (\Hc + \Mc)\binom{\Lambda}{\Upsilon}\right] = \partial_s\binom{\Lambda_-}{\Upsilon_-} - \big(\Hc+ \Mc\big)\binom{\Lambda_-}{\Upsilon_-}.
\end{equation*}

As for the potential term, we have the following estimates:
\begin{lemma}[Estimate of  $\Pi_{-,M}\left(V\binom{\Lambda}{\Upsilon}\right)$]  \label{lemm:Pineg3rdterm}$\;$\\
$(i)\;$ For all $s \geq 1$, we have 
\begin{align*}
\left\|\frac{\Pi_{-,M}(V_1 \Lambda + V_2\Upsilon)}{1 + |y|^{M+1}}\right\|_{L^\infty(\Rb)} &\leq \left(\|V_1\|_{L^\infty(\Rb)} + \frac C s \right)\left\|\frac{\Lambda_-}{1 + |y|^{M+1}}\right\|_{L^\infty(\Rb)}\\
& \quad  + \left(\|V_2\|_{L^\infty(\Rb)} + \frac C s \right)\left\|\frac{\Upsilon_-}{1 + |y|^{M+1}}\right\|_{L^\infty(\Rb)}\\
&\qquad + \sum_{n = 0}^M \frac{C}{s^\frac{M+1 - n}{2}}(|\theta_n(s)| + |\tilde \theta_n(s)|),\\
\left\|\frac{\Pi_{-,M}(V_3 \Lambda + V_4\Upsilon)}{1 + |y|^{M+1}}\right\|_{L^\infty(\Rb)} &\leq \left(\|V_3\|_{L^\infty(\Rb)} + \frac Cs \right)\left\|\frac{\Lambda_-}{1 + |y|^{M+1}}\right\|_{L^\infty(\Rb)}\\
& \quad + \left(\|V_4\|_{L^\infty(\Rb)} + \frac Cs \right)\left\|\frac{\Upsilon_-}{1 + |y|^{M+1}}\right\|_{L^\infty(\Rb)}\\
& \qquad + \sum_{n = 0}^M \frac{C}{s^\frac{M+1 - n}{2}}(|\theta_n(s)| + |\tilde \theta_n(s)|).
\end{align*}
$(ii)\;$ For all $A \geq 1$, there exists $s_8(A) \geq 1$ such that for all $s \geq s_8(A)$, if $\binom{\Lambda(s)}{\Upsilon(s)} \in \Vc_A(s)$, then
\begin{align*}
\left\|\frac{\Pi_{-,M}(V_1 \Lambda+ V_2 \Upsilon)}{1 + |y|^{M+1}}\right\|_{L^\infty(\Rb)} &\leq \|V_1\|_{L^\infty(\Rb)} \left\|\frac{\Lambda_-}{1 + |y|^{M+1}}\right\|_{L^\infty(\Rb)}\\
& \quad + \|V_2\|_{L^\infty(\Rb)} \left\|\frac{\Upsilon_-}{1 + |y|^{M+1}}\right\|_{L^\infty(\Rb)} + \frac{CA^M}{s^\frac{M+2}{2}},\\
\left\|\frac{\Pi_{-,M}(V_3 \Lambda + V_4\Upsilon)}{1 + |y|^{M+1}}\right\|_{L^\infty(\Rb)} &\leq \|V_3\|_{L^\infty(\Rb)} \left\|\frac{\Lambda_-}{1 + |y|^{M+1}}\right\|_{L^\infty(\Rb)}\\
& \quad + \|V_4\|_{L^\infty(\Rb)} \left\|\frac{\Upsilon_-}{1 + |y|^{M+1}}\right\|_{L^\infty(\Rb)} + \frac{CA^M}{s^\frac{M+2}{2}}.
\end{align*}

\end{lemma}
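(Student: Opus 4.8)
The plan is to reduce everything, via Lemma \ref{lemm:DefProjection}, to estimating the scalar projectors $\Pi_n$ and $\hat\Pi_n$ applied to the products $V_i\Lambda$ and $V_i\Upsilon$, and then to exploit two complementary features of the potential $V$: its global boundedness $\|V_i\|_{L^\infty_{y,s}}<\infty$, and, inside the blowup region $|y|\le\sqrt s$, the polynomial expansion $V_i(y,s)=\sum_{j=1}^{k}s^{-j}W_{i,j}(y)+\tilde W_{i,k}(y,s)$ provided by Lemma \ref{lemm:estVys}, with $W_{i,j}$ an even polynomial of degree $2j$ and $\tilde W_{i,k}$ of size $(1+|y|^{2k+2})s^{-k-1}$. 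I would write $\Lambda=\Lambda_-+\sum_{n\le M}(\theta_nf_n+\tilde\theta_n\tilde f_n)$ (and similarly for $\Upsilon$, with $g_n,\tilde g_n$) and treat the two parts separately. Throughout, every inner product $\langle\,\cdot\,,\,\cdot\,\rangle_{\rho_1}$ (or $\langle\,\cdot\,,\,\cdot\,\rangle_{\rho_\mu}$) is split into the near region $|y|\le\sqrt s$, where the expansion of $V_i$ is available, and the far region $|y|>\sqrt s$, where only $\|V_i\|_{L^\infty}$ is used but the Gaussian weight produces a factor $e^{-cs}$ making that contribution negligible once $s$ is large.

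For the $\Lambda_-,\Upsilon_-$ contribution I would use $\Pi_{-,M}(V_1\Lambda_-+V_2\Upsilon_-)=(V_1\Lambda_-+V_2\Upsilon_-)-\Pi_{+,M}(V_1\Lambda_-+V_2\Upsilon_-)$. The first term is bounded pointwise by $\|V_1\|_{L^\infty}\|\Lambda_-/(1+|y|^{M+1})\|_{L^\infty}(1+|y|^{M+1})+\|V_2\|_{L^\infty}\|\Upsilon_-/(1+|y|^{M+1})\|_{L^\infty}(1+|y|^{M+1})$, which yields the $\|V_1\|_{L^\infty},\|V_2\|_{L^\infty}$ coefficients. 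For the finite part $\Pi_{+,M}(\cdots)=\sum_{n\le M}\langle V_1\Lambda_-+V_2\Upsilon_-,h_n\rangle_{\rho_1}\,\|h_n\|_{\rho_1}^{-2}\,h_n$, I move each $V_i$ onto $h_n$, insert the $\tfrac1s$-expansion of $V_i$ in the near region, and invoke the orthogonality $\langle\Lambda_-,P\rangle_{\rho_1}=0$ for every polynomial $P$ of degree $\le M$ (from \eqref{eq:partnegVW}, cf.\ Remark \ref{rema:orth}): the pairing $\langle\Lambda_-,W_{i,j}h_n\rangle_{\rho_1}$ vanishes unless $2j+n\ge M+1$, so it always carries at least one factor $\tfrac1s$ (the worst case being $n=M$, $j=1$); choosing $k$ large, the remainder $\tilde W_{i,k}h_n$ and the far region contribute only $O(e^{-cs}+s^{-k-1})$. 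Since $\|h_n/(1+|y|^{M+1})\|_{L^\infty}\le C$ for $n\le M$, this is precisely the source of the additional $\tfrac Cs$ multiplying $\|\Lambda_-/(1+|y|^{M+1})\|_{L^\infty}$ and $\|\Upsilon_-/(1+|y|^{M+1})\|_{L^\infty}$.

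For the finite-dimensional part $\sum_{n\le M}(\theta_nf_n+\tilde\theta_n\tilde f_n)$ the key point is that $\Pi_{-,M}$ annihilates every polynomial of degree $\le M$: writing $\Pi_{-,M}(V_1\theta_nf_n)=\sum_{m>M}\langle V_1\theta_nf_n,h_m\rangle_{\rho_1}\,\|h_m\|_{\rho_1}^{-2}\,h_m$ and inserting the expansion of $V_1$ in the near region, the pairing $\langle W_{1,j}f_n,h_m\rangle_{\rho_1}$ vanishes for $m>\deg(W_{1,j}f_n)=2j+n$, so only terms with $2j+n\ge M+1$, i.e.\ $s^{-j}$ with $j\ge\lceil(M+1-n)/2\rceil\ge(M+1-n)/2$, survive; with $k$ chosen appropriately and the far region $|y|>\sqrt s$ discarded via the Gaussian weight, this produces the weight $s^{-(M+1-n)/2}$ in front of $|\theta_n|$ (and likewise $|\tilde\theta_n|$), and summation over $n\le M$ gives the term $\sum_{n=0}^MCs^{-(M+1-n)/2}(|\theta_n|+|\tilde\theta_n|)$. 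Collecting the three contributions proves $(i)$. Item $(ii)$ then follows by inserting the bounds of $\Vc_A(s)$: for $3\le n\le M$ one has $s^{-(M+1-n)/2}|\theta_n|\le A^n s^{-(M+1-n)/2-(n+1)/2}=A^n s^{-(M+2)/2}\le A^M s^{-(M+2)/2}$, the cases $n\le2$ and the $\tilde\theta_n$'s being even smaller, while $\tfrac Cs\|\Lambda_-/(1+|y|^{M+1})\|_{L^\infty}\le CA^{M+1}s^{-(M+4)/2}\le CA^M s^{-(M+2)/2}$ once $s\ge s_8(A)$, which absorbs the $\tfrac Cs$ corrections and leaves exactly the stated form. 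The main obstacle is not conceptual but organizational: keeping track of the powers of $s$ generated by each monomial in the products $W_{i,j}h_n$ and $W_{i,j}f_n$ and of the degree constraints forced by orthogonality, together with the care needed to split off the far region $|y|>\sqrt s$ without spoiling the $(1+|y|^{M+1})$-weighted bound. Since the potential $V$ here has exactly the same structure as its counterpart in \cite{GNZpre16c} (compare Lemma \ref{lemm:estVys} with the expansion used there), this computation is identical to the one already carried out in that work, to which we refer for the remaining details.
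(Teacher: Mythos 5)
The paper does not give an internal proof for this lemma --- it refers the reader to Lemma 5.12 of \cite{GNZpre16c} --- so there is no in-text argument to compare against; your sketch is a sound reconstruction of the expected argument, with the right decomposition of $\Lambda,\Upsilon$, the right near/far splitting at $|y|=\sqrt s$, the right degree count producing the weight $s^{-(M+1-n)/2}$, and a correct substitution of the $\Vc_A(s)$ bounds for item $(ii)$.

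One point deserves a flag. You attribute the extra $\tfrac Cs$ in front of $\|\Lambda_-/(1+|y|^{M+1})\|_{L^\infty}$ to the orthogonality $\langle\Lambda_-,P\rangle_{\rho_1}=0$ for $\deg P\le M$. That is over-kill: since the near-region expansion of $V_i$ in Lemma~\ref{lemm:estVys} has no $j=0$ term, each coefficient $\langle V_i\Lambda_-,h_n\rangle_{\rho_1}$ already carries a factor $\tfrac1s$ once the far region $|y|>\sqrt s$ is discarded as $O(e^{-cs})$, regardless of any orthogonality. The distinction matters because for the cross term $\Pi_{+,M}(V_2\Upsilon_-)$ (and symmetrically $\Pi_{+,M}(V_3\Lambda_-)$ in the second displayed estimate) the orthogonality you cite is in fact unavailable when $\mu\neq 1$: $\Upsilon_-$ is $\rho_\mu$-orthogonal, not $\rho_1$-orthogonal, to polynomials of degree $\le M$, so your step ``the pairing vanishes unless $2j+n\ge M+1$'' simply does not apply to that piece. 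The bound still holds, but for the expansion reason, not the orthogonality one. A smaller presentational remark: writing $\Pi_{-,M}(V_i\theta_nf_n)=\sum_{m>M}\langle\cdots,h_m\rangle\|h_m\|^{-2}h_m$ is an $L^2_{\rho_1}$ identity that needs justification in the weighted $L^\infty$ framework; the more direct route is to subtract off the polynomial $\sum_{j:\,2j+n\le M}s^{-j}W_{i,j}f_n$, which $\Pi_{-,M}$ annihilates exactly, then control the remainder pointwise on $|y|\le\sqrt s$ (using the expansion and $|y|^{2j+n-M-1}\le s^{(2j+n-M-1)/2}$) and on $|y|>\sqrt s$ (using $(1+|y|^n)/(1+|y|^{M+1})\le Cs^{-(M+1-n)/2}$), and finally invoke item $(iv)$ of Lemma~\ref{lemm:prokernalL}.
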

\begin{proof} See Lemma 5.12 in \cite{GNZpre16c} for a similar proof.
\end{proof}

For the nonlinear term, we claim the following:
\begin{lemma}[Estimate of  $\Pi_{-,M}\left(\binom{q}{p}\Lambda \Upsilon\right)$] \label{lemm:Pineg4thterm} Let $\binom{\Lambda(s)}{\Upsilon(s)} \in \Vc_A(s)$. Then for all $A \geq 1$ and $K_0 \geq 1$ introduced in \eqref{def:chi}, there exists $s_9(A,K_0) \geq 1$ such that for all $s \geq s_9(A,K_0)$, we have
\begin{equation*}
\quad \left\|\frac{\Pi_{-,M}\left[\binom{q}{p}\Lambda \Upsilon\right]}{1 + |y|^{M+1}} \right\|_{L^\infty(\Rb)} \leq \frac{CA^{2(M+2)}}{s^\frac{M + 3}{2}}.
\end{equation*}
\end{lemma}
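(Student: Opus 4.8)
Since $\binom{q}{p}\Lambda\Upsilon$ is the same purely quadratic term that occurs in \cite{GNZpre16c} and it involves no gradients, this estimate is insensitive to the main novelty of the paper and essentially reproduces Lemma~5.13 of \cite{GNZpre16c}; the only ingredients are the pointwise bounds collected in Proposition~\ref{prop:proSt} (which already encode the three‑fold structure of the set) together with the decomposition \eqref{decomoposeLamUp}. First I would reduce the vector estimate to two scalar ones: because $\Hc$ acts diagonally and $\Pi_{-,M}$ is built from its spectrum, $\Pi_{-,M}\binom{q\Lambda\Upsilon}{p\Lambda\Upsilon}$ splits into the $L^2_{\rho_1}$‑projection of $q\Lambda\Upsilon$ off $\mathrm{span}\{h_n\}_{n\le M}$ and the $L^2_{\rho_\mu}$‑projection of $p\Lambda\Upsilon$ off $\mathrm{span}\{\hat h_n\}_{n\le M}$, and the two are handled identically. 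I would also record the elementary continuity property $\|\Pi_{-,M}g/(1+|y|^{M+1})\|_{L^\infty(\Rb)}\le C\,\|g/(1+|y|^{M+1})\|_{L^\infty(\Rb)}$, which holds because $\Pi_{+,M}g$ is a polynomial of degree $\le M$ whose coefficients $\langle g,h_n\rangle_{\rho_1}/\langle h_n,h_n\rangle_{\rho_1}$ are bounded by $C\|g/(1+|y|^{M+1})\|_{L^\infty}$ thanks to the finiteness of the Gaussian moments $\int_{\Rb}(1+|y|^{M+1})|h_n|\rho_1\,dy$. Hence the whole task is to produce the bound $CA^{2(M+2)}s^{-(M+3)/2}$ for the weighted sup‑norm of $\Lambda\Upsilon$ \emph{after its low modes have been removed}.

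Next I would insert the cut‑off $\chi$ of \eqref{def:chi} and split $\Lambda\Upsilon=(1-\chi)\Lambda\Upsilon+\chi\,\Lambda\Upsilon$. On the outer piece, supported in $|y|\ge K_0\sqrt s$, one uses $(1-\chi)|\Lambda\Upsilon|=|\Lambda_e|\,|\Upsilon|$ together with the pointwise bound of Proposition~\ref{prop:proSt}$(i)$ on $K_0\sqrt s\le|y|\le 2K_0\sqrt s$, and $(1-\chi)|\Lambda\Upsilon|=|\Lambda_e\Upsilon_e|\le A^{2(M+2)}/s$ on $|y|\ge 2K_0\sqrt s$; dividing by $1+|y|^{M+1}\ge (K_0\sqrt s)^{M+1}$ on the support of $1-\chi$ immediately yields $\|(1-\chi)\Lambda\Upsilon/(1+|y|^{M+1})\|_{L^\infty}\le CA^{2(M+2)}s^{-(M+3)/2}$, and the continuity property closes this piece. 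For the inner piece I would substitute \eqref{decomoposeLamUp}, writing $\Lambda=P_\Lambda+\Lambda_-$ and $\Upsilon=P_\Upsilon+\Upsilon_-$, where $P_\Lambda,P_\Upsilon$ are polynomials of degree $\le M$ whose $y^k$‑coefficient is controlled (via Lemma~\ref{lemm:DefProjection} and the $\Vc_A$‑bounds on $\theta_n,\tilde\theta_n$) by $CA^{\max(k,4)}s^{-(k+1)/2}$, up to a logarithm affecting only $k\le 2$, while $\|\Lambda_-\|,\|\Upsilon_-\|\le A^{M+1}s^{-(M+2)/2}(1+|y|^{M+1})$. Expanding $\chi\,\Lambda\Upsilon$ into $\chi P_\Lambda P_\Upsilon$ plus the three terms containing $\Lambda_-$ or $\Upsilon_-$, the latter already carry the factor $s^{-(M+2)/2}$; since on $\mathrm{supp}\,\chi$ one has $|P_\Lambda|,|P_\Upsilon|\le C(A,K_0)s^{-1/2}$ and $1+|y|^{M+1}\le C(K_0)s^{(M+1)/2}$, each of them is pointwise $\le CA^{2(M+2)}s^{-(M+3)/2}(1+|y|^{M+1})$, and the continuity property applies again.

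The remaining term $\chi P_\Lambda P_\Upsilon$ is where the gain from $\Pi_{-,M}$ is genuinely used. I would split $P_\Lambda P_\Upsilon=L+H$ into its part $L$ of degree $\le M$ and its part $H$ of degree between $M+1$ and $2M$, whose $y^m$‑coefficients $\gamma_m$ satisfy $|\gamma_m|\le C(A)s^{-(m+2)/2}$ (up to logarithms for small $m$). Since $\Pi_{-,M}L=0$ one has $\Pi_{-,M}(\chi L)=-\Pi_{-,M}((1-\chi)L)$, and on $\mathrm{supp}(1-\chi)$ each monomial obeys $|\gamma_m y^m|/(1+|y|^{M+1})\le|\gamma_m|(K_0\sqrt s)^{m-M-1}\le C(K_0)s^{-(M+3)/2}$ because $m\le M$; for $\chi H$, on $\mathrm{supp}\,\chi$ one has $|\gamma_m y^m|/(1+|y|^{M+1})\le|\gamma_m|(2K_0\sqrt s)^{m-M-1}\le C(K_0)s^{-(M+3)/2}$ because $M+1\le m\le 2M$. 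In both cases the weighted sup‑norm is $\le CA^{2(M+2)}s^{-(M+3)/2}$, and the continuity property transfers this to the projections. Adding up the finitely many contributions gives the claim. The main obstacle is purely the bookkeeping: one has to track the exponents of $s$ and of $A$ through the expansion so that every positive power $|y|^m$ with $m\le 2M$ produced by the product is systematically traded — either against $|y|\ge K_0\sqrt s$ on $\mathrm{supp}(1-\chi)$ or against $|y|\le 2K_0\sqrt s$ on $\mathrm{supp}\,\chi$ — for the missing powers of $s^{-1/2}$, so that everything lands at $s^{-(M+3)/2}$ and at $A^{2(M+2)}$, this last power arising from the cross products $\alpha_0\beta_m$ and from $\|\Lambda_e\|_{L^\infty}\|\Upsilon_e\|_{L^\infty}$; once this is set up, all estimates are routine and parallel \cite{GNZpre16c}.
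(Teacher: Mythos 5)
Your proposal is correct and takes essentially the same approach as the paper: subtract from $\Lambda\Upsilon$ the polynomial of degree $\le M$ that $\Pi_{-,M}$ annihilates, bound the remainder by $CA^{2(M+2)}s^{-(M+3)/2}(1+|y|^{M+1})$ using the $\Vc_A$-bounds and the decomposition \eqref{decomoposeLamUp}, and invoke the weighted continuity of $\Pi_{-,M}$ (part $(iv)$ of Lemma~\ref{lemm:prokernalL}). The only difference is cosmetic: you introduce the smooth cut-off $\chi$ at $|y|\sim K_0\sqrt s$ and treat the low-degree part via the identity $\Pi_{-,M}(\chi L)=-\Pi_{-,M}((1-\chi)L)$, whereas the paper simply performs a pointwise split at $|y|=\sqrt s$ and subtracts the polynomial globally, which yields the same bound with slightly less bookkeeping.
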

\begin{proof} From part $(i)$ of Proposition \ref{prop:proSt}, we have the estimate
$$|\Lambda(y,s)\Upsilon(y,s)| \leq \frac{CA^{2(M+2)}}{s} s^{-\frac{M+1}{2}}(|y|^{M+1} + 1), \quad \forall y \geq \sqrt s.$$
For $|y| \leq \sqrt s$, we use the decomposition \ref{decomoposeLamUp} and part $(i)$ of Definition \ref{def:St} to write 
\begin{align*}
\Lambda\Upsilon &= \left[\sum_{i = 0}^M \alpha_i y^i + \Lambda_-\right]\left[\sum_{j = 0}^M \beta_j y^j + \Upsilon-\right]\\
& = \sum_{i + j = 0}^{M}\alpha_i \beta_j y^{i+j} + \Oc\left(A^{2(M+1)}s^{-\frac{M + 3}{2}}(|y|^{M+1} + 1)\right), \quad \forall |y| \leq \sqrt{s},
\end{align*}
where we used the fact that $|\alpha_i| + |\beta_i| \leq CA^is^{-\frac{i + 1}{2}}$. Note that for all polynomial functions $f(y)$ of degree $M$, we have $\Pi_{-,M}f(y) = 0$. The conclusion then follows from part $(iv)$ of Lemma \ref{lemm:prokernalL}. This ends the proof of Lemma \ref{lemm:Pineg4thterm}.
\end{proof}

For the error term, we use Lemma \ref{lemm:expandR1R2} to get the following estimates:
\begin{lemma}[Estimate for $\Pi_{-,M}\binom{R_1}{R_2}$.] \label{lemm:Pineg5th} The functions $R_1(y,s)$ and $R_2(y,s)$ defined by \eqref{def:Rys} satisfy 
\begin{equation*}
\quad \left\|\frac{\Pi_{-,M}\big[R_i(y,s)\big]}{1 + |y|^{M+1}} \right\|_{L^\infty(\Rb)} \leq \frac{C}{s^\frac{M+3}{2}}.
\end{equation*}

\end{lemma}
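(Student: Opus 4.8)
The plan is to exploit the two basic features of the projector $\Pi_{-,M}$: it annihilates every polynomial of degree at most $M$, and it is bounded on the weighted $L^\infty$ space with weight $1+|y|^{M+1}$ (this last fact being precisely part~$(iv)$ of Lemma~\ref{lemm:prokernalL}, already used in the proof of Lemma~\ref{lemm:Pineg4thterm}). Thus the whole task reduces to subtracting from $R_i$ its polynomial part up to degree $M$, and then estimating the remainder in that weighted norm.

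Concretely, since $M$ is even, I would apply the expansion \eqref{eq:Riexpand} of Lemma~\ref{lemm:expandR1R2} with $m=\frac M2+1$, which gives, for all $|y|<\sqrt s$ and $s\ge 1$,
\[
\Big|R_i(y,s)-\sum_{k=1}^{M/2}\frac1{s^{k+1}}R_{i,k}(y)\Big|\le \frac{C\big(1+|y|^{M+2}\big)}{s^{M/2+2}},
\]
where each $R_{i,k}$ is a polynomial of degree $2k\le M$. Set $\tilde R_i(y,s):=R_i(y,s)-\sum_{k=1}^{M/2}s^{-(k+1)}R_{i,k}(y)$. Since $\Pi_{-,M}$ kills each $R_{i,k}$, we have $\Pi_{-,M}[R_i]=\Pi_{-,M}[\tilde R_i]$, so it suffices to show $\big\|\tilde R_i(\cdot,s)/(1+|\cdot|^{M+1})\big\|_{L^\infty(\Rb)}\le Cs^{-(M+3)/2}$. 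On the region $|y|\le\sqrt s$ this follows from the displayed bound together with $\frac{1+|y|^{M+2}}{1+|y|^{M+1}}\le 1+|y|\le C\sqrt s$, yielding $Cs^{1/2}/s^{M/2+2}=Cs^{-(M+3)/2}$. On the region $|y|\ge\sqrt s$ the Taylor remainder is unavailable, and I would instead use the crude global bound $|R_i(y,s)|\le C/s$ — which is read off directly from the closed form of $R_i$ displayed inside the proof of Lemma~\ref{lemm:expandR1R2}, every summand there carrying a factor $1/s$ — together with $|R_{i,k}(y)|\le C(1+|y|^{2k})$; dividing by $1+|y|^{M+1}\ge|y|^{M+1}\ge s^{(M+1)/2}$ and using $2k\le M$, one checks term by term that each contribution is $\le Cs^{-(M+3)/2}$ on $\{|y|\ge\sqrt s\}$ as well. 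Combining the two regions and applying the boundedness of $\Pi_{-,M}$ on the weighted norm gives the conclusion.

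I do not expect any genuine obstacle; the only slightly fiddly point is the exponent bookkeeping in $\{|y|\ge\sqrt s\}$, which boils down to the elementary inequalities $2k-M-1\le -1$ and $|y|^{-1}\le s^{-1/2}$. The argument is the exact scalar analogue of the error-term estimate in \cite{MZdm97, GNZpre16c}, the only genuinely new ingredient being the explicit expansion \eqref{eq:Riexpand}--\eqref{eq:R2exps3}, which has already been established in Lemma~\ref{lemm:expandR1R2}; in the final write-up I would therefore simply cite the analogous lemma of \cite{GNZpre16c} for the computation.
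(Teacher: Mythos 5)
Your argument is correct and is essentially the paper's own proof, down to the same choice of truncation order $m=\tfrac{M+2}{2}=\tfrac M2+1$ in Lemma~\ref{lemm:expandR1R2} and the same invocation of the annihilation of degree-$\le M$ polynomials plus part $(iv)$ of Lemma~\ref{lemm:prokernalL}. The only place where you go beyond what the paper writes is that you explicitly verify the weighted bound on $\{|y|\ge\sqrt s\}$, using the global estimate $\|R_i(\cdot,s)\|_{L^\infty}\le C/s$ (which the paper records elsewhere, in the outer-part section) together with $|R_{i,k}(y)|\le C(1+|y|^{2k})$, $2k\le M$, and $|y|\ge\sqrt s$; the paper's written proof states the Taylor bound only for $|y|\le\sqrt s$ and then cites part $(iv)$ without comment. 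Your exponent bookkeeping in the outer region is right: both $(C/s)/(1+|y|^{M+1})$ and $s^{-(k+1)}|y|^{2k-M-1}$ reduce to $Cs^{-(M+3)/2}$ when $|y|\ge\sqrt s$. So this is the same route, written a little more carefully than the published version.
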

\begin{proof} Applying Lemma \ref{lemm:expandR1R2} with $m = \frac{M+2}{2}$, we write for all $|y| \leq \sqrt s$ and $s \geq 1$,
$$\left|R_i(y,s)- \sum_{k = 1}^{M/2}\frac{1}{s^{k + 1}}R_{i,k}(y) \right| \leq \frac{C(1 + |y|^{M + 2})}{s^\frac{M+4}{2}} \leq  \frac{C(1 + |y|^{M + 1})}{s^\frac{M+3}{2}}.$$
Since $\text{deg}(R_{i,k}) = 2k \leq M$, we have $\Pi_{-,M}R_{i,k} = 0$. The conclusion simply follows by using part $(iv)$ of Lemma \ref{lemm:prokernalL}. This ends the proof of Lemma \ref{lemm:Pineg5th}.
\end{proof}

We now turn to the estimate for the nonlinear gradient term. We claim the following: 
\begin{lemma}[Estimates for $\Pi_{-,M}\binom{G_1}{G_2}$] \label{lemm:Pineg6th} Under the assumption of Lemma \ref{lemm:expG1G2}, we have 
\begin{equation*}
\text{for} \;\; i = 1, 2, \quad \left\|\frac{\Pi_{-,M}\big[G_i(y,s)\big]}{1 + |y|^{M+1}} \right\|_{L^\infty(\Rb)} \leq \frac{C(A)}{s^\frac{M+3}{2}}.
\end{equation*}
\end{lemma}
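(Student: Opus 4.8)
The plan is to estimate $\Pi_{-,M}$ of the nonlinear gradient terms $G_1$ and $G_2$ by reducing everything to an $L^\infty$-bound on the polynomial part of the expansion provided by Lemma~\ref{lemm:expG1G2}, exactly as was done for the nonlinear quadratic term in Lemma~\ref{lemm:Pineg4thterm} and for the error term in Lemma~\ref{lemm:Pineg5th}. Since $G_i = \chi G_i + (1-\chi)G_i$, I would first dispose of the outer piece: by \eqref{est:G12chi1} we have $|(1-\chi)G_i| \le C/s$ and this term is supported in $|y|\ge K_0\sqrt s$, so its contribution to $\left\|\Pi_{-,M}(G_i)/(1+|y|^{M+1})\right\|_{L^\infty}$ is bounded by $Ce^{-cs}$ (using the Gaussian weight in the scalar product defining $\Pi_{-,M}$, as in the proof of Lemma~\ref{lemm:Pro6term}), which is far smaller than $C(A)s^{-(M+3)/2}$.

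For the inner piece $\chi G_i$, I would invoke the expansion \eqref{eq:expG1}–\eqref{eq:expG2} with $k = M/2$ (recall $M$ is even), which writes $\chi G_1$ as $\chi$ times a finite sum of terms of the form $\frac{|\nabla\phi|^2}{\phi^{j+1}}\Lambda^j$, $\frac{\nabla\Lambda\cdot\nabla\phi}{\phi^j}\Lambda^{j-1}$, $\frac{|\nabla\Lambda|^2}{\phi^{j-2}}\Lambda^{j-2}$ for $1\le j\le M/2$, plus a remainder controlled by $C(A)\chi\left(\tfrac1s|\Lambda|^{M/2+1} + \tfrac{|y|^2}{s^2}|\Lambda|^{M/2} + |\Lambda|^{M/2-1}|\nabla\Lambda|^2\right)$. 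Using the pointwise bounds of Proposition~\ref{prop:proSt} on $(\Lambda,\Upsilon)$ and $(\nabla\Lambda,\nabla\Upsilon)$ together with $|\nabla\phi|\le C/\sqrt s$, $|\nabla\phi|^2/\phi\le C/s$, $|\nabla\phi|^2/\phi^2\le C|y|^2/s^2$ (valid for $|y|\le 2K_0\sqrt s$), the remainder is $\Oc\!\left(C(A)s^{-(M+3)/2}(1+|y|^{M+1})\right)$ on the support of $\chi$, hence contributes the desired bound after applying part $(iv)$ of Lemma~\ref{lemm:prokernalL} (the kernel-projection estimate). For each of the explicit terms in the finite sum, I would expand $\Lambda$, $\nabla\Lambda$, $\phi^{-m}$, $|\nabla\phi|^2$ in powers of $y$ as in the proof of Lemma~\ref{lemm:Pro6term}: one checks that each such term equals a polynomial of degree $\le M$ in $y$ with coefficients of size $\le C(A)s^{-(\deg+3)/2}$ (at worst $s^{-3/2}$ for the lowest-degree contribution), plus an $\Oc(C(A)s^{-(M+3)/2}(1+|y|^{M+1}))$ tail. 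Since $\Pi_{-,M}$ annihilates polynomials of degree $\le M$, only the tail survives, and part $(iv)$ of Lemma~\ref{lemm:prokernalL} gives $\left\|\Pi_{-,M}(\chi G_i)/(1+|y|^{M+1})\right\|_{L^\infty}\le C(A)s^{-(M+3)/2}$.

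The main obstacle is bookkeeping rather than a conceptual difficulty: one must verify that every single term in the Taylor expansion of $G_i$, after substituting the expansions of $\Lambda$ and $\phi$, has its non-polynomial part of size at most $s^{-(M+3)/2}$ up to the weight $1+|y|^{M+1}$ — in particular that the powers of $\Lambda$ (which carries factors $s^{-1/2}$ per degree) combine with the $s^{-1}$ and $s^{-2}$ factors coming from $|\nabla\phi|^2/\phi^{j+1}$ etc. so that no term is only of size $s^{-(M+2)/2}$ or larger once the degree-$\le M$ polynomial part is removed. This is precisely the point where the quadratic structure of $\binom{G_1}{G_2}$ (each term is at least quadratic in $(\Lambda,\nabla\Lambda)$ or carries an extra $1/s$ or $|y|^2/s^2$ smallness factor from $\phi$) is essential, and it mirrors verbatim the argument already carried out for $\Pi_{n,M}$ in Lemma~\ref{lemm:Pro6term}; I would therefore present the estimate for one representative term in full and indicate that the others are handled identically, referring to Lemma~\ref{lemm:Pro6term} for the analogous computations.
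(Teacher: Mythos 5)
Your overall strategy matches the paper's: split $G_i$ into the inner part $\chi G_i$ and outer part $(1-\chi)G_i$, Taylor-expand the inner part using Lemma \ref{lemm:expG1G2}, subtract the degree-$\le M$ polynomial part (which $\Pi_{-,M}$ annihilates), bound what remains pointwise by $C(A)s^{-(M+3)/2}(1+|y|^{M+1})$, and conclude by part $(iv)$ of Lemma \ref{lemm:prokernalL}. The main discrepancy is your choice of Taylor order $k=M/2$; the paper takes $k=M$ (its $G_{1,M}$ is the degree-$\le M$ part of the sum $\sum_{j=1}^{M}\dots$), and this choice is not arbitrary. With $k=M$, the bound on the Taylor remainder is immediate from the uniform estimates $|\Lambda|,|\nabla\Lambda|\le C(A)/\sqrt{s}$ of Proposition \ref{prop:proSt}: for instance $\frac{|\Lambda|^{M+1}}{s}\le C(A)s^{-\frac{M+1}{2}-1}=C(A)s^{-\frac{M+3}{2}}$, with no weight needed. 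With $k=M/2$, the same crude computation gives only $\frac{|\Lambda|^{M/2+1}}{s}\le C(A)s^{-\frac{M}{4}-\frac{3}{2}}$, and since $\frac{M}{4}+\frac{3}{2}<\frac{M+3}{2}$ as soon as $M>0$ (and $M$ is fixed large by \eqref{eq:Mfixed}), this does \emph{not} yield the asserted $\Oc(C(A)s^{-(M+3)/2}(1+|y|^{M+1}))$ bound; the assertion that the remainder has the required size is therefore not justified by the argument you outline. It is in fact possible to rescue $k=M/2$ by a much more careful $y$-by-$y$ analysis that exploits the weighted polynomial bounds of Proposition \ref{prop:proSt} (using $|\Lambda(y,s)|\lesssim (|y|^3+1)/s^2$ on the support of $\chi$ rather than the uniform $1/\sqrt s$), but you do not carry this out, and the need for it is precisely the subtlety your write-up hides. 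The cleanest fix is simply to take $k=M$ as the paper does.

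A secondary, smaller inaccuracy: the contribution of $(1-\chi)G_i$ to $\|\Pi_{-,M}(G_i)/(1+|y|^{M+1})\|_{L^\infty}$ is not exponentially small. Since $\Pi_{-,M}g=g-\Pi_{+,M}g$, only the polynomial piece $\Pi_{+,M}((1-\chi)G_i)$ has exponentially small coefficients via the Gaussian weight; the first piece $(1-\chi)G_i$ itself is supported in $\{|y|\ge K_0\sqrt s\}$ with $|(1-\chi)G_i|\le C/s$, hence contributes $\frac{C/s}{(K_0\sqrt s)^{M+1}}\sim Cs^{-(M+3)/2}$, which is exactly the borderline size. The paper uses the uniform bound $\|G_1\|_{L^\infty}\le C(A)/s$ to handle the whole region $|y|\ge\sqrt s$ at once and obtains the same $s^{-(M+3)/2}(|y|^{M+1}+1)$ estimate; both give what is needed, but the claim of exponential smallness is incorrect.
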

\begin{proof} We only deal with the $G_1$ term because the estimate for $G_2$ follows similarly. From \eqref{est:G1chi}, \eqref{est:G12chi1} and part $(i)$ of Proposition \ref{prop:proSt}, we see that
$$\|G_1(s)\|_{L^\infty(\Rb)} \leq \frac{C(A)}{s}.$$
This immediately yields the estimate
$$|G_1(y,s)| \leq C(A) s^{-\frac{M+3}{2}}(|y|^{M+1} + 1), \quad \forall |y| \geq \sqrt s.$$
For $|y| \leq \sqrt{s}$, we recall that for all polynomial functions $f(y)$ of degree $M$, we have $\Pi_{-,M}f(y) = 0$. Hence, the conclusion follows once we show that there exists a polynomial function $G_{1,M}$ of degree $M$ in $y$ such that 
\begin{equation}\label{eq:estG1M}
|G_1(y,s) - G_{1,M}(y,s)| \leq C(A)s^{-\frac{M+3}{2}}(|y|^{M+1} + 1), \quad \forall |y| \leq \sqrt{s}.
\end{equation}
In particular, we take
$$G_{1,M} = \Pi_{+,M}\left\{\sum_{j = 1}^{M} \frac{1}{j!} \left[D_j \frac{|\nabla \phi|^2}{\phi^{j + 1}} \Lambda^j + D_{j - 1} \frac{2\nabla \Lambda \cdot \nabla \phi}{\phi^j}\Lambda^{j - 1} + D_{j - 2} \frac{2|\nabla \Lambda|^2}{\phi^{j - 2}}\Lambda^{j - 2} \right]\right\}.$$
Arguing as in the proof of Lemma \ref{lemm:Pro6term}, we deduce that the coefficient of degree $k \geq M+1$ of the polynomial 
$$\sum_{j = 1}^{M} \frac{1}{j!} \left[D_j \frac{|\nabla \phi|^2}{\phi^{j + 1}} \Lambda^j + D_{j - 1} \frac{2\nabla \Lambda \cdot \nabla \phi}{\phi^j}\Lambda^{j - 1} + D_{j - 2} \frac{2|\nabla \Lambda|^2}{\phi^{j - 2}}\Lambda^{j - 2} \right] - G_{1,M}$$  
is controlled by $\frac{CA^{k}}{s^{\frac{k+2}{2}}}$. Hence, for $|y| \leq \sqrt{s}$, 
\begin{align*}
&\left|\sum_{j = 1}^{M} \frac{1}{j!} \left[D_j \frac{|\nabla \phi|^2}{\phi^{j + 1}} \Lambda^j + D_{j - 1} \frac{2\nabla \Lambda \cdot \nabla \phi}{\phi^j}\Lambda^{j - 1} + D_{j - 2} \frac{2|\nabla \Lambda|^2}{\phi^{j - 2}}\Lambda^{j - 2} \right] - G_{1,M}\right|\\
& \qquad \leq \frac{CA^{M+2}}{s^\frac{M+3}2} (|y|^{M+1} + 1).
\end{align*}
According to the expansion \eqref{eq:expG1}, it remains to control $
\frac{|\Lambda|^{M+1}}{s} + \frac{|y|^2}{s^2}|\Lambda|^M + |\Lambda|^{M-1}|\nabla \Lambda|^2$. 
From Proposition \ref{prop:proSt}, we have $|\Lambda(y,s)| \leq \frac{C(A)\ln s}{s^2}(|y|^{M+1} + 1)$ and $|\Lambda(y,s)| + |\nabla \Lambda(y,s)| \leq \frac{C(A)}{\sqrt s}$ for all $y \in \Rb$. This implies that for $|y| \leq \sqrt s$, 
\begin{align*}
& \frac{|\Lambda|^{M+1}}{s} + \frac{|y|^2}{s^2}|\Lambda|^M + |\Lambda|^{M-1}|\nabla \Lambda|^2\\
& \leq C(A)\left(s^{-\frac{ M}{2} - 1} + s^{-\frac{M-1}{2} - 1} + s^{-\frac{M-2}{2} - 1} \right) \frac{\ln s}{s^2}(|y|^{M+1} + 1)\\
& \leq C(A)s^{-\frac{M+3}{2}}(|y|^{M+1} + 1). 
\end{align*}
This concludes the proof of \eqref{eq:estG1M} as well as Lemma \ref{lemm:Pineg6th}.
\end{proof}

\paragraph{Part 2: Proof of item $(iv)$ of Proposition \ref{prop:dyn}.} Applying the projection $\Pi_{-,M}$ to system \eqref{eq:LamUp} and using the various estimates given in the first step, we see that $\Lambda_-$ and $\Upsilon_-$ satisfy the following system:
\begin{align}
\partial_s \Lambda_- = \Lc_1 \Lambda_- + \frac qp \Upsilon_- + H_{1,-}(y,s) \label{eq:Lamneg}\\
\partial_s \Upsilon_- = \Lc_\mu \Upsilon_- + \frac pq \Lambda_-  + H_{2,-}(y,s) \label{eq:Upneg},
\end{align}
where $H_{1,-}$ and $H_{2,-}$ satisfy 
\begin{align*}
\left\|\frac{H_{1,-}(y,s)}{1 + |y|^{M+1}} \right\|_{L^\infty(\Rb)} &\leq \|V_1(s)\|_{L^\infty(\Rb)}\left\|\frac{\Lambda_-}{1 + |y|^{M+1}} \right\|_{L^\infty(\Rb)}\\
& \quad + \|V_2(s)\|_{L^\infty(\Rb)}\left\|\frac{\Upsilon_-}{1 + |y|^{M+1}} \right\|_{L^\infty(\Rb)}  + \frac{CA^M}{s^{\frac{M+2}{2}}} + \frac{C(A)}{s^\frac{M+3}{2}},
\end{align*}
and 
\begin{align*}
\left\|\frac{H_{2,-}(y,s)}{1 + |y|^{M+1}} \right\|_{L^\infty(\Rb)} &\leq \|V_3(s)\|_{L^\infty(\Rb)}\left\|\frac{\Lambda_-}{1 + |y|^{M+1}} \right\|_{L^\infty(\Rb)}\\
& \quad + \|V_4(s)\|_{L^\infty(\Rb)}\left\|\frac{\Upsilon_-}{1 + |y|^{M+1}} \right\|_{L^\infty(\Rb)}  + \frac{CA^M}{s^{\frac{M+2}{2}}} + \frac{C(A)}{s^\frac{M+3}{2}}.
\end{align*}
Using the integral formulation associated to the linear operator $\Lc_\eta$ with $\eta \in \{1, \mu\}$, we write for all $s \in [\tau, \tau_1]$,
\begin{align*}
\Lambda_-(s) &= e^{(s - \tau)\Lc_1}\Lambda_-(\tau) + \int_{\tau}^s e^{(s - s')\Lc_1}\left(\frac qp\Upsilon_-(s') + H_{1,-}(s')\right)ds'\\
\Upsilon_-(s) &= e^{(s - \tau)\Lc_\mu}\Upsilon_-(\tau) + \int_{\tau}^s e^{(s - s')\Lc_\mu}\left(\frac pq \Lambda_-(s')  + H_{2,-}(s') \right)ds'.
\end{align*}
Using part $(iii)$ of Lemma \ref{lemm:prokernalL}, we estimate
\begin{align*}
\left\|\frac{\Lambda_-(s)}{1 + |y|^{M+1}}\right\|_{L^\infty} &\leq e^{-\frac{M+1}{2}(s - \tau)}\left\|\frac{\Lambda_-(\tau)}{1 + |y|^{M+1}}\right\|_{L^\infty}\\
&+ \int_{\tau}^s e^{-\frac{M+1}{2}(s - s')} \left\{\frac qp  \left\|\frac{\Upsilon_-(s')}{1 + |y|^{M+1}}\right\|_{L^\infty} + \left\|\frac{H_{1,-}(y,s)}{1 + |y|^{M+1}} \right\|_{L^\infty}\right\} ds',
\end{align*}
and
\begin{align*}
\left\|\frac{\Upsilon_-(s)}{1 + |y|^{M+1}}\right\|_{L^\infty} &\leq e^{-\frac{M+1}{2}(s - \tau)}\left\|\frac{\Upsilon_-(\tau)}{1 + |y|^{M+1}}\right\|_{L^\infty}\\
&+ \int_{\tau}^s e^{-\frac{M+1}{2}(s - s')}  \left\{\frac pq \left\|\frac{\Lambda_-(s')}{1 + |y|^{M+1}}\right\|_{L^\infty}+ \left\|\frac{H_{2,-}(y,s)}{1 + |y|^{M+1}} \right\|_{L^\infty}\right\}ds'.
\end{align*} 
Introducing $\lambda(s) = \left\|\frac{\Lambda_-(s)}{1 + |y|^{M+1}}\right\|_{L^\infty}+\left\|\frac{\Upsilon_-(s)}{1 + |y|^{M+1}}\right\|_{L^\infty}$, then we have 
\begin{align*}
\lambda(s) &\leq e^{-\frac{M+1}{2}(s -\tau)}\lambda(\tau) +\int_{\tau}^s e^{-\frac{M+1}{2}(s - s')} \left(\frac{p}{q} + \frac{q}{p} + \sum_{i=1}^4\|V_i\|_{L^\infty}\right)\lambda(s')ds'\\
&+C\int_{\tau}^s e^{-\frac{M+1}{2}(s - s')} \left(\frac{C(A)}{{s'}^{\frac{M+3}{2}}} + \frac{A^M}{{s'}^\frac{M+2}{2}} \right)ds'.
\end{align*}
Since we have already fixed $M$ in \eqref{eq:Mfixed}, we then apply Lemma \ref{lemm:Gronwall} to deduce that 
$$e^{\frac{M+1}{2}s} \lambda(s) \leq e^{\frac{M+1}{4}(s - \tau)}e^{\frac{M+1}{2}\tau} \lambda(\tau) + Ce^{\frac{M+1}{2}s}\frac{A^M}{s^\frac{M+2}{2}},$$
which concludes the proof of part $(iv)$ of Proposition \ref{prop:dyn}.\\

\paragraph{Part 3: Proof of item $(v)$ of Proposition \ref{prop:dyn}.} In this part, we use the parabolic regularity of the semigroup associated to the linear operator $\Lc_\eta$ for $\eta \in \{1, \mu\}$ to prove item $(v)$ of Proposition \ref{prop:dyn}. Since the controls of $\nabla \Lambda_-$ and $\nabla \Upsilon_-$ are the same, we only deal with $\nabla \Lambda_-$. By \eqref{eq:Lamneg} and part $(i)$ of Definition \ref{def:St}, we have 
$$\partial_s \Lambda_- = \Lc_1 \Lambda_- + \tilde{H}_{1,-},$$
where 
$$ |\tilde{H}_{1,-}(y,s)| \leq \frac{CA^{M+1}}{s^{\frac{M+2}{2}}}(|y|^{M+1} + 1), \quad \forall y \in \Rb. $$
We then write 
$$ \Lambda_-(s) = e^{(s - s_0)\Lc_1}\Lambda_-(s_0) + \int_{s_0}^s e^{(s - s')\Lc_1}\tilde{H}_{1,-}(s') ds',$$
and 
$$
 |\nabla \Lambda_-(s)| \leq |\nabla e^{(s - s_0)\Lc_1}\Lambda_-(s_0)| + \int_{s_0}^s \left|\nabla e^{(s - s')\Lc_1}\tilde{H}_{1,-}(s')\right| ds'.
$$
We consider two cases:\\
- Case 1: $s \leq s_0 + 1$. We use parts $(v) - (vi)$ of Lemma \ref{lemm:prokernalL} and part $(i)$ of Proposition \ref{prop:uvt0} to estimate 
\begin{align*}
|\nabla \Lambda_-(y,s)| &\leq \frac{C}{s_0^{\frac{M+2}{2}}}(|y|^{M+1} + 1) + \frac{CA^{M+1}}{s^{\frac{M+2}{2}}}(|y|^{M+1} + 1)\int_{s_0}^s \frac{ds'}{\sqrt{1 - e^{-(s - s')}}}\\
&\leq \frac{CA^{M+1}}{s^{\frac{M+2}{2}}}(|y|^{M+1} + 1), \quad \forall y \in \Rb.
\end{align*}
- Case 2: $s > s_0 + 1$. We write for $s > s_0 + 1$, 
\begin{align*}
\nabla \Lambda_-(s) = \nabla e^{\Lc_1} \Lambda_-(s - 1) + \int_{s - 1}^s \nabla \left(e^{(s  -s')\Lc_1} \tilde{H}_{1,-}(s')\right) ds'.
\end{align*}
From part $(i)$ of Definition \ref{def:St}, we have 
$$|\Lambda_-(y,s - 1)| \leq \frac{A^{M+1}}{(s - 1)^\frac{M+2}{2}}(|y|^{M+1} + 1), \quad \forall y \in \Rb,$$
from which and part $(vi)$ of Lemma \ref{lemm:prokernalL}, we estimate
\begin{align*}
|\nabla \Lambda_-(y,s)| &\leq \frac{CA^{M+1}}{(s - 1)^{\frac{M+2}{2}} \sqrt{1 - e^{-1}}}(|y|^{M+1} + 1) + \frac{CA^{M+1}}{s^{\frac{M+2}{2}}}(|y|^{M+1} + 1)\int_{s_0}^s \frac{ds'}{\sqrt{1 - e^{-(s - s')}}}\\
&\leq \frac{CA^{M+1}}{s^{\frac{M+2}{2}}}(|y|^{M+1} + 1), \quad \forall y \in \Rb.
\end{align*}
This concludes the proof of item $(v)$ of Proposition \ref{prop:dyn}.

\subsubsection{Control of the outer part.}\label{sec:outerpart} 
We prove part $(vi)$ of Proposition \ref{prop:dyn} in this subsection. Let us write from \eqref{eq:LamUp} a system satisfied by $\tilde \Lambda_e = (1 - \chi(2y,s))\Lambda$ and $\tilde \Upsilon_e = (1 - \chi(2y,s))\Upsilon$ ($\chi$ is defined by \eqref{def:chi}):
\begin{align*}
\partial_s \tilde \Lambda_e &= \Lc_1 \tilde \Lambda_e - \tilde \Lambda_e + (1 - \chi(2y,s))\big(\tilde{F}_1(y,s) + R_1(y,s) + G_1(y,s) \big)\\
& - \Lambda(s)\left(\partial_s \chi(2y,s) + \Delta \chi(2y,s) + \frac{1}{2}y\cdot \nabla \chi(2y,s) \right) + 2\text{div}(\Lambda \nabla \chi(2y,s)),\\
\partial_s \tilde \Upsilon_e &= \Lc_\mu \tilde \Upsilon_e - \tilde \Upsilon_e + (1 - \chi(2y,s))\big(\tilde{F}_2( y,s) + R_2(y,s) + G_2(y,s) \big)\\
& - \Upsilon(s)\left(\partial_s \chi(2y,s) + \mu\Delta \chi(2y,s) + \frac{1}{2}y\cdot \nabla \chi(2y,s) \right) + 2\mu\text{div}(\Upsilon \nabla \chi(2y,s)),
\end{align*}
where 
$$\frac{1}{q}\tilde{F}_1 = \frac{1}{p}\tilde{F}_2 = \Lambda\Upsilon + \psi \Lambda + \phi \Upsilon.$$
Using the  semigroup representation of $\Lc_\eta$ with $\eta \in \{1, \mu\}$ and parts $(i) - (ii)$ of Lemma \ref{lemm:prokernalL}, we write for all $s \in [\tau, \tau_1]$,
\begin{align*}
\|\tilde \Lambda_e(s)\|_{L^\infty} &\leq e^{-(s - \tau)}\|\tilde \Lambda_e(\tau)\|_{L^\infty}\\
& + \int_{\tau}^s e^{-(s - s')}\left(\left\|(1 - \chi(2y,s'))\tilde{F}_1(s')\right\|_{L^\infty} + \left\|(1 - \chi(2y,s'))R_1(s')\right\|_{L^\infty} \right)ds'\\
& + \int_{\tau}^s e^{-(s - s')}\left\|\Lambda(s')\left(\partial_s \chi(2y,s') + \Delta \chi(2y,s') + \frac{1}{2}y\cdot \nabla \chi(2y,s') \right)  \right\|_{L^\infty}ds'\\
& + \int_{\tau}^s e^{-(s - s')} \frac{C}{\sqrt{1 - e^{-(s - s')}}}\|\Lambda(s')\nabla \chi(2y,s')\|_{L^\infty} ds',
\end{align*}
and 
\begin{align*}
\|\tilde \Upsilon_e(s)\|_{L^\infty} &\leq e^{-(s - \tau)}\|\tilde \Upsilon_e(\tau)\|_{L^\infty}\\
& + \int_{\tau}^s e^{-(s - s')}\left(\left\|(1 - \chi(2y,s'))\tilde{F}_2(s')\right\|_{L^\infty} + \left\|(1 - \chi(2y,s'))R_2(s')\right\|_{L^\infty} \right)ds'\\
& + \int_{\tau}^s e^{-(s - s')}\left\|\Upsilon(s')\left(\partial_s \chi(2y,s') + \mu\Delta \chi(2y,s') + \frac{1}{2}y\cdot \nabla \chi(2y,s') \right)  \right\|_{L^\infty}ds'\\
& + \int_{\tau}^s e^{-(s - s')} \frac{C}{\sqrt{1 - e^{-(s - s')}}}\|\Upsilon(s')\nabla \chi(2y,s')\|_{L^\infty} ds'.
\end{align*}
From the definition \eqref{def:chi} of $\chi$ and  part $(i)$ of Proposition \ref{prop:proSt}, we have 
\begin{align*}
&\left\|\Lambda(s')\left(\partial_s \chi(2y,s') + \Delta \chi(2y,s') + \frac{1}{2}y\cdot \nabla \chi(2y,s') \right)  \right\|_{L^\infty}\\
& + \left\|\Upsilon(s')\left(\partial_s \chi(2y,s') + \mu\Delta \chi(2y,s') + \frac{1}{2}y\cdot \nabla \chi(2y,s') \right)  \right\|_{L^\infty}\\
& \quad \leq C\left(\|\Lambda(s')\|_{L^\infty(|y| \leq K_0\sqrt {s'})} + \|\Upsilon(s')\|_{L^\infty(|y| \leq K_0\sqrt {s'})} \right) \leq \frac{CA^{M+1}}{\sqrt {s'}},
\end{align*}
and 
\begin{align*}
&\|\Lambda(s')\nabla \chi(2y,s')\|_{L^\infty} + \|\Upsilon(s')\nabla \chi(2y,s')\|_{L^\infty}\\
& \leq \frac{C}{K_0\sqrt {s'}}\left(\|\Lambda(s')\|_{L^\infty(|y| \leq K_0\sqrt {s'})} + \|\Upsilon(s')\|_{L^\infty(|y| \leq K_0\sqrt {s'})} \right) \leq \frac{CA^{M+1}}{s'}.
\end{align*}
Note from the proof of Lemma \ref{lemm:expandR1R2} that 
$$\left\|R_1(s')\right\|_{L^\infty} + \left\|R_2(s')\right\|_{L^\infty} \leq \frac{C}{s'}.$$
From the definitions \eqref{def:phipsi} of $\phi$ and $\psi$, we see that $|(1 - \chi(2y, s'))\phi(y,s')| + |(1 - \chi(2y, s'))\psi(y,s')| \leq \frac{1}{4}$ for $K_0$ large enough. By part $(i)$ of Proposition \ref{prop:proSt}, we derive
\begin{align*}
\left\|(1 - \chi(2y,s'))\tilde{F}_1(s')\right\|_{L^\infty} + \left\|(1 - \chi(2y,s'))\tilde{F}_2(s')\right\|_{L^\infty}& \leq  \frac{1}{2} \left(\|\tilde \Lambda_e(s')\|_{L^\infty} + \|\tilde \Upsilon_e(s')\|_{L^\infty} \right),
\end{align*}
for $K_0$ large enough. 
From \eqref{est:G12chi1}, we have 
$$\|(1 - \chi(2y, s'))G_1(s')\|_{L^\infty} + \|(1 - \chi(2y, s'))G_2(s')\|_{L^\infty} \leq \frac{C}{s'}. $$
Let $\lambda(s) = \|\tilde \Lambda_e(s)\|_{L^\infty} + \|\tilde \Upsilon_e(s)\|_{L^\infty}$, then we end up with 
\begin{align*}
\lambda(s) &\leq e^{-(s - \tau)}\lambda(\tau)\\
&+\int_{\tau}^s e^{-(s - s')}\left(\frac 12 \lambda(s') + \frac{CA^{M+1}}{\sqrt{s'}} + \frac{CA^{M+1}}{s' \sqrt{1 - e^{-(s - s')}}}\right)ds'.
\end{align*}
Applying Lemma \ref{lemm:Gronwall} yields
$$\lambda(s) \leq e^{-\frac 12 (s - \tau)}\lambda(\tau) + \frac{CA^{M+1}}{\sqrt{s}}(s - \tau + \sqrt{s - \tau}).$$
Since $\text{supp}(1 - \chi(y,s)) \subset \text{supp}(1 - \chi(2y,s))$, we have $\|\Lambda_e\|_{L^\infty} \leq \|\tilde \Lambda_e\|_{L^\infty}$ and $\|\Upsilon_e\|_{L^\infty} \leq \|\tilde \Upsilon_e\|_{L^\infty}$. This concludes the proof of part $(vi)$ of Proposition \ref{prop:dyn}.

\subsection{A priori estimates in $\Dc_2$ and $\Dc_3$.}
In this section, we estimate directly the solution of system \eqref{PS} through a classical parabolic regularity argument. Note that this part corresponds to Section 4.1 in \cite{GNZjde17} (see also Section 4 in \cite{MZnon97}). Note also that the mentioned papers deal with a single equation, however, it can be naturally extended to system \eqref{PS} without any difficulties. For the sake of completeness, we will sketch the proof. 

We have the following \textit{a priori} estimates in $\Dc_2$. 

\begin{proposition}[\textbf{A priori estimate in $\Dc_2$}]\label{prop:D2}
There exists $K_{0,3} > 0$ such that for all $K_0 \geq K_{0,3}$, $\delta_1 \leq 1$, $\xi_0 \gg 1$ and $C_{0,1}^* > 0$, $C_{0,2}^* > 0$, we have the following property: Assume that $(\tilde{u}, \tilde v)$ is a solution to the system 
\begin{equation}\label{eq:UcD2}
\partial_\tau \tilde{u} = \Delta \tilde{u} + e^{p \tilde{v}}, \quad \partial_\tau \tilde{v} = \Delta \tilde{v} + e^{q \tilde{u}},
\end{equation}
for $\tau \in [\tau_1, \tau_2]$ with $0 \leq \tau_1 \leq \tau_2 \leq 1$. Assume in addition, for all $\tau \in [\tau_1, \tau_2]$,
\begin{itemize}
\item[(i)] for all $|\xi| \leq 2\xi_0$, 
$$|\tilde u(\xi, \tau_1) - \hat u(\tau_1)| + |\tilde v(\xi, \tau_1) - \hat v(\tau_1)|\leq \delta_1, \quad |\nabla \tilde{u}(\xi, \tau_1)| + |\nabla \tilde{v}(\xi, \tau_1)|  \leq \frac{C_{0,1}^*}{\xi_0},$$ where $\hat u(\tau)$ and $\hat v (\tau)$ are given by \eqref{def:solUc},
\item[(ii)] for all $|\xi| \leq \frac 74\xi_0$, $|\nabla \tilde u(\xi, \tau)| + |\nabla \tilde v(\xi, \tau)|  \leq \frac{C_{0,2}^*}{\xi_0}$.
\item[(iii)] for all $|\xi|\leq  \frac 74\xi_0$, $\tilde u(\xi, \tau) \leq \frac{1}{2}\hat u(\tau)$ and $\tilde v(\xi, \tau) \leq \frac{1}{2}\hat v(\tau)$.
\end{itemize}
Then, for $\xi_0 \geq \xi_{0,3}(C_0^*)$, there exists $\epsilon = \epsilon(K_0, C_{0,2}^*, \delta_1, \xi_0)$ such that for all $|\xi| \leq \xi_0$ and $\tau \in [\tau_1, \tau_2]$,
$$|\tilde u(\xi, \tau) - \hat u(\tau)| + |\tilde v(\xi, \tau) - \hat v(\tau)|  \leq \epsilon, \quad |\nabla \tilde u(\xi, \tau)| + |\nabla \tilde v(\xi, \tau)|  \leq \frac{2C_{0,1}^*}{\xi_0},$$
where $\epsilon \to 0$ as $(\delta_1, \xi_0) \to (0, +\infty)$.
\end{proposition}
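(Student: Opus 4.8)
The plan is to compare $(\tilde u,\tilde v)$ with the spatially homogeneous solution $(\hat u,\hat v)$ of the associated ODE system and to close an estimate on the difference by a Gronwall argument applied to a suitably localized version of it. First I would set $w_1=\tilde u-\hat u$ and $w_2=\tilde v-\hat v$; since $(\hat u,\hat v)$ depends only on $\tau$ and solves $\partial_\tau\hat u=e^{p\hat v}$, $\partial_\tau\hat v=e^{q\hat u}$, we have
\begin{equation*}
\partial_\tau w_1=\Delta w_1+\big(e^{p\tilde v}-e^{p\hat v}\big),\qquad \partial_\tau w_2=\Delta w_2+\big(e^{q\tilde u}-e^{q\hat u}\big).
\end{equation*}
The structural fact to exploit is that, for $K_0\ge K_{0,3}$, the nonlinear source is small and essentially linear in $(w_1,w_2)$: from the explicit formulas \eqref{def:solUc} one has $e^{p\hat v(\tau)}+e^{q\hat u(\tau)}\le CK_0^{-2}$ for $\tau\in[0,1]$, while assumption (iii) gives $\tilde u\le\tfrac12\hat u<0$ and $\tilde v\le\tfrac12\hat v<0$ on $\{|\xi|\le\tfrac74\xi_0\}$; together with $|e^a-e^b|\le|a-b|\,e^{\max(a,b)}$ this yields $|e^{p\tilde v}-e^{p\hat v}|\le CK_0^{-1}|w_2|$ and $|e^{q\tilde u}-e^{q\hat u}|\le CK_0^{-1}|w_1|$ on that ball.

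I would then localize. Fix $\zeta\in\Cc^\infty_0(\RN)$ with $\zeta\equiv1$ on $\{|\xi|\le\xi_0\}$ and $\mathrm{supp}\,\zeta\subset\{|\xi|\le\tfrac74\xi_0\}$, so that $W_i:=\zeta w_i$ solves $\partial_\tau W_i=\Delta W_i+\zeta N_i-2\,\mathrm{div}(w_i\nabla\zeta)+w_i\Delta\zeta$, where $N_i$ denotes the nonlinear difference and the two commutator terms are supported in the annulus $\{\xi_0\le|\xi|\le\tfrac74\xi_0\}$, on which assumptions (ii)--(iii) are available. Writing the Duhamel formula for $W_i$ and using the heat-semigroup bounds $\|e^{t\Delta}f\|_{L^\infty}\le\|f\|_{L^\infty}$ and $\|\nabla e^{t\Delta}f\|_{L^\infty}\le Ct^{-1/2}\|f\|_{L^\infty}$, I would estimate the initial term by $\|W_i(\tau_1)\|_{L^\infty}\le\delta_1$ (assumption (i)); the nonlinear term by $CK_0^{-1}\sup_{\mathrm{supp}\,\zeta}|w_i(\tau')|$, where $\sup_{\mathrm{supp}\,\zeta}|w_i|$ is controlled through assumptions (i)--(iii) (using that $\tilde u,\tilde v$ are heat supersolutions, so that the one-sided bound (iii) together with the datum (i) also yields a lower bound on the annulus); and the commutator terms, using $|\nabla\zeta|\le C\xi_0^{-1}$, $|\Delta\zeta|\le C\xi_0^{-2}$, (ii)--(iii), $\tau-\tau_1\le1$ and the integrability of $t^{-1/2}$, by $C\xi_0^{-1}$. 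With $\Theta(\tau)=\|W_1(\tau)\|_{L^\infty}+\|W_2(\tau)\|_{L^\infty}$ this produces, on $[\tau_1,\tau_2]\subset[0,1]$, an inequality $\Theta(\tau)\le 2\delta_1+C\xi_0^{-1}+CK_0^{-1}\int_{\tau_1}^\tau\Theta(\tau')\,d\tau'$; taking $K_0$ large and applying Gronwall's lemma gives $\Theta(\tau)\le\epsilon$ with $\epsilon=\epsilon(K_0,C_{0,2}^*,\delta_1,\xi_0)\to0$ as $(\delta_1,\xi_0)\to(0,+\infty)$, and since $\zeta\equiv1$ on $\{|\xi|\le\xi_0\}$ this is the claimed $L^\infty$ bound.

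For the gradient, I would differentiate the Duhamel identity for $W_i$: the contribution of the datum is $\le\|\nabla W_i(\tau_1)\|_{L^\infty}\le C_{0,1}^*\xi_0^{-1}+C\delta_1\xi_0^{-1}$ by (i); the nonlinear contribution is $\le CK_0^{-1}\epsilon$ (using the $L^\infty$ bound just proved and $\int_{\tau_1}^\tau(\tau-\tau')^{-1/2}\,d\tau'\le C$); and the commutator contributions are $o(\xi_0^{-1})$ as $\xi_0\to\infty$. Choosing $K_0,\xi_0$ large so that all corrections are $\le C_{0,1}^*\xi_0^{-1}$ gives $|\nabla\tilde u|+|\nabla\tilde v|\le 2C_{0,1}^*\xi_0^{-1}$ on $\{|\xi|\le\xi_0\}$, completing the proposition. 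The main obstacle is not the parabolic machinery, which is classical, but the bookkeeping forced by the localization: the commutator terms can only be controlled through the a priori bounds (ii)--(iii) on the annulus $\{\xi_0\le|\xi|\le\tfrac74\xi_0\}$, which forces one to propagate two-sided control of $(\tilde u,\tilde v)$ there --- the supersolution argument for the missing lower bound being the delicate point --- and to track the constants $K_0,\delta_1,\xi_0,C_{0,1}^*,C_{0,2}^*$ carefully so that the nonlinear feedback is a genuine contraction and $\epsilon$ has the stated dependence, exactly as in Section 4.1 of \cite{GNZjde17} and Section 4 of \cite{MZnon97}.
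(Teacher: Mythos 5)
Your route is genuinely different from the paper's, and it has a real gap that the paper's route is specifically designed to avoid. The paper proves the gradient estimate \emph{first}, by applying the maximum principle to $\theta=|\nabla\tilde u|^2+|\nabla\tilde v|^2$ (which satisfies $\partial_\tau\theta\le\Delta\theta+C\theta$ because $e^{p\tilde v}$ and $e^{q\tilde u}$ are bounded thanks to (iii)), cut off by $\varphi_1$ and exponentially damped so that the commutator term in the annulus is controlled by (ii) and the maximum principle closes the argument. It then proves the $L^\infty$ estimate by a completely local argument: for any center $\bar\xi_0$ with $|\bar\xi_0|\le\xi_0-2$, it averages $\tilde u,\tilde v$ over the ball of radius $2$ to obtain quantities $\tilde U,\tilde V$ that, modulo $O(\epsilon)$-errors controlled by the just-proved gradient bound, solve the ODE system $\frac{d\tilde U}{d\tau}=e^{p\tilde V}+O(\epsilon)$, $\frac{d\tilde V}{d\tau}=e^{q\tilde U}+O(\epsilon)$; ODE comparison with $(\hat u,\hat v)$ and the gradient bound then transfer the estimate from the center to the whole ball. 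Crucially, no boundary data of $w_i$ on any annulus for $\tau>\tau_1$ ever enters, and the lower bound on $\tilde U$ is automatic from the nonnegativity of the right-hand side.

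By contrast, your localized Duhamel--Gronwall argument for $W_i=\zeta w_i$ generates commutator terms $-2\,\mathrm{div}(w_i\nabla\zeta)$ and $w_i\Delta\zeta$ supported on $\{\xi_0\le|\xi|\le\tfrac74\xi_0\}$, and to estimate them in $L^\infty$ you need a \emph{two-sided} pointwise bound on $w_i$ there for \emph{all} $\tau\in[\tau_1,\tau_2]$. The hypotheses do not give this: (iii) only bounds $\tilde u$ from above (and since $\hat u<0$, $\tilde u\le\tfrac12\hat u$ gives an $O(|\hat u|)$ upper bound on $w_1$, not smallness), and (i) gives two-sided control only at $\tau=\tau_1$. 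You flag the "supersolution argument" for the missing lower bound, but this step cannot be completed from the stated hypotheses: comparing $\tilde u$ with $e^{(\tau-\tau_1)\Delta}\tilde u(\tau_1)$ requires a lower bound on $\tilde u(\cdot,\tau_1)$ on all of $\RN$ (or on $\tilde u$ at $|\xi|=2\xi_0$ for $\tau>\tau_1$), neither of which follows from (i)--(iii). This is precisely the obstruction the paper's averaging-over-small-balls argument is designed to circumvent. A secondary issue with your ordering: since your gradient estimate feeds in the $L^\infty$ bound $\epsilon\sim 2\delta_1+C\xi_0^{-1}$, the nonlinear contribution to $\|\nabla W_i\|_{L^\infty}$ is $\sim K_0^{-1}\epsilon$, which is not $o(\xi_0^{-1})$ for fixed $\delta_1$ as $\xi_0\to\infty$, so the claimed bound $\le 2C_{0,1}^*/\xi_0$ would require an unstated constraint linking $\delta_1$ and $\xi_0$; the paper avoids this by establishing the gradient bound before and independently of the $L^\infty$ one.
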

\begin{proof} We first deal with the gradient estimate. Let $\theta = |\nabla \tilde u|^2 + |\nabla \tilde v|^2$, then we write from \eqref{eq:UcD2}, 
$$\partial_\tau \theta \leq \Delta \theta + C \theta,$$
where we used the fact that $2 \nabla f \cdot\nabla (\Delta f) \leq \Delta (|\nabla f|^2)$ and the boundedness of $e^{p\tilde v}$ and $e^{q\tilde{u}}$. \\
Consider $\varphi_1 \in \mathcal{C}^\infty(\Rb^N)$ such that $\varphi_1 \in [0,1]$, $\varphi_1(\xi) =1$ for $|\xi| \leq \frac{3}{2}\xi_0$ and $\varphi_1(\xi) = 0$ for $|\xi| \geq \frac{7}{4}\xi_0$, $|\nabla \varphi_1(\xi)| \leq \frac{1}{\xi_0}$ and $|\Delta \varphi_1(\xi)| \leq \frac{1}{\xi_0^2}$. Then, $\theta_1 = \varphi_1 \theta$ satisfies
$$\partial_\tau \theta_1 \leq \Delta \theta_1 + C(C_{0,2}^*) \xi_0^{-2} \mathbf{1}_{\{\frac 32 \xi_0 \leq |\xi| \leq 2\xi_0\}} + C\theta_1.$$
Let $\theta_2 = e^{-C\tau}\theta_1$, we write
$$\partial_\tau \theta_2 \leq \Delta \theta_2 + C(C_{0,2}^*)\xi_0^{-2} \mathbf{1}_{\{\frac 32 \xi_0 \leq |\xi| \leq 2\xi_0\}}, \quad 0 \leq \theta_2(\tau_1) \leq \frac{{C_{0,1}^*}^2}{\xi_0^2}.$$
By the maximum principle, we deduce 
$$\forall|\xi | \leq \frac{5}{4} \xi_0, \;\; \tau \in[\tau_1, \tau_2], \quad \theta (\xi, \tau) \leq \frac{{C_{0,1}^*}^2 + C(C_{0,2}^*)^2e^{-C'\xi_0^2}}{\xi_0^2} \leq \frac{2C_{0,1}^*}{\xi_0^2},$$
for $\xi_0 \geq \xi_{0,3}(C_{0,2}^*)$, which yields the conclusion.

We now turn to the estimates on $\tilde{u}$ and $\tilde{v}$. Let us consider $\tilde{u}_1$ and $\tilde{v}_1$ a solution of system \eqref{eq:UcD2} such that for all $|\xi| \leq 2$ and $\tau \in [\tau_1, \tau_2]$:
$$|\tilde{u}_1(\xi, \tau_1) - \hat{u}(\tau_1)| +|\tilde{v}_1(\xi, \tau_1) - \hat{v}(\tau_1)| \leq \delta_1, \quad |\nabla \tilde{u}_1(\xi, \tau)| + |\nabla \tilde{v}_1(\xi, \tau)| \leq \epsilon,$$
where $\hat u$ and $\hat v$ are defined as in \eqref{def:solUc}. Let us show that for all $|\xi| \leq 2$ and $\tau \in [\tau_1, \tau_2]$:  
$$|\tilde{u}_1(\xi, \tau) - \hat{u}(\tau)| +|\tilde{v}_1(\xi, \tau) - \hat{v}(\tau)| \leq C(K_0)\epsilon + \delta_1,$$
where $C(K_0)$ is independent from $\epsilon$.\\
We have for all $\tau \in [\tau_1, \tau_2]$,
$$\tilde{u}_1(0,\tau) = \frac{1}{|B_2(0)|} \int_{|\xi| \leq 2} \tilde{u}_1(\xi, \tau) d\xi  + \tilde{u}_2(\tau), \quad \tilde{v}_1(0,\tau) = \frac{1}{|B_2(0)|} \int_{|\xi| \leq 2} \tilde{v}_1(\xi, \tau) d\xi  + \tilde{v}_2(\tau),$$
and 
$$e^{q\tilde{u}_1(0,\tau)} = \frac{1}{|B_2(0)|} \int_{|\xi| \leq 2} e^{q\tilde{u}_1(\xi, \tau)} d\xi  + \tilde{u}_3(\tau), \quad e^{p\tilde{v}_1(0,\tau)} = \frac{1}{|B_2(0)|} \int_{|\xi| \leq 2} e^{p\tilde{v}_1(\xi, \tau)} d\xi  + \tilde{v}_3(\tau),$$
where $|B_2(0)|$ is the volume of the sphere of radius $2$ in $\Rb^N$, $\|\tilde{u}_i\|_{L^\infty} + \|\tilde{v}_i\|_{L^\infty} \leq C\epsilon$ for $i = 2,3$. \\
For $\epsilon$ small, we consider in the distribution sense,
$$\tilde{U}(\tau) = \frac{1}{|B_2(0)|} \int_{|\xi| \leq 2} \tilde{u}_1(\xi, \tau) d\xi, \quad \tilde{V}(\tau) = \frac{1}{|B_2(0)|} \int_{|\xi| \leq 2} \tilde{v}_1(\xi, \tau) d\xi,$$
then we have from \eqref{eq:UcD2}, 
$$e^{p\tilde{V}} - C\epsilon \leq \frac{d\tilde{U}}{d\tau} \leq e^{p \tilde{V}} + C\epsilon, \quad e^{q\tilde{U}} - C\epsilon \leq \frac{d\tilde{V}}{d\tau} \leq e^{p \tilde{U}} + C\epsilon,$$
and 
$$|\tilde{U}(\tau_1) - \hat u(\tau_1)| + |\tilde{V}(\tau_1) - \hat v(\tau_1)| \leq C\epsilon + \delta_1.$$
We obtain by a classical \textit{a priori} estimates that for all $\tau \in [\tau_1, \tau_2]$, $|\tilde{U}(\tau) - \hat u(\tau)| + |\tilde{V}(\tau) - \hat v(\tau)| \leq C(K_0)\epsilon + \delta_1$ (since $C_1 \leq |\hat u(\tau)| + |\hat v (\tau)| \leq C_1'(K_0)$). Therefore, for all $|\xi| \leq 2$ and $\tau \in[\tau_1, \tau_2]$, we have $|\tilde{u}(\xi,\tau) - \hat u(\tau)| + |\tilde{v}(\xi,\tau) - \hat v(\tau)| \leq C(K_0)\epsilon + \delta_1$. Applying this result to $\tilde{u}'(\xi, \tau) = \tilde{u}(\xi - \bar \xi_0, \tau)$ and $\tilde{v}'(\xi, \tau) = \tilde{v}(\xi - \bar \xi_0, \tau)$ for $|\bar{\xi}_0| \leq \xi_0 - 2$ with $\xi_0 \gg 1$, from the assumption and the gradient estimates proved in the previous step, i.e. $|\nabla \tilde u(\xi, \tau)| + |\nabla \tilde v(\xi, \tau)|  \leq \frac{2C_{0,1}^*}{\xi_0}$, we end-up with 
$$\forall |\xi| \leq \xi_0, \;\; \tau \in [\tau_1, \tau_2], \quad |\tilde{u}(\xi,\tau) - \hat u(\tau)| + |\tilde{v}(\xi,\tau) - \hat v(\tau)| \leq \epsilon,$$
where $\epsilon = \epsilon(\delta_1, \xi_0) \to 0$ as $(\delta_1, \xi_0) \to (0, +\infty)$. This concludes the proof of Proposition \ref{prop:D2}.
\end{proof}

For the \textit{a priori} estimates in $\Dc_3$, we have the following:
\begin{proposition}[\textbf{A priori estimate in $\Dc_3$}]\label{prop:D3}
For all $\epsilon > 0$, $\epsilon_0 > 0$, $\sigma_0 > 0$, there exists $t_{0,4}(\epsilon, \epsilon_0, \sigma_0) < T$ such that for all $t_0 \in [t_{0,4}, T)$, if $(u,v)$ is a solution of \eqref{PS} on $[t_0,t_*]$ for some $t_* \in [t_0, T)$ satisfying 
\begin{itemize}
\item[(i)] for all $|x| \in \left[\frac{\epsilon_0}6, \frac{\epsilon_0}{4}\right]$ and $t \in [t_0, t_*]$,
\begin{equation}
i = 0, 1, \quad |\nabla^i u(x,t)| + |\nabla^i v(x,t)| \leq \sigma_0,
\end{equation}
\item[(ii)] For $|x| \geq \frac{\epsilon_0}{6}$, $u(x,t_0) = \hat u^*(x)$ and $v(x,t_0) = \hat v^*(x)$ where $\hat u^*$ and $\hat v^*$ are defined in \eqref{def:ustar} and \eqref{def:vstar} respectively.
\end{itemize}
Then for all $|x| \in \left[\frac{\epsilon_0}{4}, +\infty\right)$ and $t \in [t_0, t_*]$, 
\begin{equation}
i = 0, 1, \quad |\nabla ^iu(x,t) - \nabla ^i u(x,t_0)| + |\nabla ^i v(x,t) - \nabla ^i v(x,t_0)|  \leq \epsilon.
\end{equation}
\end{proposition}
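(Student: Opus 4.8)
The plan is to rewrite the problem using the integral (Duhamel) formulation for the heat semigroups $e^{\tau\Delta}$ and $e^{\mu\tau\Delta}$, localize away from the origin with a suitable cut-off, and close the estimate by a Gronwall-type argument exactly as in Giga--Kohn \cite{GKcpam89} and as sketched in Section 4.1 of \cite{GNZjde17}. First I would fix a smooth cut-off $\zeta\in\Cc^\infty(\RN)$ with $\zeta\equiv 1$ on $\{|x|\ge \epsilon_0/5\}$ and $\zeta\equiv 0$ on $\{|x|\le \epsilon_0/6\}$, so that $\nabla\zeta$ and $\Delta\zeta$ are supported in the annulus $\{\epsilon_0/6\le |x|\le \epsilon_0/5\}$. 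Setting $\bar u = \zeta(u - \hat u^*)$ and $\bar v = \zeta(v - \hat v^*)$, one checks from \eqref{PS} that $\bar u$ and $\bar v$ solve linear heat equations with source terms of the form $\zeta(e^{pv}-\Delta\hat u^*)$, respectively $\zeta(e^{qu}-\mu\Delta \hat v^*)$, plus commutator terms $-(u-\hat u^*)\Delta\zeta - 2\nabla\big((u-\hat u^*)\nabla\zeta\big)$ supported in the annulus. By hypothesis (i), those commutator terms are bounded by $C(\epsilon_0)\sigma_0$ on $[t_0,t_*]$, and by hypothesis (ii) they vanish at $t=t_0$ up to that same control.

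Next I would estimate the reaction terms $e^{pv}$ and $e^{qu}$ on the support of $\zeta$. Here is where the threshold condition is used: since $\hat u^*$ and $\hat v^*$ are bounded on $\{|x|\ge\epsilon_0/6\}$ and, more importantly, since the earlier parts of Theorem \ref{theo1} (the no-blowup statement via Proposition \ref{prop:Noblowup}) guarantee that $(T-t)e^{qu}$ and $(T-t)e^{pv}$ stay uniformly small on the region $|x|\ge \epsilon_0/6$, one gets a uniform bound $\|e^{pv}\|_{L^\infty(|x|\ge\epsilon_0/6)} + \|e^{qu}\|_{L^\infty(|x|\ge\epsilon_0/6)}\le C(\epsilon_0)$ on $[t_0,t_*]$; alternatively this bound is propagated directly by a short-time Gronwall argument from the initial data $(\hat u^*,\hat v^*)$ using hypothesis (i), which is the self-contained way consistent with the statement. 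Then I would write, for $i=0,1$ and $t\in[t_0,t_*]$,
\begin{equation*}
\nabla^i\bar u(t) = \nabla^i e^{(t-t_0)\Delta}\bar u(t_0) + \int_{t_0}^t \nabla^i e^{(t-s)\Delta}\, \mathcal{R}_1(s)\, ds,
\end{equation*}
and similarly for $\bar v$ with $e^{\mu(t-s)\Delta}$, where $\mathcal R_1$ collects the source and commutator terms just described, so $\|\mathcal R_1(s)\|_{L^\infty}\le C(\epsilon_0)(1+\sigma_0)$. Using the standard smoothing bounds $\|\nabla e^{\tau\Delta}f\|_{L^\infty}\le C\tau^{-1/2}\|f\|_{L^\infty}$ and the fact that $\bar u(t_0)\equiv 0$ wherever $\zeta\ne 0$ except on the small annulus where it is $O(\sigma_0)$, the first term is $O(\sigma_0)$ and the integral term is bounded by $C(\epsilon_0)(1+\sigma_0)(T-t_0)^{1/2}$ for $i=1$ and by $C(\epsilon_0)(1+\sigma_0)(T-t_0)$ for $i=0$. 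Choosing $t_{0,4}$ close enough to $T$ (depending on $\epsilon,\epsilon_0,\sigma_0$) makes all these contributions $\le \epsilon/2$ on $\{|x|\ge\epsilon_0/4\}$, where $\zeta\equiv 1$ so that $\bar u = u-\hat u^* = u(t)-u(t_0)$; this yields the claimed bound, with the analogous argument for $v$ using the $\mu$-heat semigroup (for which the same smoothing estimates hold).

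The main obstacle is controlling the reaction terms $e^{pv}$, $e^{qu}$ on the cut-off support: a priori these are large near the blowup zone, and the whole point of restricting to $\Dc_3$ is that they are not. The cleanest route is to invoke the smallness of $(T-t)e^{qu}$, $(T-t)e^{pv}$ on $|x|\ge\epsilon_0/6$, which in turn rests on Proposition \ref{prop:Noblowup} and the shrinking-set bounds; alternatively one runs a local-in-time Gronwall bound on $\|e^{qu}(t)\|_{L^\infty(|x|\ge\epsilon_0/6)}+\|e^{pv}(t)\|_{L^\infty(|x|\ge\epsilon_0/6)}$ starting from $(\hat u^*,\hat v^*)$, valid on $[t_0,t_*]\subset[t_0,T)$ because $T-t_0$ can be taken small. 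Either way, once that uniform bound is in hand the rest is the routine Duhamel/Gronwall estimate above, and since this argument is a verbatim adaptation of \cite[Section 4.1]{GNZjde17} and \cite[Section 4]{MZnon97} from the scalar case, I would present it in the paper only in sketch form, as is done throughout this section.
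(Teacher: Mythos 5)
Your approach (localize with a cutoff away from the origin, Duhamel/semigroup smoothing, Gronwall-type bootstrap) is indeed the intended ``standard parabolic regularity argument''; the paper itself only points to Proposition 4.3 of \cite{GNZjde17}, and your sketch supplies the right content. However, three points need correction.

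First, invoking Proposition \ref{prop:Noblowup} to bound $e^{pv}$, $e^{qu}$ on $\{|x|\ge\epsilon_0/6\}$ is circular: Proposition \ref{prop:D3} is one of the ingredients used inside Proposition \ref{prop:redu}, hence upstream of the existence result of Theorem \ref{theo1}, and Proposition \ref{prop:Noblowup} is only applied downstream of it, to deduce single-point blowup once the solution is constructed. You should discard that route entirely and keep only your alternative self-contained bootstrap: posit $|v(x,t)-v(x,t_0)|\le 2\epsilon$ on $\{|x|\ge\epsilon_0/4\}$ up to some maximal time $t^{**}$, use hypothesis (i) to bound $e^{pv},e^{qu}$ by a constant depending on $\sigma_0$ on the transition annulus $\epsilon_0/6\le |x|\le\epsilon_0/4$, use the bootstrap bound plus the boundedness of $\hat u^*,\hat v^*$ on $\{|x|\ge\epsilon_0/4\}$, obtain a uniform $L^\infty$ bound on the source over $\mathrm{supp}\,\zeta$, and conclude from Duhamel that the improvement to $\le \epsilon$ persists, forcing $t^{**}=t_*$.

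Second, by hypothesis (ii) the function $\bar u(t_0)=\zeta\,(u(\cdot,t_0)-\hat u^*)$ vanishes identically, not merely up to $O(\sigma_0)$ on the annulus, since $\zeta\equiv 0$ on $\{|x|\le\epsilon_0/6\}$ and $u(\cdot,t_0)=\hat u^*$ on $\{|x|\ge\epsilon_0/6\}$; this makes the free-evolution term exactly zero and simplifies the constants. Third, the commutator should read $+(u-\hat u^*)\Delta\zeta-2\,\mathrm{div}\bigl((u-\hat u^*)\nabla\zeta\bigr)$ (check signs against $\Delta(\zeta f)=\zeta\Delta f+2\nabla\zeta\cdot\nabla f+f\Delta\zeta$); the magnitude estimate is unaffected, but the formula as written is off. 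With these corrections the argument closes as you describe, the Duhamel integral for $i=0$ being $O(T-t_0)$ and for $i=1$ being $O(\sqrt{T-t_0})$, both made less than $\epsilon/2$ by choosing $t_{0,4}$ sufficiently close to $T$.
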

\begin{proof} The proof follows from a standard parabolic regularity argument. We refer the interested reader to Proposition 4.3 in \cite{GNZjde17} for a similar proof.
\end{proof}

\subsection{Conclusion of the proof of Proposition \ref{prop:redu}.}

In this subsection we complete the proof of Proposition \ref{prop:redu}. We will show that we can choose the parameters $K_0, \delta_0, C_0$ independently from $A$, where $A$ is fixed large enough. Then we choose the parameter $\epsilon_0, \alpha_0, \eta_0, s_0$ in term of $A$ such that all the bounds given in Definition \ref{def:St} are improved, except for the components $\theta_0$ and $\theta_1$. This concludes the proof of part $(i)$ of Proposition \ref{prop:redu}. Part $(ii)$ is just a direct consequence of the dynamics on the components $\theta_0$ and $\theta_1$ given in Proposition \ref{prop:dyn}. \\

\noindent - \textit{Proof of part $(i)$ of Proposition \ref{prop:redu}.} For the proof of the improved bounds in $\Dc_1$, we have the following: for all $s \in [s_0, s_1]$, 
$$\|\Lambda_e(s)\|_{L^\infty(\Rb)} +  \|\Upsilon_e(s)\|_{L^\infty(\Rb)} \leq \frac{A^{M+2}}{2\sqrt{s}},$$
$$ \left\|\frac{\Lambda_-(y,s)}{1 + |y|^{M+1}} \right\|_{L^\infty(\Rb)} +  \left\|\frac{\Upsilon_-(y,s)}{1 + |y|^{M+1}} \right\|_{L^\infty(\Rb)} \leq \frac{A^{M+1}}{2s^{\frac{M+2}{2}}},$$
$$ \left\|\frac{\nabla \Lambda_-(y,s)}{1 + |y|^{M+1}} \right\|_{L^\infty(\Rb)} +  \left\|\frac{\nabla \Upsilon_-(y,s)}{1 + |y|^{M+1}} \right\|_{L^\infty(\Rb)} \leq \frac{A^{M+2}}{2s^{\frac{M+2}{2}}},$$
\begin{equation*}
 |\theta_j(s)|\leq \frac{A^j}{2s^\frac{j+1}{2}},\quad |\tilde{\theta}_j(s)| \leq \frac{A^j}{2s^\frac{j+1}{2}} \;\; \text{for}\;\; 3\leq j\leq M, 
\end{equation*}
$$ |\tilde \theta_i(s)| \leq \frac{A^{2}}{2s^2}\;\; \text{for}\;\; i = 0, 1,2, \quad |\theta_2(s)| < \frac{A^4 \ln s}{s^2}.$$
Since the proof of these estimates uses the same argument as in  Section 5.2.1 of \cite{GNZpre16c} through the dynamics of the solution given in Proposition \ref{prop:dyn}, therefore we omit it here. \\
For the improved control on $\Dc_2$, we use the following result:
\begin{lemma} \label{lemm:44} Under the hypothesis of Proposition \eqref{prop:redu}, we have  for all 
$$|x| \in \left[ \frac{K_0}{4}\sqrt{(T-t_*)|\ln(T-t_*)|},\epsilon_0 \right],$$ 
$(i)$ For all $|\xi| \leq \frac{7}{4}\alpha_0 \sqrt{|\ln \sigma(x)|}$ and $\tau \in \left[\max \left\{0, \frac{t_0 -t(x)}{\sigma(x)} \right\}, \frac{t_* - t(x)}{\sigma(x)} \right]$, 
$$|\nabla_\xi \tilde{u}(x, \xi, \tau)| + |\nabla_\xi \tilde{v}(x, \xi, \tau)| \leq \frac{2 C_0}{\sqrt{|\ln \sigma(x)|}}, \quad \tilde{u}(x, \xi, \tau) \leq \frac{1}{2}\hat u(\tau), \quad \tilde{v}(x, \xi, \tau) \leq \frac{1}{2}\hat v(\tau).$$

\noindent $(ii)$ For all $|\xi| \leq 2\alpha_0 \sqrt{|\ln \sigma(x)|}$ and $\tau = \max \left\{0, \frac{t_0 -t(x)}{\sigma(x)} \right\}$ and for all $\delta_1 \leq 1$, 
$$ |\tilde{u}(x, \xi, \tau) - \hat u(\tau)| + |\tilde{v}(x, \xi, \tau) - \hat v(\tau)| \leq \delta_1, \quad |\nabla_\xi \tilde{u}(x, \xi, \tau)| + |\nabla_\xi \tilde{v}(x, \xi, \tau)| \leq \frac{C_0}{4 \sqrt{|\ln \sigma(x)|}}.  $$
\end{lemma}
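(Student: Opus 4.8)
The plan is to convert the statements about $\tilde u, \tilde v$ back into statements about $(u,v)$, $(\Lambda,\Upsilon)$ and the self-similar profile on the common overlap between $\Dc_1$ and $\Dc_2$, then propagate. First I would recall that for $|x|$ in the stated range we have, by the definition \eqref{def:tx} of $t(x)$ and $\sigma(x)$, the asymptotics $\sigma(x)\sim\frac{32}{K_0^2}\frac{|x|^2}{|\ln|x||}$ and $|\ln\sigma(x)|\sim 2|\ln|x||$ as $|x|\to 0$; in particular $\sigma(x)\le T-t_0$ can be made as small as we like by taking $t_0$ close to $T$ and $\epsilon_0$ small. For point $(ii)$ I would take $\tau=\max\{0,(t_0-t(x))/\sigma(x)\}$; there are two subcases. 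If $\tau=0$, then $t(x)\ge t_0$, so the point $(x+\xi\sqrt{\sigma(x)},t(x))$ with $|\xi|\le 2\alpha_0\sqrt{|\ln\sigma(x)|}$ lies in the blowup region $\Dc_1(t(x))$ for $\alpha_0$ small (since $K_0/4\le$ the associated $|z|$-coordinate $\le K_0/4+2\alpha_0\cdot(\text{bounded})<K_0$), so from $(u,v)(t(x))\in\Sc(t(x))$, i.e. $\binom{\Lambda}{\Upsilon}(s(x))\in\Vc_A(s(x))$ with $s(x)=-\ln\sigma(x)$, and the estimate \eqref{eq:LUinVAest} together with the explicit forms of $\phi,\psi$ in \eqref{def:phipsi}, one checks by a direct computation using the transformations \eqref{def:baruv}, \eqref{def:simvariables}, \eqref{def:uvtilde} that $\tilde u(x,\xi,0)$ equals $\hat u(0)$ plus an error that is $O(A^{M+2}/\sqrt{s(x)})+O(|\xi|^2/s(x))=o(1)$, which is $\le\delta_1$ once $\epsilon_0$ (hence $s(x)$) is chosen large; the gradient bound $\le C_0/(4\sqrt{|\ln\sigma(x)|})$ follows similarly from item $(ii)$ of Proposition \ref{prop:proSt} applied at $s(x)$, after choosing $C_0$ (i.e. $C_{0,1}$) large enough. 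If instead $\tau>0$, then $t(x)<t_0$ and $\tau=(t_0-t(x))/\sigma(x)$, so $(x+\xi\sqrt{\sigma(x)},t_0)$ is evaluated at the initial time: here I use the explicit initial data \eqref{def:uvt0}, noting that for $|x|$ in this range $\chi_1(x,t_0)=0$, so $(qu,pv)(x,t_0)=(\hat u_*,\hat v_*)(x)$ given by \eqref{def:ustar}--\eqref{def:vstar}, and a Taylor expansion of $\hat u_*$ near the relevant scale gives exactly $\hat u(\tau)$ up to $O(\delta_1)$ and the claimed gradient bound; this is precisely the content of part $(ii)$ of Proposition \ref{prop:uvt0}, so I would just invoke it.

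For point $(i)$ the strategy is a bootstrap/continuity argument in $\tau$ combined with Proposition \ref{prop:D2} applied with $\xi_0=\alpha_0\sqrt{|\ln\sigma(x)|}$. Fix $x$ in the stated range and set $\tau_1=\max\{0,(t_0-t(x))/\sigma(x)\}$, $\tau_2=(t_*-t(x))/\sigma(x)$. The hypotheses $(i)$, $(ii)$, $(iii)$ of Proposition \ref{prop:D2} at the initial time $\tau_1$ are supplied by point $(ii)$ of the present lemma (with $\delta_1$ small, $C_{0,1}^*=C_0/4$); the running hypotheses $(ii)$ and $(iii)$ of Proposition \ref{prop:D2} on $[\tau_1,\tau_2]$ are supplied by the fact that $(u,v)(t)\in\Sc(t)$ for all $t\in[t_0,t_*]$, which gives, via item $(ii)$ of Definition \ref{def:St} (and the monotone decay of $\hat u,\hat v$ so that the "$\le\frac12\hat u$" sign condition persists for $\delta_0$ small), precisely the bounds $|\nabla_\xi\tilde u|+|\nabla_\xi\tilde v|\le C_0/\sqrt{|\ln\sigma(x)|}$ and $\tilde u\le\frac12\hat u$, $\tilde v\le\frac12\hat v$ on the slightly larger ball $|\xi|\le\frac74\xi_0$ needed to run the cutoff argument in Proposition \ref{prop:D2}. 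Then Proposition \ref{prop:D2} upgrades these to $|\nabla_\xi\tilde u|+|\nabla_\xi\tilde v|\le 2C_{0,1}^*/\xi_0=C_0/(2\sqrt{|\ln\sigma(x)|})\le 2C_0/\sqrt{|\ln\sigma(x)|}$ and $|\tilde u-\hat u|+|\tilde v-\hat v|\le\epsilon$ on $|\xi|\le\xi_0$, which in particular gives $\tilde u\le\hat u+\epsilon\le\frac12\hat u$ for $\epsilon$ small (using $\hat u<0$, $|\hat u|$ bounded below on $[0,1]$), closing the bootstrap. The parameters are chosen in the order dictated by Proposition \ref{prop:redu}: first $K_0$ large, then $\delta_0,C_0$ (hence $\delta_1,C_{0,1}^*$) independently of $A$, then $\alpha_0$ small, then $\epsilon_0$ small (forcing $s(x),|\ln\sigma(x)|$ large uniformly on the range of $x$), finally $\eta_0$ and $s_0=-\ln(T-t_0)$ large.

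The main obstacle I anticipate is the careful bookkeeping of the change of variables \eqref{def:baruv}--\eqref{def:simvariables}--\eqref{def:uvtilde} in point $(ii)$, subcase $\tau=0$: one must verify that the self-similar description of $(\Phi,\Psi)$ valid for $|z|\le K_0$ (i.e. the $\Vc_A$ bounds plus the profile $(\phi,\psi)$) really does match, after taking logarithms and rescaling by $\sigma(x)$, the constant $\hat u(0)=-\frac1q\ln[p(1+\frac{K_0^2/16}{2(\mu+1)})]$ dictated by the profile value $\Phi^*$ at $|z|=K_0/4$ — the factor $K_0^2/16$ rather than $K_0^2$ comes from the mismatch between the $\Dc_1$-boundary radius $K_0$ and the $\Dc_2$-reference radius $K_0/4$ in \eqref{def:tx}, and one must track it consistently; the error terms $O(A^{M+2}/\sqrt{s(x)})$ and $O(|\xi|^2/s(x))$ and $O(1/s(x))$ (from $\phi-\Phi^*$) must all be absorbed into $\delta_1$, which is why $\epsilon_0$ is chosen last and small. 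Everything else is either a direct citation (Propositions \ref{prop:proSt}, \ref{prop:uvt0}, \ref{prop:D2}) or a routine parabolic/elementary computation, so I would not grind through it here.
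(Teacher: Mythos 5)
Your point (ii) is essentially sound modulo error bookkeeping (the $\Phi^*$ variation over the annulus $|z|\in[K_0/4-2\alpha_0,K_0/4+2\alpha_0]$ is $O(\alpha_0)$, not $O(|\xi|^2/s(x))$, and is controlled by taking $\alpha_0$ small rather than $\epsilon_0$ small; and for $\tau>0$ the cutoff $\chi_1(x,t_0)$ need not vanish on the whole $\mathcal{D}_2$ range). But your proof of point (i) has a genuine gap. You claim that item $(ii)$ of Definition~\ref{def:St}, valid on $[t_0,t_*]$, supplies the running hypotheses of Proposition~\ref{prop:D2} ``on the slightly larger ball $|\xi|\le\frac74\xi_0$''. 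It does not: Definition~\ref{def:St} part $(ii)$ only controls $|\xi|\le\alpha_0\sqrt{|\ln\sigma(x)|}=\xi_0$. The entire content of Lemma~\ref{lemm:44}$(i)$ is precisely this extension from $\xi_0$ to $\frac74\xi_0$ (and in $(ii)$ from $\xi_0$ to $2\xi_0$), and it cannot be obtained for free. What is needed is a translation-in-$x$ (covering) argument: for $\xi_0<|\xi|\le\frac74\xi_0$, set $x'=x+(\xi-\xi')\sqrt{\sigma(x)}$ for a suitable $|\xi'|\le\xi_0$, express $\tilde u(x,\xi,\tau)$ as $\tilde u(x',\xi'',\tau')+\frac1q\ln\frac{\sigma(x)}{\sigma(x')}$ and $\nabla_\xi\tilde u(x,\xi,\tau)=\sqrt{\sigma(x)/\sigma(x')}\,\nabla_{\xi'}\tilde u(x',\xi'',\tau')$, check that $x'$ remains in the $\mathcal{D}_2$ range (or otherwise falls into $\mathcal{D}_1$ or $\mathcal{D}_3$, handled separately), and use the comparability $\sigma(x')/\sigma(x)\to1$ as $\sigma(x)\to0$ when $|x-x'|\le C\alpha_0\sqrt{\sigma(x)|\ln\sigma(x)|}$. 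The bounded constant arising from this comparison is exactly where the factor $2$ in $\frac{2C_0}{\sqrt{|\ln\sigma(x)|}}$ comes from. This is the argument in Lemma~4.4 of \cite{MZnon97}, which the paper cites in lieu of a proof; it is not a consequence of Proposition~\ref{prop:D2}.

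There is a second, related defect: even granting the enlarged-ball hypotheses, Proposition~\ref{prop:D2} with $C_{0,1}^*=C_0/4$ and $\xi_0=\alpha_0\sqrt{|\ln\sigma(x)|}$ yields a gradient bound $\frac{2C_{0,1}^*}{\xi_0}=\frac{C_0}{2\alpha_0\sqrt{|\ln\sigma(x)|}}$, which is \emph{larger} than the target $\frac{2C_0}{\sqrt{|\ln\sigma(x)|}}$ once $\alpha_0<1/4$. Your computation ``$2C_{0,1}^*/\xi_0=C_0/(2\sqrt{|\ln\sigma(x)|})$'' drops the $\alpha_0$. This confirms that Proposition~\ref{prop:D2} is the wrong tool for Lemma~\ref{lemm:44}$(i)$: that proposition propagates bounds \emph{forward in time}, whereas the lemma needs an extension \emph{in space} from $\xi_0$ to $\frac74\xi_0$, which is an elementary translation/covering argument built from the shrinking-set bounds, not a parabolic-regularity step. (Logically it is also backwards: in the paper, Lemma~\ref{lemm:44} is what feeds Proposition~\ref{prop:D2} in the conclusion of Proposition~\ref{prop:redu}; the lemma itself is meant to be proved directly from Definition~\ref{def:St} and Proposition~\ref{prop:uvt0} by translation, not by another application of the same parabolic estimate.)
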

\begin{proof} See Lemma 4.4 in \cite{MZnon97} for an analogous proof. 
\end{proof}
From Lemma \ref{lemm:44}, we apply Proposition \ref{prop:D2} with $C_{0,1}^* = \frac{C_0}{4}$, $C_{0,2}^* = 2C_0$, $\xi_0 = \alpha_0 \sqrt{|\ln \sigma(\epsilon_0)|}$ for $\alpha_0 \in (0,1)$ to derive 
$$|\tilde{u}(x, \xi, \tau_*) - \hat u(\tau_*)| + |\tilde{v}(x, \xi, \tau_*) - \hat v(\tau_*)| \leq \frac{\delta_0}{2}, \quad |\nabla_\xi \tilde{u}(x, \xi, \tau_*)| + |\nabla_\xi \tilde{v}(x, \xi, \tau_*)| \leq \frac{C_0}{2\sqrt{|\ln \sigma(x)|}},$$
which concludes the improved control in $\Dc_2$.

For the proof of the improved bounds on $\Dc_3$, we note from the choice of initial data \eqref{def:uvt0} that the hypothesis of Proposition \ref{prop:D3} holds. We then apply Proposition \ref{prop:D3} with $\epsilon = \eta_0/2$ to obtain the estimate for all $t \in [t_0, t_*]$ and $|x| \geq \frac{\epsilon_0}{4}$, 
$$ i =0, 1, \quad |\nabla ^i u(x,t) - \nabla^i u(x,t_0)| + |\nabla ^i v(x,t) - \nabla^i v(x,t_0)| \leq \frac{\eta_0}{2}.$$  
This completes the proof of part $(i)$ of Proposition \ref{prop:redu}.\\

\noindent - \textit{Proof of part $(ii)$ of Proposition \ref{prop:redu}.} This is just a consequence of the dynamics of the components $\theta_0$ and $\theta_1$. Indeed, from part $(i)$ of Proposition \ref{prop:redu}, we know that for $n = 0$ or $1$ and $\omega = \pm 1$, we have $\theta_n(s_1) = \omega \frac{A}{s_1^2}$. From part $(i)$ of Proposition \ref{prop:dyn}, we see that 
$$\omega \theta_n'(s_1) \geq \left(1 - \frac{n}{2}\right)\omega\theta_n(s_1) - \frac{C}{s_1^2} \geq \frac{(1 - n/2)A - C}{s_1^2}.$$
Taking $A$ large enough gives $\omega \theta_n'(s_1) > 0$, which means that $\theta_n$ is traversal outgoing to the bounding curve $s \mapsto \omega As^{-2}$ at $s = s_1$. This completes the proof of Proposition \ref{prop:redu}. \hfill $\square$

\bigskip

\noindent \textbf{Acknowledgments:} The authors would like to thank the anonymous referees for their careful reading and their suggestions to improve the presentation of the paper.  

\appendix

\section{Some technical results used in the proof of Theorem \ref{theo1}.}
The following lemma is an integral version of Gronwall's inequality:
\begin{lemma}[A Gronwall's inequality]\label{lemm:Gronwall} If $\lambda(s)$, $\alpha(s)$ and $\beta(s)$ are continuous defined on $[s_0, s_1]$ such that 
$$\lambda(s) \leq \lambda(s_0) + \int_{s_0}^s\alpha(\tau) \lambda(\tau) d\tau + \int_{s_0}^{s}\beta(\tau)d\tau, \quad s_0 \leq s\leq s_1,$$
then 
$$\lambda(s) \leq \exp\left(\int_{s_0}^s\alpha(\tau) d\tau \right) \left[\lambda(s_0) + \int_{s_0}^s \beta(\tau) \exp \left(-\int_{s_0}^\tau \alpha(\tau')d\tau' \right)d\tau\right].$$  
\end{lemma}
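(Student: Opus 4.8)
The plan is to use the classical integrating-factor trick after replacing the implicit inequality by an explicit auxiliary function. First I would set
\[
\Mc(s) = \lambda(s_0) + \int_{s_0}^s \alpha(\tau)\lambda(\tau)\,d\tau + \int_{s_0}^s \beta(\tau)\,d\tau,
\]
so that by hypothesis $\lambda(s)\le \Mc(s)$ for all $s\in[s_0,s_1]$, with $\Mc(s_0)=\lambda(s_0)$. Since $\alpha,\beta$ and $\lambda$ are continuous, $\Mc$ is $C^1$ with $\Mc'(s)=\alpha(s)\lambda(s)+\beta(s)$. Using $\lambda(s)\le\Mc(s)$ together with the (implicit, and in all applications in this paper satisfied) sign condition $\alpha(s)\ge 0$, one gets the differential inequality $\Mc'(s)\le \alpha(s)\Mc(s)+\beta(s)$ on $[s_0,s_1]$.

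Next I would introduce the integrating factor
\[
E(s) = \exp\!\left(-\int_{s_0}^s \alpha(\tau)\,d\tau\right),
\]
and compute
\[
\frac{d}{ds}\bigl(E(s)\Mc(s)\bigr) = E(s)\bigl(\Mc'(s)-\alpha(s)\Mc(s)\bigr) \le E(s)\beta(s).
\]
Integrating this between $s_0$ and $s$ and using $E(s_0)=1$ and $\Mc(s_0)=\lambda(s_0)$ yields
\[
E(s)\Mc(s) \le \lambda(s_0) + \int_{s_0}^s E(\tau)\beta(\tau)\,d\tau,
\]
and hence, dividing by $E(s)>0$ and noting $1/E(s)=\exp\!\bigl(\int_{s_0}^s\alpha(\tau)\,d\tau\bigr)$ while $1/E(\tau)\cdot E(s)$ rewrites as $\exp\!\bigl(\int_{s_0}^s\alpha\bigr)\exp\!\bigl(-\int_{s_0}^\tau\alpha\bigr)$, one obtains exactly the claimed bound for $\Mc(s)$; since $\lambda(s)\le\Mc(s)$, the same bound holds for $\lambda(s)$.

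This argument is entirely routine, so I do not anticipate a genuine obstacle; the only point that deserves a word of care is the passage from $\Mc'=\alpha\lambda+\beta$ to $\Mc'\le\alpha\Mc+\beta$, which needs $\alpha\ge0$ (automatically met in every use of the lemma here, where $\alpha$ is a nonnegative constant or a nonnegative continuous function built from the potential $V$). If one wished to state the lemma without any sign hypothesis on $\alpha$, the same proof still goes through after replacing $\alpha$ by $|\alpha|$ in the majorization step, at the cost of $|\alpha|$ appearing in the final exponential factors; but since the paper only invokes the nonnegative case, I would keep the statement as is and simply record the sign assumption at the start of the proof.
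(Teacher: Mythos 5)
Your proof is correct and is the standard integrating-factor argument; the paper itself does not spell out a proof but simply cites Lemma 2.3 of Giga--Kohn \cite{GKcpam89}, so you have supplied the missing details. You also correctly flag the one genuine subtlety: the step $\Mc'=\alpha\lambda+\beta\le\alpha\Mc+\beta$ uses $\alpha\ge 0$, and this hypothesis is indeed silently omitted from the statement of the lemma as printed. Without it the conclusion is false (take $s_0=0$, $\alpha\equiv-1$, $\beta\equiv0$, $\lambda(s)=e^{-s}(s-1)^2$: one checks $\lambda(s)\le \lambda(0)-\int_0^s\lambda=(s^2+1)e^{-s}$, yet $\lambda(s)>e^{-s}\lambda(0)=e^{-s}$ for $s>2$). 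As you observe, every invocation in the paper has $\alpha$ nonnegative, so the lemma is used correctly even though stated carelessly. One minor caveat about your final remark: replacing $\alpha$ by $|\alpha|$ in the majorization step additionally requires $\lambda\ge 0$ (or $\Mc\ge 0$), since $\alpha\lambda\le|\alpha|\Mc$ does not follow from $\lambda\le\Mc$ alone when $\alpha<0$; but this is peripheral and does not affect the validity of your proof of the lemma under the natural hypothesis $\alpha\ge 0$.
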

\begin{proof} See Lemma 2.3 in \cite{GKcpam89} for an example of the proof.

\end{proof}

In the following lemma, we recall some linear regularity estimates of the linear operator $\Lc_\eta$ defined in \eqref{def:HLM}:

\begin{lemma}[Properties of the semigroup $e^{\tau \Lc_\eta}$]\label{lemm:prokernalL} The kernel $e^{\tau \Lc_\eta}(y,x)$ of the semigroup $e^{\tau \Lc_\eta}$ is given by
\begin{equation}\label{def:kernelE}
e^{\tau \Lc_\eta}(y,x) =  \frac{1}{\big[4\pi(1 - e^{-\tau})\big]^{N/2}}\exp \left(-\frac{|ye^{-\tau/2} - x|^2}{4\eta (1 - e^{\tau})} \right), \quad \forall \tau > 0,
\end{equation}
and $e^{\tau \Lc_\eta}$ is defined by
\begin{equation}\label{def:semieL}
e^{\tau \Lc_\eta}g(y) = \int_{\RN}e^{\tau \Lc_\eta}(y,x)g(x)dx.
\end{equation}
We have the following estimates:

\noindent $(i)\;$ $\left\|e^{\tau \Lc_\eta}g\right\|_{L^\infty(\RN)} \leq \|g\|_{L^\infty(\RN)}$ for all $g \in L^\infty(\RN)$,\\

\noindent $(ii)\;$ $\left\|e^{\tau \Lc_\eta}\, \text{div} (g)\right\|_{L^\infty(\RN)} \leq \frac{C}{\sqrt{1 - e^{-\tau}}}\|g\|_{L^\infty(\RN)}$ for all $g \in L^\infty(\RN)$,\\

\noindent $(iii)\;$ If $|g(x)| \leq c(1 + |x|^{M+1})$ for all $x \in \RN$, then 
$$\left|e^{\tau \Lc_\eta} \Pi_{-,M}(g(y))\right| \leq Cce^{-\frac{(M+1)\tau}{2}} (1 + |y|^{M+1}), \quad \forall y \in \RN.$$

\noindent $(iv)\;$ For all $k \geq 0$, we have 
\begin{equation*}
\left\|\frac{\Pi_{-,M}(g)}{1 + |y|^{M+k}} \right\|_{L^\infty(\RN)} \leq C\left\|\frac{g}{1 + |y|^{M + k}} \right\|_{L^\infty(\RN)}.
\end{equation*}

\noindent $(v)\;$ If $|\nabla g(x)| \leq D(1 + |x|^m)$ for all $x \in \Rb^N$, then 
$$|\nabla \left(e^{\tau \Lc_\eta} g\right)(y)| \leq CDe^{\frac{\tau}{2}}(1 + |y|^m), \quad \forall y \in \Rb^N.$$

\noindent $(vi)\;$ If $|g(x)| \leq D(1 + |x|^m)$ for all $x \in \Rb^N$, then 
$$|\nabla \left(e^{\tau \Lc_\eta} g\right)(y)| \leq CD\frac{e^{\frac{\tau}{2}}}{\sqrt{1 - e^{-\tau}}}(1 + |y|^m), \quad \forall y \in \Rb^N.$$

\end{lemma}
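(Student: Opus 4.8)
The plan is to reduce every assertion to the explicit Mehler‑type kernel \eqref{def:kernelE} and then conclude by elementary Gaussian estimates, treating the two spectral statements $(iii)$–$(iv)$ separately at the end. First one verifies \eqref{def:kernelE}: the linear change of variables $y\mapsto y/\sqrt\eta$ conjugates $\Lc_\eta$ to $\Lc_1=\Delta-\tfrac12 y\cdot\nabla$, and for $\eta=1$ the substitution $\xi=ye^{-\tau/2}$ turns $\partial_\tau w=\Lc_1 w$ into the ordinary heat equation in the time variable $\sigma(\tau)=1-e^{-\tau}$; comparison with the heat kernel gives \eqref{def:kernelE}, and in particular $\int_{\RN}e^{\tau\Lc_\eta}(y,x)\,dx=1$ for all $y$ and $\tau>0$. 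Assertion $(i)$ is then immediate, since $e^{\tau\Lc_\eta}$ is an averaging operator and hence an $L^\infty$‑contraction.

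For the divergence bound $(ii)$ I would integrate by parts, writing $e^{\tau\Lc_\eta}\,\mathrm{div}(g)(y)=-\int_{\RN}\nabla_x e^{\tau\Lc_\eta}(y,x)\cdot g(x)\,dx$, and note from \eqref{def:kernelE} that $|\nabla_x e^{\tau\Lc_\eta}(y,x)|=\frac{|x-ye^{-\tau/2}|}{2\eta(1-e^{-\tau})}\,e^{\tau\Lc_\eta}(y,x)$, whose $L^1_x$‑norm equals $c_{N,\eta}(1-e^{-\tau})^{-1/2}$ by the first absolute moment of a Gaussian; this gives $(ii)$.

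For $(v)$ and $(vi)$ I would use the shifted representation $e^{\tau\Lc_\eta}g(y)=\int_{\RN}\kappa_\tau(z)\,g\big(ye^{-\tau/2}+z\big)\,dz$, where $\kappa_\tau$ is the centred Gaussian of variance of order $\eta(1-e^{-\tau})$, so that $\int\kappa_\tau(z)(1+|z|^m)\,dz\le C$ uniformly in $\tau>0$. In $(v)$, differentiating under the integral sign yields $\nabla_y e^{\tau\Lc_\eta}g(y)=e^{-\tau/2}\int\kappa_\tau(z)(\nabla g)(ye^{-\tau/2}+z)\,dz$; using $e^{-\tau/2}\le e^{\tau/2}$, the polynomial bound on $\nabla g$, the moment estimate above and $|ye^{-\tau/2}|^m\le|y|^m$ gives the claim. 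In $(vi)$ only $g$ (not $\nabla g$) is controlled, so I would instead integrate by parts in $z$, using $\nabla_y[g(ye^{-\tau/2}+z)]=e^{-\tau/2}\nabla_z[g(ye^{-\tau/2}+z)]$, so that the derivative falls on $\kappa_\tau$; the extra factor $\|\nabla\kappa_\tau\|_{L^1}\le C(\eta(1-e^{-\tau}))^{-1/2}$ produces the $(1-e^{-\tau})^{-1/2}$, while the polynomial weight is again preserved.

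The spectral statements $(iii)$–$(iv)$ are the only non‑mechanical point. The key facts are that $\Pi_{-,M}$ is the orthogonal projection in $L^2_{\rho_\eta}$ onto $\overline{\mathrm{span}}\{\tilde h_\alpha:|\alpha|\ge M+1\}$ and that $e^{\tau\Lc_\eta}$ acts diagonally, $e^{\tau\Lc_\eta}\tilde h_\alpha=e^{-|\alpha|\tau/2}\tilde h_\alpha$; hence $e^{\tau\Lc_\eta}$ commutes with $\Pi_{-,M}$ and contracts by $e^{-(M+1)\tau/2}$ on its range in $L^2_{\rho_\eta}$. For $(iv)$ I would write $\Pi_{-,M}=\mathrm{Id}-\sum_{|\alpha|\le M}\langle\,\cdot\,,\tilde h_\alpha\rangle_{\rho_\eta}\tilde h_\alpha$, a finite rank perturbation of the identity, and bound each coefficient by $|\langle g,\tilde h_\alpha\rangle_{\rho_\eta}|\le C\big\|g/(1+|y|^{M+k})\big\|_{L^\infty}$, using that $(1+|y|^{M+k})|\tilde h_\alpha(y)|\rho_\eta(y)$ is integrable together with $|\tilde h_\alpha(y)|\le C(1+|y|^{M+k})$ since $\deg\tilde h_\alpha\le M$; this gives the weighted $L^\infty$ boundedness of $\Pi_{-,M}$. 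Then $(iii)$ follows by combining the $L^2_{\rho_\eta}$ decay with the smoothing of the kernel, splitting the estimate into the region $|y|\le C\sqrt\tau$, where parabolic regularity of $e^{\tau\Lc_\eta}$ upgrades $L^2_{\rho_\eta}$ control to pointwise control, and the region $|y|\ge C\sqrt\tau$, where the Gaussian weight of the kernel itself forces the decay. The hard part is precisely this passage from the $L^2$ spectral gap to the pointwise weighted bound; since it is carried out in detail in \cite{BKnon94, MZdm97, GNZpre16c}, I would either reproduce that argument verbatim or simply invoke it.
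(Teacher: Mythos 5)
The paper's own ``proof'' of Lemma~\ref{lemm:prokernalL} is purely bibliographic: it cites \cite{BKnon94} (p.~554) for the kernel formula, Lemma~4.15 of \cite{TZpre15} for items $(i)$, $(ii)$, $(v)$, $(vi)$, and Lemmas~A.2, A.3 of \cite{MZjfa08} for items $(iii)$, $(iv)$. Your proposal instead supplies actual arguments, and they are essentially the standard ones underlying those references. Your treatment of $(i)$, $(ii)$, $(v)$, $(vi)$ via the shifted Mehler representation $e^{\tau\Lc_\eta}g(y)=\int\kappa_\tau(z)g(ye^{-\tau/2}+z)\,dz$ is correct (note for $(v)$ you even obtain the better factor $e^{-\tau/2}$; the stated $e^{\tau/2}$ is simply a coarser bound). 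Your argument for $(iv)$ --- writing $\Pi_{-,M}$ as identity minus a finite-rank sum of projections onto $\tilde h_\alpha$ with $|\alpha|\le M$, and bounding each coefficient by the weighted sup norm using integrability of $(1+|y|^{M+k})|\tilde h_\alpha(y)|\rho_\eta(y)$ --- is also sound.

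The one place where your sketch would not survive as written is $(iii)$. The split into $|y|\lesssim\sqrt\tau$ (``$L^2_{\rho_\eta}$ to pointwise by parabolic regularity'') and $|y|\gtrsim\sqrt\tau$ (``the Gaussian weight of the kernel forces decay'') is not the mechanism: the kernel $K_\tau(y,\cdot)$ is centred at $ye^{-\tau/2}$, so for fixed $\tau$ and $|y|\to\infty$ there is no decay coming from the Gaussian alone, and the passage from $L^2_{\rho_\eta}$ decay of $\|e^{\tau\Lc_\eta}\Pi_{-,M}g\|$ to a pointwise bound with the sharp polynomial weight $(1+|y|^{M+1})$ and the sharp rate $e^{-(M+1)\tau/2}$ is not a soft consequence of local parabolic smoothing. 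The standard proof, in \cite{BKnon94} and \cite{MZjfa08}, works directly with the kernel $K_\tau(y,x)-\sum_{|\alpha|\le M}e^{-|\alpha|\tau/2}\tilde h_\alpha(y)\tilde h_\alpha(x)\rho_\eta(x)/\|\tilde h_\alpha\|_{\rho_\eta}^2$ of $e^{\tau\Lc_\eta}\Pi_{-,M}$ and estimates its action on $(1+|x|^{M+1})$ by Taylor-expanding the translation $x\mapsto ye^{-\tau/2}+z$, which is what produces both the weight $(1+|y|^{M+1})$ and the prefactor $e^{-(M+1)\tau/2}$. You are aware of this and explicitly defer to the cited lemmas, which is legitimate --- but be careful not to present the $|y|\lessgtr\sqrt\tau$ splitting as though it were the argument, since it is not.
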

\begin{proof} The expressions of $e^{\tau \Lc_\eta}(y,x)$ and $e^{\tau \Lc_\eta}$ are given in \cite{BKnon94}, page 554. For item $(i)-(ii)$ and $(v)-(vi)$, see Lemma 4.15 in \cite{TZpre15}. For item $(iii)-(iv)$, see Lemmas A.2 and A.3 in \cite{MZjfa08}.
\end{proof}

% \bibliographystyle{plain}
% \bibliography{../../ref}
% \end{document}

\def\cprime{$'$}

%\bibliographystyle{plainnat}
%\bibliography{/Volumes/Data/Work/mybib} %for Mac

\bigskip

\end{document}